\newtheorem{thm}{Theorem}[section]
\newtheorem{cor}[thm]{Corollary} 
\newtheorem{lem}[thm]{Lemma}
\newtheorem{prop}[thm]{Proposition} 
\newtheorem{defn}[thm]{Definition}
\theoremstyle{definition}
\newtheorem{remark}[thm]{Remark}
\newtheorem{question}[thm]{Question}
\newtheorem{ex}[thm]{Example}
\numberwithin{equation}{section}
\newcommand{\ep}{\varepsilon}
\newcommand{\al}{\alpha}
\newcommand{\Om}{\Omega}
\newcommand{\vp}{\phi}
\newcommand{\la}{\langle}
\newcommand{\ra}{\rangle}
\newcommand{\ol}{\overline}
\newcommand{\ti}{\widetilde}
\newcommand{\wh}{\widehat}
\newcommand{\supp}{\operatorname{supp}}
\newcommand{\spn}{\operatorname{span}}
\newcommand{\fbl}{\mathrm{FBL}}
\newcommand{\fbp}{{\mathrm{FBL}}^{(p)}}
\newcommand{\N}{{\mathbb N}}
\newcommand{\R}{{\mathbb R}}
\newcommand{\cC}{{\mathcal C}}
\newcommand{\cD}{{\mathcal D}}
\newcommand{\cF}{{\mathcal F}}
\newcommand{\lL}{\mathcal{L}}
\newcommand{\cX}{{\mathcal X}}
\newcommand{\fU}{\mathfrak{U}}
\newcommand{\eps}{\varepsilon}
\DeclareMathOperator{\tr}{tr}
\newcommand{\pinf}{{p,\infty}}
\newcommand{\pp}{{p,p_2}}
\newcommand{\qq}{{q,q_2}}
\newcommand{\ppqq}{{(\pp;\qq)}}
\newcommand{\wlp}{L_\pinf}
\newcommand{\ocwlp}{L_\pinf^\circ}
\title[Banach lattices with upper $p$-estimates: Renorming and factorization]{Banach lattices with upper $p$-estimates: Renorming and factorization}
\author{E.~Garc\'ia-S\'anchez}
\address{Instituto de Ciencias Matem\'aticas (CSIC-UAM-UC3M-UCM)\\
Consejo Superior de Investigaciones Cient\'ificas\\
C/ Nicol\'as Cabrera, 13--15, Campus de Cantoblanco UAM\\
28049 Madrid, Spain.}
\email{enrique.garcia@icmat.es}
\author{D.~H.\ Leung}
\address{Department of Mathematics\\
National University of Singapore\\
Singapore 119076.
} \email{dennyhl@u.nus.edu}
\author{M.~A.\ Taylor}
\address{Department of Mathematics\\
ETH Z\"urich, Ramistrasse 101, 8092 Z\"urich, Switzerland.
} \email{mitchell.taylor@math.ethz.ch}
\author{P.~Tradacete}
\address{Instituto de Ciencias Matem\'aticas (CSIC-UAM-UC3M-UCM)\\
Consejo Superior de Investigaciones Cient\'ificas\\
C/ Nicol\'as Cabrera, 13--15, Campus de Cantoblanco UAM\\
28049 Madrid, Spain.}
\email{pedro.tradacete@icmat.es}
\date{\today}
\subjclass[2020]{46B42 (primary); 06B25, 46E30; 47B60; 47A68(secondary)} 
\keywords{Upper $p$-estimate; weak-$L_p$; factorization theory.}
\begin{document}
\begin{abstract}
The notions of $p$-convexity and concavity are fundamental tools for studying Banach lattices, as they partition the class of Banach lattices into a scale of spaces with $L_p$-like properties. Upper and lower $p$-estimates provide a refinement of this scale, modeled by the Lorentz spaces $L_{p,\infty}$ and $L_{p,1}$, respectively. In this article, we provide a comprehensive treatment of Banach lattices with upper $p$-estimates. In particular, we show that many well-known theorems about $p$-convex Banach lattices have analogues in the upper $p$-estimate setting, including the ability to represent all such spaces inside of infinity sums of model spaces, to canonically factor the convex operators and identify their associated operator ideals, as well as to give a precise description of the free objects and push-outs. Proving these results is far from straightforward and will require the development of a variety of new tools that avoid convexification and concavification procedures. In fact, we will identify many fundamental differences between the theories of $p$-convexity and upper $p$-estimates, particularly with regards to isometric problems and renormings.

\end{abstract}

\maketitle
\allowdisplaybreaks
\tableofcontents

\section{Introduction}
Convexity properties have long played a fundamental role in the theory of Banach lattices \cite{LT2}. Recall that a Banach lattice $X$ is \emph{$p$-convex} if there exists a constant $K\geq 1$ such that for all $n\in \mathbb{N}$ and $x_1,\dots, x_n\in X$ we have 

\begin{equation}\label{intro p-convex}
            \norm[3]{\intoo[3]{\sum_{i=1}^n |x_i|^{p}}^\frac{1}{p}}\leq K \intoo[3]{\sum_{i=1}^n \|x_i\|^{p}}^\frac{1}{p}.
        \end{equation}
Evidently, every Banach lattice is $1$-convex, and $L_p(\mu)$ is $p$-convex. By reversing the inequality in \eqref{intro p-convex} one obtains the notion of a \emph{$p$-concave} Banach lattice, and it is well-known that a Banach lattice $X$ is both $p$-convex and $p$-concave if and only if it is lattice isomorphic to $L_p(\mu)$ for some measure $\mu$.
\medskip

The above convexity and concavity properties partition the class of Banach lattices into a scale of spaces with properties similar to those of $L_p$. In fact, by Maurey's Factorization Theorem, it follows that a Banach lattice is $p$-convex if and only if it is lattice isomorphic to a closed sublattice of an $\ell_\infty$-sum of $L_p$-spaces. This theorem unlocks a host of concrete tools that can be used to study a wide class of function spaces and is in some sense analogous to the well-known representation of Banach spaces as closed subspaces of $\ell_\infty$-sums of copies of the real line.
\medskip

It is natural to replace one of the $p$-sums by a $q$-sum in \eqref{intro p-convex}. However, it is well-known that this only leads to one additional non-trivial property; namely, $(p,\infty)$-convexity.  Here, we recall that a Banach lattice $X$ is \emph{$(p,\infty)$-convex} (or has an \emph{upper $p$-estimate}) if there exists a constant $K\geq 1$ such that for all $n\in \mathbb{N}$ and $x_1,\dots, x_n\in X$ we have 

\begin{equation}\label{intro pinfty-convex}
            \norm[3]{\bigvee_{i=1}^n |x_i|}\leq K \intoo[3]{\sum_{i=1}^n \|x_i\|^{p}}^\frac{1}{p}.
        \end{equation}
It is well-known that $X$ is $(p,\infty)$-convex if and only if the inequality \eqref{intro p-convex} holds for all pairwise disjoint $x_1,\dots, x_n\in X$.
 Although $(p,\infty)$-convexity is also a classical notion \cite{LT2}, the corresponding theory is far more difficult and underdeveloped than that of $p$-convexity -- the primary objective of this article is to explain and rectify this issue.
 \medskip

 As we shall see, the class of $(p,\infty)$-convex Banach lattices is modeled by weak-$L_p.$ In fact, using Pisier's Factorization Theorem we will be able to represent every $(p,\infty)$-convex Banach lattice as a closed sublattice of an $\ell_\infty$-sum of weak-$L_p$-spaces. Then, by studying the properties of this representing space, we will give quick and transparent proofs of many of the fundamental structural properties of the full class of $(p,\infty)$-convex Banach lattices, including the fact that $(p,\infty)$-convexity implies $q$-convexity for $q<p$ with sharp constant growing like $(p-q)^{-\frac{1}{p}}$ as $q\uparrow  p$.  This is the main content of \Cref{sec: review}.
\medskip

In the remainder of the article, we will analyze the fine structure of $(p,\infty)$-convex and $(p,1)$-concave Banach lattices and their operators. This will present major challenges compared to the analogous study of $p$-convexity and concavity. This is perhaps most strikingly illustrated by contrasting Proposition~\ref{prop: representation convexity infty sums} and Example~\ref{ex: finite dim BL}: Every $p$-convex Banach lattice with constant one embeds lattice \emph{isometrically} into an $\ell_\infty$-sum of $L_p$-spaces and every $(p,\infty)$-convex Banach lattice with constant one embeds lattice \emph{isomorphically} into an $\ell_\infty$-sum of weak-$L_p$-spaces, but there are $(p,\infty)$-convex Banach lattices with constant one that do not embed lattice isometrically into any $\ell_\infty$-sum of weak-$L_p$-spaces. This fact will make the isometric theory of $(p,\infty)$-convex Banach lattices particularly subtle, and will have immediate implications for the study of free Banach lattices with upper $p$-estimates as well as all other universal constructions.
\medskip

Another difficulty in the study of upper $p$-estimates is that one no longer has access to convexification and concavification arguments. This is a standard and often indispensable tool in the theory of $p$-convex Banach lattices. For example, one may use it in combination with the fact that every separable Banach lattice embeds lattice isometrically into $C(\Delta,L_1)$ to immediately deduce that every separable $p$-convex Banach lattice embeds lattice isometrically into $C(\Delta,L_p)$, i.e., $C(\Delta,L_p)$ is injectively universal for separable $p$-convex Banach lattices. Identifying an analogous injectively universal object in the upper $p$-estimate case is much more difficult, not only because the above proof technique breaks down, but also because $L_{p,\infty}(\mu)$ is non-separable, depends in a more complicated way on the choice of measure $\mu$, and can be equipped with more than one norm. However, in \Cref{sec: universal} we will give strong evidence that $C(\Delta, \ocwlp[0,\infty))$ might be the correct universal space.
\medskip

With the above difficulties in mind, the primary objective of this article will be to develop a host of new techniques to understand $(p,\infty)$-convex spaces and operators. In particular, we will develop a series of factorization results in \Cref{sec: abstract factorizations} and a complete theory of the relevant operator ideals in \Cref{sec: Reisner}.  In many cases, the correct route to obtain a $(p,\infty)$-convex analogue of a result on $p$-convex Banach lattices is to replace $L_p$ with the far more difficult to work with $\wlp$ (contrast Pisier and Maurey's factorization theorems) and then overcome the additional difficulties. Interestingly, however, in other cases it will be more appropriate to replace certain $\ell_p^n$-norms with $\ell^n_\infty$-norms (compare \eqref{intro p-convex} with \eqref{intro pinfty-convex}, or the general strategy in \Cref{sec: Reisner}) or to instead replace multiple $\ell_p^n$ norms with $\ell_{p,\infty}^n$ norms (compare the norm of $\fbp$ with $\fbl^{(\pinf)}$). In all cases, the proof that the desired $(p,\infty)$-convex result holds will be significantly more complicated than the $p$-convex one.

\subsection{Content of the paper}

We begin the paper by reviewing in \Cref{sec: review} some basic facts about $p$-convexity and upper $p$-estimates. In particular, we recall some of the properties of $\wlp$, as well as its role as a model space in the factorization of $(p,\infty)$-convex operators (\Cref{thm: factorization Pisier operator}) and the representation of Banach lattices with upper $p$-estimates (\Cref{prop: representation convexity infty sums}). We also provide a simple proof of the well-known fact that every Banach lattice with upper $p$-estimates is $q$-convex for every $1\leq q<p$ with a constant that explodes at a precise rate as $q$ approaches $p$, and give an example that shows that the converse is not true.\medskip

Next, in \Cref{sec: isomorphic upe and weak Lp} we delve into the connection between the class of Banach lattices with upper $p$-estimates (with constant 1) and that of sublattices of $\ell_\infty$-sums of $\wlp$-spaces. \Cref{prop: representation convexity infty sums} implies that these classes coincide isomorphically, with a universal constant $\gamma_p$ depending only on $p$. However, in \Cref{sec: fd example} we establish that they cannot coincide isometrically, as we provide an example of a (three-dimensional) Banach lattice that satisfies an upper $p$-estimate with constant 1, but does not embed isometrically into any $\ell_\infty$-sum of $\wlp$-spaces. Moreover, we show in \Cref{sec: renormings} that the ambiguity in the choice of the renorming of $\wlp$ (see \eqref{eq: renorming weak Lp}) is not the cause of this discrepancy. Implications for free Banach lattices are also discussed.\medskip

\Cref{sec: abstract factorizations} is devoted to extending a series of results on abstract factorizations of convex and concave operators through Banach lattices with certain convexity and concavity properties due to Reisner \cite{Reisner}, Meyer-Nieberg \cite{MN}, and Raynaud and the last author \cite{RT}. We take the ideas from \cite{MN,RT} to develop a general theory of convex and concave Banach lattices and operators, which is developed through Sections \ref{sec: geometry factorization} to \ref{sec: duality factorizations}. This allows us to extend the results from \cite{RT} on minimal and maximal factorizations for $p$-convex and $q$-concave operators, respectively, to the setting of $(\pinf)$-convex and $(q,1)$-concave operators (see \Cref{thm: optimal factorization upe lpe case}), refining \cite[Theorem 2.8.7]{MN}. Finally, in \Cref{sec: factorization of simultaneous conditions} we apply this abstract theory to the study of factorizations of operators between Banach lattices which are simultaneously $(\pp)$-convex and $(\qq)$-concave, for $p_2\in \{p,\infty\}$ and $q_2\in \{1,q\}$. The corresponding result when $p_2=p$ and $q_2=q$ was established in \cite{RT}. Therefore, here we deal with the three remaining cases. \medskip

In \Cref{sec: PO}, we revisit the construction of push-outs in subcategories of Banach lattices established in \cite{GLTT} with the aim of improving the isomorphism constant of Theorem 4.7 from the aforementioned reference. We show using a $p$-convexification and $p$-concavification argument that in the $p$-convex case this constant can be reduced to 1, that is, \cite[Theorem 4.4]{AT} holds also for push-outs in the subcategory of $p$-convex Banach lattices. We also apply the abstract factorization techniques from the previous section to improve the proof for the upper $p$-estimates case. \medskip

Next, in \Cref{sec: universal}, we continue the study of universal constructions in subcategories of Banach lattices. More specifically, we show that $C(\Delta, L_p[0,1])$ is universal for the class of separable $p$-convex Banach lattices, and provide some partial results that point towards $C(\Delta, \ocwlp[0,\infty))$ as the natural candidate to satisfy an analogous property in the setting of upper $p$-estimates.\medskip

Finally, we devote \Cref{sec: Reisner} to extending some results from Reisner \cite{Reisner} concerning the ideal of operators that factor through a Banach lattice with $p$-convex and $q$-concave factors to the other three relevant cases: We show that the ideal $M_\ppqq$ of operators that factor through a $(\pp)$-convex and a $(\qq)$-concave operator, with $p_2\in \{p,\infty\}$ and $q_2\in \{1,q\}$, is perfect, and can be identified with the ideal of operators that factor through a multiplication operator from $L_\pp$ to $L_\qq$.

\section{An overview of \texorpdfstring{$p$}{}-convexity and upper \texorpdfstring{$p$}{}-estimates}\label{sec: review}

Throughout the paper, Banach lattices will be denoted by $X,Y,Z$...~and Banach spaces by $E,F,G$...~The closed unit ball of a Banach space $E$ is denoted by $B_E$. Given $1\leq p\leq \infty$, we will write $p^*=\frac{p}{p-1}$. Expressions of the form $\intoo{\sum_{i=1}^n |x_i|^{p}}^\frac{1}{p}$ are defined via functional calculus (cf.~\cite[Section 1.d]{LT2}) and must be understood as the finite supremum $\bigvee_{i=1}^n|x_i|$ for $p=\infty$. We refer the reader to \cite{LT2,MN} for standard notation and terminology concerning Banach lattices. \medskip

As explained in the introduction, this paper is devoted to the study of upper $p$-estimates, and more specifically its differences and similarities with $p$-convexity. We begin by recalling the classical notions of convexity and concavity in full generality (see \cite[Chapter 16]{DJT}):

\begin{defn}\label{def: convexity and concavity}
    Let $X$ be a Banach lattice, $E$ a Banach space and $1\leq p,q\leq \infty$.
    \begin{enumerate}
        \item An operator $T:E\rightarrow X$ is called \emph{$(p,q)$-convex} if there exists a constant $K\geq 1$ such that
        \begin{equation}\label{eq: (p,q)-convex operator}
            \norm[3]{\intoo[3]{\sum_{i=1}^n |Tx_i|^{q}}^\frac{1}{q}}\leq K \intoo[3]{\sum_{i=1}^n \|x_i\|^{p}}^\frac{1}{p}
        \end{equation}
        for every $x_1,\ldots,x_n\in E$ and $n\in \N$. The best constant $K$ satisfying \eqref{eq: (p,q)-convex operator} is called the \emph{$(p,q)$-convexity constant of $T$} and is denoted $K^{(p,q)}(T)$.
        \item An operator $T:X\rightarrow E$ is called \emph{$(p,q)$-concave} if there exists a constant $K\geq 1$ such that
        \begin{equation}\label{eq: (p,q)-concave operator}
            \intoo[3]{\sum_{i=1}^n \|Tx_i\|^p}^\frac{1}{p} \leq K \norm[3]{\intoo[3]{\sum_{i=1}^n |x_i|^q}^\frac{1}{q}}
        \end{equation}
        for every $x_1,\ldots,x_n\in X$ and $n\in \N$. The best constant $K$ satisfying \eqref{eq: (p,q)-concave operator} is called the \emph{$(p,q)$-concavity constant of $T$} and is denoted $K_{(p,q)}(T)$.
        \item A Banach lattice $X$ is said to be \emph{$(p,q)$-convex} (respectively, \emph{$(p,q)$-concave}) if the identity operator $id_X:X\rightarrow X$ is $(p,q)$-convex (respectively, $(p,q)$-concave). We write $K^{(p,q)}(X)=K^{(p,q)}(id_X)$ and $K_{(p,q)}(X)=K_{(p,q)}(id_X)$ for the \emph{$(p,q)$-convexity and $(p,q)$-concavity constants of $X$}, respectively. 
    \end{enumerate}
    If $p=q$, we will simply say that $T$ or $X$ is \emph{$p$-convex} (respectively, \emph{$p$-concave}) and write $K^{(p)}(T)$ or $K^{(p)}(X)$ (respectively, $K_{(p)}(T)$ or $K_{(p)}(X)$) instead.
\end{defn}

If we replace arbitrary collections of vectors by disjoint collections, we arrive at the notions of upper and lower estimates:

\begin{defn}\label{def: upe}
    Let $X$ be a Banach lattice and $1\leq p\leq\infty$.
    \begin{enumerate}
        \item $X$ is said to satisfy an \emph{upper $p$-estimate} if there exists a constant $K\geq 1$ such that
        \begin{equation}\label{eq: upe}
            \norm[3]{\sum_{i=1}^n x_i}\leq K \intoo[3]{\sum_{i=1}^n \|x_i\|^p}^\frac{1}{p}
        \end{equation}
        for every (positive) pairwise disjoint $x_1,\ldots,x_n\in X$ and $n\in \N$. The best constant $K$ satisfying \eqref{eq: upe} is called the \emph{upper $p$-estimates constant of $X$} and is denoted $K^{(\uparrow p)}(X)$.
        \item $X$ is said to satisfy a \emph{lower $p$-estimate} if there exists a constant $K\geq 1$ such that
        \begin{equation}\label{eq: lpe}
            \intoo[3]{\sum_{i=1}^n \|x_i\|^p}^\frac{1}{p} \leq K \norm[3]{\sum_{i=1}^n x_i}
        \end{equation}
        for every (positive) pairwise disjoint $x_1,\ldots,x_n\in X$ and $n\in \N$. The best constant $K$ satisfying \eqref{eq: lpe} is called the \emph{lower $p$-estimates constant of $X$} and is denoted $K_{(\downarrow p)}(X)$.
    \end{enumerate}
\end{defn}

The first thing that should be mentioned is that convexity and concavity are dual notions: An operator $T$ is $(p,q)$-convex (respectively, $(p,q)$-concave) if and only if $T^*$ is $(p^*,q^*)$-concave (respectively, $(p^*,q^*)$-convex), with the same constant, and a similar relation holds for upper and lower $p$-estimates (see, for instance, \cite[Sections 1.d and 1.f]{LT2} or \cite[Chapter 16]{DJT}). Next, note that when $q<p$, the only $(p,q)$-convex operator is the null operator, and the same happens for $(p,q)$-concavity when $q>p$. Additionally, an operator $T$ is $(p,q)$-convex for some $q>p$ (respectively, $(p,q)$-concave for some $q<p$) if and only if it is $(p,\infty)$-convex (respectively, $(p,1)$-concave), with equivalent constants; and a Banach lattice is $(p,\infty)$-convex (respectively, $(p,1)$-concave) if and only if it satisfies an upper $p$-estimate (respectively, lower $p$-estimate), with the same constant. Finally, it is straightforward to check that for every $1\leq q \leq \infty$, every operator $T:E\rightarrow X$ is $(1,q)$-convex and every operator $T:X\rightarrow E$ is $(\infty,q)$-concave, both with constant 1. Therefore, the only meaningful cases, up to constants, are $p$-convexity and $p$-concavity for $1\leq p\leq \infty$, and $(p,\infty)$-convexity/upper $p$-estimates and $(p,1)$-concavity/lower $p$-estimates for $1<p<\infty$. Throughout the paper, either of the synonymous terms $(p,\infty)$-convexity and upper $p$-estimates will be employed when deemed more appropriate. \medskip

As expected, the notions of $p$-convexity and $(p,\infty)$-convexity are strongly related, and therefore so are $p$-concavity and $(p,1)$-concavity. First, it is clear that every $p$-convex operator is $(p,\infty)$-convex with the same constant. Moreover, it can be shown that every $p$-convex operator is $q$-convex for every $q<p$, and the $q$-convexity constant is non-decreasing \cite[Proposition 1.d.5]{LT2}. On the other hand, every $(p,\infty)$-convex operator is $q$-convex for every $q<p$, with constant $K^{(q)}(T)\leq C_{p,q}K^{(p,\infty)}(T)$, where $C_{p,q}$ is a constant that explodes like $(p-q)^{-\frac{1}{p}}$ when $q\rightarrow p^-$. Below, we elaborate on the proof of this fact and provide a new argument (see \Cref{cor: upe implies q convex}). Moreover, we show that the converse is not true via \Cref{ex: q convex but not upe}.\medskip

The prototypical examples of $p$-convex and $p$-concave Banach lattices are $L_p$-spaces. Actually, these spaces play a crucial role in many representation and factorization results for $p$-convex and $p$-concave Banach lattices and operators, as we will recall later. Some of these results have an analogue in the setting of $(p,\infty)$-convexity and $(p,1)$-concavity, with the Lorentz spaces $L_\pinf$ and $L_{p,1}$ playing the role of model spaces. Let us recall some facts about these spaces. \medskip

Let $(\Omega,\Sigma,\mu)$ be a measure space and $f$ a measurable function on $\Omega$. We define its decreasing rearrangement $f^*:[0,\infty)\rightarrow [0,\infty)$ by
\[f^*(t):=\inf\{\lambda>0 : \mu\{|f|>\lambda\}\leq t\}.\]
Given $1<p< \infty$, the space $L_{p,1}(\mu)$ is the set of all measurable functions $f:\Omega\rightarrow \R$ such that the norm
\[\vvvert f\vvvert_{L_{p,1}}:=\int_0^\infty t^\frac{1}{p}f^*(t)\frac{dt}{t}\]
is finite. Similarly, the space $L_{p,\infty}(\mu)$ is the set of all measurable functions $f$ such that the quasinorm
\[ \vvvert f\vvvert_{L_{p,\infty}}: = \sup_{t>0} t^\frac{1}{p} f^*(t)=\sup_{\lambda>0} \lambda\mu(\{|f|>\lambda\})^{\frac{1}{p}} \]
is finite. Note, in particular, that every $f\in  L_{p,\infty}(\mu)$ has $\sigma$-finite support. It is easy to check that the quasinorm of $L_{p,\infty}(\mu)$ is equivalent to the dual norm of $L_{p^*,1}(\mu)$ with equivalence constant $p^*$, in other words, $L_{p,\infty}(\mu)$ is the dual of $L_{p^*,1}(\mu)$ (up to a lattice isomorphic renorming). Actually, $L_\pinf(\mu)$ admits the following family of lattice renormings: For each $1\leq r<p$, let
\[\|f\|_{L_{p,\infty}^{[r]}}:=\sup_{0<\mu(A)<\infty} \mu(A)^{\frac{1}{p}-\frac{1}{r}} \left(\int_A |f|^r\,d\mu\right)^{\frac{1}{r}}. \]
All of these norms are equivalent to the quasinorm \cite[Exercise 1.1.12]{GrafakosCFA}:
\begin{equation}\label{eq: renorming weak Lp}
    \vvvert f\vvvert_{L_{p,\infty}} \leq \|f\|_{L_{p,\infty}^{[r]}} \leq \left(\frac{p}{p-r}\right)^{\frac{1}{r}}  \vvvert f\vvvert_{L_{p,\infty}} \quad \text{for every}\quad f\in L_{p,\infty}(\mu).
\end{equation}
Moreover, given $f\in L_{p,\infty}(\mu)$, the norms $\|f\|_{L_{p,\infty}^{[r]}}$ are increasing in $r\in [1,p)$. Observe that for each $1\leq r<p$, $L^{[r]}_{p,\infty}(\mu)=(L_{p,\infty}(\mu),\|\cdot\|_{L_{p,\infty}^{[r]}})$ is a Banach lattice that satisfies an upper $p$-estimate with constant 1. Hence, $L_{p^*,1}(\mu)$ satisfies a lower $p^*$-estimate.\medskip

Unless stated otherwise, in this paper, we will assume that $L_{p,\infty}(\mu)$ is endowed with the norm $\|\cdot\|_{L_{p,\infty}^{[1]}}$, and we will simply denote this norm by $\|\cdot\|_{L_{p,\infty}}$ or $\|\cdot\|_{p,\infty}$. Similarly, when referring to the predual space $L_{p^*,1}(\mu)$, we will write $L_{p^*,1}^{[r]}(\mu)$ when it is endowed with the dual norm induced by $\|\cdot\|_{L_{p,\infty}^{[r]}}$, and we will simply write $L_{p^*,1}(\mu)$ in the default case $r=1$. As usual, when the measure space is the natural numbers with the counting measure, we will write $\ell_{p,\infty}$ and $\ell_{p^*,1}$.\medskip

As mentioned earlier, the spaces $L_p$, $L_\pinf$ and $L_{p,1}$ play a special role in the theory of concavity and convexity, as the following results due to Maurey \cite{MaureyFactorization} and Pisier \cite{Pisier86} illustrate. It should be noted that both theorems can be stated for $0<r<p$, but in this work, we will only consider the range $1\leq r<p$. 

\begin{thm}[Maurey's Factorization Theorem]\label{thm: factorization Maurey operator}
    Let $1\leq r<p<\infty$, $E$ a Banach space, $T:E\rightarrow L_r(\mu)$ a linear operator and $C>0$. The following are equivalent:
    \begin{enumerate}
        \item $T$ is $p$-convex with $K^{(p)}(T)\leq C$.
        \item There exists a normalized $g\in L_1(\mu)_+$ such that $\{g=0\}\subseteq \{Tx=0\}$ for every $x\in E$, and $T=M R$, where $R:E\rightarrow L_p(g\cdot \mu)$ given by $Rx=g^{-\frac{1}{r}}Tx$ is bounded with $\|R\|\leq C$ and $M:L_p(g\cdot \mu)\rightarrow L_r(\mu)$ is the multiplication operator by $g^\frac{1}{r}$.
        \[\xymatrix{
		E  \ar@{->}[rd]_{R} \ar@{->}[rr]^{T} & & L_r(\mu)  \\
		 & L_p(g\cdot \mu) \ar@{->}[ru]_{M} & 
        }\]
    \end{enumerate}
\end{thm}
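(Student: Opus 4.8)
The plan is to prove Maurey's Factorization Theorem via the standard Ky Fan / Hahn–Banach separation argument, adapted to this convexity setting. The implication $(2)\Rightarrow(1)$ is the easy direction: if $T=MR$ with $M$ multiplication by $g^{1/r}$ and $\|R\|\le C$, then for disjointly supported... actually for arbitrary $x_1,\dots,x_n\in E$ one computes, using that $R$ maps into $L_p(g\cdot\mu)$ and that multiplication by a fixed function commutes with functional calculus, $\bignorm{(\sum_i|Tx_i|^p)^{1/p}}_{L_r(\mu)} = \bignorm{(\sum_i|Rx_i|^p)^{1/p}}_{L_p(g\cdot\mu)} \le (\sum_i\|Rx_i\|_{L_p(g\cdot\mu)}^p)^{1/p} \le C(\sum_i\|x_i\|^p)^{1/p}$, where the middle inequality is just the fact that $L_p(g\cdot\mu)$ is $p$-convex with constant one (Minkowski's inequality applied in the $\ell_p$ direction). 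So $K^{(p)}(T)\le C$.

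For the hard direction $(1)\Rightarrow(2)$, I would first reduce to the case $\mu$ finite (or more precisely replace $\mu$ by the measure that makes $1\in L_r$, exhausting the $\sigma$-finite support of the range if necessary) and then set up the convexity inequality in dual form. Fix a dense or exhausting family and consider, for each finite tuple $x_1,\dots,x_n\in E$ with $\sum_i\|x_i\|^p\le 1$, the function $\phi_{(x_i)} = \sum_{i=1}^n |Tx_i|^p \in L_{r/p}(\mu)$ (note $r/p<1$, so one works in a quasi-Banach lattice, which is why the argument is delicate). The $p$-convexity hypothesis says $\bignorm{\phi_{(x_i)}}_{L_{r/p}(\mu)}\le C^p$. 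Let $\cC$ be the convex hull in $L_{r/p}(\mu)$ of all such $\phi_{(x_i)}$ (closed under the obvious operations); one checks using the convexity inequality again that $\sup_{\phi\in\cC}\bignorm{\phi}_{L_{r/p}}\le C^p$ — this requires the standard trick that a convex combination $\sum_j\lambda_j\phi_{(x_i^{(j)})}$ is dominated pointwise by $\phi_{(y_i)}$ for the tuple $y_i^{(j)} = \lambda_j^{1/p}x_i^{(j)}$ reindexed, which again has $\sum\|y\|^p\le 1$. Then one applies a Ky Fan–type minimax theorem, or equivalently a Hahn–Banach separation between the convex set $\{h\in L_1(\mu): \int h\,d\mu > C^p \cdot (\text{something})\}$ and $\cC$ viewed appropriately, to extract a single normalized $g\in L_1(\mu)_+$ with $\int \phi\, g^{1-p/r}\,d\mu \le C^p$ — wait, one must be careful with exponents — such that $\int_\Omega |Tx|^p g^{1-p/r}\,d\mu \le C^p$ for all $\|x\|\le 1$, i.e. $\|g^{-1/r}Tx\|_{L_p(g\cdot\mu)}\le C$. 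The null-set condition $\{g=0\}\subseteq\{Tx=0\}$ is arranged by adding to $g$ a small multiple of a strictly positive $L_1$-density supported on the (σ-finite) range support, or it follows automatically from the extremality of $g$.

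More precisely, the cleanest route is: regard the map $x\mapsto |Tx|^p$ as landing in $L_{r/p}(\mu)$, and use the fact (an easy computation from the definition) that for $h\in L_1(\mu)_+$ with $\int h^{r/(r-p)}\,d\mu$... — this is getting unwieldy, so in the writeup I would instead follow the slick formulation: one wants $g\in L_1(\mu)_+$, $\int g\,d\mu=1$, with $\int |Tx|^p g^{1-p/r}\,d\mu\le C^p\|x\|^p$. Consider the set $B$ of functions of the form $C^p\sum_i\|x_i\|^p \cdot u - \sum_i|Tx_i|^p$ where $u$ ranges over positive norm-one elements of $L_{r/p}(\mu)$ — no. The genuinely clean statement is via Pietsch-type domination: define on the unit ball tuples the functionals and separate. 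I will present it using the Ky Fan lemma exactly as in the original Maurey argument: let $K$ be the (weak-$*$ compact, convex) set of normalized densities $g$, and for each tuple $\bar x=(x_1,\dots,x_n)$ with $\sum\|x_i\|^p\le1$ define the continuous concave (affine) function $f_{\bar x}(g) = C^p - \int \big(\sum_i|Tx_i|^p\big) g^{1-p/r}\,d\mu$ — but $g\mapsto g^{1-p/r}$ is convex since $1-p/r<0$, so $f_{\bar x}$ is concave in $g$; and for each fixed $g$, $\sup_{\bar x} \int(\sum|Tx_i|^p)g^{1-p/r}\,d\mu$ is attained/bounded precisely when... This concavity/convexity pattern is exactly what Ky Fan needs.

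The main obstacle I anticipate is exactly this book-keeping with the exponent $r/p<1$: the "norm" $\|\cdot\|_{L_{r/p}}$ is only a quasi-norm, so the convexity of the relevant set and the compactness/continuity needed for the minimax argument must be verified by hand rather than quoted, and one must identify the correct weight exponent ($g^{1-p/r}$ versus $g^{1/r}$ in the multiplication operator, which is a Hölder-duality computation: $M$ multiplies by $g^{1/r}$, so $g^{-1/r}Tx\in L_p(g\cdot\mu)$ has $p$-th power $|Tx|^p g^{-p/r}$ integrated against $g\,d\mu$, giving $|Tx|^p g^{1-p/r}\,d\mu$ — consistent). Everything else — the $(2)\Rightarrow(1)$ direction, the reduction to σ-finite support of the range, and the null-set patching — is routine. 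I would therefore organize the proof as: (i) $(2)\Rightarrow(1)$ in two lines; (ii) reduce to the case where the range support is all of $\Omega$ and $\mu$ is σ-finite, fixing an auxiliary strictly positive density; (iii) set up the Ky Fan minimax with the concave functions $f_{\bar x}$ above on the weak-$*$ compact convex set of normalized $L_1$-densities; (iv) extract $g$ and verify the factorization and the inclusion $\{g=0\}\subseteq\{Tx=0\}$.
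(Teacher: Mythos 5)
First, a point of calibration: the paper does not prove this theorem at all --- it is quoted as a classical result with a citation to \cite{MaureyFactorization}, so there is no in-paper argument to compare against. Your strategy is the standard one (Maurey's original convexity/separation argument), and the exponent bookkeeping you arrive at is correct: the domination one must extract is $\int |Tx|^p g^{1-p/r}\,d\mu \leq C^p\|x\|^p$, which is exactly $\|g^{-1/r}Tx\|_{L_p(g\cdot\mu)}\leq C\|x\|$; and the null-set condition $\{g=0\}\subseteq\{Tx=0\}$ does come for free from the finiteness of that integral (with the convention $0^{1-p/r}=+\infty$), so no patching of $g$ is needed --- which is fortunate, because the patching you propose would degrade the constant, since $t\mapsto t^{1-p/r}$ is decreasing and replacing $g$ by $(1-\ep)g+\ep h$ only gives the bound $(1-\ep)^{1-p/r}C^p>C^p$.

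The genuine gap is that the core of the hard direction is set up three different ways and never carried through. What must actually be proved, and is absent, is: (i) the single-tuple duality, namely that for each fixed $\phi=\sum_i|Tx_i|^p$ with $\sum_i\|x_i\|^p\leq 1$ there exists a normalized density $g$ with $\int\phi\, g^{1-p/r}d\mu\leq \|\phi\|_{L_{r/p}(\mu)}\leq C^p$ --- this is the reverse H\"older computation with exponents $p/r$ and $p/(p-r)$, and it is precisely the statement that each of your functions $f_{\bar x}$ has nonpositive infimum over $K$, without which Ky Fan's lemma gives nothing; (ii) the compactness of the set $K$ of normalized densities and the lower semicontinuity of $g\mapsto\int\phi\,g^{1-p/r}d\mu$ in a topology making $K$ compact (note $K=B_{L_1(\mu)_+}\cap\{\int g=1\}$ is not weakly compact in $L_1$; one must pass to $B_{(L_\infty)^*}$ or to a suitable weak-$*$ setting and then argue the limit object is still a density, or localize to finite sub-$\sigma$-algebras); and (iii) the concavity of the family $\{f_{\bar x}\}$ under convex combinations of tuples, which you correctly identify as the reindexing trick but do not verify. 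Separately, in $(2)\Rightarrow(1)$ the claimed equality $\|(\sum_i|Tx_i|^p)^{1/p}\|_{L_r(\mu)}=\|(\sum_i|Rx_i|^p)^{1/p}\|_{L_p(g\cdot\mu)}$ is false as stated; the correct chain is an equality with the $L_r(g\cdot\mu)$-norm followed by the contractive inclusion $L_p(g\cdot\mu)\subseteq L_r(g\cdot\mu)$, valid because $g\cdot\mu$ is a probability measure and $r<p$. None of these defects is fatal to the strategy, but as written the proposal is a plan for a proof rather than a proof.
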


\begin{thm}[Pisier's Factorization Theorem]\label{thm: factorization Pisier operator}
    Let $1\leq r<p<\infty$, $E$ a Banach space and $T:E\rightarrow L_r(\mu)$ a linear operator. The following are equivalent:
    \begin{enumerate}
        \item There exists $C>0$ such that $T$ is $(p,\infty)$-convex with $K^{(p,\infty)}(T)\leq C$.
        \item There exists $C'>0$ and a normalized $g\in L_1(\mu)_+$ such that for every $x\in E$ and measurable subset $B\subseteq \Omega$
        \[\|Tx\chi_B\|_{L_r(\mu)}\leq C'\|x\| \intoo[3]{\int_B g\, d\mu}^{\frac{1}{r}-\frac{1}{p}}.\]
        \item There exists $C''>0$ and a normalized $g\in L_1(\mu)_+$ such that $\{g=0\}\subseteq \{Tx=0\}$ for every $x\in E$, and $T=M R$, where $R:E\rightarrow L^{[r]}_{p,\infty}(g\cdot\mu)$ given by $Rx=g^{-\frac{1}{r}}Tx$ is bounded with $\|R\|\leq C''$ and $M:L^{[r]}_{p,\infty}(g\cdot\mu)\rightarrow L_r(\mu)$ is the multiplication operator by $g^\frac{1}{r}$.
        \[\xymatrix{
		E  \ar@{->}[rd]_{R} \ar@{->}[rr]^{T} & & L_r(\mu)  \\
		 & L^{[r]}_{p,\infty}(g\cdot\mu) \ar@{->}[ru]_{M} & 
        }\]
    \end{enumerate}
    Moreover, for the best constants, we have $C\leq C''\leq C'\leq (1-\frac{r}{p})^{\frac{1}{p}-\frac{1}{r}}C$.
\end{thm}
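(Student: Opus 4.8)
The plan is to prove $(2)\Rightarrow(1)$, $(2)\Leftrightarrow(3)$ and $(1)\Rightarrow(2)$ separately. The first two are short computations that will yield $C\le C''$ and $C''\le C'$; the substance is $(1)\Rightarrow(2)$, which is a quantitative form of the Nikishin--Maurey--Pisier change-of-density theorem.

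I would first dispose of the routine implications. For $(2)\Leftrightarrow(3)$: assuming $(2)$ with data $(C',g)$, taking $B=\{g=0\}$ forces $Tx\chi_{\{g=0\}}=0$, so $Rx:=g^{-1/r}Tx$ is well defined and $\int_A|Rx|^r\,d(g\mu)=\int_A|Tx|^r\,d\mu$ for every measurable $A$; hence $\|Rx\|_{L^{[r]}_{p,\infty}(g\mu)}=\sup_{0<(g\mu)(A)<\infty}(g\mu)(A)^{\frac1p-\frac1r}\big(\int_A|Tx|^r\,d\mu\big)^{1/r}\le C'\|x\|$ is just a restatement of $(2)$. Since $\int g\,d\mu=1$ we have $(g\mu)(\Om)=1$, so the multiplication operator $M$ by $g^{1/r}$ satisfies $\|Mh\|_{L_r(\mu)}=\|h\|_{L_r(g\mu)}\le\|h\|_{L^{[r]}_{p,\infty}(g\mu)}$, i.e.\ $\|M\|\le1$, and $MR=T$; this is $(3)$ with $C''=C'$, and reading the same computation backwards gives $(3)\Rightarrow(2)$ with $C'=C''$. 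For $(2)\Rightarrow(1)$: decompose $\Om$ into pairwise disjoint sets $A_i$ with $\bigvee_i|Tx_i|=\sum_i\chi_{A_i}|Tx_i|$, apply $(2)$ on each $A_i$, and use Hölder with exponents $\frac pr$ and $\frac p{p-r}$ together with $\sum_ig(A_i)=g\big(\bigcup_iA_i\big)\le1$ to get $\big\|\bigvee_i|Tx_i|\big\|_{L_r}^r\le(C')^r\sum_i\|x_i\|^r g(A_i)^{1-\frac rp}\le(C')^r\big(\sum_i\|x_i\|^p\big)^{r/p}$, so $K^{(p,\infty)}(T)\le C'$.

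The core is $(1)\Rightarrow(2)$, and the first step is to reformulate the conclusion. Feeding the elementary bound $a^{1-\frac rp}\le(1-\tfrac rp)ta+\tfrac rp\,t^{1-\frac pr}$ (valid for all $a\ge0$, $t>0$, sharp at the optimal $t$) with $a=g(B)$ into the target $\int_B|Tx|^r\,d\mu\le(C')^r\|x\|^rg(B)^{1-\frac rp}$, then taking the supremum over $B$ (which turns $\int_B(\,\cdot\,-\text{const}\cdot g)\,d\mu$ into $\int(\,\cdot\,-\text{const}\cdot g)_+\,d\mu$) and rescaling $x$ to absorb the parameter $t$, one checks that for the choice $C'=(1-\tfrac rp)^{\frac1p-\frac1r}C$ condition $(2)$ is \emph{equivalent} to the existence of a normalized $g\in L_1(\mu)_+$ with
\[\int_\Om\big(|Tx|^r-g\big)_+\,d\mu\ \le\ \tfrac rp\,C^p\,\|x\|^p\qquad\text{for all }x\in E;\]
the exponent $\tfrac1p-\tfrac1r$ is exactly what makes the powers of $1-\tfrac rp$ cancel and leaves this clean right-hand side (it also automatically forces $\{g=0\}\subseteq\{Tx=0\}$, by scaling $x\to0$). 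Alternatively one may produce $g$ with $\vvvert g^{-1/r}Tx\vvvert_{L_{p,\infty}(g\mu)}\le(1-\tfrac rp)^{1/p}C\|x\|$ and then apply the renorming estimate \eqref{eq: renorming weak Lp} to pass to the norm $\|\cdot\|_{L^{[r]}_{p,\infty}}$ at the cost of the factor $(1-\tfrac rp)^{-1/r}$. Either way, everything reduces to constructing such a density $g$.

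To build $g$ I would run a minimax argument. (First one notes that, for $r<p$, $(p,\infty)$-convexity of $T$ forces $\bigcup_x\supp(Tx)$ to be $\sigma$-finite — against a maximal disjoint family the maximal-function inequality $\|\bigvee_i|Tx_i|\|_{L_r}\le C(\sum_i\|x_i\|^p)^{1/p}$ would become $\big(\sum_i\|Tx_i\|_{L_r}^r\big)^{1/r}\le C(\sum_i\|x_i\|^p)^{1/p}$, impossible for infinitely many uniformly separated terms — so the requirement on $\{g=0\}$ is meaningful.) Fixing $x_1,\dots,x_n\in E$, the function $(g,\alpha)\mapsto\sum_i\alpha_i\big(\int(|Tx_i|^r-g)_+\,d\mu-\tfrac rp C^p\|x_i\|^p\big)$ is convex and $L_1$-continuous in $g$ on the set $\cP$ of probability densities and affine in $\alpha$ on the simplex, so by the Ky Fan/Sion minimax theorem it suffices to verify, for each fixed convex combination $\alpha$, the inner estimate $\inf_{g\in\cP}\sum_i\alpha_i\int(|Tx_i|^r-g)_+\,d\mu\le\tfrac rp C^p\sum_i\alpha_i\|x_i\|^p$, which is where $(p,\infty)$-convexity enters, through a density built from the $|Tx_i|$ and the maximal-function inequality applied to suitably rescaled vectors, together with $\int|Tx_i|^r\,d\mu\le C^r\|x_i\|^r$. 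One then passes from finitely many test vectors to all $x\in E$: the closed convex sets $\{g\in\cP:\int(|Tx|^r-g)_+\,d\mu\le\tfrac rp C^p\|x\|^p\}$ have the finite intersection property, and one takes the full intersection via a weak-$*$ compactness argument in the finitely additive measures (or by exhausting $\Om$ by finite-measure pieces), checking that the limit extracted is a genuine $\mu$-density. Chaining the implications gives $C\le C''\le C'\le(1-\tfrac rp)^{\frac1p-\frac1r}C$. The two real obstacles I expect are the finite-dimensional inner estimate for the minimax — and in particular extracting from it the \emph{sharp} constant $\tfrac rp C^p$ rather than one off by a universal multiple, which is the whole point of the reformulation above — and the measure-theoretic passage to all of $E$, since $\cP$ is not weakly compact in $L_1(\mu)$.
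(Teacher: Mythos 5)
The paper itself offers no proof of this statement: it is quoted (for $1\le r<p$) from Pisier \cite{Pisier86}, so there is no in-paper argument to measure you against, and your plan is essentially Pisier's original one. The parts you execute are correct, including the constants: $(2)\Leftrightarrow(3)$ with $C''=C'$ (using $(g\mu)(\Om)=1$ to get $\|M\|\le1$), $(2)\Rightarrow(1)$ via the disjoint decomposition and H\"older with exponents $\frac pr,\frac p{p-r}$ (giving $C\le C'$), and the reformulation of $(2)$ with $C'=(1-\frac rp)^{\frac1p-\frac1r}C$ as $\int(|Tx|^r-g)_+\,d\mu\le\frac rp C^p\|x\|^p$; I verified that optimizing $s\mapsto s^{-r}g(B)+\frac rp C^p s^{p-r}\|x\|^p$ reproduces exactly the factor $(1-\frac rp)^{\frac rp-1}$, so the sharp constant does come out of this pivot.

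The gap is that the substance of $(1)\Rightarrow(2)$ is announced rather than proved, and both announced steps are genuinely nontrivial. First, the inner minimax estimate: for fixed $x_1,\dots,x_n$ and weights $\al_i$ you must actually produce a density $g$ (of the form $\lambda^{-1}\bigvee_i|s_iTx_i|^r$ for suitably chosen scalars $s_i$ and normalization $\lambda$) satisfying $\sum_i\al_i\int(|Tx_i|^r-g)_+\,d\mu\le\frac rpC^p\sum_i\al_i\|x_i\|^p$ with exactly the constant $\frac rpC^p$; the interplay between the choice of the $s_i$ and the hypothesis $\|\bigvee_i|s_iTx_i|\|_{L_r}\le C(\sum_is_i^p\|x_i\|^p)^{1/p}$ is the entire quantitative content of the theorem and is not carried out. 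Second, the compactness step fails as described: the sets $\{g\in\cP:\int(|Tx|^r-g)_+\,d\mu\le\frac rpC^p\|x\|^p\}$ are weak-$*$ closed in $ba(\mu)$ (each is a sublevel set of a supremum of weak-$*$ continuous affine functionals $\nu\mapsto\int_B|Tx|^r\,d\mu-\nu(B)$), but if $\nu=\tilde g\,d\mu+\nu_s$ is the Yosida--Hewitt decomposition of a weak-$*$ cluster point, then since $\nu_s\ge0$ one only gets $\sup_B[\int_B|Tx|^r\,d\mu-\nu(B)]\le\int(|Tx|^r-\tilde g)_+\,d\mu$ --- the controlled quantity is the \emph{smaller} one, so discarding the purely finitely additive part a priori destroys the bound. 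This is repairable (restrict to the $\sigma$-finite set $\bigcup_x\supp Tx$, use that a purely finitely additive part is concentrated, up to $\ep$, on sets of arbitrarily small $\mu$-measure, and absolute continuity of $\int_A|Tx|^r\,d\mu$), but that argument is missing and must be supplied; as written, "checking that the limit extracted is a genuine $\mu$-density" hides the step that actually needs proof.
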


Note that in the $(\pinf)$-convex setting, we obtain a constant $(1-\frac{r}{p})^{\frac{1}{p}-\frac{1}{r}}$ that does not appear in the $p$-convex case. For $r=1$, this constant will play a significant role in subsequent sections, so we will denote it by 
\[
\gamma_p=\Big(1-\frac{1}{p}\Big)^{\frac{1}{p}-1}=(p^*)^{\frac{1}{p^*}}.
\]
We include for later use the corresponding dual statements for concave operators.

\begin{thm}\label{thm: dual factorization Maurey operator}
    Let $1< q<s< \infty$, $E$ a Banach space, $T:L_s(\mu)\rightarrow E$ a linear operator and $C>0$. The following are equivalent:
    \begin{enumerate}
        \item $T$ is $q$-concave with $K_{(q)}(T)\leq C$.
        \item There exists a normalized $g\in L_1(\mu)_+$ such that $T=S \widetilde{M}$, where $\widetilde{M}:L_s(\mu) \rightarrow L_q(g\cdot \mu)$ is the multiplication operator by $g^{-\frac{1}{s}}$ and $S:L_q(g\cdot \mu)\rightarrow E$ is bounded with $\|S\|\leq C$.
        \[\xymatrix{
		L_s(\mu)  \ar@{->}[rd]_{\widetilde{M}} \ar@{->}[rr]^{T} & & E \\
		 & L_q(g\cdot \mu) \ar@{->}[ru]_{S} & 
        }\]
    \end{enumerate}
\end{thm}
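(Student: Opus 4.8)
The plan is to deduce the statement from Maurey's Factorization Theorem (\Cref{thm: factorization Maurey operator}) by duality, taking advantage of the reflexivity of $L_s(\mu)$ (valid since $1<s<\infty$) and of the fact, recalled above, that an operator is $(p,q)$-concave if and only if its adjoint is $(p^*,q^*)$-convex with the same constant. The implication $(2)\Rightarrow(1)$ is a direct computation. Here $\widetilde M\colon L_s(\mu)\to L_q(g\cdot\mu)$ is the positive multiplication operator by $g^{-1/s}$, and since $\int g\,d\mu=1$ and $q<s$, H\"older's inequality gives $\|\widetilde Mx\|_{L_q(g\cdot\mu)}^q=\int|x|^qg^{1-q/s}\,d\mu\le\|x\|_{L_s(\mu)}^q$, so $\|\widetilde M\|\le1$. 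Being a multiplication operator, $\widetilde M$ commutes with the functional calculus, whence $\big(\sum_i|\widetilde Mx_i|^q\big)^{1/q}=\widetilde M\big(\big(\sum_i|x_i|^q\big)^{1/q}\big)$ for all $x_1,\dots,x_n\in L_s(\mu)$. Combining this with the identity $\big\|\big(\sum_i|f_i|^q\big)^{1/q}\big\|_{L_q(g\cdot\mu)}=\big(\sum_i\|f_i\|_{L_q(g\cdot\mu)}^q\big)^{1/q}$ (i.e., $L_q(g\cdot\mu)$ is $q$-concave with constant $1$) and with $\|S\|\le C$, $\|\widetilde M\|\le1$, one reads off $\big(\sum_i\|Tx_i\|^q\big)^{1/q}\le C\big\|\big(\sum_i|x_i|^q\big)^{1/q}\big\|_{L_s(\mu)}$, i.e., $K_{(q)}(T)\le C$.

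For $(1)\Rightarrow(2)$, assume $T$ is $q$-concave with $K_{(q)}(T)\le C$. By duality, $T^*\colon E^*\to L_s(\mu)^*=L_{s^*}(\mu)$ is $q^*$-convex with $K^{(q^*)}(T^*)\le C$, and $1<q<s<\infty$ yields $1<s^*<q^*<\infty$, so \Cref{thm: factorization Maurey operator} applies to $T^*$ with the parameter choices $r=s^*$ and $p=q^*$. It produces a normalized $g\in L_1(\mu)_+$ with $\{g=0\}\subseteq\{T^*y^*=0\}$ for every $y^*\in E^*$, together with a factorization $T^*=MR$, where $R\colon E^*\to L_{q^*}(g\cdot\mu)$, $Ry^*=g^{-1/s^*}T^*y^*$, $\|R\|\le C$, and $M\colon L_{q^*}(g\cdot\mu)\to L_{s^*}(\mu)$ is multiplication by $g^{1/s^*}$.

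Now pass to adjoints. Using the pairings $\la f,\psi\ra=\int f\psi\,g\,d\mu$ between $L_{q^*}(g\cdot\mu)$ and $L_q(g\cdot\mu)$ and $\la u,\phi\ra=\int u\phi\,d\mu$ between $L_{s^*}(\mu)$ and $L_s(\mu)$, the identity $\int(g^{1/s^*}f)\phi\,d\mu=\int f(M^*\phi)g\,d\mu$ forces $M^*\phi=g^{1/s^*-1}\phi=g^{-1/s}\phi$; that is, $M^*\colon L_s(\mu)\to L_q(g\cdot\mu)$ is exactly the operator $\widetilde M$ of the statement, while $R^*\colon L_q(g\cdot\mu)\to E^{**}$ has $\|R^*\|\le C$. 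Since $L_s(\mu)$ is reflexive, the canonical image of $T$ in $E^{**}$ equals $T^{**}=R^*M^*=R^*\widetilde M$. It remains to check that $R^*$ takes values in the canonical copy of $E$ rather than merely in $E^{**}$; then $S\colon L_q(g\cdot\mu)\to E$ defined as $R^*$ followed by the inverse of the canonical embedding satisfies $\|S\|\le C$ and $T=S\widetilde M$. This is where the hypotheses $q<s$ and $\int g\,d\mu=1$ re-enter: for every measurable $A$ with $\int_A g\,d\mu<\infty$ the function $g^{1/s}\chi_A$ lies in $L_s(\mu)$ and $\widetilde M(g^{1/s}\chi_A)=\chi_A$ in $L_q(g\cdot\mu)$, so $\rng(\widetilde M)$ contains all simple functions and is therefore dense in $L_q(g\cdot\mu)$. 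On this dense range $R^*$ agrees with $T^{**}$, which maps into the (closed) canonical copy of $E$; by continuity of $R^*$ this persists on all of $L_q(g\cdot\mu)$.

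The routine ingredients (the H\"older estimate, the $q$-concavity of $L_q(g\cdot\mu)$ with constant $1$, and the convex/concave duality) present no difficulty. I expect the main obstacle to be the bookkeeping in the last two steps: correctly identifying the adjoints of the Maurey factors as the multiplication operator $\widetilde M$ and its companion, and verifying that $R^*$ does not escape $E$, which rests on the density of $\rng(\widetilde M)$ and the reflexivity of $L_s(\mu)$.
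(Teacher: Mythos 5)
Your proof is correct. The paper offers no argument for this theorem at all — it simply records Theorems 2.5 and 2.6 as "the corresponding dual statements" of Maurey's and Pisier's factorization theorems — and your derivation (dualize to get a $q^*$-convex operator $E^*\to L_{s^*}(\mu)$, apply Theorem 2.3 with $r=s^*$, $p=q^*$, identify the adjoints of the factors, and use reflexivity of $L_s(\mu)$ together with the density of $\rng(\widetilde M)$ to land back in $E$) is exactly the duality argument the paper implicitly invokes.
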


\begin{thm}\label{thm: dual factorization Pisier operator}
    Let $1< q<s< \infty$, $E$ a Banach space and $T:L_s(\mu)\rightarrow E$ a linear operator. The following are equivalent:
    \begin{enumerate}
        \item There exists $C>0$ such that $T$ is $(q,1)$-concave with $K_{(q,1)}(T)\leq C$.
        \item There exists $C'>0$ and a normalized $g\in L_1(\mu)_+$ such that for every $f\in L_s(\mu)$ and measurable subset $B\subseteq \Omega$
        \[\|T(f\chi_B)\|\leq C'\|f\|_{L_s(\mu)} \intoo[3]{\int_B g\, d\mu}^{\frac{1}{q}-\frac{1}{s}}.\]
        \item There exists $C''>0$ and a normalized $g\in L_1(\mu)_+$ such that $T=S \widetilde{M}$, where $\widetilde{M}:L_s(\mu)\rightarrow L^{[s^*]}_{q,1}(g\cdot\mu)$ is the multiplication operator by $g^{-\frac{1}{s}}$ and $S:L^{[s^*]}_{q,1}(g\cdot\mu)\rightarrow E$ is bounded with $\|S\|\leq C''$.
        \[\xymatrix{
		L_s(\mu)  \ar@{->}[rd]_{\widetilde{M}} \ar@{->}[rr]^{T} & & E  \\
		 & L^{[s^*]}_{q,1}(g\cdot\mu) \ar@{->}[ru]_{S} & 
        }\]
    \end{enumerate}
    Moreover, for the best constants, we have $C\leq C''\leq C'\leq (1-\frac{s^*}{q^*})^{\frac{1}{q^*}-\frac{1}{s^*}}C$.
\end{thm}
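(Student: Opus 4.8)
The natural route is to obtain this statement by dualizing Pisier's Factorization Theorem (\Cref{thm: factorization Pisier operator}) applied to the adjoint operator $T^*$. First I would record the standard duality: since $L_s(\mu)^*=L_{s^*}(\mu)$ isometrically and $1^*=\infty$, the operator $T$ is $(q,1)$-concave with $K_{(q,1)}(T)\le C$ if and only if $T^*:E^*\to L_{s^*}(\mu)$ is $(q^*,\infty)$-convex with $K^{(q^*,\infty)}(T^*)\le C$. As $1<q<s<\infty$ forces $1<s^*<q^*<\infty$, \Cref{thm: factorization Pisier operator} applies to $T^*$ with the parameters $r=s^*$ and $p=q^*$, producing a normalized $g\in L_1(\mu)_+$, the two equivalent conditions of that theorem for $T^*$, and (observing $\frac1{s^*}-\frac1{q^*}=\frac1q-\frac1s$) precisely the constant range $C\le C''\le C'\le(1-\frac{s^*}{q^*})^{\frac1{q^*}-\frac1{s^*}}C$. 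It then remains to translate those two conditions into conditions (2) and (3) of the present theorem, keeping the same $g$ throughout.

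The translation of condition (2) is a routine duality computation. Condition (2) of \Cref{thm: factorization Pisier operator} for $T^*$ reads $\|(T^*\xi)\chi_B\|_{L_{s^*}(\mu)}\le C'\|\xi\|(\int_B g\,d\mu)^{1/q-1/s}$ for all $\xi\in E^*$ and measurable $B$; using $\|(T^*\xi)\chi_B\|_{L_{s^*}(\mu)}=\sup\{|\langle\xi,T(f\chi_B)\rangle|:\|f\|_{L_s(\mu)}\le1\}$ and taking the supremum over $\xi\in B_{E^*}$ converts this into $\|T(f\chi_B)\|\le C'\|f\|_{L_s(\mu)}(\int_B g\,d\mu)^{1/q-1/s}$, with the same constant in both directions. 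This yields the equivalence (1)$\Leftrightarrow$(2).

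For condition (3) I would first handle the bookkeeping at the level of multiplication operators. The map $\widetilde M:L_s(\mu)\to L^{[s^*]}_{q,1}(g\cdot\mu)$, $f\mapsto g^{-1/s}f$, is well defined with $\|\widetilde M\|\le1$: the defining formula of the $[s^*]$-norm, evaluated on $A=\Omega$, gives the contractive inclusion $L^{[s^*]}_{q^*,\infty}(g\cdot\mu)\hookrightarrow L_{s^*}(g\cdot\mu)$ (here $\int g\,d\mu=1$), and dualizing this bounds $\widetilde M$. Moreover, under the pairing $\langle u,h\rangle=\int uh\,g\,d\mu$ between $L^{[s^*]}_{q,1}(g\cdot\mu)$ and $L^{[s^*]}_{q^*,\infty}(g\cdot\mu)$, the adjoint of $\widetilde M$ is exactly the multiplication operator $M$ by $g^{1/s^*}$ appearing in \Cref{thm: factorization Pisier operator}. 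If (3) of the present theorem holds, then $T=S\widetilde M$ gives $T^*=\widetilde M^*S^*=MS^*$, and injectivity of $M$ forces $S^*$ to be the operator $\xi\mapsto g^{-1/s^*}T^*\xi$, recovering condition (3) of \Cref{thm: factorization Pisier operator} with $\|S^*\|=\|S\|$. Conversely, starting from the factorization $T^*=MR=\widetilde M^*R$ with $\|R\|\le C''$, I would set $S:=R^*|_{L^{[s^*]}_{q,1}(g\cdot\mu)}$; one checks $R^*(\widetilde M f)=Tf\in E$ for all $f$, and since $\rng\widetilde M=L_s(g\cdot\mu)$ contains every simple function and is therefore dense in $L^{[s^*]}_{q,1}(g\cdot\mu)$, the operator $S$ maps into $E$, is bounded by $\|R\|\le C''$, and satisfies $S\widetilde M=T$. (Note that $\widetilde M$, hence $T=S\widetilde M$, automatically vanishes on functions supported in $\{g=0\}$, so no separate support hypothesis is needed in (3).) As a sanity check, (3)$\Rightarrow$(1) can also be seen directly, since $L^{[s^*]}_{q,1}(g\cdot\mu)$ is $(q,1)$-concave with constant $1$ and $\widetilde M$ is a lattice homomorphism of norm $\le1$, so $K_{(q,1)}(S\widetilde M)\le\|S\|$, which also confirms $C\le C''$.

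The step I expect to be the main obstacle is making the condition-(3) translation fully rigorous: verifying that $\widetilde M$ genuinely maps into $L^{[s^*]}_{q,1}(g\cdot\mu)$ with norm one and has adjoint exactly $M$ requires care with the $[s^*]$-renorming and with the weighted pairing, and the passage from "$S$ defined on $\overline{\rng\widetilde M}$" to "$S$ defined on all of $L^{[s^*]}_{q,1}(g\cdot\mu)$" relies on the identification $\rng\widetilde M=L_s(g\cdot\mu)$ together with density of simple functions in $L_{q,1}$. Everything else, including the transfer of the best-constant chain, is immediate from \Cref{thm: factorization Pisier operator}.
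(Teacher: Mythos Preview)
Your approach is correct and is precisely the route the paper indicates: the theorem is stated in the paper simply as the ``corresponding dual statement'' of \Cref{thm: factorization Pisier operator}, with no detailed proof given. Your dualization via $T^*:E^*\to L_{s^*}(\mu)$ with parameters $r=s^*$, $p=q^*$, together with the verification that $\widetilde{M}^*=M$ under the weighted pairing and the density argument to land $S$ in $E$, fills in exactly the details the paper leaves implicit.
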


For concave operators defined on $L_\infty$, or more generally on $C(K)$-spaces, the analogues of the last two results are Pietsch's Factorization Theorem for $p$-summing operators (see, for instance, \cite[Theorem 9.2]{TJ}) and Pisier's generalization for $(p,1)$-summing operators \cite[Theorem 2.4]{Pisier86}, since every operator defined on a $C(K)$-space is $(p,q)$-concave if and only if it is $(p,q)$-summing. Maurey's and Pisier's Factorization Theorems can be used, for instance, to prove the following representation result \cite[Proposition 3.3]{GLTT}, which generalizes the well-known representation of any Banach lattice as a sublattice of an $\ell_\infty$-sum of $L_1$ spaces (see \cite[Lemma 3.4]{Lotz}). Throughout the paper, by the \textit{$\ell_\infty$-sum of spaces $(X_\gamma)_{\gamma\in \Gamma}$} we mean the space 
\[\intoo[3]{\bigoplus_{\gamma\in \Gamma}X_\gamma}_\infty:=\cbr[3]{x=(x_\gamma)_{\gamma\in \Gamma}: x_\gamma\in X_\gamma\, \forall \gamma\in \Gamma, \|x\|:=\sup_{\gamma\in \Gamma}\|x_\gamma\|<\infty}.\]

\begin{prop}\label{prop: representation convexity infty sums}
    Let $X$ be a Banach lattice and $1<p<\infty$:
    \begin{enumerate}
        \item If $X$ is $p$-convex, there are a family $\Gamma$ of probability measures and a lattice isomorphism
        \[ J : X\rightarrow \intoo[3]{\bigoplus_{\mu\in\Gamma}L_p(\mu)}_{\infty}\]
        such that $\|x\| \leq \|Jx\| \leq K^{(p)}(X)\|x\|$ for all $x\in X$.
        \item If $X$ is $(p,\infty)$-convex, there are a family $\Gamma$ of probability measures and a lattice isomorphism
        \[ J : X\rightarrow \intoo[3]{\bigoplus_{\mu\in \Gamma}L_{p,\infty}(\mu)}_{\infty}\]
        such that $\|x\| \leq \|Jx\| \leq \gamma_p K^{(\pinf)}(X)\|x\|$ for all $x\in X$, where $\gamma_p=(p^*)^\frac{1}{p^*}$ is the constant from \Cref{thm: factorization Pisier operator} associated to $r=1$.
    \end{enumerate}
\end{prop}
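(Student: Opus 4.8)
The plan is to reduce to the scalar-valued case and then apply Maurey's and Pisier's factorization theorems coordinatewise. I would start from the classical representation underlying \cite[Lemma 3.4]{Lotz}: for each positive $\varphi\in B_{X^*}$ the functional $x\mapsto\varphi(|x|)$ is an $L$-seminorm on $X$, so the completion of $X$ modulo its kernel is an abstract $L$-space, hence by Kakutani's theorem of the form $L_1(\mu_\varphi)$, and the induced quotient map $Q_\varphi\colon X\to L_1(\mu_\varphi)$ is a lattice homomorphism with $\|Q_\varphi y\|_{L_1(\mu_\varphi)}=\varphi(|y|)\le\|y\|$. Since $\|x\|=\| |x| \|=\sup\{\varphi(|x|):\varphi\in B_{X^*},\ \varphi\ge0\}$, the map $x\mapsto(Q_\varphi x)_\varphi$ already embeds $X$ lattice isometrically into $\bigl(\bigoplus_\varphi L_1(\mu_\varphi)\bigr)_\infty$.

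Because $Q_\varphi$ is a lattice homomorphism it commutes with the functional calculus, so for all $x_1,\dots,x_n\in X$ one has $\|(\sum_i|Q_\varphi x_i|^p)^{1/p}\|_{L_1(\mu_\varphi)}=\varphi\bigl((\sum_i|x_i|^p)^{1/p}\bigr)\le\|(\sum_i|x_i|^p)^{1/p}\|$, and likewise with $\bigvee_i|\cdot|$ in place of $(\sum_i|\cdot|^p)^{1/p}$; hence $K^{(p)}(Q_\varphi)\le K^{(p)}(X)$ in case (1) and $K^{(p,\infty)}(Q_\varphi)\le K^{(p,\infty)}(X)$ in case (2). For (1) I would apply Maurey's Factorization Theorem (\Cref{thm: factorization Maurey operator}) with $r=1$ to each $Q_\varphi$: there is a normalized $g_\varphi\in L_1(\mu_\varphi)_+$ with $Q_\varphi=M_\varphi R_\varphi$, where $R_\varphi x=g_\varphi^{-1}Q_\varphi x$ maps $X$ into $L_p(\nu_\varphi)$ with $\|R_\varphi\|\le K^{(p)}(X)$, $\nu_\varphi:=g_\varphi\cdot\mu_\varphi$ is a probability measure, and $M_\varphi$ is multiplication by $g_\varphi$. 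For (2) I would instead apply Pisier's Factorization Theorem (\Cref{thm: factorization Pisier operator}) with $r=1$, which yields the analogous factorization with $R_\varphi\colon X\to L^{[1]}_{p,\infty}(\nu_\varphi)$ and, by the sharp constant estimate of that theorem specialized to $r=1$ (where $(1-\tfrac1p)^{\frac1p-1}=\gamma_p$), $\|R_\varphi\|\le\gamma_p K^{(p,\infty)}(Q_\varphi)\le\gamma_p K^{(p,\infty)}(X)$.

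In either case $R_\varphi$ is a lattice homomorphism, being $Q_\varphi$ postcomposed with multiplication by the positive function $g_\varphi^{-1}$; so $Jx:=(R_\varphi x)_\varphi$ is a lattice homomorphism of $X$ into the $\ell_\infty$-sum of the spaces $L_p(\nu_\varphi)$ (resp.\ $L^{[1]}_{p,\infty}(\nu_\varphi)$), and the bounds on $\|R_\varphi\|$ give $\|Jx\|\le K^{(p)}(X)\|x\|$ (resp.\ $\le\gamma_p K^{(p,\infty)}(X)\|x\|$). For the lower bound $\|Jx\|\ge\|x\|$ I would observe that $M_\varphi$ is a contraction from $L_p(\nu_\varphi)$, resp.\ from $L^{[1]}_{p,\infty}(\nu_\varphi)$, into $L_1(\mu_\varphi)$: indeed $\|M_\varphi h\|_{L_1(\mu_\varphi)}=\|h\|_{L_1(\nu_\varphi)}$, while for a probability measure $\|h\|_{L_1(\nu_\varphi)}\le\|h\|_{L_p(\nu_\varphi)}$ by Hölder and $\|h\|_{L_1(\nu_\varphi)}\le\|h\|_{L^{[1]}_{p,\infty}(\nu_\varphi)}$ by testing the defining supremum at $A=\Omega$. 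Hence $\|R_\varphi x\|\ge\|M_\varphi R_\varphi x\|_{L_1(\mu_\varphi)}=\|Q_\varphi x\|_{L_1(\mu_\varphi)}=\varphi(|x|)$, and taking $\sup_\varphi$ gives $\|Jx\|\ge\|x\|$.

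The content beyond bookkeeping is twofold: one must observe that the factorizing operators $R_\varphi$ produced by Maurey/Pisier are lattice homomorphisms — so that $J$ is a genuine lattice embedding and not merely a linear isomorphic embedding — and one must track the constant $\gamma_p$, which enters only through the sharp factor $(1-\tfrac rp)^{\frac1p-\frac1r}$ of Pisier's theorem at $r=1$ and is absent from Maurey's theorem. The one step requiring a little care is the lower estimate, where the decisive point is that multiplication by $g_\varphi$ is a contraction out of the weak-$L_p$ space of a probability measure into $L_1$; everything else is a routine reassembly.
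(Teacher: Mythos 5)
Your argument is correct and is exactly the route the paper intends: it cites \cite[Proposition 3.3]{GLTT} and describes the proof as combining the Lotz-type representation of $X$ into an $\ell_\infty$-sum of $L_1(\mu_\varphi)$-spaces with a coordinatewise application of Maurey's, respectively Pisier's, factorization theorem at $r=1$, which is precisely your construction. Your handling of the two points of substance — that the factors $R_\varphi=g_\varphi^{-1}Q_\varphi$ are lattice homomorphisms (using $\{g_\varphi=0\}\subseteq\{Q_\varphi x=0\}$), and that the lower bound $\|R_\varphi x\|\geq\varphi(|x|)$ follows from the contractivity of $M_\varphi$ out of $L_p(\nu_\varphi)$, resp.\ $L^{[1]}_{p,\infty}(\nu_\varphi)$, for the probability measure $\nu_\varphi$ — is accurate, as is the identification of the constant $\gamma_p=(1-\tfrac1p)^{\frac1p-1}$.
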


Clearly, every closed sublattice of a $p$-convex or $(\pinf)$-convex Banach lattice inherits the corresponding property. Therefore, the above conditions characterize $p$-convexity and $(\pinf)$-convexity. Similarly, one can characterize the $p$-concave and $(p,1)$-concave Banach lattices as quotients of $\ell_1$-sums of $L_p$ and $L_{p,1}$-spaces, respectively. As a corollary of \Cref{prop: representation convexity infty sums}, we obtain an alternative proof of the fact that every $(p,\infty)$-convex Banach lattice (and, by \cite[Theorem 2.8.7]{MN} every $(\pinf)$-convex operator) is $q$-convex for every $q<p$, with a constant that explodes like $(p-q)^{-\frac{1}{p}}$ when $q\rightarrow p^-$. To our knowledge, there existed two proofs of this fact: The proof presented in \cite[Corollary 16.7]{DJT} exploits a connection between $(r,s)$-concave operators and $(r,s)$-summing operators on $C(K)$-spaces (see \cite[Theorem 16.5]{DJT}) to apply Pisier's generalization of Pietsch's factorization theorem for $(r,1)$-summing operators (\cite[Theorem 2.4]{Pisier86}, see also \cite[Theorem 21.3]{TJ}). On the other hand, the proof presented in \cite[Theorem 1.f.7]{LT2} uses a probabilistic argument. The argument we provide here also relies on a result of Pisier (namely \Cref{thm: factorization Pisier operator}, which corresponds to \cite[Theorem 1.1]{Pisier86} instead of \cite[Theorem 2.4]{Pisier86}), but has some advantages over the other proofs. First of all, it is more direct than the one presented in \cite{DJT}, as it does not require one to go through $(p,q)$-summing operators. On the other hand, this proof is conceptually simpler than that of \cite{LT2}, and does not require probabilistic techniques.

\begin{cor}\label{cor: upe implies q convex}
    Let $1<p<\infty$ and assume that $X$ is a Banach lattice satisfying an upper $p$-estimate. Then it is $q$-convex for every $1\leq q<p$ with constant $K^{(q)}(X)\leq \left(\frac{p}{p-q}\right)^{\frac{1}{q}} \gamma_p K^{(\uparrow p)}(X)$.
\end{cor}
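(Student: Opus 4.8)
The plan is to deduce this from the representation result \Cref{prop: representation convexity infty sums}(2) together with the fact that each model space $L^{[1]}_{p,\infty}(\mu)$ is $q$-convex with a controlled constant. First I would invoke \Cref{prop: representation convexity infty sums}(2) to obtain a family $\Gamma$ of probability measures and a lattice isomorphism $J: X \to (\bigoplus_{\mu \in \Gamma} L_{p,\infty}(\mu))_\infty$ satisfying $\|x\| \le \|Jx\| \le \gamma_p K^{(\uparrow p)}(X)\|x\|$. Since $q$-convexity passes to closed sublattices and behaves well under $\ell_\infty$-sums (the $q$-convexity constant of an $\ell_\infty$-sum is the supremum of the constants of the summands), it suffices to bound $K^{(q)}$ of each $L^{[1]}_{p,\infty}(\mu)$, and then the $q$-convexity constant of $X$ is at most $\gamma_p K^{(\uparrow p)}(X)$ times that bound.

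The heart of the matter is therefore the estimate $K^{(q)}(L^{[1]}_{p,\infty}(\mu)) \le (p/(p-q))^{1/q}$. Here I would use the renorming inequality \eqref{eq: renorming weak Lp}: for the choice $r=q$ we have $\vvvert f \vvvert_{L_{p,\infty}} \le \|f\|_{L^{[q]}_{p,\infty}} \le (p/(p-q))^{1/q} \vvvert f \vvvert_{L_{p,\infty}}$, and the same inequalities relate $\|\cdot\|_{L^{[1]}_{p,\infty}}$ to $\vvvert\cdot\vvvert_{L_{p,\infty}}$ with $r=1$, whence $\|\cdot\|_{L^{[1]}_{p,\infty}}$ and $\|\cdot\|_{L^{[q]}_{p,\infty}}$ are equivalent with ratio at most $(p/(p-q))^{1/q}$ (using also that these norms are increasing in $r$, so $\|\cdot\|_{L^{[1]}_{p,\infty}} \le \|\cdot\|_{L^{[q]}_{p,\infty}}$). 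So it remains to check that $(L_{p,\infty}(\mu), \|\cdot\|_{L^{[q]}_{p,\infty}})$ is $q$-convex with constant $1$. For this, given $f_1, \dots, f_n$ and writing $f = (\sum_i |f_i|^q)^{1/q}$, for any $A$ with $0<\mu(A)<\infty$ one computes $\int_A |f|^q \, d\mu = \sum_i \int_A |f_i|^q \, d\mu \le \sum_i \mu(A)^{q/r' } \cdot$ (a factor handled by the definition of $\|f_i\|_{L^{[q]}_{p,\infty}}$); more precisely $\mu(A)^{q/p - 1}\int_A |f_i|^q\,d\mu \le \|f_i\|_{L^{[q]}_{p,\infty}}^q$, so summing and taking $q$-th root and then supremum over $A$ gives $\|f\|_{L^{[q]}_{p,\infty}} \le (\sum_i \|f_i\|_{L^{[q]}_{p,\infty}}^q)^{1/q}$, which is exactly $q$-convexity with constant $1$. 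Combining the three factors — the renorming ratio $(p/(p-q))^{1/q}$, the constant-$1$ $q$-convexity in the $[q]$-norm, and the representation constant $\gamma_p K^{(\uparrow p)}(X)$ — yields the claimed bound.

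The main obstacle, and the step requiring the most care, is the passage through the two renormings: one must correctly track that the default norm $\|\cdot\|_{L^{[1]}_{p,\infty}}$ on the model space differs from the auxiliary norm $\|\cdot\|_{L^{[q]}_{p,\infty}}$ in which $q$-convexity is transparent, and that the distortion between them is controlled purely by $(p/(p-q))^{1/q}$ and not, say, by an additional factor depending on how $[1]$ compares to $[q]$. The monotonicity of $r \mapsto \|\cdot\|_{L^{[r]}_{p,\infty}}$ on $[1,p)$ is what makes this clean: it gives $\|\cdot\|_{L^{[1]}_{p,\infty}} \le \|\cdot\|_{L^{[q]}_{p,\infty}} \le (p/(p-q))^{1/q}\vvvert\cdot\vvvert_{L_{p,\infty}} \le (p/(p-q))^{1/q}\|\cdot\|_{L^{[1]}_{p,\infty}}$, so the identity map between the two normed spaces has norm at most $1$ in one direction and at most $(p/(p-q))^{1/q}$ in the other, and the $q$-convexity constant gets multiplied by at most this factor. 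One should also record at the outset the elementary facts that a closed sublattice of a $q$-convex lattice is $q$-convex with no worse constant and that $K^{(q)}((\bigoplus_\gamma X_\gamma)_\infty) = \sup_\gamma K^{(q)}(X_\gamma)$; these are routine but must be stated. Everything else is bookkeeping of constants.
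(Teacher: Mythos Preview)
Your proposal is correct and follows essentially the same approach as the paper's proof: invoke \Cref{prop: representation convexity infty sums}(2), pass from the $[1]$-norm to the $[q]$-norm on each $L_{p,\infty}(\mu)$ via \eqref{eq: renorming weak Lp} and monotonicity in $r$ (incurring the factor $(p/(p-q))^{1/q}$), verify directly that $L_{p,\infty}^{[q]}(\mu)$ is $q$-convex with constant $1$, and conclude by stability of $q$-convexity under $\ell_\infty$-sums, sublattices, and isomorphisms. The paper's write-up is slightly terser but the argument is identical.
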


\begin{proof}
    First of all, by \Cref{prop: representation convexity infty sums} we know that $X$ lattice embeds with distortion $\gamma_p K^{(\uparrow p)}(X)$ into a Banach lattice of the form $\intoo[1]{\bigoplus_{\mu\in\Gamma}\wlp(\mu)}_{\infty}$, with $\mu$ a probability measure for every $\mu \in \Gamma$. Now, let us fix $1\leq q<p$, and observe that by \eqref{eq: renorming weak Lp} each of the factors $\wlp(\mu)$ is lattice isomorphic to $\wlp^{[q]}(\mu)$, with distortion $\left(\frac{p}{p-q}\right)^{\frac{1}{q}}$. Now, it suffices to show that $\wlp^{[q]}(\mu)$ is $q$-convex with constant~1: Given $f_1,\ldots,f_n \in \wlp^{[q]}(\mu)$ and a measurable set $A$ with $\mu(A)>0$, it follows that
    \[\mu(A)^{\frac{1}{p}-\frac{1}{q}} \intoo[3]{\int_A \sum_{i=1}^n|f_i|^q\, d\mu}^\frac{1}{q}=  \intoo[3]{ \sum_{i=1}^n \intoo[3]{\mu(A)^{\frac{1}{p}-\frac{1}{q}}\intoo[3]{\int_A|f_i|^q\, d\mu}^\frac{1}{q}}^q}^\frac{1}{q} \leq  \intoo[3]{ \sum_{i=1}^n \|f\|_{\wlp^{[q]}(\mu)}^q}^\frac{1}{q}.\]
    Taking the supremum over all the sets $A$, we obtain
    \[ \norm[3]{\intoo[3]{ \sum_{i=1}^n |f_i|^q}^\frac{1}{q}}_{\wlp^{[q]}(\mu)} \leq  \intoo[3]{ \sum_{i=1}^n \|f\|_{\wlp^{[q]}(\mu)}^q}^\frac{1}{q}.\]
    Since $q$-convexity is preserved under taking $\ell_\infty$-sums, sublattices and isomorphisms, we conclude that $X$ is $q$-convex with constant $K^{(q)}(X)\leq \left(\frac{p}{p-q}\right)^{\frac{1}{q}} \gamma_p K^{(\uparrow p)}(X)$.
\end{proof}

\begin{remark}\label{rem: interpretation renorming}
    Observe that the previous argument shows that for every $1\leq q<p$, $\wlp$ endowed with the norm $\|\cdot\|_{\wlp^{[q]}}$ is $q$-convex with constant 1, while it simultaneously satisfies an upper $p$-estimate with constant 1. In general, by the above results, obtaining information on the structure of $\left(\bigoplus_{\mu\in \Gamma}L_{p,\infty}(\mu)\right)_{\infty}$ will have implications for arbitrary Banach lattices with upper $p$-estimates. In particular, a direct proof of the fact that $L_\pinf$ is $(p,q)$-convex for every $q>p$ suffices to show that every $(p,\infty)$-convex operator is $(p,q)$-convex following the same scheme of proof as in \Cref{cor: upe implies q convex}. We note that this strategy is part of a more general line of reasoning; for example, understanding the ideals of universal spaces was used in \cite{AT} to deduce information about separably injective Banach lattices.
\end{remark}

The following example illustrates that the reverse implication in \Cref{cor: upe implies q convex} is not true:

\begin{ex}\label{ex: q convex but not upe}
    Let $1<p<\infty$. The space $X=\ell_\pinf(\ell_p)$, formed by the sequences of vectors $\overline{x}=(x_k)_{k\in\N}$ such that $x_k\in \ell_p$ for every ${k\in\N}$ and $\|\overline{x}\|_X:=\|(\|x_k\|_{\ell_p})_k\|_{\ell_\pinf}$ is finite, does not satisfy an upper $p$-estimate, but it is $q$-convex with constant $\left(\frac{p}{p-q}\right)^{\frac{1}{q}}$ for every $1\leq q<p$.
\end{ex}

\begin{proof}
    We start by showing that $X$ is $q$-convex for any $1\leq q<p$. Recall that both $\ell_p$ and $\ell_\pinf$ are $q$-convex with constants 1 and $\left(\frac{p}{p-q}\right)^{\frac{1}{q}}$, respectively. Given $\overline{x}^{(i)}=(x_k^{(i)})_{k\in\N}\in X$, $i=1,\ldots,n$, we have
    \begin{align*}
        \norm[3]{\intoo[3]{\sum_{i=1}^n |\overline{x}^{(i)}|^q}^\frac{1}{q}}_X & = \norm[4]{\intoo[4]{\intoo[3]{\sum_{i=1}^n |x_k^{(i)}|^q}^\frac{1}{q}}_k}_X = \norm[4]{\intoo[4]{\norm[3]{\intoo[3]{\sum_{i=1}^n |x_k^{(i)}|^q}^\frac{1}{q}}_{\ell_p}}_k}_{\ell_\pinf}\\
        & \leq \norm[4]{\intoo[4]{\intoo[3]{\sum_{i=1}^n \|x_k^{(i)}\|_{\ell_p}^q}^\frac{1}{q}}_k}_{\ell_\pinf} =  \norm[4]{\intoo[3]{\sum_{i=1}^n \abs[2]{\intoo[2]{\|x_k^{(i)}\|_{\ell_p}}_k}^q}^\frac{1}{q}}_{\ell_\pinf} \\
        & \leq \left(\frac{p}{p-q}\right)^{\frac{1}{q}}  \intoo[3]{\sum_{i=1}^n \norm[2]{\intoo[2]{\|x_k^{(i)}\|_{\ell_p}}_k}_{\ell_\pinf} ^q}^\frac{1}{q} = \left(\frac{p}{p-q}\right)^{\frac{1}{q}}  \intoo[3]{\sum_{i=1}^n \|\overline{x}^{(i)}\|_X^q}^\frac{1}{q},
    \end{align*}
    where the first and second inequalities follow from the $q$-convexity of $\ell_p$ and $\ell_\pinf$, respectively.\medskip

    To show that $X$ does not satisfy an upper $p$-estimate (i.e., it is not $(\pinf)$-convex), let us assume that the coordinates in $\ell_p$ and $\ell_\pinf$ are indexed starting at $0$. Let us fix $n\geq 1$ and denote by $(e_j)_{j=0}^\infty$ the canonical basis of $\ell_p$. For $k=0,\ldots,n-1$, we write $a^{(k)}=\sum_{j=0}^{n-1} \alpha_{(k+j)_n} e_j$ for the cyclic permutations of the coefficients $\alpha_k=(k+1)^\frac{1}{p^*}-k^\frac{1}{p^*}$ over the set $\{0,\ldots,n-1\}$. Here, by $i_n$ we mean $i \mod n$. We consider for $i=0,\ldots,n-1$ the (finite) sequences $\overline{x}^{(i)}=(x_k^{(i)})_{k=0}^\infty\in X$ given by $x_k^{(i)}=\alpha_{(k+i)_n} e_i$ for $k=0,\ldots,n-1$ and $x_k^{(i)}=0$ for $k\geq n$. It is clear that for every $i$, 
    \[\|\overline{x}^{(i)}\|_X=\|(\alpha_{(k+i)_n})_{k=0}^{n-1}\|_{\ell_\pinf}=\|(\alpha_{k})_{k=0}^{n-1}\|_{\ell_\pinf}=1,\quad \text{so  }\intoo[3]{\sum_{i=0}^{n-1} \|\overline{x}^{(i)}\|_X^p}^\frac{1}{p}=n^\frac{1}{p}.\]
    On the other hand, for every $k=0,\ldots, n-1$,
    \[\bigvee_{i=0}^{n-1}|x_k^{(i)}|=\sum_{i=0}^{n-1} \alpha_{(k+i)_n} e_i=a^{(k)},\]
    so
    \[\norm[3]{\bigvee_{i=0}^{n-1} |\overline{x}^{(i)}|}_X = \norm[4]{\intoo[4]{\norm[3]{\bigvee_{i=0}^{n-1} |x_k^{(i)}|}_{\ell_p}}_{k=0}^{n-1}}_{\ell_\pinf}= \norm[2]{\intoo[2]{\|a^{(k)}\|_{\ell_p}}_{k=0}^{n-1}}_{\ell_\pinf}= A_n n^\frac{1}{p},\]
    where $A_n=\intoo{\sum_{j=0}^{n-1} \alpha_j^p}^\frac{1}{p}$ is the norm of $a^{(k)}$ in ${\ell_p}$ for every $k$, and $n^\frac{1}{p}$ is the norm in $\ell_\pinf$ of the constant 1 vector on the set $\{0,\ldots,n-1\}$. It is not difficult to check that $A_n$ diverges as $n \rightarrow \infty$. More specifically, $\alpha_k \approx (k+1)^{-\frac{1}{p}}$ uniformly in $k$, so $A_n\approx \intoo{\sum_{k=1}^n k^{-1}}^\frac{1}{p}$, which diverges. To conclude the argument, observe that the quotients $\norm{\bigvee_{i=0}^{n-1} |\overline{x}^{(i)}|}_X /\intoo{\sum_{i=0}^{n-1} \|\overline{x}^{(i)}\|_X^p}^\frac{1}{p}=A_n$ are not uniformly bounded, so $X$ cannot be $(\pinf)$-convex.
\end{proof}

\begin{remark}\label{rem: counterexample with finite factors}
    Observe that for the previous argument to hold, it suffices to consider $\ell_\pinf(\ell_p^n)_{n\in \N}$, an $\ell_\pinf$-sum of $\ell_p^n$ with $n\in \N$. However, it is essential that the dimensions of the inner terms in the sum increase.
\end{remark}

\section{Banach lattices with upper \texorpdfstring{$p$}{}-estimates vs.~sublattices of \texorpdfstring{$\ell_\infty$-sums of $L_\pinf$}{}}\label{sec: isomorphic upe and weak Lp}

\Cref{prop: representation convexity infty sums} points out a fundamental difference between $p$-convexity and upper $p$-estimates: While the class of $p$-convex Banach lattices (with constant 1) and the class of closed sublattices of $\ell_\infty$-sums of $L_p$-spaces can be identified isometrically, the class of Banach lattices with upper $p$-estimates (with constant 1) and the class of closed sublattices of $\ell_\infty$-sums of $\wlp$-spaces do not coincide isometrically, but only isomorphically. Indeed, in this section, we provide an example of a Banach lattice that satisfies an upper $p$-estimate with constant 1 but does not embed isometrically into any $\ell_\infty$-sum of $\wlp$-spaces. Moreover, we show that $\wlp$ endowed with the different renormings from \eqref{eq: renorming weak Lp} can always be isometrically embedded into an $\ell_\infty$-sum of $L_\pinf^{[1]}$, concluding that the choice of norm for $L_\pinf$ is not the cause of the obstruction.

\subsection{A finite dimensional counterexample}\label{sec: fd example}

A natural question that arises from \Cref{thm: factorization Pisier operator} and \Cref{prop: representation convexity infty sums} is whether the constant $\gamma_p$ can be improved to $1$. In this section, we will show that the answer is generally negative.
\medskip

Let $X, Y$ be Banach lattices and let $1\leq C<\infty$.  We say that $X$ $C$-lattice embeds into $Y$ if there exists a lattice isomorphic injection $T:X\to Y$ such that $C^{-1}\|x\| \leq \|Tx\| \leq \|x\|$ for all $x\in X$.
In this case, we call $T$ a $C$-lattice embedding.  If $C =1$, then $T$ is a lattice isometric embedding.
A Banach space with a normalized $1$-unconditional basis can be regarded as a Banach lattice with respect to the coordinate-wise order.

\begin{thm}\label{t4.1}
Let $X$ be a Banach lattice with the order given by a normalized $1$-unconditional basis $(e_i)$.  
Denote the biorthogonal functionals by $(e_i^*)$.  For any $C\geq 1$, the following statements are equivalent:
\begin{enumerate}
\item For each normalized finitely supported $a = \sum^n_{i=1}a_ie_i\in X_+$ and every $\ep >0$, there exist $b = \sum^n_{i=1}b_ie^*_i\in X^*_+$ and $d = (d_i)^n_{i=1}\in \R^n_+$ so that $\sum^n_{i=1}a_ib_i > 1-\ep$,
$\sum^n_{i=1}d_i=1$  
and  $\|\sum_{i\in I}b_ie^*_i\|^{p^*} \leq C^{p^*}\sum_{i\in I}d_i$ for any $I \subseteq \{1,\dots,n\}$.
\item There is a set $\Gamma$ of (probability) measures such that $X$ $C$-lattice  embeds into the space $\intoo[1]{\bigoplus_{\mu\in \Gamma}L_{p,\infty}(\mu)}_\infty$.
\end{enumerate}
\end{thm}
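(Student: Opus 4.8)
The plan is to exploit the explicit formula for the norm of $L_{p,\infty}(\mu)$ under the standing convention $r=1$, namely
\[
\|f\|_{L_{p,\infty}^{[1]}}=\sup_{0<\mu(A)<\infty}\mu(A)^{1/p-1}\int_A|f|\,d\mu=\sup_{0<\mu(A)<\infty}\mu(A)^{-1/p^*}\int_A|f|\,d\mu,
\]
and to recognize statement (1) as the precise dualization of the assertion that $X$ embeds as a sublattice of an $\ell_\infty$-sum of such spaces. Throughout I would use that a $C$-lattice embedding is in particular an injective, hence positive, lattice homomorphism, and that the vectors $e_i$ of a $1$-unconditional basis are pairwise disjoint, so their images under a lattice homomorphism are disjoint as well.

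For $(2)\Rightarrow(1)$, start from a $C$-lattice embedding $T\colon X\to\big(\bigoplus_{\mu\in\Gamma}L_{p,\infty}(\mu)\big)_\infty$ and a normalized $a=\sum_{i=1}^n a_ie_i\in X_+$. Since $\|Ta\|\ge C^{-1}$, I would pick $\mu\in\Gamma$ and, using that $(Ta)_\mu\ge 0$ has $\sigma$-finite support, a set $A$ with $0<\mu(A)<\infty$ and $\mu(A)^{-1/p^*}\int_A(Ta)_\mu\,d\mu>C^{-1}-\delta$, with $\delta$ to be chosen small relative to $\ep$. Setting $f_i:=(Te_i)_\mu\ge 0$, these are pairwise disjoint, $(Ta)_\mu=\sum_i a_if_i$, and after shrinking $A$ to $A\cap\bigcup_i\supp f_i$ one may put $d_i:=\mu(A\cap\supp f_i)/\mu(A)$ (so $\sum_i d_i=1$) and $b_i:=C'\mu(A)^{-1/p^*}\int_A f_i\,d\mu$. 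Then $\sum_i a_ib_i=C'\mu(A)^{-1/p^*}\int_A(Ta)_\mu\,d\mu$, and choosing $C':=(1-\ep)/(C^{-1}-\delta)$ makes this exceed $1-\ep$ while $\delta<\ep/C$ forces $C'<C$; moreover $b\in X^*_+$. For the block inequality, fix $I\subseteq\{1,\dots,n\}$ and $x\in X$: disjointness gives $\sum_{i\in I}|x_i|f_i=|(Tx)_\mu|\,\chi_{E_I}$ with $E_I=\bigcup_{i\in I}\supp f_i$ and $\mu(A\cap E_I)=\mu(A)\sum_{i\in I}d_i$, so $|\sum_{i\in I}b_ix_i|\le C'\mu(A)^{-1/p^*}\int_{A\cap E_I}|(Tx)_\mu|\,d\mu\le C'(\sum_{i\in I}d_i)^{1/p^*}\|(Tx)_\mu\|_{L_{p,\infty}}\le C'(\sum_{i\in I}d_i)^{1/p^*}\|x\|$, using the displayed formula and $\|T\|\le1$. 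Taking the supremum over $\|x\|\le1$ yields $\|\sum_{i\in I}b_ie_i^*\|^{p^*}\le C^{p^*}\sum_{i\in I}d_i$.

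For $(1)\Rightarrow(2)$, to each normalized finitely supported $a\in X_+$ with support $\{1,\dots,n\}$ and each $\ep=1/k$ I would attach the $b,d$ from (1), the probability measure $\mu_{a,\ep}$ on $\{1,\dots,n\}$ with $\mu_{a,\ep}(\{i\})=d_i$, and the lattice homomorphism $S_{a,\ep}\colon X\to L_{p,\infty}(\mu_{a,\ep})$ given by $(S_{a,\ep}x)(i)=b_ix_i/d_i$ (read as $0$ when $d_i=0$, which forces $b_i=0$). The block form of the norm gives $\|S_{a,\ep}x\|_{L_{p,\infty}}=\sup_I(\sum_{i\in I}d_i)^{-1/p^*}\sum_{i\in I}b_i|x_i|\le C\|x\|$ by (1), whereas $I=\{1,\dots,n\}$ gives $\|S_{a,\ep}a\|_{L_{p,\infty}}\ge\sum_i a_ib_i>1-\ep$. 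The diagonal map $J_0x=(S_{a,\ep}x)_{(a,\ep)}$ into $\big(\bigoplus_{(a,\ep)}L_{p,\infty}(\mu_{a,\ep})\big)_\infty$ is then a lattice homomorphism with $\|J_0x\|\le C\|x\|$ and $\|J_0a\|\ge\sup_\ep\|S_{a,\ep}a\|\ge\|a\|$ for every normalized finitely supported $a\in X_+$. Since each $S_{a,\ep}$ is a lattice homomorphism one has $\|J_0x\|=\|J_0|x|\|$, so density of finitely supported vectors together with continuity of $J_0$ upgrades the lower estimate to $\|J_0x\|\ge\|x\|$ for all $x\in X$; finally $J:=C^{-1}J_0$ is the required $C$-lattice embedding.

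The point that needs genuine care is the bookkeeping of constants in $(2)\Rightarrow(1)$: because the embedding is only isomorphic, the soft estimate $\|Ta\|\ge C^{-1}$ has to be turned into the inequality in (1) with the sharp exponent $C^{p^*}$ on the right, and this is exactly what the nesting $\delta<\ep/C$, hence $C'<C$, achieves. Everything else is a routine unwinding of the definition of $\|\cdot\|_{L_{p,\infty}^{[1]}}$ combined with disjointness of the basis vectors, so I do not anticipate further obstacles; in particular, no convexification or concavification is needed.
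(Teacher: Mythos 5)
Your argument is correct and takes essentially the same route as the paper's: both directions rest on the same duality between the explicit $\sup_A\mu(A)^{-1/p^*}\int_A|f|\,d\mu$ formula for the norm and the block condition in (1), with the same choices of $b_i$ (normalized integrals of the disjoint images $f_i=(Te_i)_\mu$) and $d_i$ (relative measures of $A\cap\supp f_i$), and the same diagonal embedding built from the maps $S_{a,\ep}$. The only cosmetic differences are that you use atomic probability measures on $\{1,\dots,n\}$ where the paper partitions $(0,1)$ into sets of measure $d_i$, you track constants via an auxiliary $C'<C$ and $\delta<\ep/C$ where the paper normalizes directly by $C$, and you spell out the density step that the paper dismisses as clear.
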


\begin{proof}
(1)$\implies$(2) 
Let $a\in X_+$ be normalized and finitely supported, and let $\ep >0$ be given.  Obtain  $b$ and $d$ from (1). Let $(U_i)^n_{i=1}$ be a measurable partition of $(0,1)$ such that $\lambda(U_i) = d_i$, $1\leq i\leq n$, where $\lambda$ denotes the Lebesgue measure.
Define 
\[ S_{a,\ep}: X\to L_{p,\infty}(0,1) \text{  by } S_{a,\ep}\intoo[3]{\sum^\infty_{i=1}c_ie_i} = \sum^n_{i=1}\frac{c_ib_i\chi_{U_i}}{d_i}.\]
We have
\[ \|S_{a,\ep} a\| \geq \lambda(0,1)^{-\frac{1}{p^*}}\int^1_0|S_{a,\ep} a|\,d\lambda = \sum^n_{i=1}a_ib_i > 1-\ep.\]
On the other hand, if $\|\sum^\infty_{i=1}c_ie_i\|\leq1$ then there exists $I \subseteq \{1,\dots, n\}$ so that 
\begin{align*}
 \|S_{a,\ep} c\| &= \lambda \intoo[3]{\bigcup_{i\in I}U_i}^{-\frac{1}{p^*}} \int_{\bigcup_{i\in I}U_i}|S_{a,\ep}c|\,d\lambda=\intoo[3]{\sum_{i\in I}d_i}^{-\frac{1}{p^*}}\sum_{i\in I}|c_i|b_i\\
 & = \intoo[3]{\sum_{i\in I}d_i}^{-\frac{1}{p^*}}\left \la \sum^\infty_{i=1}|c_i|e_i, \sum_{i\in I}b_ie^*_i\right \ra
\leq \intoo[3]{\sum_{i\in I}d_i}^{-\frac{1}{p^*}}\norm[3]{\sum_{i\in I}b_ie^*_i} \leq C.
\end{align*}
Let $\Gamma$ be the Cartesian product of the set of normalized finitely supported elements in $X_+$ and the interval $(0,\frac{1}{2})$.
It is now clear that the map $T:X\to \intoo[1]{\bigoplus_{(a,\ep)\in \Gamma}L_{p,\infty}(0,1)}_\infty$ 
given by 
\[ Tc = (C^{-1}S_{a,\ep}c)_{(a,\ep)\in \Gamma}\]
is a $C$-lattice embedding.

\bigskip

\noindent (2)$\implies$(1)  Let $T:X\to  \intoo[1]{\bigoplus_{\mu\in \Gamma}L_{p,\infty}(\mu)}_\infty$ be a $C$-lattice  embedding.  
Suppose that $a = \sum^n_{i=1}a_ie_i$ is normalized  and $\ep > 0$.
For each $\mu_0\in \Gamma$, let $P_{\mu_0}$ be the projection of  $\intoo[1]{\bigoplus_{\mu\in \Gamma}L_{p,\infty}(\mu)}_\infty$ onto the $\mu_0$-th component.
There exists $\mu_0$ such that $C\|P_{\mu_0}Ta\| > 1-\ep$.
Let  $f_i = P_{\mu_0}Te_i.$ 
Choose a $\mu_0$-measurable set $U$ such that $0 < \mu_0(U) < \infty$ and 
\[ C\int_U P_{\mu_0}Ta\,d\mu_0 > (1-\ep)\mu_0(U)^{\frac{1}{p^*}}.\]
Set $U_i = U \cap \supp f_i$ and $m = (\sum^n_{i=1}\mu_0(U_i))^{\frac{1}{p^*}}$. Define
\[ b_i = \frac{C\int_{U_i}f_i\,d\mu_0}{m}, \quad d_i = \frac{\mu_0(U_i)}{m^{p^*}}, \quad 1\leq i\leq n.\]
We have $d_i \geq 0$, $\sum^n_{i=1}d_i =1$ and 
\[
 \sum^n_{i=1}a_ib_i  = \frac{C}{m}\int_{\bigcup^n_{i=1}U_i}\sum^n_{i=1}a_if_i\,d\mu_0 = 
 \frac{C}{m}\int_{U}P_{\mu_0}Ta\, d\mu_0 > 1-\ep.
\]
On the other hand, suppose that $I\subseteq \{1,\dots, n\}$. If $\|\sum^\infty_{i=1}c_ie_i\| \leq 1$, then
\begin{align*}
\left \la \sum^\infty_{i=1}c_ie_i, \sum_{i\in I}b_ie^*_i\right \ra & = \sum_{i\in I}c_ib_i = \frac{C}{m}\int_{\bigcup_{i\in I}U_i}\sum_{i\in I}c_if_i\,d\mu_0\\
& \leq  \frac{C}{m}\norm[3]{P_{\mu_0}T\intoo[3]{\sum^\infty_{i=1}c_ie_i}}\,\mu_0\intoo[3]{\bigcup_{i\in I}U_i}^{\frac{1}{p^*}}
\leq C\intoo[3]{\sum_{i\in I}d_i}^{\frac{1}{p^*}}.
\end{align*}
This proves that $\|\sum_{i\in I}b_ie_i^*\|^{p^*} \leq C^{p^*}\sum_{i\in I}d_i$.
\end{proof}

\begin{remark}
If $X$ is a two-dimensional Banach lattice that satisfies an upper $p$-estimate with constant $1$, then both conditions (1)  and (2) of \Cref{t4.1} hold with $C=1$.  
Indeed, let $(e_1,e_2)$ be a normalized $1$-unconditional basis for $X$.  Suppose that $a= a_1e_1 + a_2e_2$ is normalized and non-negative.   Choose $b = b_1e_1^* + b_2e^*_2\in X^*_+$ such that $\la a,b\ra =1 = \|b\|$. Define $d_i = b_i^{p^*}/(b_1^{p^*} + b_2^{p^*})$, $i=1,2$.
Note that $X^*$ satisfies a lower $p^*$-estimate with constant $1$ and $e_1^*$ and $e_2^*$ are disjoint, so
\[ b_1^{p^*} + b_2^{p^*} = \|b_1e^*_1\|^{p^*} + \|b_2e^*_2\|^{p^*} \leq \|b\|^{p^*} =1.\]
It is easy to check that $\|\sum_{i\in I}b_ie^*_i\|^{p^*} \leq \sum_{i\in I}d_i$ for any $I \subseteq \{1,2\}$ and $d_1 + d_2 = 1$.
\end{remark}

\begin{cor}\label{c4.2}
Suppose that $X$ is a  Banach lattice with the order given by a normalized  $1$-unconditional basis $(e_i)$ that $C$-lattice embeds into $\intoo[1]{\bigoplus_{\mu\in \Gamma}L_{p,\infty}(\mu)}_\infty$.
For any normalized $b = \sum^n_{i=1}b_ie^*_i \in X^*_+$ and any  $I_j\subseteq \{1,\dots, n\}$, $1\leq j\leq m$, such  that $\sum^m_{j=1}\chi_{I_j} = l\chi_{\{1,\dots,n\}}$ for some $l\in \N$, we have
\[ \sum_{j=1}^m\norm[3]{\sum_{i\in I_j}b_ie^*_i}^{p^*} \leq C^{p^*}l.\]
\end{cor}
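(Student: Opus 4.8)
Here is how I would approach the proof.

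The plan is to exploit the fact that, although the dual of an $\ell_\infty$-sum is not the $\ell_1$-sum of the duals, the space $Z:=\big(\bigoplus_{\mu\in\Gamma}L_{p,\infty}(\mu)\big)_\infty$ is itself a dual space: since $L_{p,\infty}^{[1]}(\mu)$ is the dual of the predual space $L_{p^*,1}^{[1]}(\mu)$ (see the discussion around \eqref{eq: renorming weak Lp}), one has $Z=W^*$ with $W:=\big(\bigoplus_{\mu\in\Gamma}L_{p^*,1}^{[1]}(\mu)\big)_{\ell_1}$. I would fix a $C$-lattice embedding $T\colon X\to Z$, put $f_i:=Te_i=(f_i^\mu)_{\mu\in\Gamma}$, and note that, $T$ being an injective lattice homomorphism and the $e_i\ge 0$ being pairwise disjoint with $\|e_i\|=1$, the $f_i$ are positive, pairwise disjoint, with $\|f_i\|_Z\le 1$. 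Since $T$ is a $C$-embedding, $b=T^*\phi$ for some $\phi\in Z^*$ with $\|\phi\|\le C$. By Goldstine's theorem the ball $C\,B_W$ is $\sigma(Z^*,Z)$-dense in $C\,B_{Z^*}$, and $T^*\colon Z^*\to X^*$ is $\sigma(Z^*,Z)$-to-$\sigma(X^*,X)$ continuous, so $b$ lies in the $\sigma(X^*,X)$-closure of $T^*(C\,B_W)$; restricting to the finite-dimensional space $(\spn\{e_1,\dots,e_n\})^*$, for every $\ep>0$ there is $\phi_\ep=(\psi_\mu^\ep)_\mu\in C\,B_W$ — in particular $\sum_\mu\|\psi_\mu^\ep\|_{L_{p^*,1}^{[1]}(\mu)}\le C$ — with $|b_i^\ep-b_i|\le\ep$ for all $i\le n$, where $b_i^\ep:=\la Te_i,\phi_\ep\ra$.

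The local ingredient I would isolate is a \emph{covering estimate} for the predual norm: if $g\in L_{p^*,1}^{[1]}(\mu)$ and measurable sets $A_1,\dots,A_m$ satisfy $\sum_j\chi_{A_j}\le l\,\chi_\Omega$ with $l\in\N$, then $\sum_j\|g\chi_{A_j}\|_{L_{p^*,1}^{[1]}}^{p^*}\le l\,\|g\|_{L_{p^*,1}^{[1]}}^{p^*}$. I would prove this by inflating the measure space by the factor $l$ (pass to $\Omega\times\{1,\dots,l\}$ and distribute each point among the $\le l$ covering sets containing it into distinct copies) so as to disjointify the $A_j$: since $L_{p^*,1}^{[1]}$ is rearrangement invariant the norms of the individual pieces do not change, replacing $\mu$ by $l\mu$ multiplies the norm by $l^{1/p^*}$, and then the disjoint lower $p^*$-estimate of $L_{p^*,1}^{[1]}$ with constant $1$ (recorded just after \eqref{eq: renorming weak Lp}) yields the claim. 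Getting the factor $l$ rather than $l^{p^*}$ here is essential: applying the lower $p^*$-estimate naively to the overlapping functionals $\sum_{i\in I_j}b_ie_i^*$ themselves would give only the weaker bound $C^{p^*}l^{p^*}$.

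With these in hand, I would fix $I\subseteq\{1,\dots,n\}$ and set $V_I^\mu:=\bigcup_{i\in I}\supp f_i^\mu$ (a \emph{disjoint} union of the blocks $\supp f_i^\mu$). For $x\in B_X\cap X_+$ with $\supp x\subseteq I$ the coordinate $(Tx)_\mu=\sum_{i\in I}x_if_i^\mu$ is supported on $V_I^\mu$ and has $L_{p,\infty}^{[1]}(\mu)$-norm at most $\|Tx\|_Z\le 1$; pairing $Tx$ with $\phi_\ep$, applying H\"older's inequality in each coordinate $\mu$, and replacing $b_i^\ep,\psi_\mu^\ep$ by their moduli to absorb signs, one obtains $|\la x,\sum_{i\in I}b_i^\ep e_i^*\ra|\le\sum_\mu\|\psi_\mu^\ep\chi_{V_I^\mu}\|_{L_{p^*,1}^{[1]}(\mu)}$, hence $\big\|\sum_{i\in I}b_i^\ep e_i^*\big\|_{X^*}\le\sum_\mu\|\psi_\mu^\ep\chi_{V_I^\mu}\|_{L_{p^*,1}^{[1]}(\mu)}$ after a supremum over such $x$ (using $1$-unconditionality). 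Since the $\supp f_i^\mu$ are disjoint in $i$ and $\sum_j\chi_{I_j}=l\,\chi_{\{1,\dots,n\}}$, for each $\mu$ we have $\sum_j\chi_{V_{I_j}^\mu}\le l\,\chi_{\Omega_\mu}$; combining this with the covering estimate, Minkowski's inequality in $\ell_{p^*}$ over $j$, and $\sum_\mu\|\psi_\mu^\ep\|_{L_{p^*,1}^{[1]}}\le C$ gives
\[
\Big(\sum_{j=1}^m\Big\|\sum_{i\in I_j}b_i^\ep e_i^*\Big\|_{X^*}^{p^*}\Big)^{1/p^*}\le\sum_\mu\Big(\sum_j\|\psi_\mu^\ep\chi_{V_{I_j}^\mu}\|_{L_{p^*,1}^{[1]}}^{p^*}\Big)^{1/p^*}\le l^{1/p^*}\sum_\mu\|\psi_\mu^\ep\|_{L_{p^*,1}^{[1]}}\le l^{1/p^*}C.
\]
Finally $\big\|\sum_{i\in I_j}b_i^\ep e_i^*-\sum_{i\in I_j}b_ie_i^*\big\|_{X^*}\le\sum_{i\in I_j}|b_i^\ep-b_i|\le n\ep$ (recall $\|e_i^*\|=1$), so letting $\ep\to 0$ transfers the bound $\sum_{j}\big\|\sum_{i\in I_j}b_i^\ep e_i^*\big\|^{p^*}\le C^{p^*}l$ to $\sum_{j}\big\|\sum_{i\in I_j}b_ie_i^*\big\|^{p^*}\le C^{p^*}l$, which is the assertion.

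I expect the first paragraph to be the main obstacle: $b$ cannot be represented directly by an element of the $\ell_1$-sum $W$ (that would be possible only for a $c_0$-sum), so the Goldstine reduction — and checking that it still lets one evaluate against the finitely many $Te_i$ with norm control — is the real point. The other genuinely load-bearing step is the constant $1$ in the covering estimate; any loss there feeds straight into the final constant.
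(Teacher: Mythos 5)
Your proof is correct, but it takes a genuinely different route from the paper's. The paper deduces the corollary from \Cref{t4.1}: it fixes a normalized $a\in X_+$ with $\la a,b\ra=1$, uses \Cref{t4.1} to produce a functional $b'$ almost norming $a$ together with weights $(d_i)$ satisfying $\|\sum_{i\in I}b_i'e_i^*\|^{p^*}\le C^{p^*}\sum_{i\in I}d_i$, and then exploits the covering identity $\sum_j\sum_{i\in I_j}d_i=l$; the delicate point is passing from $b'$ back to $b$, which is handled by perturbing the norm on $[(e_i)^n_{i=1}]$ to a uniformly convex one. You bypass both \Cref{t4.1} and the uniform convexity argument: you represent $b$ directly (up to $\ep$, via Hahn--Banach plus Goldstine) by an element of the $\ell_1$-sum of the preduals $L_{p^*,1}^{[1]}(\mu)$, and you shift the covering combinatorics onto a clean estimate inside $L_{p^*,1}^{[1]}$ (a multiplicity-$l$ cover costs a factor $l$, not $l^{p^*}$). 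Your version is more self-contained and the covering lemma is of independent interest; the paper's is shorter once \Cref{t4.1} is in hand and avoids all duality theory for $L_{p,\infty}$.

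Two steps deserve more care. First, the Goldstine reduction needs $L_{p,\infty}^{[1]}(\mu)$ to pair isometrically with $L_{p^*,1}^{[1]}(\mu)$, but you do not need surjectivity of $W^*\supseteq Z$: an isometric embedding $Z\hookrightarrow W^*$ suffices, because the restriction map $W^{**}\to Z^*$ maps $B_{W^{**}}$ onto $B_{Z^*}$ and is weak$^*$-to-weak$^*$ continuous, so the image of $CB_W$ is still $\sigma(Z^*,Z)$-dense in $CB_{Z^*}$. Second, in the covering estimate the appeal to rearrangement invariance of $\|\cdot\|_{L_{p^*,1}^{[1]}}$ across different (possibly atomic) measure spaces is the one place I would not wave hands; it is safer to verify the two inequalities you actually use: $\|\ti g\chi_{\ti A_j}\|=\|g\chi_{A_j}\|$ holds because the supports are measure-isomorphic and both the $[1]$-norm and its predual norm are determined by the restriction to the support, while $\|\ti g\|\le l^{1/p^*}\|g\|$ follows by pairing $\ti g$ with a norm-one $\ti h$ in $L_{p,\infty}^{[1]}$ of the inflated space and checking that the fiber sum $h=\sum_{k=1}^l\ti h(\cdot,k)$ has $\|h\|_{L_{p,\infty}^{[1]}(\mu)}\le l^{1/p^*}$. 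With these repairs the argument goes through and gives exactly the stated constant.
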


\begin{proof}
Choose a normalized $a = \sum^n_{i=1}a_ie_i\in X_+$ so that $\sum^n_{i=1}a_ib_i =1$.  First, assume that $[(e_i)^n_{i=1}]$ is uniformly convex.
Let $\ep > 0$ be given. There exists $\delta >0$ so that if $b' = \sum^n_{i=1}b'_ie^*_i\in X^*_+$, $\|b'\|=1$
and $\la a, b'\ra > 1-\delta$, then $\|b-b'\| < \ep$.
By \Cref{t4.1}, there exists $b'\in X^*_+$ and $d = (d_i)^n_{i=1}\in \R^n_+$ so that $\sum_{i=1}^nd_i=1$, $\la a,b'\ra > 1-\delta$ and  $\|\sum_{i\in I}b_i'e_i^*\|^{p^*} \leq C^{p^*}\sum_{i\in I}d_i$ for any $I\subseteq \{1,\dots,n\}$.
It follows that for any $I\subseteq \{1,\dots, n\}$, 
\[ \norm[3]{\sum_{i\in I}b_ie_i^*}  \leq  \norm[3]{\sum_{i\in I}|b_i-b_i'|e_i^*}+ \norm[3]{\sum_{i\in I}b'_ie_i^*} \leq \ep |I| + C\intoo[3]{\sum_{i\in I}d_i}^{\frac{1}{p^*}}.
\]
Now assume that $I_j\subseteq \{1,\dots, n\}$, $1\leq j\leq m$, are such that $\sum^m_{j=1}\chi_{I_j} = l\chi_{\{1,\dots,n\}}$ for some $l\in \N$.
Set $A_j = \ep|I_j|$ and $B_j = C(\sum_{i\in I_j}d_i)^{\frac{1}{p^*}}$.  Then
\begin{align*}
\intoo[3]{\sum_{j=1}^m\norm[3]{\sum_{i\in I_j}b_ie^*_i}^{p^*}}^{\frac{1}{p^*}}& \leq \intoo[3]{\sum^m_{j=1}(A_j+ B_j)^{p^*}}^{\frac{1}{p^*}}
\leq \intoo[3]{\sum^m_{j=1}A_j^{p^*}}^{\frac{1}{p^*}} + \intoo[3]{\sum^m_{j=1}B_j^{p^*}}^{\frac{1}{p^*}} \\
& \leq \sum^m_{j=1}A_j + \intoo[3]{\sum^m_{j=1}B_j^{p^*}}^{\frac{1}{p^*}}
= \ep\sum^m_{j=1}|I_j|  + C\intoo[3]{\sum^m_{j=1}\sum_{i\in I_j}d_i}^{\frac{1}{p^*}}
\leq \ep nl + Cl^{\frac{1}{p^*}}.
\end{align*}
Taking $\ep \downarrow 0$ gives the desired result.
\medskip

We now return to a general $X$ with a normalized $1$-unconditional basis $(e_i)$. Given a normalized $b = \sum^n_{i=1}b_ie^*_i \in X^*_+$ and  $\ep > 0$, there is a uniformly convex lattice norm $\vvvert\cdot\vvvert$ on $[(e_i)^n_{i=1}]$ such that 
$\|x\| \leq \vvvert x\vvvert  \leq (1+\ep)\|x\|$ for any $x\in [(e_i)^n_{i=1}]$ (take, for instance, $\vvvert\cdot \vvvert =\|\cdot\|+\delta \|\cdot\|_{\ell_2^n}$ for $\delta>0$ small enough, which is strictly convex and hence uniformly convex, since $[(e_i)^n_{i=1}]$ is finite dimensional).
Set $u_i = e_i/\vvvert e_i\vvvert $, $u_i^* = e_i^*/\vvvert e_i^*\vvvert $ and $b' = \sum^n_{i=1}b_i'u^*_i = b/\vvvert b\vvvert $.
Note that $(u_i)^n_{i=1}$ is a $1$-unconditional basis for $[(e_i)^n_{i=1}] =   [(u_i)^n_{i=1}]$ and $b'$ is $\vvvert \cdot\vvvert $-normalized.   Moreover, if $T: [(e_i)^n_{i=1}]\to \intoo[1]{\bigoplus_{\mu\in \Gamma}L_{p,\infty}(\mu)}_\infty$ is a $C$-lattice embedding, then 
\[ \ti{T}x = Tx: [(u_i)^n_{i=1}]\to \intoo[3]{\bigoplus_{\mu\in \Gamma}L_{p,\infty}(\mu)}_\infty\]
is a $(1+\ep)C$-lattice embedding.
Assume that $I_j\subseteq \{1,\dots, n\}$, $1\leq j\leq m$, are such  that $\sum^m_{j=1}\chi_{I_j} = l\chi_{\{1,\dots,n\}}$ for some $l\in \N$.  
By the first part of the proof,
\[  \sum_{j=1}^m\Bigg\vvvert \sum_{i\in I_j}b_i'u^*_i\Bigg\vvvert ^{p^*} \leq (1+\ep)^{p^*}C^{p^*}l.
\]
Since $\vvvert b\vvvert \leq 1+\ep$, for any $I\subseteq \{1,\dots,n\}$ we have
\[ \Bigg\vvvert\sum_{i\in I}b'_iu^*_i\Bigg\vvvert =\frac{1}{\vvvert b\vvvert } \Bigg\vvvert \sum_{i\in I}b_ie^*_i\Bigg\vvvert  \geq \frac{1}{1+\ep}\norm[3]{\sum_{i\in I}b_ie^*_i}.
\]
Thus,
\[ \sum^m_{j=1}\norm[3]{\sum_{i\in I}b_ie_i^*}^{p^*} \leq (1+\ep)^{2p^*}C^{p^*}l.\]
Taking $\ep\downarrow 0$ gives the desired result.
\end{proof}

The next example shows that $\gamma_p$ cannot be made to be $1$ in Proposition \ref{prop: representation convexity infty sums} or Theorem \ref{thm: factorization Pisier operator}.

\begin{ex}\label{ex: finite dim BL} Suppose that $1< p < \infty$. Denote by $C_p$ the infimum of the set of constants $C$ so that every Banach lattice satisfying an upper $p$-estimate with constant $1$ $C$-lattice embeds into $\intoo[1]{\bigoplus_{\gamma\in \Gamma}L_{p,\infty}(\mu)}_\infty$ for some set of measures $\Gamma$.
Then $C_p^{p^*} \geq  \frac{3 \cdot 2^{p^*}}{2(1+2^{p^*})}> 1$.
\end{ex}

\begin{proof}
Let $X = \R^3$ so that the norm on $X^*$  is 
\[ \|(b_1,b_2,b_3)\|_{X^*} = \max\bigl\{\bigl(|b_i|^{p^*} + (|b_j|+|b_k|)^{p^*}\bigr)^{\frac{1}{p^*}}: \{i,j,k\} = \{1,2,3\}\bigr\}.\]
It is clear that the coordinate unit vectors form a normalized $1$-unconditional basis for $X$.
Suppose that $b=(b_1,b_2,b_3)\in X^*$ and $I,J$ are disjoint non-empty  subsets of $\{1,2,3\}$. Without loss of generality, we may assume that $I= \{i\}$ and $J = \{j,k\}$, where $i,j,k$ are distinct. Then
\[ \|b\chi_I\|^{p^*}_{X^*} + \|b\chi_J\|^{p^*}_{X^*} =  |b_i|^{p^*} + (|b_j| + |b_k|)^{p^*} \leq \|b\|_{X^*}^{p^*}.\]
Thus, $X^*$ satisfies a lower $p^*$-estimate with constant $1$.  Hence, $X$ satisfies an upper $p$-estimate with constant $1$.
Suppose that $X$ $C$-lattice embeds into some $\intoo[1]{\bigoplus_{\gamma\in \Gamma}L_{p,\infty}(\mu)}_\infty$.
The vector $(1+2^{p^*})^{-\frac{1}{p^*}}(1,1,1)$ is normalized in $X^*_+$.
By \Cref{c4.2}, 
\[ \frac{1}{1+2^{p^*} }\,(\|(1,1,0)\|^{p^*} + \|(1,0,1)\|^{p^*} + \|(0,1,1)\|^{p^*}) \leq 2C^{p^*}.\]
Thus,
\[ \frac{3 \cdot 2^{p^*}}{2(1+2^{p^*})} \leq C^{p^*}.
\]
Taking infimum over the  eligible $C$'s gives the desired result. Direct verification shows that  $\frac{3 \cdot 2^{p^*}}{2(1+2^{p^*})}> 1$ if $1< p < \infty$.
\end{proof}

\begin{question}
    Is $\gamma_p=(p^*)^\frac{1}{p^*}$ the optimal value of the constant $C_p$ defined in \Cref{ex: finite dim BL}?
\end{question}

As a consequence of the previous discussion, we find that the free Banach lattices associated to the classes $\cC_\pinf$ of Banach lattices satisfying an upper $p$-estimate with constant 1, and $\cX_\pinf$ of $L_\pinf(\mu)$ spaces, are not lattice isometric in general, even though they are lattice isomorphic (see \cite{GLTT}). Recall that, given a class $\cC$ of Banach lattices and a Banach space $E$, there exists an (essentially unique) Banach lattice $\fbl^\cC[E]$ together with a linear isometric embedding $\phi:E\rightarrow \fbl^\cC[E]$ with the property that for every Banach lattice $X$ in the class $\cC$ and every bounded operator $T:E\rightarrow X$, there exists a unique extension $\widehat{T}:\fbl^\cC[E]\rightarrow X$ as a lattice homomorphism such that $\widehat{T}\phi=T$ and $\|\widehat{T}\|=\|T\|$. This space is called the \emph{free Banach lattice generated by $E$ associated to the class $\cC$}. In \cite[Theorem 3.6]{GLTT} it was established that for every Banach space $E$, the norm of $\fbl^{\cC_\pinf}[E]$ is $\gamma_p$-equivalent to the norm of $\fbl^{\cX_\pinf}[E]$. However, we will see that this isomorphism constant cannot be made 1.\medskip

In order to show this claim, we need to recall the following property. A Banach lattice $X$ is said to have the \emph{isometric lattice-lifting property} if there exists a lattice isometric embedding $\alpha: X \rightarrow \fbl[X]$ such that $\wh{id_X}\alpha=id_X$. This property was introduced in the article \cite{AMRT}, inspired by previous work of Godefroy and Kalton \cite{GK} on Lipschitz-free spaces. Notably, Banach lattices ordered by a $1$-unconditional basis have this property (see \cite[Theorem 4.1]{AMRT}). In \cite[Theorem 8.3]{OTTT} an alternative proof of this fact was given, which also worked when $\fbl[E]$ was replaced by $\fbp[E]$. We now show how to generalize these results to any class $\cC$.

\begin{prop}\label{prop: embedding and FBL constant}
    Let $\cC$ be a class of Banach lattices, $X$ a Banach space with a normalized 1-unconditional basis $(e_i)_i$, viewed as a Banach lattice with the order given by the basis, and $C\geq1$. The following are equivalent.
    \begin{enumerate}
        \item $X$ $C$-lattice embeds into some $\intoo[1]{\bigoplus_{\gamma\in \Gamma} X_\gamma}_\infty$ with $X_\gamma \in \cC$ for every $\gamma\in \Gamma$.
        \item For every operator $T:X\rightarrow X$, there exists a unique extension $\widehat T:\fbl^{\cC}[X]\rightarrow X$ satisfying $\|\widehat T\|\leq C\|T\|.$
        \item There exists a unique extension $\widehat{id_X}:\fbl^{\cC}[X]\rightarrow X$ of the identity $id_X:X\rightarrow X$ satisfying $\|\widehat{id_X}\|\leq C.$
        \item $X$ $C$-lattice embeds into $\fbl^\cC[X]$.
    \end{enumerate}
    Moreover, the $C$-lattice embedding $\alpha: X\rightarrow \fbl^\cC[X]$ in $(4)$ can be chosen so that $\wh{id_X}\alpha=id_X$.
\end{prop}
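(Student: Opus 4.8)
The plan is to prove the cycle of implications $(1)\Rightarrow(2)\Rightarrow(3)\Rightarrow(4)\Rightarrow(1)$, together with the final ``moreover'' clause, and the central tool throughout will be the universal property defining $\fbl^{\cC}[X]$ together with the representation of $C$-lattice embeddings into $\ell_\infty$-sums of members of $\cC$.

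\smallskip
\noindent\textbf{$(1)\Rightarrow(2)$.} Assume $\iota\colon X\to \bigl(\bigoplus_{\gamma}X_\gamma\bigr)_\infty=:Y$ is a $C$-lattice embedding, with $X_\gamma\in\cC$. Given any operator $T\colon X\to X$, compose to obtain $\iota T\colon X\to Y$; since $Y$ is an $\ell_\infty$-sum of members of $\cC$, and $\cC$-ness behaves well under such sums (this is the point at which one must know $\fbl^{\cC}$ is computed coordinatewise — I would invoke, or quickly reprove, that $\fbl^{\cC}$ of a space maps into an $\ell_\infty$-sum of $\cC$-lattices by assembling the coordinate extensions, exactly as in \cite{GLTT}), the universal property produces a lattice homomorphism $\widehat{\iota T}\colon\fbl^{\cC}[X]\to Y$ with $\|\widehat{\iota T}\|=\|\iota T\|\le\|T\|$, extending $\iota T$. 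Now one must ``divide by $\iota$'': because $\iota$ is a lattice isomorphism onto its (closed) range, $\iota^{-1}$ is a bounded lattice homomorphism on $\iota(X)$ with $\|\iota^{-1}\|\le C$; one checks that $\widehat{\iota T}$ actually maps $\fbl^{\cC}[X]$ into $\iota(X)$ (since $\fbl^{\cC}[X]$ is the closed sublattice generated by $\phi(X)$, and $\widehat{\iota T}\phi=\iota T$ takes values in $\iota(X)$, a closed sublattice of $Y$), so $\widehat T:=\iota^{-1}\circ\widehat{\iota T}$ is the desired extension with $\|\widehat T\|\le C\|T\|$; uniqueness is automatic from the generation property.

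\smallskip
\noindent\textbf{$(2)\Rightarrow(3)$} is the special case $T=id_X$. For \textbf{$(3)\Rightarrow(4)$}, let $\alpha:=\phi\colon X\to\fbl^{\cC}[X]$ be the canonical isometric embedding (so $\|\alpha x\|=\|x\|$). Since $\widehat{id_X}\phi=id_X$, for every $x\in X$ we get $\|x\|=\|\widehat{id_X}\phi(x)\|\le\|\widehat{id_X}\|\,\|\phi(x)\|\le C\|x\|$, i.e.\ $C^{-1}\|x\|\le\|\alpha x\|\le\|x\|$; and $\alpha$ is a lattice homomorphism, hence a $C$-lattice embedding — and it already satisfies $\widehat{id_X}\alpha=id_X$, which gives the ``moreover'' clause at no extra cost. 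Finally \textbf{$(4)\Rightarrow(1)$} is immediate once we know $\fbl^{\cC}[X]\in\cC$ (true when $\cC$ is closed under closed sublattices — which holds for $\cC_\pinf$ by \Cref{prop: representation convexity infty sums} and its surrounding remarks, and is the natural hypothesis here): then $\fbl^{\cC}[X]$, being in $\cC$, itself $1$-lattice embeds into an $\ell_\infty$-sum of members of $\cC$ via the defining representation for the class, so composing with the $C$-lattice embedding from $(4)$ yields a $C$-lattice embedding of $X$ into such a sum.

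\smallskip
\noindent\textbf{Main obstacle.} The delicate point is entirely in $(1)\Rightarrow(2)$: one needs that the lattice-homomorphic extension $\widehat{\iota T}$ lands inside the sublattice $\iota(X)$ rather than merely inside $Y$, so that post-composing with the (bounded) inverse is legitimate and the norm estimate $\le C\|T\|$ survives. This is where the hypothesis that $(e_i)_i$ is a $1$-unconditional basis — so that $X$ is generated as a Banach lattice by the images of the basis vectors — and that $\iota$ is a genuine lattice embedding are both used: $\fbl^{\cC}[X]$ is generated by $\phi(X)$, its image under any lattice homomorphism is contained in the closed sublattice generated by the image of $\phi(X)$, and $\iota(X)$ is exactly such a closed sublattice. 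One should also be careful to record the normalization convention ``$C^{-1}\|x\|\le\|Tx\|\le\|x\|$'' for $C$-embeddings consistently, since the inequalities in $(3)\Rightarrow(4)$ depend on it; with that convention fixed everything closes up without loss of constant.
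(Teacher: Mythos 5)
Your implications $(1)\Rightarrow(2)$, $(2)\Rightarrow(3)$ and $(4)\Rightarrow(1)$ are essentially sound and match the paper's route (the paper simply cites \cite[Propositions 2.5 and 2.7]{GLTT} for the first and last of these; your coordinatewise extension plus the observation that $\widehat{\iota T}$ lands in the closed sublattice $\iota(X)$ is exactly the right way to flesh out the first). One small caveat in $(4)\Rightarrow(1)$: you do not need, and should not claim, that $\fbl^{\cC}[X]\in\cC$; what is true and what suffices is that $\fbl^{\cC}[X]$ embeds lattice isometrically into an $\ell_\infty$-sum of members of $\cC$ by its construction, which you do mention as the fallback.

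The genuine gap is in $(3)\Rightarrow(4)$. You take $\alpha:=\phi$, the canonical map $X\to\fbl^{\cC}[X]$, and assert it is a lattice homomorphism. It is not: $\phi$ is only a \emph{linear} isometric embedding. Even for $X=\R^2$ with the coordinate order, in the free Banach lattice realized as positively homogeneous functions on $X^*$ one has $(\phi(e_1)\wedge\phi(e_2))(x^*)=\min(x^*(e_1),x^*(e_2))$, which is nonzero at $x^*=e_1^*-e_2^*$, whereas $e_1\wedge e_2=0$ in $X$. Since a $C$-lattice embedding is by definition a lattice isomorphism onto its range, $\phi$ cannot serve as $\alpha$, and the norm estimate you derive is beside the point. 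This step is precisely where the hypothesis that $(e_i)$ is a $1$-unconditional basis carries real weight: one must invoke the isometric lattice-lifting property (\cite[Theorem 4.1]{AMRT} or \cite[Theorem 8.3]{OTTT}), which produces a genuine lattice isometric embedding $\alpha_0:X\to\fbl[X]$ with $\widehat{id_X}\alpha_0=id_X$, and then set $\alpha:=j\alpha_0$ where $j:\fbl[X]\to\fbl^{\cC}[X]$ is the formal (norm-one) identity. The chain $C^{-1}\|x\|\le\|\widehat{id_X}\|^{-1}\|\widehat{id_X}\alpha x\|\le\|\alpha x\|\le\|\alpha_0 x\|\le\|x\|$ then gives the $C$-lattice embedding, and the ``moreover'' clause $\widehat{id_X}\alpha=id_X$ comes from the lifting property, not for free from the universal property of $\phi$.
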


\begin{proof}
    \noindent $(1)\Rightarrow(2)$ is an adaptation of \cite[Proposition 2.5]{GLTT} and $(2)\Rightarrow (3)$ is clear.\medskip

    \noindent $(3)\Rightarrow (4)$ Recall that by either \cite[Theorem 4.1]{AMRT} or \cite[Theorem 8.3]{OTTT} with $p=1$ there is a lattice isometric embedding $\alpha_0:X\rightarrow \fbl[X]$, the free Banach lattice (which can be seen as the free Banach lattice associated to the class of all Banach lattices), such that $\widehat{id_X} \alpha_0=id_X$ (here, $\widehat{id_X}:\fbl[X]\rightarrow X$). Moreover, since $\cC$ is a class of Banach lattices, the formal identity $j:\fbl[X]\rightarrow \fbl^{\cC}[X]$ is a continuous norm one injection such that $\widehat{id_X} j \alpha_0=id_X$ (now, we are abusing the notation by also denoting $\widehat{id_X}:\fbl^\cC[X]\rightarrow X$ the extension of $id_X$ to $\fbl^\cC[X]$ given in $(3)$). We claim that $\alpha:=j \alpha_0:X\rightarrow \fbl^{\cC}[X]$ defines a $C$-lattice embedding. To prove the claim, simply note that for every $x\in X$
    \[\frac{1}{C}\|x\|\leq  \frac{1}{\|\widehat{id_X}\|}\|\widehat{id_X} \alpha x\| \leq \|\alpha x\|_{\fbl^\cC[X]}\leq \|\alpha_0 x\|_{\fbl[X]} \leq \|x\|.\]
    This also proves the additional claim.\medskip
    
    \noindent $(4)\Rightarrow (1)$ follows from \cite[Proposition 2.7]{GLTT}, as $\fbl^\cC[X]$ embeds lattice isometrically into an $\ell_\infty$-sum of spaces in $\cC$.
\end{proof}

We can apply \Cref{prop: embedding and FBL constant} to the Banach lattice $X$ constructed in Example \ref{ex: finite dim BL} to conclude that, since $X$ does not 1-lattice embed into $\intoo[1]{\bigoplus_{\mu\in \Gamma}L_{p,\infty}(\mu)}_\infty$ for any choice of measures $\Gamma$, the extension $\widehat{id_X}$ of the identity on $X$ to $\fbl^{\cX_{p,\infty}}[X]$ cannot be contractive. However, if we instead consider $\fbl^{\cC_\pinf}[X]$ (which by \cite[Theorem 3.6]{GLTT} coincides as a set with $\fbl^{\cX_{p,\infty}}[X]$, with an equivalent renorming), we observe that in this setting the norm of the extension $\widehat{id_X}$ becomes $\|id_X\|=1$, since $X$ belongs to the class $\cC_\pinf$. Therefore, the norms $\|\cdot \|_{\fbl^{\cX_{p,\infty}}[X]}$ and $\|\cdot \|_{\fbl^{\cC_{p,\infty}}[X]}$ cannot agree.

\subsection{The renormings of weak-\texorpdfstring{$L_p$}{}}\label{sec: renormings}

In this paper (and also in \cite{GLTT}), we have chosen to equip $L_{p,\infty}(\mu)$ with the norm
\[\|f\|_{L_{p,\infty}^{[1]}}:=\sup_{0<\mu(A)<\infty} \mu(A)^{\frac{1}{p}-1} \int_A |f|\,d\mu. \]
However, for each $1\leq r<p$, we could have equally chosen to equip $L_{p,\infty}(\mu)$ with the norm 
\[\|f\|_{L_{p,\infty}^{[r]}}:=\sup_{0<\mu(A)<\infty} \mu(A)^{\frac{1}{p}-\frac{1}{r}} \left(\int_A |f|^r\,d\mu\right)^{\frac{1}{r}}, \]
as it also defines a lattice norm on $L_{p,\infty}(\mu)$ with upper $p$-estimate constant $1$. For this reason, it is natural to ask whether the ambiguity in choosing a norm on $L_{p,\infty}(\mu)$ is the reason why $\gamma_p$ cannot be chosen to be $1$ in \cite[Proposition 3.3 and Theorem 3.6]{GLTT}.
\medskip

The aim of this subsection is to show that $(L_{p,\infty}(\mu),\|\cdot\|_{L_{p,\infty}^{[r]}})$ embeds lattice isometrically into a Banach lattice of the form
$\intoo[1]{\bigoplus_{\mu\in \Gamma}L_{p,\infty}^{[1]}(\mu)}_\infty$. As a consequence, we deduce that the above renormings cannot eliminate the constant $\gamma_p$ in  \cite[Proposition 3.3 and Theorem 3.6]{GLTT}. 
\medskip

Recall that every Banach lattice with upper $p$-estimates is $r$-convex for every $1\leq r<p$, with a constant that might explode like $(p-r)^{-\frac{1}{p}}$ as $r$ approaches $p$. Moreover, every $r$-convex Banach lattice has an equivalent lattice norm whose $r$-convexity constant becomes 1 \cite[Proposition 1.d.8]{LT2}. As we saw in \Cref{rem: interpretation renorming}, it turns out that in the case of $L_\pinf$, we can use the $L_{p,\infty}^{[r]}$-renorming to actually renorm $\wlp$ so that the upper $p$-estimates constant and the $r$-convexity constant are simultaneously 1. Now that we have clarified the role of the renormings, we can proceed with the main goal of this section. We begin with some notation and technical lemmata.
\medskip

Let $e = (1,\dots, 1)\in \R^n$ and let $(e_i)^n_{i=1}$ be the unit vector basis for $\R^n$. Let $\beta, b\in \R^n_+$, $\|\beta\|_1, \|b\|_1 \leq 1$, and let $0 < s < 1$. Define $\al : \R^n_+ \to \R$ by
\begin{equation}\label{eq: alpha}
    \al(x) = (\beta\cdot x)^{1-s}\, (b\cdot x)^s,
\end{equation} 
where $\cdot$ is the dot product on $\R^n$. Clearly, $\al$ is positively homogeneous.

\begin{lem}\label{lem A2}
There exists $d= (d_i)^n_{i=1} \in \R^n_+$ with $\|d\|_1 \leq 1$ such that $\al(x) \leq d\cdot x$ for any $x\in \R^n_+$.
\end{lem}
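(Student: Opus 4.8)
The statement asserts that the function $\al(x) = (\beta\cdot x)^{1-s}(b\cdot x)^s$ on $\R^n_+$, which is a weighted geometric mean of two linear functionals each with $\ell_1$-norm at most $1$, is dominated by a single linear functional $d\cdot x$ with $\|d\|_1\le 1$. The first thing I would observe is that $\al$ is concave on $\R^n_+$ (being the geometric mean $u^{1-s}v^s$ of the two nonnegative linear functions $u=\beta\cdot x$, $v=b\cdot x$, composed with the jointly-concave function $(u,v)\mapsto u^{1-s}v^s$), so the superlevel set $K = \{x\in\R^n_+ : \al(x)\ge 1\}$ is convex, and $\al$ being positively homogeneous of degree $1$ means $\al(x)\le d\cdot x$ for all $x\in\R^n_+$ is equivalent to $d\cdot x\ge 1$ on $K$, i.e.\ to $d$ lying in the polar-type set separating $K$ from the origin. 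One could try a Hahn--Banach separation argument, but the quantitative requirement $\|d\|_1\le 1$ makes me prefer an explicit construction, which the AM--GM inequality supplies directly.

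The cleanest route is Young's inequality (weighted AM--GM): for nonnegative reals $u,v$ and any $\tau>0$,
\[
u^{1-s}v^s \;=\; \bigl(\tau^{-s}u\bigr)^{1-s}\bigl(\tau^{1-s}v\bigr)^{s} \;\le\; (1-s)\,\tau^{-s}u + s\,\tau^{1-s}v .
\]
Applying this with $u=\beta\cdot x$ and $v=b\cdot x$ gives, for every fixed $\tau>0$,
\[
\al(x) \;\le\; (1-s)\tau^{-s}\,(\beta\cdot x) + s\,\tau^{1-s}\,(b\cdot x) \;=\; \bigl((1-s)\tau^{-s}\beta + s\tau^{1-s}b\bigr)\cdot x ,
\]
so the vector $d_\tau := (1-s)\tau^{-s}\beta + s\tau^{1-s}b \in\R^n_+$ already dominates $\al$ pointwise on $\R^n_+$. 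It remains to choose $\tau$ so that $\|d_\tau\|_1\le 1$. Using $\|\beta\|_1,\|b\|_1\le 1$ we get $\|d_\tau\|_1 \le (1-s)\tau^{-s} + s\tau^{1-s}$, and the function $\tau\mapsto (1-s)\tau^{-s}+s\tau^{1-s}$ attains its minimum at $\tau=1$, where its value is exactly $(1-s)+s = 1$. Hence $d:=d_1 = (1-s)\beta + s b$ works: it is in $\R^n_+$, has $\|d\|_1\le (1-s)\|\beta\|_1 + s\|b\|_1 \le 1$, and satisfies $\al(x)\le d\cdot x$ for all $x\in\R^n_+$.

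So in fact no optimization over $\tau$ is needed once one notices $\tau=1$ is the right choice; the entire proof collapses to: apply the weighted AM--GM inequality $u^{1-s}v^s\le (1-s)u+sv$ coordinatewise-free (to the scalars $u=\beta\cdot x$, $v=b\cdot x$), then bound the resulting coefficient vector's $\ell_1$-norm by convexity of the norm. There is no real obstacle here — the only thing to be slightly careful about is the degenerate cases (some $x_i=0$, or $\beta\cdot x=0$ or $b\cdot x=0$), but the inequality $u^{1-s}v^s\le(1-s)u+sv$ holds for all $u,v\ge 0$ including when one vanishes, so these cause no trouble. I would write the proof in essentially three lines: state the scalar inequality, apply it, and check $\|d\|_1\le 1$.
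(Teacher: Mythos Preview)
Your proof is correct and takes essentially the same approach as the paper: define $d = (1-s)\beta + sb$ and apply Young's inequality $u^{1-s}v^s \le (1-s)u + sv$ with $u = \beta\cdot x$, $v = b\cdot x$. The paper's proof is in fact the three-line version you describe at the end, omitting the exploratory discussion of $\tau$.
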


\begin{proof}
Let $d=(1-s)\beta+sb$, so that $d\in \R^n_+$ and $\|d\|_1 \leq 1$. Given $x\in \R^n_+$, by Young's inequality for products, we have that
\[\al(x) = (\beta\cdot x)^{1-s}\, (b\cdot x)^s \leq (1-s) (\beta\cdot x)+s(b\cdot x)=((1-s)\beta+sb)\cdot x =d\cdot x.\]
\end{proof}

\begin{lem}\label{lem weakpr into weakp1}
Let $(\Om,\Sigma,\mu)$ be a measure space. Suppose that $(a_i)^n_{i=1} \in \R^n_+$ and $(U_i)^n_{i=1}$ is a disjoint sequence in $\Sigma$ with $0< \mu(U_i)<\infty$ so that $C: =(\sum^n_{i=1}\mu(U_i))^{\frac{1}{p}-\frac{1}{r}} \|\sum^n_{i=1}a_i\chi_{U_i}\|_{L_r(\mu)} \leq 1$.
There is a measure $\nu$ on $\Sigma$ and a lattice homomorphism 
$S: L_{p,\infty}^{[r]}(\mu) \to L_{p,\infty}^{[1]}(\nu)$ so that $\|S\| \leq 1$ and $\|S(\sum^n_{i=1}a_i\chi_{U_i})\| \geq C^r$.
\end{lem}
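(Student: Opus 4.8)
The goal is to build a single weighted measure $\nu$ on $\Sigma$ so that the identity map, viewed as $S\colon L_{p,\infty}^{[r]}(\mu)\to L_{p,\infty}^{[1]}(\nu)$, is a contractive lattice homomorphism while still detecting a definite proportion of the norm of the test function $\sum_{i=1}^n a_i\chi_{U_i}$. The natural candidate is a density against $\mu$, i.e.\ $\nu = w\,d\mu$ for a suitable $w\ge 0$; then $S$ is automatically a lattice homomorphism (it is multiplication by $w^{-1}$ composed with the formal identity, or more precisely the formal identity between the two norms on the same underlying functions after reweighting), so the only real content is the two norm estimates. The weight $w$ should be chosen to be constant on each $U_i$ and supported there, with the constant on $U_i$ tuned using the data $a_i$, $\mu(U_i)$, and the exponents $p,r$; Lemma~\ref{lem A2} (via Young's inequality) is exactly the device that will let us reduce the supremum over $A$ defining $\|\cdot\|_{L_{p,\infty}^{[r]}}$ — which involves the nonlinear quantity $\mu(A)^{\frac1p-\frac1r}(\int_A|f|^r)^{1/r}$ — to a linear estimate $d\cdot x \le 1$.

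First I would set up the reweighting explicitly: write $\nu$ with density $w=\sum_{i=1}^n c_i\chi_{U_i}$ for scalars $c_i>0$ to be determined, and note that any $f\in L_{p,\infty}^{[r]}(\mu)$ can be compared to itself as an element of $L_{p,\infty}^{[1]}(\nu)$; the map $S$ is the formal identity on functions supported where $w>0$ (and zero off the $U_i$), which is clearly a lattice homomorphism. Next I would unwind $\|Sf\|_{L_{p,\infty}^{[1]}(\nu)}=\sup_{0<\nu(A)<\infty}\nu(A)^{\frac1p-1}\int_A|Sf|\,d\nu$ and, since the supremum over $A$ can be reduced using disjointness/homogeneity to unions $A=\bigcup_{i\in I}U_i$ (the part of $A$ outside $\bigcup U_i$ contributes nothing, and within each $U_i$ one checks the worst case is to take all of $U_i$ — this is the standard fact that for these Lorentz-type norms the extremal sets among subsets of a fixed measure are the ``level sets'' of the weight, handled just as in the proof of \Cref{t4.1}), arrive at an expression of the form $\big(\sum_{i\in I}c_i\mu(U_i)\big)^{\frac1p-1}\sum_{i\in I} c_i\big(\int_{U_i}|f|\,d\mu\big)$, or after plugging $f=\sum a_i\chi_{U_i}$ a finite-dimensional quantity in the vectors $(c_i\mu(U_i))_i$ and $(a_i)_i$.

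The heart of the matter is then to choose the $c_i$ so that simultaneously (a) $\|S\|\le 1$, i.e.\ the above quantity is $\le\|f\|_{L_{p,\infty}^{[r]}(\mu)}$ for \emph{all} $f$ and all index sets $I$, and (b) it is $\ge C^r$ when $f=\sum a_i\chi_{U_i}$. For (a), I expect to fix an index set $I$, compute $\mu(U_I):=\sum_{i\in I}\mu(U_i)$ and reduce to showing something like $\nu(\bigcup_{i\in I}U_i)^{\frac1p-1}\int_{\bigcup I}|f|\,d\nu \le \mu(U_I)^{\frac1p-\frac1r}\big(\int_{\bigcup I}|f|^r\,d\mu\big)^{1/r}$; by homogeneity and Hölder this should come down, with the right choice $c_i = a_i^{\,r-1}\,\mu(U_i)^{\frac1p-\frac1r}\cdot(\text{normalizing constant})$ — chosen precisely so that $c_i\mu(U_i)$ and $\int_{U_i}|f|\,d\mu$ for the test $f$ are ``aligned'' — to an instance of the inequality $\alpha(x)\le d\cdot x$ from \Cref{lem A2} with $s$ playing the role of (a power related to) $1/r$ and $\beta,b$ built from the normalized weights; the constraint $C\le 1$ is what guarantees $\|\beta\|_1,\|b\|_1\le 1$ so that \Cref{lem A2} applies and gives $\|d\|_1\le 1$, which is precisely the normalization needed for $\|S\|\le 1$. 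For (b), once the $c_i$ are pinned down, plugging $f=\sum a_i\chi_{U_i}$ and $A=\bigcup_{i=1}^n U_i$ into $\|Sf\|_{L_{p,\infty}^{[1]}(\nu)}$ should collapse, after cancellation, to exactly $C^r$ (the exponent $r$ appearing because we integrate $|f|$ rather than $|f|^r$ and the weight carries an $a_i^{r-1}$). I expect the main obstacle to be exactly this bookkeeping: getting the powers of $a_i$, $\mu(U_i)$, $p$, $r$ in the weight $c_i$ correct so that \Cref{lem A2} applies cleanly with the normalization $\|\beta\|_1,\|b\|_1\le 1$ forced by $C\le 1$, and simultaneously the test function hits the lower bound $C^r$ on the nose; the reduction of the sup over measurable $A$ to sups over unions of the $U_i$ is routine (same argument as in \Cref{t4.1}), and the lattice-homomorphism property of $S$ is immediate from it being a formal identity/multiplication map.
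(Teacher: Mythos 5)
Your ansatz --- $\nu = w\,d\mu$ for a \emph{single} weight $w$ and $S$ the formal identity --- is structurally too weak, and this is a genuine gap rather than bookkeeping. With the formal identity the weight in the pairing $\int_A|Sf|\,d\nu=\sum_i c_i\int_{A\cap U_i}|f|\,d\mu$ and the weight in the normalization $\nu(A)=\sum_i c_i\mu(A\cap U_i)$ are forced to coincide, and the two estimates then pull in incompatible directions. Indeed, testing contractivity on $\chi_B$ with $B\subseteq U_j$ gives $\nu(B)^{1/p}\le\|\chi_B\|_{L^{[r]}_{p,\infty}(\mu)}=\mu(B)^{1/p}$, so $\|S\|\le 1$ forces $w\le 1$ a.e.; consequently, for $f_0=\sum_i a_i\chi_{U_i}$,
\[ \sup_A\,\nu(A)^{-1/p^*}\int_A f_0\,d\nu \;\le\; \sup\Bigl\{\Bigl(\sum_i u_i\Bigr)^{-1/p^*}\sum_i a_iu_i \,:\, 0\le u_i\le\mu(U_i)\Bigr\}. \]
Now take $\mu(U_i)=1/n$ and $a_i=\lambda(i/n)^{-1/p}$ with $\lambda=(1-r/p)^{1/r}$, so that $C^r\to 1$ as $n\to\infty$. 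For $\sum u_i=\gamma$ the sum $\sum a_iu_i$ is maximized greedily and is about $\lambda p^*\gamma^{1/p^*}$, so the right-hand side above is about $\lambda p^*=p^*(1-r/p)^{1/r}$, which is strictly less than $1$ once $r$ is close to $p$. Hence no admissible single weight achieves $\|Sf_0\|\ge C^r$. (The ``multiplication by $w^{-1}$'' variant fails for the same reason: there contractivity forces $w\ge 1$ on average, and the resulting functional is dominated by the $w\equiv 1$ case, which is the one just excluded.)

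The fix, and what the paper actually does, is to decouple the two roles: $S$ is multiplication by $M^{r/p}a_i^{r-1}b_i/d_i$ on $U_i$ while $\nu$ carries the independent density $d_i/\mu(U_i)$ there. Their product is pinned down so that $\int_A|Sf|\,d\nu=M^{r/p-1}\sum_i a_i^{r-1}\int_{A\cap U_i}|f|\,d\mu$, which makes the test function evaluate to exactly $C^r$ (using $\nu(\bigcup U_i)=\|d\|_1\le1$); the remaining freedom in $d$ is then spent, via Lemma~\ref{lem A2} applied to $\beta_i=M^{r/p-1}\mu(U_i)a_i^r$ and $b_i=\mu(U_i)/M$ with $s=p^*(\tfrac1r-\tfrac1p)$, on dominating the H\"older bound $(\beta\cdot x)^{1/r^*}(b\cdot x)^{1/r-1/p}=\al(x)^{1/p^*}$ by $(d\cdot x)^{1/p^*}=\nu(A)^{1/p^*}$. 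So you correctly identified the role of Lemma~\ref{lem A2} and of the hypothesis $C\le1$ (it gives $\|\beta\|_1\le1$), but that mechanism only becomes available after the two weights are separated. A secondary point: for the upper bound you cannot reduce to $A$ being a union of whole $U_i$'s (that reduction is legitimate only for the lower bound on the specific function $f_0$); the paper keeps a general $A$ and applies H\"older to $\sum_i a_i^{r-1}\int_{A\cap U_i}|f|\,d\mu$ on each piece.
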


\begin{proof}
Define $M = \sum^n_{i=1}\mu(U_i)$, $b_i = \frac{\mu(U_i)}{M}$, $b= (b_i)^n_{i=1}$ and 
$\beta_i = M^{\frac{r}{p}-1}\mu(U_i)a_i^r$, $\beta = (\beta_i)^n_{i=1}$.  Clearly, $\beta, b\in \R^n_+$ and $\|b\|_1 =1$. 
Moreover, 
\[ \|\beta\|_1 =M^{\frac{r}{p}-1}\,\norm[3]{\sum^n_{i=1}a_i\chi_{U_i}}^r_{L_r(\mu)} = C^r \leq 1.
\]
Take $s = p^*(\frac{1}{r}-\frac{1}{p})$ and define $\al$ as in \eqref{eq: alpha}. Note that $\al(e) = \|\beta\|_1^{1-s}\,\|b\|_1^s = C^{r(1-s)}\leq 1$. Let $d$ be given by \Cref{lem A2}.
Define a measure $\nu$ on $\Sigma$ by $\nu(A) = \sum^n_{i=1}\frac{d_i\,\mu(A\cap U_i)}{\mu(U_i)}$ and a linear operator 
\[ S: L_{p,\infty}^{[r]}(\mu) \to L_{p,\infty}^{[1]}(\nu) \quad \text{by}\quad Sf = M^{\frac{r}{p}}\sum^n_{i=1}\frac{a_i^{r-1}b_i}{d_i}f\chi_{U_i}.\]
Clearly, $S$ is a lattice homomorphism.
Note that $\nu(\bigcup^n_{i=1}U_i) = \|d\|_1 \leq 1$. Hence,
\begin{align*} 
\norm[3]{S\intoo[3]{\sum^n_{i=1}a_i\chi_{U_i}}} &\geq   M^{\frac{r}{p}}\norm[3]{\sum^n_{i=1}\frac{a_i^rb_i}{d_i}\chi_{U_i}}_{L_1(\nu)} = M^{\frac{r}{p}}\sum^n_{i=1}a^r_ib_i \\&= M^{\frac{r}{p}-1}\norm[3]{\sum^n_{i=1}a_i\chi_{U_i}}^r_{L_r(\mu)} = C^r.
\end{align*}
On the other hand, let $f\in L_{p,\infty}^{[r]}(\mu)$ with $\|f\| \leq 1$.  If $A\in \Sigma$ with $0 < \nu(A) <\infty$ then
\begin{align*}
 \int_A|Sf| \,d\nu & = M^{\frac{r}{p}}\sum^n_{i=1}\frac{a_i^{r-1}b_i}{d_i}\int_{A\cap U_i} |f| \,d\nu
=  M^{\frac{r}{p}-1}\sum^n_{i=1} a_i^{r-1}\int_{A\cap U_i} |f| \,d\mu\\
&\leq M^{\frac{r}{p}-1}\norm[3]{\sum^n_{i=1}a^{r-1}_i\chi_{A\cap U_i}}_{L_{r^*}(\mu)}\, \norm[3]{f\chi_{\bigcup^n_{i=1}(A\cap U_i)}}_{L_r(\mu)}\\
&\leq  \left(\sum^n_{i=1}\beta_i\frac{\,\mu(A\cap{U_i})}{\mu(U_i)}\right)^{\frac{1}{r^*}}\, \left(\sum^n_{i=1} b_i\frac{\,\mu(A\cap{U_i})}{\mu(U_i)}\right)^{\frac{1}{r}-\frac{1}{p}}.
\end{align*}
Let $x = \bigl(\frac{\mu(A\cap U_i)}{\mu(U_i)}\bigr)^n_{i=1} \in \R^n_+$.
Since $1-s =\frac{p^*}{r^*}$, it follows from \Cref{lem A2} that
\[ \int_A|Sf| \,d\nu \leq (\beta\cdot x)^{\frac{1-s}{p^*}}\, (b\cdot x)^{\frac{s}{p^*}} = \al(x)^{\frac{1}{p^*}} \leq (x\cdot d)^{\frac{1}{p^*}} = \nu(A)^{\frac{1}{p^*}}.
\]
Hence, $\|Sf\| \leq 1$.
\end{proof}

We are now in disposition of proving the claim that we made at the beginning of the subsection.

\begin{thm}
Suppose that $(\Om,\Sigma,\mu)$ is a measure space and  $1< r < p$.
There is a set $\Gamma$ of measures on $\Sigma$ so that $L_{p,\infty}^{[r]}(\mu)$ embeds lattice isometrically into $\intoo[1]{\bigoplus_{\nu \in \Gamma} L_{p,\infty}^{[1]}(\nu)}_{\infty}.$
\end{thm}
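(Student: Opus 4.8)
The plan is to build the isometric embedding by assembling, for every admissible finite datum, a single weak-$L_p$ factor obtained from \Cref{lem weakpr into weakp1}, and then taking the $\ell_\infty$-sum over all such data with a suitable normalization, exactly as in the proof of the implication $(1)\Rightarrow(2)$ in \Cref{t4.1}. Concretely, one first reduces to simple functions: a function $f\in L_{p,\infty}^{[r]}(\mu)$ has $\sigma$-finite support, and the norm $\|f\|_{L_{p,\infty}^{[r]}}=\sup_{0<\mu(A)<\infty}\mu(A)^{\frac1p-\frac1r}(\int_A|f|^r\,d\mu)^{\frac1r}$ is, after truncating $f$ and the set $A$, approximated by configurations of the form $\sum_{i=1}^n a_i\chi_{U_i}$ with the $U_i$ disjoint of finite positive measure; here the key point is that the supremum defining the norm is attained in the limit along such finite disjoint configurations, by monotone convergence and the definition of the decreasing rearrangement. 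So it suffices to show that for each $\eps>0$ and each such normalized configuration there is a single measure $\nu$ and a norm-one lattice homomorphism $S:L_{p,\infty}^{[r]}(\mu)\to L_{p,\infty}^{[1]}(\nu)$ witnessing the norm up to $\eps$.

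The heart of the argument is precisely \Cref{lem weakpr into weakp1}: given a normalized configuration $\sum_{i=1}^n a_i\chi_{U_i}$ — that is, one with $C:=(\sum_{i=1}^n\mu(U_i))^{\frac1p-\frac1r}\|\sum a_i\chi_{U_i}\|_{L_r(\mu)}$ close to $1$ — the lemma produces $\nu$ and a lattice homomorphism $S$ with $\|S\|\leq 1$ and $\|S(\sum a_i\chi_{U_i})\|\geq C^r$. Letting $\Gamma$ index all (normalized, or $\eps$-normalized) finite disjoint configurations and defining $T:L_{p,\infty}^{[r]}(\mu)\to\intoo[1]{\bigoplus_{\nu\in\Gamma}L_{p,\infty}^{[1]}(\nu)}_\infty$ by $Tf=(S_\gamma f)_{\gamma\in\Gamma}$, each coordinate is a norm-one lattice homomorphism so $\|Tf\|\leq\|f\|$, and $T$ is a lattice homomorphism since each coordinate is. For the lower bound: given $f$, choose configurations approximating $f$ from below in the relevant sense so that the associated $C^r$ approaches $\|f\|_{L_{p,\infty}^{[r]}}$; the matching coordinate $S_\gamma$ then satisfies $\|S_\gamma f\|\geq\|S_\gamma(\sum a_i\chi_{U_i})\|-\|S_\gamma(f-\sum a_i\chi_{U_i})\|\geq C^r-\eps$, whence $\|Tf\|\geq\|f\|$. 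Thus $T$ is a lattice isometric embedding.

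The main obstacle, and the step requiring the most care, is the reduction to simple functions together with the bookkeeping of the exponent $r$: \Cref{lem weakpr into weakp1} guarantees $\|S(\sum a_i\chi_{U_i})\|\geq C^r$ rather than $\geq C$, so one cannot directly conclude the isometry from a single configuration with $C<1$; one must use configurations with $C\to 1$ (i.e. genuinely norm-achieving in the limit) so that $C^r\to 1$ as well, and one must verify that the $\|S_\gamma f\|$ for a fixed $f$ can be pushed arbitrarily close to $\|f\|_{L_{p,\infty}^{[r]}}$ by running over configurations built from $f$ itself — not merely configurations comparing well against $f$. Concretely, for $f\geq 0$ one takes $A$ with $\mu(A)^{\frac1p-\frac1r}(\int_A f^r)^{\frac1r}$ near $\|f\|$, replaces $f$ on $A$ by a simple function $g=\sum a_i\chi_{U_i}\le f$ with $U_i\subseteq A$ disjoint and $(\sum\mu(U_i))^{\frac1p-\frac1r}\|g\|_{L_r}$ still near $\|f\|$ (possible since $\int_A f^r=\sup\{\int g^r: g\le f\chi_A \text{ simple}\}$ and $\sum\mu(U_i)\le\mu(A)$ can be taken equal by exhausting $A$), rescales so that the resulting $C\le 1$ but $C\to\|f\|/\|f\|=1$ after normalizing $f$, and applies the lemma; the coordinate $S_\gamma$ then recovers $C^r\|f\|\to\|f\|$. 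The remaining verifications — that $T$ is well-defined into the $\ell_\infty$-sum, that it is injective, and that it is a lattice homomorphism — are routine, paralleling the corresponding parts of the proof of \Cref{t4.1}.
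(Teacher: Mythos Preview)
Your approach is essentially the paper's: index the factors by simple configurations $f=\sum a_i\chi_{U_i}$ with $C_f\leq 1$, apply \Cref{lem weakpr into weakp1} to each, and bundle the resulting homomorphisms into the $\ell_\infty$-sum. The upper bound and the lattice-homomorphism property go through as you describe.

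The lower-bound step, however, is not correct as written. You argue via the triangle inequality that
\[
\|S_\gamma f\|\geq\|S_\gamma g\|-\|S_\gamma(f-g)\|\geq C^r-\eps,
\]
but there is no reason for $\|S_\gamma(f-g)\|\leq\eps$: the simple function $g\leq f$ is chosen to nearly realize the $\|\cdot\|_{[r]}$-norm of $f$, not to approximate $f$ in norm, and $\|f-g\|_{[r]}$ can be as large as $\|f\|_{[r]}$ (think of $f=\chi_{U_1}+\chi_{U_2}$ with the supremum attained on $U_1$ alone). The correct argument, which the paper uses, exploits the order directly: since $0\leq g\leq f$ and $S_\gamma$ is a lattice homomorphism (hence positive), one has $0\leq S_\gamma g\leq S_\gamma f$ and therefore $\|S_\gamma f\|\geq\|S_\gamma g\|\geq C^r$ immediately, with no error term. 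You already set up $g\leq f$ in your concrete paragraph, so this fix is available; just replace the triangle inequality by the monotonicity of the norm. The paper also streamlines the approximation: first pick any simple $0\leq h\leq f$ with $\|h\|_{[r]}>1-\eps$, then observe that for a simple function the supremum defining $\|\cdot\|_{[r]}$ is \emph{attained} on some subcollection $I$ of the supporting sets, so $f':=\sum_{i\in I}a_i\chi_{U_i}$ lies in $\Gamma$ with $C_{f'}=\|h\|_{[r]}\geq 1-\eps$. This avoids the ``exhaust $A$'' manoeuvre.
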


\begin{proof}
Let $\Gamma$ be the set of simple functions $f = \sum^n_{i=1}a_i\chi_{U_i}$, where $n\in \N$, $a_i >0$, $0< \mu(U_i) <\infty$ and $(U_i)^n_{i=1}$ is a disjoint sequence in $\Sigma$ with   
\[ C_f = \intoo[3]{\sum^n_{i=1}\mu(U_i)}^{\frac{1}{p}-\frac{1}{r}}\norm[3]{\sum^n_{i=1}a_i\chi_{U_i}}_r \leq 1.\]
For each $f\in \Gamma$, according to Lemma \ref{lem weakpr into weakp1}, there is a lattice homomorphism $S_f: L_{p,\infty}^{[r]}(\mu) \to L_{p,\infty}^{[1]}(\nu_f)$ such that $\|S_f\| \leq 1$ and $\|S_ff\| \geq C^r_f$. 
Define \[T:  L_{p,\infty}^{[r]}(\mu) \to \intoo[3]{\bigoplus_{f\in \Gamma}L_{p,\infty}^{[1]}(\nu_f)}_\infty \quad\text{by} \quad Tg = (S_fg)_{f\in \Gamma}.\]
Clearly, $T$ is a lattice homomorphism and $\|T\|\leq 1$.
Suppose that $g\in L_{p,\infty}^{[r]}(\mu)_+$ with $\|g\|_{L_{p,\infty}^{[r]}}=1$.
For any $\ep >0$, there exists a non-negative simple function $h$ such that $g \geq h$ and $\|h\|_{L_{p,\infty}^{[r]}} \geq 1-\ep$.
Write $h = \sum^n_{i=1}a_i\chi_{U_i}$.  There exists $I\subseteq \{1,\dots,n\}$ so that 
\[\intoo[3]{\sum_{i\in I}\mu(U_i)}^{\frac{1}{p}-\frac{1}{r}}\norm[3]{\sum_{i\in I}a_i\chi_{U_i}}_r = \|h\|_{L_{p,\infty}^{[r]}}  \geq 1-\ep.\]
Then $f : = \sum_{i\in I}a_i\chi_{U_i}\in \Gamma$ with $C_f \geq 1-\ep.$
Thus,
\[ \|Tg\|\geq \|S_fg\| \geq \|S_ff\| \geq C_f^r \geq (1-\ep)^r,
\]
so that $\|Tg\|\geq 1$.  This shows that $T$ is an isometric embedding.
\end{proof}

\section{Factorization of convex and concave operators}\label{sec: abstract factorizations}

There are several results in the literature that provide methods of factoring $(p,q)$-convex/concave operators through $(p,q)$-convex/concave Banach lattices. Namely, in \cite[Proposition 5]{Reisner} it is shown that every $q$-concave operator factors through a $q$-concave Banach lattice, and the bi-adjoint of every $p$-convex operator factors through a $p$-convex Banach lattice. In \cite[Theorem 2.8.7]{MN} one can find analogous statements for cone $q$-absolutely summing operators and $q$-superadditive Banach lattices (i.e., $(q,1)$-concave operators and Banach lattices with lower $q$-estimates) and for $p$-majorizing operators and $p$-subadditive Banach lattices (i.e., $(\pinf)$-convex operators and Banach lattices with upper $p$-estimates). Finally, in \cite{RT}, the approach of \cite{MN} was further explored to cover the setting of $q$-concavity and $p$-convexity, establishing the existence of canonical factorizations for $q$-concave and $p$-convex operators that satisfy certain maximality/minimality properties, as well as a duality theory for such factorizations.\medskip 

The aim of this section is to extend these results to an abstract setting of operators satisfying general convexity and concavity conditions, recovering as a particular case the analogues of \cite{RT} for $(\pinf)$-convex and $(q,1)$-concave operators. We begin by considering operators whose domain or target space is a Banach lattice, which include concave and convex operators as particular cases, respectively. Namely, in \Cref{sec: geometry factorization} we identify suitable classes of factorizations for these operators in terms of the sets that they induce in the corresponding Banach lattice, and we devote \Cref{sec: duality convexity} to introducing two types of convex sets associated to such operators and investigating their duality relations (see \Cref{thm: polarity sets C and D}). Next, in \Cref{sec: concavity and convexity}, we define $(\tau,\sigma)$-convex and concave operators and Banach lattices as a general version of \Cref{def: convexity and concavity} and use the sets introduced in the previous subsection to establish that such operators admit factorizations through such Banach lattices. These factorizations also satisfy certain minimality and maximality properties with respect to the classes of factorizations introduced in \Cref{sec: geometry factorization}, as was the case in \cite[Section 2]{RT} (see Theorems \ref{thm: minimal factorization tau sigma convex operator} and \ref{thm: maximal factorization tau sigma concave operator}). In particular, we improve \cite[Theorem 2.8.7]{MN} in the following way:

\begin{thm}\label{thm: optimal factorization upe lpe case}
    Let $E$ be a Banach space and $X$ a Banach lattice.
    \begin{enumerate}
        \item Let $T:E\rightarrow X$ be a $(p,\infty)$-convex operator. There exist a $(p,\infty)$-convex Banach lattice $Y_0$ with constant 1, and operators $U_0:E\to Y_0$ and $V_0:Y_0\to X$, so that $V_0$ is an injective, interval preserving  lattice homomorphism such that $V_0(B_{Y_0})$ is closed in $X$ (i.e., $V_0$ is of Class $\cC$) and $T = V_0U_0$. Moreover, if there exist a $(\pinf)$-convex Banach lattice $Y$ and operators $U:E\to Y$ and $V:Y\to X$ such that $V$ is of Class $\cC$ and $T=VU$, then there is a Class $\cC$ operator $\vp:Y_0\to Y$ so that $U = \vp U_0$ and $V_0 = V\vp$.\\
        \begin{center}
        \begin{tikzcd}
        E  \ar[dr, "U"] \ar[ddr, "U_0"'] \ar[rr, "T"]  &    & X  \\
        &  Y \ar[ur, "V"] & \\
        &  Y_0 \ar[uur, "V_0"']  \ar[u, dashed, "\vp" near end] & 
        \end{tikzcd} 
        \end{center}
        \item Let $T:X\rightarrow E$ be a $(p,1)$-concave operator. There exist a $(p,1)$-concave Banach lattice $Y^0$ with constant 1, and operators $U^0:X\to Y^0$ and $V^0:Y^0\to E$, so that $U^0$ is a lattice homomorphism with dense range (i.e., $U^0$ is of Class $\cD$) and $T = V^0U^0$. Moreover, if there exist a $(p,1)$-concave Banach lattice $Y$ and operators $U:X\to Y$ and $V:Y\to E$ such that $U$ is of Class $\cD$ and $T=VU$, then there is a Class $\cD$ operator $\vp:Y\to Y^0$ so that $U^0 = \vp U$ and $V^0\vp = V$.\\
        \begin{center}
        \begin{tikzcd}
        X  \ar[dr, "U"] \ar[ddr, "U^0"'] \ar[rr, "T"]  &    & E  \\
        &  Y \ar[ur, "V"] \ar[d, dashed, "\vp" near start]  & \\
        &  Y^0 \ar[uur, "V^0"']   & 
        \end{tikzcd} 
        \end{center}
    \end{enumerate}
\end{thm}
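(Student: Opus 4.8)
The theorem is stated as a special case of the abstract factorization results (Theorems~\ref{thm: minimal factorization tau sigma convex operator} and \ref{thm: maximal factorization tau sigma concave operator}) developed in this section, so the plan is to describe how I would carry out the construction directly in the two concrete cases, emphasizing where the general machinery of Sections~\ref{sec: geometry factorization}--\ref{sec: concavity and convexity} does the work. I will treat part (1); part (2) follows by a completely dual argument, replacing $E\to X$ by $X\to E$, Class $\cC$ by Class $\cD$, convexity by concavity, and interval-preserving lattice homomorphisms with closed ball image by lattice homomorphisms with dense range.

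\textbf{Step 1: Construct the candidate space $Y_0$ via the induced convex set.} Starting from a $(p,\infty)$-convex operator $T:E\to X$ with $K^{(p,\infty)}(T)=1$ (rescale so the constant is exactly $1$), I would form the appropriate solid convex set in $X$ generated by $T(B_E)$ using the convexity modulus: concretely, consider
\[
W \;=\; \Bigl\{\, x\in X : |x|\le \bigvee_{i=1}^n |Tx_i| \text{ for some } x_i\in E,\ \sum_{i=1}^n\|x_i\|^p\le 1\,\Bigr\}^{--},
\]
i.e.\ the solid, convex, closed hull of the relevant $(p,\infty)$-averages of $T(B_E)$; this is the ``set $C$ or $D$'' of \Cref{sec: duality convexity} specialized to $\tau=\ell_p$, $\sigma=\ell_\infty$. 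One then defines $Y_0$ to be $X$ equipped with the Minkowski gauge of $W$, restricted to the ideal it generates (the span of $W$ with this gauge, completed). The point of using the $\bigvee$ (rather than $(\sum|\cdot|^p)^{1/p}$) averages is exactly the ``replace $\ell_p^n$ with $\ell_\infty^n$'' observation from the introduction: it builds the upper $p$-estimate with constant $1$ into $Y_0$ by construction, since a finite sup of elements of $W$ scaled appropriately stays in $W$. One must check $W$ is absorbing on the ideal it generates, that the gauge is a genuine lattice norm (solidity of $W$), and that $Y_0$ is complete — this is routine given solidity and the closedness built into the definition.

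\textbf{Step 2: Identify $U_0$ and $V_0$ and verify their properties.} Let $U_0:E\to Y_0$ be $T$ viewed with values in $Y_0$; boundedness (indeed $\|U_0\|\le 1$) is immediate since $T(B_E)\subseteq W$. Let $V_0:Y_0\to X$ be the formal inclusion; it is a lattice homomorphism because the norm on $Y_0$ is a lattice ideal norm coming from $X$, it is interval-preserving because $Y_0$ is (essentially) an order-ideal of $X$ with an order-continuous-type gauge, and $V_0(B_{Y_0})=\overline{W}$ is closed in $X$ by the closure in the definition of $W$ — so $V_0$ is of Class $\cC$. Injectivity of $V_0$ holds after quotienting by the null ideal of the gauge, if necessary. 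That $Y_0$ has upper $p$-estimate constant $1$ is then exactly Step 1's design. The factorization $T=V_0U_0$ is a tautology.

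\textbf{Step 3: Universality (minimality).} Given any factorization $T=VU$ with $Y$ a $(p,\infty)$-convex Banach lattice and $V:Y\to X$ of Class $\cC$, I need the dashed $\varphi:Y_0\to Y$. The key inclusion is $U(B_E)\subseteq V^{-1}(W)\cap B_Y^{\text{(suitable)}}$: since $V$ is an interval-preserving lattice homomorphism with closed ball image and $Y$ has upper $p$-estimate constant $1$, the set $V^{-1}(\overline W)$ contains all the $(p,\infty)$-averages of $U(B_E)$ — this uses that $V$ being interval-preserving lets one pull the lattice operations $\bigvee|Tx_i|=\bigvee|V U x_i|$ back through $V$, and that $Y$'s upper $p$-estimate controls their $Y$-norm. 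Hence the gauge of $W$ dominates (a multiple of) the $Y$-norm on the image, which lets one define $\varphi$ on the dense subspace $V_0^{-1}(\operatorname{span} W)$ and extend; one then checks $\varphi$ is a lattice homomorphism, that $V\varphi=V_0$ and $\varphi U_0=U$ on generators (hence everywhere), and that $\varphi$ inherits Class $\cC$ from the relation $V\varphi=V_0$ together with $V$ being Class $\cC$ (closedness of $\varphi(B_{Y_0})$ follows since $V$ is injective with closed ball image and $V\varphi(B_{Y_0})=V_0(B_{Y_0})=\overline W$ is closed).

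\textbf{Main obstacle.} The delicate point is not the existence of the factorization — that is a soft Minkowski-gauge construction — but verifying that $V_0$ (and the universal $\varphi$) are genuinely \emph{interval-preserving lattice homomorphisms with closed ball image}, i.e.\ Class $\cC$, \emph{with the optimal constant $1$}. Getting constant $1$ requires that the $\bigvee$-average construction in Step 1 is exactly right and that no slack is introduced when pulling averages back through $V$ in Step 3; this is precisely where the abstract $(\tau,\sigma)$-framework of \Cref{sec: concavity and convexity} earns its keep, since naive arguments would pick up constants like $\gamma_p$ from \Cref{thm: factorization Pisier operator}. The closedness of $V_0(B_{Y_0})$ in $X$ (as opposed to mere boundedness) is the second subtle point and is the reason one takes the closure in defining $W$; one must check this closure does not destroy solidity or the upper $p$-estimate, which it does not because both properties are preserved under closed solid convex hulls.
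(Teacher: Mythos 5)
Your proposal takes essentially the same route as the paper: the theorem is obtained by specializing the abstract $(\tau,\sigma)$-machinery to $\tau=\ell_p$ and $\sigma=c_0$ (resp.\ $\sigma=\ell_1$ for part (2)), namely $Y_0$ is the span of the closed convex solid hull $W=(C_T^{\tau,\sigma})^{\circ\circ}$ normed by its Minkowski gauge, $V_0$ is the formal inclusion, and minimality follows from the inclusion $V_0(B_{Y_0})\subseteq \|U\|\,V(B_Y)$ combined with the correspondence of \Cref{prop: class C correspondence}. The one step you describe as holding ``by construction'' --- that the \emph{closed convex} hull $W$ remains stable under $\ell_p$-weighted suprema, i.e.\ that $Y_0$ has upper $p$-estimate constant $1$ --- is precisely \Cref{prop: double polar of C is tau-sigma convex} and is the substantive part of the argument (it rests on the polar identity $(C_T^{\tau,\sigma})^{\circ}=D^{T^*}_{\tau^*,\sigma^*}$ of \Cref{thm: polarity sets C and D}), but you correctly flag this at the end as the point where the abstract framework carries the load.
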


Moreover, we show in \Cref{sec: duality factorizations} that these canonical factorizations satisfy a duality relation similar to the one presented in \cite[Theorem 5]{RT}. Finally, in \Cref{sec: factorization of simultaneous conditions}, we apply the abstract theory developed in the previous sections to factor operators which are simultaneously $(p,p_2)$-convex and $(q,q_2)$-concave, for $p_2\in \{p,\infty\}$ and $q_2\in\{1,q\}$. The case $p_2=p$ and $q_2=q$ was already covered in \cite[Theorem 15]{RT}, whereas Theorems \ref{thm: factorization of pp2 convex and q1 concave operator} and \ref{thm: factorization of pinfty convex and qq2 concave operator} cover the remaining three cases.

\subsection{Geometry of factorization}\label{sec: geometry factorization}

We start by establishing suitable classes of factorizations for convex and concave operators. 
The definitions are motivated by results in \cite{RT}. Let $X,Y$ be Banach lattices and let $E$ be a Banach space. Recall that a positive linear operator $T:X\to Y$ is {\em interval preserving}, respectively {\em almost interval preserving}, if $T[0,x] = [0,Tx]$, respectively, $T[0,x]$ is norm dense in $[0,Tx]$, for any $x\in X_+$ \cite[Definition 1.4.18]{MN}.  A linear operator $T:X\to Y$ is of 
\begin{enumerate}
\item {\em Class $\cC$} if $T$ is an injective, interval preserving  lattice homomorphism such that $T(B_X)$ is closed in $Y$. Note that $T$ is necessarily bounded, and hence $T(B_X)$ is bounded in $Y$.
\item {\em Class $\cD$} if $T$ is a lattice homomorphism with dense range. Note that in particular $T$ is almost interval preserving.
\end{enumerate}

Let $T:E\to X$ be a bounded linear operator. A triple $(Y,U,V)$ is a {\em Class $\cC$ factorization} of $T$ if $Y$ is a Banach lattice, $U:E\to Y$ and $V:Y\to X$ are bounded linear operators so that $V$ is of class $\cC$ and $T = VU$. 
A triple $(Y,U,V)$ is a {\em Class $\cD$ factorization} of a bounded linear operator $T:X\to E$ if $Y$ is a Banach lattice, $U:X\to Y$ and $V:Y\to E$ are bounded linear operators so that $U$ is of class $\cD$ and $T = VU$. 
Two Class $\cC$, respectively Class $\cD$, factorizations $(Y_i,U_i,V_i)$, $i=1,2$, of $T$ are {\em equivalent} if there is a Banach lattice isomorphism $\iota:Y_1\to Y_2$ so that $U_2 = \iota U_1$ and $V_1 = V_2 \iota$. Two closed bounded subsets $B_1, B_2$ of $E$ are {\em equivalent} if there is a  positive  constant $c$ so that $c^{-1}B_1 \subseteq B_2 \subseteq cB_1$. As we will see in this section, there is a correspondence between Class $\cC$ factorizations of an operator $T:E\to X$ and subsets of $X$ satisfying certain properties, and a similar situation operates for Class $\cD$ factorizations. Before proceeding with the precise statements, let us recall the following well-known fact:

\begin{lem}\label{lem: minkowski functional}
Let $E$ be a Banach space and let $B$ be a closed bounded convex circled subset of $E$.
Then the Minkowski functional $\rho$ of $B$ defines a complete norm on $\spn B$ so that $B$ is the closed ball with respect to $\rho$.
\end{lem}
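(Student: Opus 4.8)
\textbf{Proof proposal for Lemma~\ref{lem: minkowski functional}.}

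The plan is to verify directly that the Minkowski functional $\rho(x) = \inf\{t > 0 : x \in tB\}$ is a norm, and then to identify the key point --- completeness --- by relating $\rho$-convergence to norm convergence in $E$. First I would note that $\rho$ is finite on $\spn B$: since $B$ is absorbing in $\spn B$ (every element of $\spn B$ is a finite linear combination of elements of $B$, hence a scalar multiple of some element of $B$ after using convexity and circledness), $\rho(x) < \infty$ for all $x \in \spn B$. Positive homogeneity $\rho(\lambda x) = |\lambda|\rho(x)$ is immediate from circledness, and the triangle inequality $\rho(x+y)\le\rho(x)+\rho(y)$ follows from convexity in the standard way: if $x \in sB$ and $y \in tB$ with $s,t>0$, then $x+y = (s+t)\big(\tfrac{s}{s+t}\cdot\tfrac{x}{s} + \tfrac{t}{s+t}\cdot\tfrac{y}{t}\big) \in (s+t)B$. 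For definiteness, I would use boundedness of $B$: there is $M$ with $B \subseteq M B_E$, so $\rho(x) < \infty$ forces $\|x\|_E \le M\rho(x)$ once one checks $x \in \rho(x)B$ --- and the latter inclusion uses that $B$ is \emph{closed}, since $x \in tB$ for all $t > \rho(x)$ and letting $t \downarrow \rho(x)$ keeps $x$ in the closed set $\rho(x)B$ (for $\rho(x)>0$; the case $\rho(x)=0$ gives $x \in tB$ for all small $t>0$, hence $\|x\|_E \le tM \to 0$, so $x=0$). In particular $\rho(x) = 0 \iff x = 0$, so $\rho$ is a genuine norm, and the inequality $\|x\|_E \le M\rho(x)$ shows the inclusion map $(\spn B, \rho) \hookrightarrow E$ is continuous.

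The identification of $B$ as the closed $\rho$-ball is then the statement that $B = \{x : \rho(x) \le 1\}$. The inclusion $B \subseteq \{\rho \le 1\}$ is trivial. Conversely, if $\rho(x) \le 1$ then $x \in \rho(x) B \subseteq B$ when $\rho(x) > 0$ (using convexity and $0 \in B$ to absorb the scalar $\rho(x) \le 1$), and if $\rho(x) = 0$ then $x = 0 \in B$; here closedness of $B$ enters again via the inclusion $x \in \rho(x)B$ established above.

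The main obstacle --- and the only part requiring an actual argument --- is completeness of $(\spn B, \rho)$. The strategy I would use: given a $\rho$-Cauchy sequence $(x_n)$, the estimate $\|x_n - x_m\|_E \le M\rho(x_n - x_m)$ shows it is $\|\cdot\|_E$-Cauchy, so $x_n \to x$ in $E$ for some $x \in E$. One must show $x \in \spn B$ and $\rho(x_n - x) \to 0$. Fix $\eps > 0$ and choose $N$ so that $\rho(x_n - x_m) \le \eps$ for $n, m \ge N$, i.e.\ $x_n - x_m \in \eps B$ for all such $n,m$. Holding $n \ge N$ fixed and letting $m \to \infty$ in $E$: since $\eps B$ is $\|\cdot\|_E$-closed (it is closed and bounded, being a scalar multiple of $B$), we get $x_n - x \in \eps B$. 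In particular $x - x_N \in \eps B \subseteq \spn B$, so $x \in \spn B$, and $\rho(x_n - x) \le \eps$ for all $n \ge N$, which is exactly $\rho$-convergence $x_n \to x$. Thus every $\rho$-Cauchy sequence converges in $\rho$, so $\rho$ is complete. The whole proof hinges on the closedness and boundedness hypotheses on $B$: boundedness gives the continuous embedding into $E$ (hence a limit candidate), and closedness lets one transfer that limit back into the $\rho$-structure.
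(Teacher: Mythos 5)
Your proposal is correct and follows essentially the same route as the paper: boundedness of $B$ gives the continuous embedding $(\spn B,\rho)\hookrightarrow E$ (hence a limit candidate in $E$), and closedness of $B$ and its scaled convex combinations transfers that limit back to the $\rho$-topology. The only cosmetic difference is that you verify completeness directly on Cauchy sequences, whereas the paper uses the absolutely-summable-series criterion; both arguments are sound.
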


\begin{proof}
Let $C$ be a finite constant so that $B\subseteq CB_E$. Recall that the Minkowski functional of $B$ is given by $\rho(y)=\inf\{\lambda>0: y\in \lambda B\}$ for every $y\in E$ ($y\notin \spn B$ if and only if $\rho(y)=\infty)$.
If $y\in \spn B$ and $\rho(y) = 0$, then 
\[ y \in \bigcap_{\lambda >0} \lambda B\subseteq \bigcap_{\lambda>0}\lambda C B_E = \{0\}.\]
Hence $\rho$ is a norm on $\spn B$.\medskip

Suppose $(y_n)$ is a sequence in $\spn B$ so that $\sum \rho(y_n) <1$.
Then there exists a sequence $(\lambda_n)$ so that $y_n \in \lambda_n B$  for all $n$ and $\sum \lambda_n <1$.
In particular, $\|y_n\| \leq \lambda_nC$ and hence $\sum \|y_n\| <C$.  So $\sum y_n$ converges in $E$ to some $y$.
Since 
\[\sum^m_{n=1}y_n \in \sum^m_{n=1}\lambda_n\cdot B \subseteq B\] for all $m$ and $B$ is closed, $y\in B$.
If $k >m$,
\[ \sum^k_{n=m+1}y_n  \in \sum^k_{n=m+1}\lambda_n\cdot B \subseteq \sum^\infty_{n=m+1}\lambda_n\cdot B.
\]
We take the limit as  $k\to \infty $ in the space $E$. Then  $y - \sum^m_{n=1}y_n\in \sum^\infty_{n=m+1}\lambda_n\cdot B.$  Hence, $\rho(y - \sum^m_{n=1}y_n)\to 0$ as $m\to \infty$.
This proves that $\rho$ is complete.\medskip

If $y\in B$, then obviously $\rho(y) \leq 1$.
Conversely, suppose that $\rho(y) \leq 1$.  For any $\lambda>1$, $\frac{y}{\lambda}\in B$.
Since $B$ is closed in $E$, $y\in B$, as required.
\end{proof}

Regarding Class $\cC$ factorizations, we have the following correspondence:

\begin{prop}\label{prop: class C correspondence}
Let $T:E\to X$ be a nonzero bounded linear operator.
\begin{enumerate}
\item The map $(Y,U,V) \mapsto V(B_Y)$ is a surjective correspondence between the set of Class $\cC$ factorizations of $T$ and the set of  closed bounded convex solid subsets of $X$ that contain $cT(B_E)$ for some $c>0$. 
\item Let $(Y_i,U_i,V_i)$, $i=1,2$, be two Class $\cC$ factorizations of $T$. The following are equivalent:
\begin{enumerate}
\item There is a bounded linear map $\vp:Y_1\to Y_2$ such that $U_2 = \vp U_1$ and $V_1 = V_2 \vp $.  In this case, $\vp$ is necessarily of Class $\cC$.
\item There is a positive constant $c$ so that $V_1(B_{Y_1}) \subseteq cV_2(B_{Y_2})$.
\end{enumerate}
As a result, two Class $\cC$ factorizations $(Y_1,U_1,V_1)$ and $(Y_2,U_2,V_2)$ are equivalent if and only if $V_1(B_{Y_1})$ and $V_2(B_{Y_2})$ are equivalent.
\end{enumerate}
\end{prop}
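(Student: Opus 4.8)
The plan is to set up a dictionary between Class $\cC$ operators with target $X$ and closed bounded convex solid subsets of $X$: to a Class $\cC$ operator $V\colon Y\to X$ associate $V(B_Y)$, and to a closed bounded convex solid set $B\subseteq X$ associate the inclusion of $(\spn B,\rho_B)$ into $X$, where $\rho_B$ is the Minkowski functional of $B$. By \Cref{lem: minkowski functional} (applicable since a solid set is circled), $\rho_B$ is a complete norm on $\spn B$ with closed unit ball $B$. I would first record that a Class $\cC$ operator $V$ is always a lattice isometry of $Y$ onto $(\spn V(B_Y),\rho_{V(B_Y)})$: injectivity makes $V\colon Y\to\spn V(B_Y)$ bijective, and $\rho_{V(B_Y)}(Vy)=\|y\|_Y$ because $V(B_Y)$ is the unit ball of $\rho_{V(B_Y)}$. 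With this in hand the two assignments are mutually inverse up to lattice isometry, and the rest of the proof is bookkeeping on top of it.

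\textbf{Part (1).} For the forward direction I would check that if $(Y,U,V)$ is a Class $\cC$ factorization of $T$, then $V(B_Y)$ is closed (hypothesis on $V$), bounded ($V$ bounded), convex (linearity) and solid, and that it contains $c\,T(B_E)$ with $c=\|U\|^{-1}$ -- valid because $T\ne 0$ forces $U\ne 0$, and $T(B_E)=V(U(B_E))\subseteq\|U\|\,V(B_Y)$. For the converse, given a closed bounded convex solid $B$ with $c\,T(B_E)\subseteq B$, I would verify that $\spn B$ is an ideal of $X$ and $\rho_B$ a lattice norm, so that $Y:=(\spn B,\rho_B)$ is a Banach lattice; the inclusion $V\colon Y\to X$ is then an injective lattice homomorphism, it is interval preserving because $Y$ is an ideal (an order interval $[0,Vy]$ computed in $X$ already lies in $Y$), and $V(B_Y)=B$ is closed, so $V$ is of Class $\cC$; the corestriction $U\colon E\to Y$, $Ux:=Tx$, is well defined and bounded with $\|U\|\le c^{-1}$ since $T(B_E)\subseteq c^{-1}B$, and $VU=T$. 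This exhibits $B$ as $V(B_Y)$ for a Class $\cC$ factorization, giving the asserted surjectivity.

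\textbf{Part (2).} The implication $(a)\Rightarrow(b)$ is immediate from $V_1(B_{Y_1})=V_2\vp(B_{Y_1})\subseteq\|\vp\|\,V_2(B_{Y_2})$, and that $\vp$ is automatically of Class $\cC$ will be obtained by cancelling the injective operator $V_2$: from $V_2(\vp|y|)=V_1|y|=|V_1y|=|V_2\vp y|=V_2|\vp y|$ one gets that $\vp$ is a lattice homomorphism, the analogous computation with order intervals (using that $V_1=V_2\vp$ is interval preserving) gives that $\vp$ is interval preserving, injectivity of $\vp$ follows from that of $V_1$, and closedness of $\vp(B_{Y_1})$ in $Y_2$ is transported from closedness of $V_1(B_{Y_1})$ in $X$ through the injective Class $\cC$ operator $V_2$. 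For $(b)\Rightarrow(a)$ I would invoke the dictionary: identifying $Y_i$ with $(\spn B_i,\rho_{B_i})$ and $V_i$ with the inclusion (via lattice isometries), the hypothesis $B_1\subseteq cB_2$ says $\spn B_1\subseteq\spn B_2$ with $\rho_{B_2}\le c\,\rho_{B_1}$ there, so the inclusion $\vp\colon Y_1\to Y_2$ is a bounded operator with $V_2\vp=V_1$; then $V_2\vp U_1=V_1U_1=T=V_2U_2$ and injectivity of $V_2$ force $\vp U_1=U_2$. Finally, the equivalence statement follows by applying $(a)\Leftrightarrow(b)$ in both directions: mutual inclusions of the image balls yield bounded maps $\vp\colon Y_1\to Y_2$ and $\psi\colon Y_2\to Y_1$ intertwining the factorization data, and $V_1\psi\vp=V_2\vp=V_1$ together with injectivity of $V_1$ force $\psi\vp=\mathrm{id}_{Y_1}$, and symmetrically $\vp\psi=\mathrm{id}_{Y_2}$, so $\vp$ is a lattice isomorphism exhibiting the two factorizations as equivalent; the reverse implication is routine.

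\textbf{Main obstacle.} I expect the genuinely delicate point to be the solidity of $V(B_Y)$ in the forward part of (1) (and, dually, that $\spn B$ is an ideal of $X$): given $x\in X$ with $|x|\le|Vy|=V|y|$ for some $y\in B_Y$, interval preservation yields $z_1,z_2\in[0,|y|]$ with $Vz_1=x^+$ and $Vz_2=x^-$; since $Vz_1\wedge Vz_2=x^+\wedge x^-=0$ and $V$ is an injective lattice homomorphism, $z_1\wedge z_2=0$, whence $|z_1-z_2|=z_1+z_2=z_1\vee z_2\le|y|$, so $z_1-z_2\in B_Y$ maps onto $x$. This is exactly where all three clauses defining Class $\cC$ -- injective, interval preserving, lattice homomorphism -- must be used together, and it is easy to mishandle by conflating order intervals computed in $Y$ with those computed in $X$.
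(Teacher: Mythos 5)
Your proof is correct and follows essentially the same route as the paper: the forward direction of (1) by checking the properties of $V(B_Y)$, the converse via the Minkowski functional of $B$ on $\spn B$, and (2) by cancelling the injective operator $V_2$ to produce $\vp=V_2^{-1}V_1$. The only substantive difference is that you spell out the solidity of $V(B_Y)$ (via the disjoint decomposition $x=Vz_1-Vz_2$), which the paper simply asserts as a consequence of $V$ being of Class $\cC$; your argument for that point is valid.
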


\begin{proof}
(1)  If $(Y,U,V)$ is a Class $\cC$ factorization of $T$, then $V$ is of Class $\cC$ and hence $V(B_Y)$ is a closed bounded convex and solid subset of $X$. Since $T=VU$ is nonzero, $T(B_E) = VU(B_E) \subseteq \|U\|V(B_Y)$. Thus, $V(B_Y)$ contains $cT(B_E)$ with $c = \frac{1}{1+\|U\|}>0$.\medskip

Conversely, suppose that $B$ is a closed bounded convex solid subset of $X$ that contains $cT(B_E)$ for some $c>0$.
Let $Y = \spn B$ and let $\rho$ be the Minkowski functional of $B$ on $Y$.
Then $(Y,\rho)$ is a Banach space by Lemma \ref{lem: minkowski functional}. Since $B$ is solid, $Y$ is a Banach lattice in the order inherited from $X$. Define $U:E\to Y$ by $Ux =Tx$.  Since $T(E)\subseteq Y$, $U$ is a well-defined linear operator.
Moreover, $U(B_E) =T(B_E) \subseteq \frac{1}{c}B$. Hence $U$ is bounded.
Let  $V:Y\to X$ be the formal inclusion. 
Then $V$ is of class $\cC$.  Clearly, $T=VU$. Thus, $(Y,U,V)$ is a Class $\cC$ factorization of $T$.\medskip

We have shown that the correspondence $(Y,U,V)\mapsto V(B_Y)$ is well defined and surjective between the stated sets.
Let us verify injectivity. Assume that $(Y_i,U_i,V_i)$ are Class $\cC$ factorizations so that $V_1(B_{Y_1}) = V_2(B_{Y_2})$. Then, since $V_2$ is an injective lattice homomorphism, the operator $\iota= V_2^{-1}V_1:Y_1\rightarrow Y_2$ is a well-defined lattice homomorphism and $\iota B_{Y_1}=B_{Y_2}$, so it is contractive. Similarly, $\iota^{-1}= V_1^{-1}V_2: Y_2\rightarrow Y_1$ is a well-defined and contractive lattice homomorphism. It follows that $Y_1$ and $Y_2$ are isometric Banach lattices, and moreover $U_2 = \iota U_1$ and $V_1 = V_2 \iota$, so both Class $\cC$ factorizations are equivalent.

\medskip

\noindent (2) (a) $\implies$ (b)  From the assumption, we have $V_1(B_{Y_1}) = V_2\vp(B_{Y_1}) \subseteq \|\vp\|V_2(B_{Y_2})$. 
Let us verify that $\vp$ is of Class $\cC$.
Recall that $V_2$ is injective. For any $v\in Y_1$, 
\[V_2|\vp v| = |V_2\vp v| = |V_1v| = V_1|v| = V_2\vp|v|.\]
Hence $|\vp v| = \vp|v|$.
Similarly, if $v\in (Y_1)_+$, then
\[ V_2[0,\vp v] = [0,V_2\vp v] = [0,V_1v]= V_1[0,v] = V_2(\vp[0,v]).\]
Thus $[0,\vp v] = \vp[0,v]$.  This shows that $\vp$ is an interval preserving lattice homomorphism. It is injective since $V_1 = V_2\vp$ is injective.
Finally, $\vp(B_{Y_1}) = V_2^{-1}(V_1(B_{Y_1}))$ is closed since $V_1(B_{Y_1})$ is closed. Therefore, $\vp$ is of Class $\cC$.\medskip

\noindent (b) $\implies$ (a)  Since $V_1(Y_1) \subseteq V_2(Y_2)$ and $V_2$ is injective, $\vp=V^{-1}_2V_1:Y_1\to Y_2$ is a well-defined linear operator.
If $u\in B_{Y_1}$, then $\vp(u) = V_2^{-1}V_1u \in V_2^{-1}(V_1(B_{Y_1})) \subseteq cB_{Y_2}$.
Thus, $\vp$ is bounded.
By the definition of $\vp$, $V_1 = V_2\vp$.
If $x\in E$, then 
\[\vp U_1x = V^{-1}_2V_1U_1x = V^{-1}_2Tx = V^{-1}_2V_2 U_2x = U_2x.\]
Thus $\vp U_1 = U_2$.
The final statement follows by applying (2) in both directions.
\end{proof}

On the other hand, Class $\cD$ factorizations can be characterized as follows:

\begin{prop}\label{prop: class D correspondence}
Let $T:X\to E$ be a nonzero bounded linear operator.
\begin{enumerate}
\item The map $(Y,U,V)\mapsto U^{-1}(B_Y)$ is a surjective correspondence between the set of Class $\cD$ factorizations of $T$ and the set of closed convex solid subsets $S$ of $X$ so that  $c^{-1}B_X\subseteq S\subseteq cT^{-1}(B_E)$ for some $c\geq1$.
\item Let $(Y_i,U_i,V_i)$, $i=1,2$, be two Class $\cD$ factorizations of $T$. The following are equivalent:
\begin{enumerate}
\item There is a bounded linear map $\vp:Y_1\to Y_2$ such that $U_2 = \vp U_1$ and $V_1 = V_2 \vp $. In this case, $\vp$ is necessarily of Class $\cD$.
\item There is a positive constant $c$ so that $U^{-1}_1(B_{Y_1}) \subseteq cU^{-1}_2(B_{Y_2})$.
\end{enumerate}
As a result, two Class $\cD$ factorizations $(Y_1,U_1,V_1)$ and $(Y_2,U_2,V_2)$ are equivalent if and only if $U_1^{-1}(B_{Y_1})$ and $U_2^{-1}(B_{Y_2})$ are equivalent.
\end{enumerate}
\end{prop}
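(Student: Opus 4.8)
The plan is to mirror the proof of \Cref{prop: class C correspondence}, replacing the \emph{span} construction used there by a \emph{quotient} construction, since for Class $\cD$ the distinguished operator $U$ is a lattice homomorphism with dense range rather than an injection. Throughout, for a nonzero bounded $T:X\to E$ and a closed convex solid set $S\subseteq X$ with $c^{-1}B_X\subseteq S\subseteq cT^{-1}(B_E)$ for some $c\geq1$, I write $\rho_S$ for the Minkowski functional of $S$ on $X$; from $c^{-1}B_X\subseteq S$ one gets $\rho_S\leq c\|\cdot\|$, so $\rho_S$ is a continuous lattice seminorm, and since $S$ is closed, convex and circled, the argument in \Cref{lem: minkowski functional} gives $S=\{x\in X:\rho_S(x)\leq1\}$.

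For the forward direction of (1) I would check that, given a Class $\cD$ factorization $(Y,U,V)$ of $T$, the set $U^{-1}(B_Y)$ is closed (as $U$ is bounded), convex and circled (as $U$ is linear), and solid (as $U$ is a lattice homomorphism: $|x|\leq|x'|$ and $\|Ux'\|\leq1$ force $|Ux|=U|x|\leq U|x'|=|Ux'|$, hence $\|Ux\|\leq1$); the sandwich is immediate, with $c=\max\{1,\|U\|,\|V\|\}$, since $\|Ux\|\leq1$ whenever $x\in\frac1{\|U\|}B_X$ and $\|Tx\|=\|VUx\|\leq\|V\|$ whenever $x\in U^{-1}(B_Y)$. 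For surjectivity, given such an $S$ I put $N=\{x:\rho_S(x)=0\}$; solidity of $S$ makes $N$ an order ideal, so $X/N$ is a normed Riesz space under the quotient of $\rho_S$, and I take $Y$ to be its Banach lattice completion and $U:X\to Y$ the quotient map followed by the inclusion, which is of Class $\cD$. The key point is that $\rho_S(x)=0$ forces $x\in\lambda S\subseteq\lambda c\,T^{-1}(B_E)$ for every $\lambda>0$, hence $Tx=0$; so $Tx$ depends only on the $N$-class of $x$, and the estimate $\|Tx\|\leq c\,\rho_S(x)$ (coming from $S\subseteq cT^{-1}(B_E)$) shows that the induced map on $U(X)$ is bounded and extends to $V:Y\to E$ with $T=VU$. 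Since $\|Ux\|_Y=\rho_S(x)$, we recover $U^{-1}(B_Y)=\{x:\rho_S(x)\leq1\}=S$. For injectivity of the correspondence, if $U_1^{-1}(B_{Y_1})=U_2^{-1}(B_{Y_2})=:S$ then $\|U_1x\|=\rho_S(x)=\|U_2x\|$ for all $x$, so $U_1x\mapsto U_2x$ is a surjective lattice isometry between the dense sublattices $U_1(X)$ and $U_2(X)$, which extends to a Banach lattice isomorphism $\iota:Y_1\to Y_2$ with $\iota U_1=U_2$ and, by density, $V_1=V_2\iota$.

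For (2), the implication (a)$\Rightarrow$(b) is the estimate $\|U_2x\|=\|\vp U_1x\|\leq\|\vp\|\,\|U_1x\|$, i.e., $U_1^{-1}(B_{Y_1})\subseteq\|\vp\|\,U_2^{-1}(B_{Y_2})$; and $\vp$ is of Class $\cD$ because $\vp(Y_1)\supseteq\vp U_1(X)=U_2(X)$ is dense in $Y_2$, while $|\vp y|=\vp|y|$ holds on the dense sublattice $U_1(X)$ (there $\vp|U_1x|=U_2|x|=|U_2x|=|\vp U_1x|$) and hence on all of $Y_1$ by continuity of $\vp$ and of the lattice operations. For (b)$\Rightarrow$(a), the inclusion $U_1^{-1}(B_{Y_1})\subseteq cU_2^{-1}(B_{Y_2})$ is, by homogeneity, the same as $\|U_2x\|\leq c\|U_1x\|$ for all $x\in X$; this makes $U_1x\mapsto U_2x$ a well-defined bounded operator on the dense subspace $U_1(X)$, which extends to $\vp:Y_1\to Y_2$ with $\vp U_1=U_2$, and then $V_2\vp=V_1$ holds on $U_1(X)$, hence everywhere. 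The closing equivalence then follows by bookkeeping: an equivalence of factorizations is the case of an invertible $\vp$, which by (a) in both directions forces $U_1^{-1}(B_{Y_1})$ and $U_2^{-1}(B_{Y_2})$ to be equivalent; conversely, equivalent sets yield $\vp:Y_1\to Y_2$ and $\psi:Y_2\to Y_1$ via (b)$\Rightarrow$(a), and $\psi\vp$, $\vp\psi$ agree with the identities on the dense sublattices $U_1(X)$, $U_2(X)$, so $\vp$ is a Banach lattice isomorphism implementing an equivalence.

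The step I expect to be the main obstacle is the surjectivity part of (1). In the Class $\cC$ case the distinguished operator is injective, so one just restricts the ambient norm to a sublattice; here $U$ need not be injective, which forces the passage to the quotient $X/N$ and three things to be verified carefully: that $N$ is an order ideal so that $X/N$ is a Riesz space, that $\rho_S$ descends to a genuine \emph{lattice} norm whose completion is a Banach lattice with $U$ a Class $\cD$ operator, and --- most delicately --- that $V$ is \emph{well defined} on the quotient, which is precisely where the hypothesis $S\subseteq cT^{-1}(B_E)$ is used. The remainder is a routine transcription of the arguments for \Cref{prop: class C correspondence}.
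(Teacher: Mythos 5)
Your proposal is correct and follows essentially the same route as the paper: the forward direction of (1) is verified directly, surjectivity is obtained by quotienting $X$ by the null ideal of the Minkowski functional of $S$ and completing, injectivity and both implications in (2) come from the induced map $U_1x\mapsto U_2x$ on the dense range, extended by continuity. The only cosmetic difference is that you identify $\|U_ix\|_{Y_i}$ with $\rho_S(x)$ to get an isometry at once, where the paper argues contractivity in both directions; this changes nothing of substance.
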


\begin{proof}
(1) Let $(Y,U,V)$ be a Class $\cD$ factorization of $T$.  
Since $U$ is a lattice homomorphism, $U^{-1}(B_Y)$ is a closed convex solid subset of $X$.
By the boundedness of $U$, $U(B_X) \subseteq \|U\|B_Y$. Hence $B_X \subseteq \|U\|U^{-1}(B_Y)$.
Note that $T\neq 0$ implies that $U\neq 0$, so $\|U\| >0$. \medskip
Also, $T(U^{-1}(B_Y)) = VU(U^{-1}(B_Y))= V(B_Y) \subseteq \|V\|B_E$.  Thus $U^{-1}(B_Y)\subseteq \|V\|T^{-1}(B_E)$.  Again, $\|V\| >0$ since $V\neq 0$.
\medskip

Conversely, suppose that $B$ is a closed convex solid subset of $X$ so that $c^{-1}B_X\subseteq B \subseteq T^{-1}(B_E)$ for some $c\geq 1$.
The Minkowski functional $\rho$ of $B$ is a lattice seminorm on $X$. The kernel $I:= \ker \rho$ is a vector lattice ideal in $X$; $\rho$ induces a lattice norm $\wh{\rho}$ on the vector lattice $X/I$ by
$\wh{\rho}(Qu) = \rho(u)$, where $Q:X\to X/I$ is the quotient map.
Let $Y$ be the completion of $X/I$ with respect to $\wh{\rho}$.
Then $Y$ is a Banach lattice.  
Let $U:= Q:X\to X/I \subseteq Y$. Then $U$ is a lattice homomorphism; in particular, $U$ is bounded.
By definition, $U$ has dense range. Hence $U$ is of Class $\cD$.
Define $V:X/I \to E$ by $V(Qu) = Tu$.
Note that $u\in I$ implies that $u\in \lambda B$ for all $\lambda >0$. Since $T(B)\subseteq B_E$, $Tu = 0$.
Thus, $V$ is well defined.
If $\wh{\rho}(Qu) <1$, then $u\in B$.  Hence, $\|Tu\| \leq 1$. So, $V$ is bounded.
Therefore, $V$ extends to a bounded linear operator from $Y$ into $E$, still denoted by $V$.
By definition, $VU = T$. Thus, $(Y,U,V)$ is a Class $\cD$ factorization of $T$. \medskip

To check the injectivity of the correspondence, assume that $(Y_i,U_i,V_i)$, $i=1,2$, are two Class $\cD$ factorizations of $T$ such that $U_1^{-1}(B_{Y_1})=U_2^{-1}(B_{Y_2})$, and let us define $\iota: U_1(X)\rightarrow Y_2$ by $\iota y=U_2x$ for every $y=U_1x\in U_1(X)$. Observe that
\[\ker U_1=\bigcap_{\lambda>0}\lambda U_1^{-1}(B_{Y_1})= \bigcap_{\lambda>0}\lambda U_2^{-1}(B_{Y_2})=\ker U_2.\]
Therefore, whenever $U_1x=U_1x'$ it follows that $U_2x=U_2x'$, and hence $\iota$ is well defined. Moreover, if $y=U_1x\in B_{Y_1}$, then $x\in U_1^{-1}(B_{Y_1})=U_2^{-1}(B_{Y_2})$ and $\iota y=U_2 x\in B_{Y_2}$, so $\iota$ is contractive. Since $U_1$ and $U_2$ are lattice homomorphisms, $U_1(X)$ is a sublattice, and for every $y=U_1x\in U_1(X)$, $|y|=U_1|x|$ and $\iota |y|=U_2|x|=|U_2x|=|\iota y|$, so $\iota$ is a lattice homomorphism. Finally, $U_1$ has dense range, so $\iota$ extends to $Y_1$ as a contractive lattice homomorphism. Interchanging the roles of $U_1$ and $U_2$, we can show that $\iota$ has an inverse operator, so it is an onto isometry from $Y_1$ into $Y_2$. Clearly, $U_2=\iota U_1$ and $V_1=V_2\iota$, so the factorizations are equivalent.

\medskip

\noindent (2) (a) $\implies$ (b) If $u\in X$ and $U_1u\in B_{Y_1}$, then $U_2u = \vp U_1u \in \vp(B_{Y_1}) \subseteq \|\vp\|B_{Y_2}$. Hence $U^{-1}_1B_{Y_1} \subseteq \|\vp\|U^{-1}_2(B_{Y_2})$.
Let us verify that $\vp$ is of Class $\cD$.
For any $u\in X$, 
\[ |\vp U_1 v| = |U_2v| = U_2|v| = \vp U_1|v| = \vp|U_1 v|.\]
Since $U_1(X)$ is dense in $Y_1$, this proves that $\vp$ is a lattice homomorphism.
As $\vp (Y_1) \supseteq  U_2 (X)$, $\vp$ has dense range in $Y_2$.  Therefore, $\vp$ is of Class $\cD$.\medskip

\noindent(b) $\implies$ (a)  If $u\in X$ and $U_1u = 0$, then $u\in \lambda U^{-1}_1(B_{Y_1})$ for all $\lambda >0$. Hence $u\in \lambda U^{-1}_2(B_{Y_2})$ for all $\lambda>0$. Thus, $U_2u =0$.
This shows that the map $\vp:U_1(X)\to Y_2$ given  by $\vp(U_1u) = U_2u$ is well defined.
Moreover, if $U_1u\in B_{Y_1}$, then $u\in U_1^{-1}(B_{Y_1}) \subseteq cU_2^{-1}(B_{Y_2})$. Hence
$\vp(U_1u) = U_2u \in cB_{Y_2}$.  Thus, $\vp:U_1(X) \to Y_2$ is bounded.  Denote its bounded extension to $Y_1$ by $\vp$ again.  
By definition, $\vp U_1 = U_2$.
If $u\in X$, then $V_2\vp U_1 u = V_2 U_2 u = Tu = V_1U_1 u$.
As $U_1(X)$ is dense in $Y_1$, $V_2\vp = V_1$. This completes the proof of the proposition.
\end{proof}

\subsection{Duality of convex sets}\label{sec: duality convexity}

Krivine's functional calculus (cf.~\cite[Section 1.d]{LT2}) allows us to define expressions such as $\intoo{\sum_{i=1}^n |x_i|^{p}}^\frac{1}{p}$ for a finite quantity of elements $x_1,\ldots,x_n$ in an arbitrary Banach lattice $X$. However, we could replace the $p$-sum of $n$ elements for an arbitrary $n\in \N$, by any norm $\sigma$ on $c_{00}$, as the only requirements for functional calculus to exist is that the function on $\R^n$ that we want to represent is positively homogeneous and continuous. Therefore, we will use this fact to study convex and concave operators and Banach lattices in a more general sense. \medskip

Throughout the following subsections, let $\sigma:c_{00}\to \R$ be a lattice norm on $c_{00}$ that is
\begin{enumerate}
\item {\em Symmetric}: $\sigma((s_i)_i) = \sigma((s_{\pi(i)})_i)$ for any permutation $\pi$ of $\N$.
\item {\em Normalized}: $\sigma(1,0,\dots) =1$.
\end{enumerate}
Denote the norm on $c_{00}$ dual to $\sigma$ by $\sigma^*$. That is, for any $(s_i)_i\in c_{00}$,
\[ \sigma^*((s_i)_i) = \sup\cbr[3]{\sum_i s_it_i: (t_i)_i\in c_{00}, \sigma((t_i)_i) \leq 1}.\]
It is clear that $\sigma^*$ is a symmetric normalized lattice norm on $c_{00}$.
Moreover, taking $\sigma^{**} = (\sigma^*)^*$, we have  $\sigma^{**} = \sigma$.
Let $(e_i)$ be the unit vector basis of $c_{00}$.
For each $n\in\N$, define $\sigma_n:\R^n\to \R$ by $\sigma_n(t_1,\dots,t_n) = \sigma(\sum^n_{i=1}t_ie_i)$. Then $\sigma_n$ is a monotone positively homogeneous continuous function. As a particular case, we can take $\sigma$ to be the norm of $\ell_p$ for some $1\leq p< \infty$ or the norm of $c_0$.\medskip

In what follows, let $X$ be a Banach lattice and $E$ be a Banach space.
Let $\tau$ be another symmetric normalized lattice norm on $c_{00}$. The following sets are the main objects of our considerations.

\begin{defn}\label{def: convex sets of T}
\begin{enumerate}
\item 
Let $T:E\to X$ be a bounded linear operator. Define
\[ C_T^{\tau,\sigma} = \cbr[3]{u\in X: \exists (x_i)^n_{i=1}\subseteq  E \text{ such that } |u| \leq \sigma_n(|Tx_1|,\dots,|Tx_n|), \tau\intoo[3]{\sum^n_{i=1}\|x_i\|e_i} \leq 1}.\]
\end{enumerate}
\item Let $T:X\to E$ be a bounded linear operator. Define 
\[ D^T_{\tau,\sigma} = \cbr[3]{u\in X: |u| \geq \sigma_n(|u_1|,\dots,|u_n|),\, (u_i)^n_{i=1}\subseteq  X \implies \tau \intoo[3]{\sum^n_{i=1}\|Tu_i\|e_i} \leq 1}.\]
\end{defn}

In the rest of the section, we consider the duality $\la X,X^*\ra$.  In particular, all polars are taken with respect to this dual pair. Our aim is to show \Cref{thm: polarity sets C and D}, which establishes that the sets defined above satisfy a duality relation. First, we need a linearization lemma. Let $K$ be a compact Hausdorff space, and let $\mu$ be a regular Borel measure on $K$.
Suppose that $(h_i)^n_{i=1}\subseteq C(K)$ and $(g_i)^n_{i=1}\subseteq L_1(\mu)$.
It follows from the uniqueness of function calculus that
\begin{align*} \sigma(h_1,\dots,h_n)(t) &= \sigma(h_1(t),\dots, h_ n(t)) \text{ for all $t\in K$, and }\\ \sigma^*(g_1,\dots, g_n)(t) &= \sigma^*(g_1(t),\dots,g_n(t)) \text{ for $\mu$-almost all $t\in K$.}
\end{align*}

\begin{lem}\label{lem: bound sigma* in C(K)}
Let $\mu$ be a regular Borel measure on a compact Hausdorff space $K$.
Given $(g_i)^n_{i=1}\subseteq L_1(\mu)$ and $\ep >0$, there is a sequence $(h_i)^n_{i=1}$ in $C(K)$ such that 
$\sigma_n(h_1(t),\dots,h_n(t)) \leq 1$ for all $t\in K$, and 
\[ 
\int \sigma_n^*(g_1,\dots,g_n)\,d\mu \leq \sum^n_{i=1}\int h_i g_i\,d \mu + \ep.
\]
\end{lem}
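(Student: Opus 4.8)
The plan is to exhibit the functions $h_i$ explicitly by a pointwise optimization together with a Lusin-type approximation. First I would note that at each point $t\in K$, by definition of the dual norm $\sigma_n^*$ there is a vector $w(t)=(w_1(t),\dots,w_n(t))\in\R^n$ with $\sigma_n(w(t))\le 1$ and $\sum_{i=1}^n w_i(t)g_i(t) = \sigma_n^*(g_1(t),\dots,g_n(t))$ (taking $w(t)$ to point in the appropriate direction with the sign pattern of $(g_i(t))_i$, and using that $\sigma_n$ is a norm on $\R^n$ so the supremum defining $\sigma_n^*$ is attained by compactness of the unit ball). One must check this selection can be made measurably in $t$; this is routine since one may instead work coordinatewise with $|g_i(t)|$ and track signs, or invoke a standard measurable selection argument. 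This produces measurable functions $\widetilde h_i$ with $\sigma_n(\widetilde h_1(t),\dots,\widetilde h_n(t))\le 1$ everywhere and $\int \sigma_n^*(g_1,\dots,g_n)\,d\mu = \sum_{i=1}^n \int \widetilde h_i g_i\,d\mu$.

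Next I would pass from the merely measurable $\widetilde h_i$ to genuinely continuous $h_i\in C(K)$ while controlling the error in the integral. Since $\mu$ is a regular Borel measure on the compact Hausdorff space $K$, by Lusin's theorem (applied to the vector-valued map $t\mapsto(\widetilde h_1(t),\dots,\widetilde h_n(t))$, or to each coordinate and intersecting the sets) there is a closed set $F\subseteq K$ with $\mu(K\setminus F)$ as small as we like on which $t\maps\widetilde h(t)$ restricts to a continuous map into the unit ball $B=\{v\in\R^n:\sigma_n(v)\le1\}$, which is compact and convex. By Tietze's extension theorem one can extend each coordinate to $K$ and then — because $B$ is convex with $0$ an interior point — compose with the radial retraction of $\R^n$ onto $B$ to obtain continuous $h_i$ with $\sigma_n(h_1(t),\dots,h_n(t))\le 1$ for all $t\in K$, agreeing with $\widetilde h_i$ on $F$. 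The extensions can be taken uniformly bounded (by $\max_{v\in B}\|v\|_\infty$). Then
\[
\Bigl|\sum_{i=1}^n\int h_i g_i\,d\mu - \sum_{i=1}^n\int \widetilde h_i g_i\,d\mu\Bigr|
\le \sum_{i=1}^n \int_{K\setminus F}(|h_i|+|\widetilde h_i|)\,|g_i|\,d\mu,
\]
and by absolute continuity of the measures $|g_i|\,d\mu$ this is $\le\ep$ once $\mu(K\setminus F)$ is chosen small enough, since the $h_i$ and $\widetilde h_i$ are uniformly bounded and the $g_i\in L_1(\mu)$. Combining the two displays gives $\int\sigma_n^*(g_1,\dots,g_n)\,d\mu \le \sum_{i=1}^n\int h_i g_i\,d\mu + \ep$, as desired.

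The main obstacle I expect is the interplay between measurability of the pointwise selection $w(t)$ and the subsequent continuous approximation: one wants to be careful that the retraction onto $B$ is well-behaved (it is, since $B$ is a compact convex neighborhood of the origin, so the Minkowski functional of $B$ is continuous and the map $v\mapsto v/\max(1,\rho_B(v))$ is a continuous retraction of $\R^n$ onto $B$) and that Lusin/Tietze can be applied to the $\R^n$-valued map rather than just scalar functions (handled coordinatewise). None of these steps is deep, but assembling them cleanly with the uniform bounds needed to absorb the tail integral into $\ep$ is where the care lies. An alternative to Lusin+Tietze that avoids vector-valued subtleties: approximate each $\widetilde h_i$ in $L_1(|g_i|\,d\mu)$ by a continuous function via density of $C(K)$, then apply the retraction onto $B$ to the resulting tuple; the retraction is $1$-Lipschitz in a suitable sense and only decreases the distance, so the $L_1$-errors are not worsened, and continuity of the retraction keeps the output in $C(K)$.
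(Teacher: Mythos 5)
Your argument is correct, but it follows a genuinely different route from the paper's. You work with general $g_i\in L_1(\mu)$ in one pass: a (measurable) pointwise selection of maximizers $w(t)$ in the unit ball $B$ of $\sigma_n$, then Lusin's theorem to get a closed set $F$ with small complement on which the selection is continuous, then Tietze extension followed by the radial retraction onto $B$ to produce the $h_i$, with the error confined to $K\setminus F$ and absorbed via absolute continuity of $|g_i|\,d\mu$ and the uniform bound $|v_i|\le\sigma_n^*(e_i)=1$ on $B$. The paper instead first proves the lemma for simple functions by an explicit construction — refining the common partition via inner/outer regularity, separating the closed pieces by disjoint open sets using normality, choosing near-optimal coefficients $b_{ij}$ for each cell, and gluing with Urysohn functions — and then passes to general $g_i$ by approximating with simple functions and dominated convergence. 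Your approach is shorter and conceptually cleaner but leans on measurable selection and Lusin's theorem (both valid here: the argmax of the Carathéodory function $(t,w)\mapsto\langle w,g(t)\rangle$ over the compact ball admits a measurable selection, or one can sidestep this by picking, for each $t$, the first element of a countable dense subset of $B$ that is an $\ep/(2\mu(K))$-approximate maximizer; and Lusin's theorem holds for finite regular Borel measures on compact Hausdorff spaces). The paper's two-stage proof is more elementary, using only regularity, normality, and Urysohn's lemma, at the cost of more bookkeeping. Both are complete proofs; the one point worth making fully explicit in yours is the measurability of the selection, which you flag but do not carry out.
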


\begin{proof}
First, assume that the functions $(g_i)^n_{i=1}\subseteq L_1(\mu)$ are simple. Without loss of generality, we can assume that there is a Borel measurable partition $(A_j)^k_{j=1}$ so that each $g_i = \sum^k_{j=1}a_{ij}\chi_{A_j}$ for some $a_{ij}$ (not necessarily positive). In this case, given $\ep >0$, by the regularity of the measure $\mu$ there exist closed sets $(C_j)^k_{j=1}$ and open sets $(U_j)^k_{j=1}$ such that $C_j\subseteq A_j\subseteq U_j$ and $\mu(U_j\setminus C_j)<\frac{\ep}{4ks}$, where $s=\sup_j \sigma^*_n(a_{1j},\dots, a_{nj}) >0$ (recall that $\sigma_n^*$ is a lattice norm, so $\sigma^*_n(a_1,\dots, a_n)=\sigma^*_n(|a_1|,\dots, |a_n|)$). Since the sets $(C_j)^k_{j=1}$ are pairwise disjoint and $K$ is compact and Hausdorff, and thus, normal, there exist pairwise disjoint open neighborhoods $(V_j)^k_{j=1}$ with $C_j\subseteq V_j$ for each $j$. Let $W_j=U_j\cap V_j$ and $\widetilde g_i=\sum^k_{j=1}a_{ij}\chi_{W_j}$. Then $\mu (W_j\Delta A_j)\leq \mu(U_j\setminus C_j)$, where $A\Delta B$ denotes the symmetric difference of $A$ and $B$, so
\[\int |\sigma_n^*(g_1,\dots,g_n)-\sigma_n^*(\widetilde g_1,\dots,\widetilde g_n) |\,d\mu \leq \sum_{j=1}^k \sigma^*_n(a_{1j},\dots, a_{nj}) \mu (W_j\Delta A_j) \leq \frac{\ep}{4} . \]

Now, choose $b_{ij}$ so that $b_{ij}a_{ij} \geq 0$, $\sigma_n(b_{1j},\dots,b_{nj}) \leq 1$ and 
\[\sigma^*_n(a_{1j},\dots, a_{nj}) \leq \sum^n_{i=1}a_{ij}b_{ij}+\frac{\ep}{4\mu(K)}\quad \text{ for $1\leq j\leq k$.}\]
Since $K$ is normal, there are functions $(u_j)_{j=1}^k\subseteq C(K)$ such that $0\leq u_j\leq \chi_{W_j}$ and $u_j=1$ on $C_j$. The functions $h_i=\sum^k_{j=1}b_{ij}u_j$ satisfy $|h_i|\leq \sum^k_{j=1}|b_{ij}|\chi_{W_{j}}$ and, moreover,
\begin{align*}
    0\leq \sum^k_{j=1}a_{ij}b_{ij}\mu(W_j) & =\int \intoo[3]{\sum^k_{j=1}b_{ij}\chi_{W_{j}}} \widetilde g_i\,d\mu   = \int h_i\widetilde g_i\,d\mu + \int \intoo[3]{\sum^k_{j=1}b_{ij}(\chi_{W_{j}}-u_j)}\widetilde g_i\,d\mu \\
    & \leq \int h_i g_i\,d\mu +\int |h_i||g_i-\widetilde g_i|\,d\mu + \sum^k_{j=1}\int a_{ij}b_{ij}(\chi_{W_{j}}-u_j)\chi_{W_j\setminus C_j} \,d\mu \\
    & \leq \int h_i g_i\,d\mu + \sum^k_{j=1}a_{ij}b_{ij}(\mu(W_j\Delta A_j)+\mu(W_j\setminus C_j))  \\
    & \leq \int h_i g_i\,d\mu +\frac{\ep}{2ks}\sum^k_{j=1}a_{ij}b_{ij}  \quad\text{for $1\leq i\leq n$.}
\end{align*}
Observe that, if $t\in W_j$, 
\[ \sigma_n(h_1(t),\dots, h_n(t)) \leq \sigma_n(b_{1j},\dots, b_{nj})\leq 1,
\]
so $\sigma(h_1(t),\dots,h_n(t)) \leq 1$ for all $t\in K$.
Finally,
\begin{align*}
\int \sigma^*(g_1,\dots,g_n)\,d\mu & \leq \int \sigma^*(\widetilde g_1,\dots,\widetilde g_n)\,d\mu +\frac{\ep}{4}  = \sum^k_{j=1}\int \sigma^*_n(a_{1j},\dots,a_{nj})\chi_{W_j}\,d\mu +\frac{\ep}{4}\\
&\leq \sum^k_{j=1}\left(\sum^n_{i=1}a_{ij}b_{ij}+\frac{\ep}{4\mu(K)}\right)\mu(W_j)+\frac{\ep}{4} \leq  \sum^n_{i=1}\sum^k_{j=1}a_{ij}b_{ij}\mu(W_j) +\frac{\ep}{2}\\
&\leq \sum^n_{i=1}\int h_ig_i \,d\mu+ \frac{\ep}{2ks}\sum^k_{j=1} \sum^n_{i=1}a_{ij}b_{ij} +\frac{\ep}{2} \\
& \leq \sum^n_{i=1}\int h_ig_i \,d\mu+ \frac{\ep}{2ks}\sum^k_{j=1}  \sigma^*_n(a_{1j},\dots, a_{nj}) +\frac{\ep}{2} \leq  \sum^n_{i=1}\int h_ig_i \,d\mu+ \ep.
\end{align*}
This completes the proof for simple functions. Now, given general functions $(g_i)^n_{i=1}\subseteq L_1(\mu)$, for every $i=1,\dots,n$ there exists a sequence $(s_{im})_{m=1}^\infty$ of simple functions such that $s_{im}^{+}\uparrow g_i^{+}$ and $s_{im}^{-}\uparrow g_i^{-}$ almost everywhere, so in particular $\sigma^*(s_{1m},\dots,s_{nm})\uparrow \sigma^*(g_1,\dots,g_n)$ almost everywhere. By the dominated convergence theorem, for every $\ep>0$ there is a natural number $M\in\N$ such that 
\begin{align*}
    & \int |\sigma^*(g_1,\dots,g_n)-\sigma^*(s_{1M},\dots,s_{nM})| \,d\mu \leq \frac{\ep}{n+2}, \,\,\text{and}\\
    & \int |g_i-s_{iM}|\,d\mu \leq \frac{\ep}{n+2} \quad\text{for $1\leq i\leq n$.}
\end{align*}
By the first part of the proof, for this $M$ there exist some $(h_i)_{i=1}^n\subseteq C(K)$ such that we have $\sigma_n(h_1(t),\dots,h_n(t)) \leq 1$ for all $t\in K$ and
\[\int \sigma_n^*(s_{1M},\dots,s_{nM})\,d\mu \leq \sum^n_{i=1}\int h_i s_{iM}\,d \mu + \frac{\ep}{n+2}.\]
In particular, $h_i\in B_{C(K)}$ and we have
\[\int |h_i||g_i-s_{iM}|\,d\mu \leq \int |g_i-s_{iM}|\,d\mu \leq \frac{\ep}{n+2}\]
for every $i$. In order to obtain the result, we just use the previous inequalities:
\begin{align*}
    \int \sigma^*(g_1,\dots,g_n)\, d\mu & \leq \int \sigma^*(s_{1M},\dots,s_{nM})\, d\mu + \frac{\ep}{n+2}\\
    & \leq \sum^n_{i=1}\int h_i s_{iM}\,d \mu + \frac{2\ep}{n+2}  \leq \sum^n_{i=1}\int h_i g_i\,d \mu + \ep.
\end{align*}
\end{proof}

We will use the previous lemma to linearize expressions of the form $ \la |u|, \sigma^*_n(|u^*_1|,\dots,|u^*_n|) \ra $ for $u\in X$ and $u_1^*,\ldots, u_n^*\in X^*$.

\begin{prop}\label{prop: duality sigma} The following statements hold:
\begin{enumerate}
\item Suppose that  $u^*\in X^*$ and $(u_i)^n_{i=1}\subseteq X$. For any $\ep >0$,
there are $(u_i^*)^n_{i=1} \subseteq X^*$ such that $\sigma_n^*(|u_1^*|,\dots, |u_n^*|) \leq |u^*|$ and 
\[ \la \sigma_n(|u_1|,\dots,|u_n|), |u^*|\ra \leq \sum_{i=1}^n\la u_i,u^*_i\ra +\ep.\]
\item Suppose that  $u\in X$ and $(u_i^*)^n_{i=1}\subseteq X^*$. For any $\ep >0$,
there are $(u_i)^n_{i=1} \subseteq X$ such that $\sigma_n(|u_1|,\dots, |u_n|) \leq |u|$ and 
\[ \la |u|, \sigma^*_n(|u^*_1|,\dots,|u^*_n|) \ra \leq \sum_{i=1}^n\la u_i,u^*_i\ra +\ep.\]
\end{enumerate}
\end{prop}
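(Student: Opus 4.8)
\emph{Proof plan.} The two assertions are dual to each other and I would prove them in parallel by the same device: using Kakutani's representation theorem, realize the principal ideal generated by the \emph{single} element sitting on the ``non‑decomposed'' side of the pairing as a dense sublattice of some $C(K)$; transport the quantity $\la\sigma_n(|u_1|,\dots,|u_n|),|u^*|\ra$, resp. $\la|u|,\sigma_n^*(|u_1^*|,\dots,|u_n^*|)\ra$, to an integral against a regular Borel measure $\mu$ on $K$; and then read off the required decomposition from a linearization on $C(K)$. For (2) the linearization is exactly \Cref{lem: bound sigma* in C(K)} (which produces \emph{continuous} functions, hence, after approximation, honest elements of $X$); for (1) one needs only a simpler measurable‑selection version of it, but then faces the real difficulty — which I expect to be the main obstacle — of landing the dual functionals $u_i^*$ inside $X^*$, and not merely in the dual of the small ideal, while keeping the functional‑calculus inequality $\sigma_n^*(|u_1^*|,\dots,|u_n^*|)\leq|u^*|$.

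For (2), replace $u$ by $|u|$ and assume $u\geq0$; let $X_u$ be the ideal generated by $u$ with its order‑unit norm, so $X_u$ is a dense sublattice of $C(K)$ with $u\mapsto1$. Since the order‑unit norm dominates a multiple of $\|\cdot\|_X$ on $X_u$, restriction is a bounded lattice homomorphism $r:X^*\to(X_u)^*=M(K)$; set $\nu_i=r(u_i^*)$. Being a lattice homomorphism, $r$ commutes with the functional calculus, so $r(\sigma_n^*(|u_1^*|,\dots,|u_n^*|))=\sigma_n^*(|\nu_1|,\dots,|\nu_n|)$, and pairing with $u\leftrightarrow1$ gives $\la|u|,\sigma_n^*(|u_1^*|,\dots,|u_n^*|)\ra=\la1,\sigma_n^*(|\nu_1|,\dots,|\nu_n|)\ra$. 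Writing $\mu_0=\sum_i|\nu_i|$ and $\nu_i=g_i\,d\mu_0$, this equals $\int_K\sigma_n^*(g_1,\dots,g_n)\,d\mu_0$. Now \Cref{lem: bound sigma* in C(K)} yields $h_i\in C(K)$ with $\sigma_n(h_1(t),\dots,h_n(t))\leq1$ for all $t$ and $\int_K\sigma_n^*(g_1,\dots,g_n)\,d\mu_0\leq\sum_i\int_K h_ig_i\,d\mu_0+\ep$. Approximating each $h_i$ uniformly by $v_i\in X_u$ and using monotonicity and subadditivity of $\sigma_n$ gives $\sigma_n(|v_1|,\dots,|v_n|)\leq1+\delta$ in $C(K)$; rescaling by $(1+\delta)^{-1}$ produces $u_i\in X_u\subseteq X$ with $\sigma_n(|u_1|,\dots,|u_n|)\leq u$ and $\sum_i\la u_i,u_i^*\ra=\sum_i\int_K u_ig_i\,d\mu_0$ within an error tending to $0$ with $\delta$ of $\sum_i\int_K h_ig_i\,d\mu_0$. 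Letting $\delta\downarrow0$ and relabelling $\ep$ finishes (2).

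For (1), put $e=\bigvee_{i=1}^n|u_i|$ and represent the ideal $X_e$ (with order‑unit norm) as a dense sublattice of $C(K)$ with $e\mapsto1$ and $u_i\mapsto h_i$, $|h_i|\leq1$; the restriction of $|u^*|$ to $X_e$ is a positive regular Borel measure $\mu$ on $K$, and since $\sigma_n(|u_1|,\dots,|u_n|)\leq\sigma_n(1,\dots,1)\,e\in X_e$ corresponds to $\sigma_n(|h_1|,\dots,|h_n|)$ we get $\la\sigma_n(|u_1|,\dots,|u_n|),|u^*|\ra=\int_K\sigma_n(|h_1|,\dots,|h_n|)\,d\mu$. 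Using pointwise $\sigma_n$–$\sigma_n^*$ duality on $\R^n$ and compactness of the $\sigma_n^*$‑unit ball (the same idea as in the proof of \Cref{lem: bound sigma* in C(K)}, but easier since the chosen functions need only be measurable), I would select bounded measurable $g_i$ on $K$ with $\sigma_n^*(|g_1(t)|,\dots,|g_n(t)|)\leq1$ $\mu$‑a.e.\ and $\sum_i\int_K h_ig_i\,d\mu\geq\int_K\sigma_n(|h_1|,\dots,|h_n|)\,d\mu-\ep$. Then $\phi_i(v):=\int_K\hat v\,g_i\,d\mu$ defines a functional on $X_e$ bounded for $\|\cdot\|_X$, since $|\phi_i(v)|\leq\|g_i\|_\infty\la|v|,|u^*|\ra$, and $\sigma_n^*(|g_1|,\dots,|g_n|)\,d\mu\leq\mu$ translates into $\sum_i\la v_i,\phi_i\ra\leq\la\sigma_n(|v_1|,\dots,|v_n|),|u^*|\ra$ for all $(v_i)\subseteq X_e$. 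The decisive step is now a simultaneous Hahn–Banach extension: on $X^n$ take the sublinear functional $q(x_1,\dots,x_n)=\la\sigma_n(|x_1|,\dots,|x_n|),|u^*|\ra$ (subadditive by monotonicity and subadditivity of $\sigma_n$ and positivity of $|u^*|$), observe $L(v_1,\dots,v_n)=\sum_i\la v_i,\phi_i\ra\leq q$ on $(X_e)^n$, and extend $L$ to a linear $\widetilde L\leq q$ on $X^n$; setting $u_i^*(x)=\widetilde L(0,\dots,x,\dots,0)$ gives $u_i^*\in X^*$, $u_i^*|_{X_e}=\phi_i$, and $\sum_i\la x_i,u_i^*\ra\leq\la\sigma_n(|x_1|,\dots,|x_n|),|u^*|\ra$ for every $(x_i)\subseteq X$.

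It remains to note that this last inequality is equivalent to $\sigma_n^*(|u_1^*|,\dots,|u_n^*|)\leq|u^*|$: since $\sigma_n^*(|u_1^*|,\dots,|u_n^*|)=\bigvee\{\sum_ic_i|u_i^*|:c\in\R^n_+,\ \sigma_n(c_1,\dots,c_n)\leq1\}$ in $X^*$, it suffices, for each such $c$ and each $x\geq0$, to check $\sum_i\la w_i,u_i^*\ra\leq\la x,|u^*|\ra$ for all $|w_i|\leq c_ix$ (this is what one obtains after unfolding $\la x,|u_i^*|\ra=\sup_{|z_i|\leq x}\la z_i,u_i^*\ra$ and writing $w_i=c_iz_i$), and here $\sigma_n(|w_1|,\dots,|w_n|)\leq\sigma_n(c_1x,\dots,c_nx)=\sigma_n(c_1,\dots,c_n)\,x\leq x$, so the displayed inequality applies. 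Finally $\sum_i\la u_i,u_i^*\ra=\sum_i\la u_i,\phi_i\ra=\sum_i\int_K h_ig_i\,d\mu\geq\la\sigma_n(|u_1|,\dots,|u_n|),|u^*|\ra-\ep$, which is (1). The ingredients I would treat as standard are the compatibility of lattice homomorphisms, and of the uniqueness of functional calculus, with passage to principal ideals and the $C(K)$‑representation (cf.\ \cite[Section 1.d]{LT2}); the only genuinely new point is the Hahn–Banach step against $q$, which is precisely what recovers the dual‑norm inequality in $X^*$ from the scalar one.
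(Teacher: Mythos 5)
Your proposal is correct. Part (2) is essentially the paper's argument: represent the principal ideal generated by $|u|$ as a $C(K)$ via an interval-preserving lattice homomorphism $\iota$ with $\iota 1=|u|$, pull the functionals back to measures, apply \Cref{lem: bound sigma* in C(K)}, and push the resulting continuous functions forward. (A small simplification: $X_u$ with the order-unit norm is already complete, hence lattice isometric to $C(K)$, so your uniform-approximation-and-rescaling step is not needed — one can take $u_i=\iota h_i$ directly, as the paper does.) Part (1) is where you genuinely diverge. The paper disposes of (1) in one line by applying (2) with $X$ replaced by $X^*$ and $\sigma$ by $\sigma^*$, using $\sigma^{**}=\sigma$ and the canonical lattice-isometric embedding $X\hookrightarrow X^{**}$. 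You instead argue directly: represent the ideal generated by $\bigvee_i|u_i|$ as $C(K)$, realize $|u^*|$ as a measure, perform a pointwise $\sigma_n$–$\sigma_n^*$ duality selection (which can be done with piecewise-constant $g_i$, avoiding any serious measurable-selection machinery), and then extend the local functionals $\phi_i$ by a simultaneous Hahn--Banach step against the sublinear functional $q(x_1,\dots,x_n)=\la\sigma_n(|x_1|,\dots,|x_n|),|u^*|\ra$; your final verification that $\sum_i\la x_i,u_i^*\ra\leq q(x_1,\dots,x_n)$ on all of $X^n$ forces $\sigma_n^*(|u_1^*|,\dots,|u_n^*|)\leq|u^*|$ is the crucial point and it is sound. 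Your route is longer but stays entirely within $X$ and $X^*$ (no passage to the bidual) and makes explicit the mechanism by which the dual-norm constraint is recovered; the paper's route buys brevity at the cost of hiding that mechanism inside the duality $\sigma^{**}=\sigma$.
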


\begin{proof}
It suffices to prove (2). (1) follows by applying (2) with $X$ taken to be $X^*$ and $\sigma$ taken to be $\sigma^*$. \medskip

Let $u\in X$, $(u_i^*)^n_{i=1}\subseteq X^*$, and $\ep >0$ be given.
There exists a compact Hausdorff space $K$ and an interval preserving injective  lattice homomorphism $\iota:C(K) \to X$ so that $\iota1 = |u|$, where $1$ denotes the constant $1$ function on $K$.
Set $u^* = \sigma^*_n(|u_1^*|,\dots, |u^*_n|)$.  Since $|u^*_i| \leq u^*$ and $\iota^*$ is a lattice homomorphism, 
$|\mu_i| :=\iota^*|u^*_i| \leq \iota^*u^* =: \mu$.
Hence, there are functions $g_i\in L_1(\mu)$ such that $\mu_i = g_i\cdot\mu$.
Apply  Lemma \ref{lem: bound sigma* in C(K)} to obtain the corresponding sequence $(h_i)^n_{i=1}$ in $C(K)$
and set $u_i = \iota h_i$.  Then $u_i\in X$ and 
\[ \sigma_n(|u_1|,\dots,|u_n|) = \sigma_n(\iota |h_1|,\dots,\iota |h_n|) = \iota\sigma_n(|h_1|,\dots,|h_n|) \leq \iota1 = |u|. \]
Moreover, 
\begin{align*}
 \la |u|,\sigma^*_n (|u^*_1|,\dots,|u^*_n|)\ra & = \la 1, \sigma^*_n(\iota^*|u^*_1|,\dots, \iota^*|u^*_n|)\ra = \la 1, \sigma^*_n(\iota^*u^*_1,\dots, \iota^*u^*_n)\ra\\ & = \int \sigma_n^*(g_1,\dots,g_n)\,d\mu \leq \sum^n_{i=1}\int h_ig_i\,d\mu + \ep\\
 & = \sum^n_{i=1}\la h_i, \iota^*u^*_i\ra + \ep = \sum^n_{i=1}\la u_i,u^*_i\ra +\ep.
\end{align*}
This completes the proof.
\end{proof}

Next, we show that $D^{T^*}_{\tau^*,\sigma^*} \subseteq (C^{\tau,\sigma}_T)^\circ$ (respectively, $D^{T}_{\tau,\sigma} \subseteq (C^{\tau^*,\sigma^*}_{T^*})^\circ$) whenever $T:E\rightarrow X$ (respectively, $T:X\rightarrow E$) is bounded.

\begin{prop}\label{prop: duality sets C and D} The following statements hold:
\begin{enumerate}
\item Let $T:E\to X$ be a bounded linear operator. Then $u^*(u) \leq 1$ for all $u\in C^{\tau,\sigma}_T$ and all $u^* \in D^{T^*}_{\tau^*,\sigma^*}$.
\item Let $T:X\to E$ be a bounded linear operator. Then $u^*(u) \leq 1$ for all $u\in D^T_{\tau,\sigma}$ and all $u^* \in C_{T^*}^{\tau^*,\sigma^*}$.
\end{enumerate}
\end{prop}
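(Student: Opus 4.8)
The plan is to prove both statements simultaneously by exploiting the symmetry between them, and in fact to reduce everything to the linearization machinery of \Cref{prop: duality sigma}. Since $T:X\to E$ is bounded exactly when $T^*:E^*\to X^*$ is bounded, and since $\sigma^{**}=\sigma$, $\tau^{**}=\tau$, statement (2) applied to $T^*:E^*\to X^*$ (with $X$ replaced by $X^*$, $\sigma$ by $\sigma^*$, $\tau$ by $\tau^*$) should yield statement (1) after identifying $D^{T^{**}}_{\tau^*,\sigma^*}$ with $D^{T}_{\tau,\sigma}$ via the canonical embeddings — so it suffices to prove one of them. I would prove (1) directly, as it keeps the duality $\langle X,X^*\rangle$ in its natural form.

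For (1): fix $u\in C^{\tau,\sigma}_T$ and $u^*\in D^{T^*}_{\tau^*,\sigma^*}$; we want $u^*(u)\le 1$. By definition of $C^{\tau,\sigma}_T$ there exist $(x_i)_{i=1}^n\subseteq E$ with $|u|\le \sigma_n(|Tx_1|,\dots,|Tx_n|)$ and $\tau(\sum_{i=1}^n\|x_i\|e_i)\le 1$. Then
\[
u^*(u)\le |u^*|(|u|)\le \big\langle |u^*|,\ \sigma_n(|Tx_1|,\dots,|Tx_n|)\big\rangle.
\]
Now I would apply \Cref{prop: duality sigma}(1) (with the $u_i$ there being $Tx_i\in X$, and $u^*$ there being our $u^*$): for any $\ep>0$ there are $(u_i^*)_{i=1}^n\subseteq X^*$ with $\sigma_n^*(|u_1^*|,\dots,|u_n^*|)\le |u^*|$ and
\[
\big\langle \sigma_n(|Tx_1|,\dots,|Tx_n|),\ |u^*|\big\rangle\le \sum_{i=1}^n\langle Tx_i,u_i^*\rangle+\ep = \sum_{i=1}^n\langle x_i,T^*u_i^*\rangle+\ep.
\]
The key point is now that $\sigma_n^*(|u_1^*|,\dots,|u_n^*|)\le |u^*|$ forces the family $(u_i^*)$ to be an admissible test family in the definition of $D^{T^*}_{\tau^*,\sigma^*}$: since $u^*\in D^{T^*}_{\tau^*,\sigma^*}$ and $|u^*|\ge \sigma_n^*(|u_1^*|,\dots,|u_n^*|)$, the defining implication gives $\tau^*(\sum_{i=1}^n\|T^*u_i^*\|e_i)\le 1$. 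Then, by the duality $\langle \tau,\tau^*\rangle$ on $c_{00}$ together with $\tau(\sum_{i=1}^n\|x_i\|e_i)\le 1$, we get
\[
\sum_{i=1}^n\langle x_i,T^*u_i^*\rangle\le \sum_{i=1}^n\|x_i\|\,\|T^*u_i^*\|\le \tau\Big(\sum_{i=1}^n\|x_i\|e_i\Big)\,\tau^*\Big(\sum_{i=1}^n\|T^*u_i^*\|e_i\Big)\le 1.
\]
Combining, $u^*(u)\le 1+\ep$ for every $\ep>0$, hence $u^*(u)\le 1$, which is (1). For (2), I would either run the mirror-image argument using \Cref{prop: duality sigma}(2) in place of (1), or deduce it from (1) via the substitution $T\leadsto T^*$, $X\leadsto X^*$ as above, checking that $D^{T}_{\tau,\sigma}$ sits inside $D^{T^{**}}_{\tau^*,\sigma^*}$ under the canonical inclusions (which is routine since the defining conditions only involve finitely many vectors and are stable under weak-$*$ density).

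The main obstacle I anticipate is purely bookkeeping: making sure the roles of $\sigma$ versus $\sigma^*$ and $\tau$ versus $\tau^*$ are assigned correctly when invoking \Cref{prop: duality sigma} — note that part (1) of that proposition is itself stated as the case of part (2) applied to $X^*$, so one must be careful that the "$\sigma$" appearing in the definition of $C^{\tau,\sigma}_T$ matches the "$\sigma$" in the conclusion $\sigma_n^*(|u_1^*|,\dots,|u_n^*|)\le|u^*|$. Everything else — the two Hölder-type estimates (one for the $\langle X,X^*\rangle$ pairing with solidity of $|u|,|u^*|$, one for the $\langle \tau,\tau^*\rangle$ pairing on $c_{00}$) and the $\ep\downarrow 0$ limit — is routine. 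No compactness or completeness is needed beyond what \Cref{prop: duality sigma} already supplies.
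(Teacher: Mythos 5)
Your proof of (1) is exactly the paper's argument: reduce to $\la |u|,|u^*|\ra$, invoke Proposition~\ref{prop: duality sigma}(1) to produce $(u_i^*)$ with $\sigma_n^*(|u_1^*|,\dots,|u_n^*|)\le|u^*|$, feed this family into the defining implication of $D^{T^*}_{\tau^*,\sigma^*}$, and finish with the $\tau$--$\tau^*$ H\"older estimate and $\ep\downarrow 0$; the paper disposes of (2) with the mirror argument, which is your primary suggestion. The only shaky point is your alternative route of deducing one part from the other through biduals, which would require $J_X(D^T_{\tau,\sigma})\subseteq D^{T^{**}}_{\tau,\sigma}$ and is not "routine" since $J_X(X)$ need not be an ideal in $X^{**}$ --- but you never rely on it.
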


\begin{proof}
(1) Suppose that $u\in C^{\tau,\sigma}_T$ and $u^* \in D^{T^*}_{\tau^*,\sigma^*}$.
There exists $(x_i)^n_{i=1}$ in $E$ so that $|u| \leq \sigma_n(|Tx_1|,\dots,|Tx_n|)$ and $\tau(\sum^n_{i=1}\|x_i\|e_i) \leq 1$.
By Proposition \ref{prop: duality sigma}(1), for any $\ep > 0$, there exists $(u^*_i)^n_{i=1}\subseteq X^*$ so that $\sigma^*_n(|u_1^*|,\dots,|u^*_n|)\leq |u^*|$ and 
\begin{align*}
     \la |u|, |u^*|\ra & \leq  \la \sigma_n(|Tx_1|,\dots,|Tx_n|), |u^*|\ra\leq \sum^n_{i=1}\la Tx_i, u^*_i\ra + \ep\\
     & = \sum^n_{i=1}\la x_i, T^*u^*_i\ra + \ep \leq \tau\intoo[3]{\sum^n_{i=1}\|x_i\|e_i}\cdot \tau^*\intoo[3]{\sum^n_{i=1}\|T^*u^*_i\|e^*_i}+ \ep.
\end{align*}
Since $u^* \in D^{T^*}_{\tau^*,\sigma^*}$ and $|u^*| \geq \sigma_n^*(|u^*_1|,\dots,|u^*_n|)$, $\tau^*(\sum^n_{i=1}\|T^*v_i^*\|e_i) \leq 1$.
Therefore, $\la |u|,|u^*|\ra \leq 1$, as claimed.\medskip
 
 The proof of (2) is similar. 
\end{proof}

The next lemma will help us show the reverse inclusions.

\begin{lem}\label{lem: sum of evaluations bounded by sigma}
Let $(u_i)^n_{i=1}$ and $(u^*_i)^n_{i=1}$ be sequences in $X$ and $X^*$, respectively.
Then 
\[\sum^n_{i=1}\la u_i,u^*_i\ra \leq  \la \sigma_n(|u_1|,\dots,|u_n|),\sigma_n^*(|u^*_1|,\dots, |u^*_n|)\ra.\]
\end{lem}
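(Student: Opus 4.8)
The plan is to reduce to a pointwise (finite-dimensional) inequality by representing the relevant sublattice as a $C(K)$-space and invoking the uniqueness of Krivine's functional calculus. First I would set $w = \sigma_n(|u_1|,\dots,|u_n|) \in X_+$ and pick a compact Hausdorff space $K$ together with an interval-preserving injective lattice homomorphism $\iota : C(K) \to X$ with $\iota 1 = w$; since $|u_i| \le w = \iota 1$ for each $i$, each $|u_i|$ lies in the range of $\iota$, so we may write $u_i = \iota f_i$ with $f_i \in C(K)$ and $|f_i| \le 1$ pointwise (using that $\iota$ is an interval-preserving lattice homomorphism, so $[0,\iota 1]=\iota[0,1]$ and $\iota$ preserves absolute values). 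By uniqueness of functional calculus, $\sigma_n(|f_1|,\dots,|f_n|)(t) = \sigma_n(|f_1(t)|,\dots,|f_n(t)|) \le \sigma(1,0,\dots,0)\cdots$ — more precisely $\le 1$ wherever needed — and $\iota \sigma_n(|f_1|,\dots,|f_n|) = w$, forcing $\sigma_n(|f_1(t)|,\dots,|f_n(t)|) = 1$ for all $t$ in the (closed) support of the representing measures, and $\le 1$ everywhere.

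Next I would dualize: the functionals $v_i^* := \iota^* u_i^*$ live in $M(K) = C(K)^*$, and since $\iota^*$ is a lattice homomorphism, $|v_i^*| = \iota^* |u_i^*|$. Let $\mu = \sigma_n^*(|v_1^*|,\dots,|v_n^*|) \in M(K)_+$ (defined via functional calculus on the order-complete lattice $M(K)$, or equivalently by disintegrating against a common dominating measure); write $|v_i^*| = g_i \cdot \mu$ with $g_i \in L_1(\mu)$, $\sigma_n^*(|g_1|,\dots,|g_n|) = 1$ $\mu$-a.e. Then, using $\iota 1 = w$,
\[
\sum_{i=1}^n \la u_i, u_i^*\ra = \sum_{i=1}^n \la \iota f_i, u_i^*\ra = \sum_{i=1}^n \la f_i, \iota^* u_i^*\ra = \sum_{i=1}^n \int_K f_i \, dv_i^* \le \sum_{i=1}^n \int_K |f_i|\, |g_i|\, d\mu,
\]
and by the pointwise Hölder-type duality $\sum_{i=1}^n a_i b_i \le \sigma_n(a_1,\dots,a_n)\,\sigma_n^*(b_1,\dots,b_n)$ for nonnegative scalars (which is exactly the definition of $\sigma^*$), the integrand is bounded pointwise by $\sigma_n(|f_1(t)|,\dots,|f_n(t)|)\,\sigma_n^*(|g_1(t)|,\dots,|g_n(t)|) \le 1 \cdot 1$. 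Hence $\sum_i \la u_i,u_i^*\ra \le \mu(K) = \la 1, \sigma_n^*(|v_1^*|,\dots,|v_n^*|)\ra = \la \iota 1, \sigma_n^*(|u_1^*|,\dots,|u_n^*|)\ra = \la \sigma_n(|u_1|,\dots,|u_n|), \sigma_n^*(|u_1^*|,\dots,|u_n^*|)\ra$, as desired.

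The main obstacle I anticipate is the careful handling of the functional calculus on the dual side: $X^*$ need not be representable as a nice function space, and $\sigma_n^*(|u_1^*|,\dots,|u_n^*|)$ must be manipulated through $\iota^*$ and disintegration (writing $|v_i^*| = g_i\,d\mu$ and controlling $\sigma_n^*(|g_i|)$ pointwise). An alternative, perhaps cleaner, route that avoids this is to invoke Proposition \ref{prop: duality sigma}(2): given $\ep > 0$, apply it with $u := \sigma_n(|u_1|,\dots,|u_n|)$ to the functionals $(u_i^*)_{i=1}^n$ to obtain $(w_i)_{i=1}^n \subseteq X$ with $\sigma_n(|w_1|,\dots,|w_n|) \le |u| = \sigma_n(|u_1|,\dots,|u_n|)$ and $\la \sigma_n(|u_1|,\dots,|u_n|), \sigma_n^*(|u_1^*|,\dots,|u_n^*|)\ra \le \sum_i \la w_i, u_i^*\ra + \ep$ — but this does not immediately give $\sum_i \la u_i, u_i^*\ra$ on the left, so some additional argument comparing $(u_i)$ and $(w_i)$ would be needed; I would therefore favor the direct $C(K)$-representation argument sketched above, treating the disintegration step with care.
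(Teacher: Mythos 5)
Your proposal is correct and follows essentially the same route as the paper: represent the principal ideal generated by $\sigma_n(|u_1|,\dots,|u_n|)$ as a $C(K)$-space via an injective interval preserving lattice homomorphism $\iota$ with $\iota 1=\sigma_n(|u_1|,\dots,|u_n|)$, pull the functionals back through $\iota^*$ (a lattice homomorphism), disintegrate $\iota^*|u_i^*|=g_i\cdot\mu$ against $\mu=\sigma_n^*(\iota^*|u_1^*|,\dots,\iota^*|u_n^*|)$, and finish with the pointwise H\"older inequality $\sum_i a_ib_i\leq\sigma_n(a)\,\sigma_n^*(b)$. The only cosmetic differences are that the paper works with $|u_i|=\iota f_i$, $0\le f_i\le 1$, after first passing to $\sum_i\la |u_i|,|u_i^*|\ra$, and keeps $\sigma_n(f)\sigma_n^*(g)$ inside the integral rather than normalizing both factors to $1$; these are the same computation.
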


\begin{proof}
Take a compact Hausdorff space $K$ and an injective interval preserving lattice homomorphism $\iota:C(K)\to X$ so that $\iota 1 = \sigma_n(|u_1|,\dots,|u_n|)$.
Since $|u_i| \leq \sigma_n(|u_1|,\dots, |u_n|)$, there exists $f_i\in C(K)_+$ so that  $\iota f_i = |u_i|$.  Set $\mu = \iota^*\sigma^*_n(|u^*_1),\dots,|u^*_n|)$ and $\mu_i = \iota^*(|u^*_i|)$.
Then $0\leq f_i\leq 1$ and $0\leq \mu_i\leq \mu$.  There are functions $g_i\in L_1(\mu)_+$ so that $\mu_i = g_i\cdot \mu$.
Then
\begin{align*}
\sum^n_{i=1}\la u_i,u^*_i\ra & \leq  \sum^n_{i=1}\la |u_i|,|u^*_i|\ra = \sum^n_{i=1}\int f_i\,d\mu_i = \int \sum^n_{i=1} f_i(t)g_i(t)\,d\mu(t)\\
&\leq \int \sigma_n(f_1(t),\dots,f_n(t))\cdot \sigma^*_n(g_1(t),\dots, g_n(t))\,d\mu\\
&= \la \sigma_n(f_1,\dots, f_n), \sigma^*_n(\mu_1,\dots,\mu_n)\ra\\
&= \la \sigma_n(|u_1|,\dots, |u_n|), \sigma^*_n(|u^*_1|,\dots,|u^*_n|)\ra.
\end{align*}
\end{proof}

We are now ready to establish the main result of this subsection.

\begin{thm}\label{thm: polarity sets C and D}  The following statements hold:
\begin{enumerate}
\item Let $T:E\to X$ be a bounded linear operator.
Then $(C^{\tau,\sigma}_T)^\circ = D^{T^*}_{\tau^*,\sigma^*}$.
\item Let $T:X\to E$ be a bounded linear operator.
Then $(C^{\tau^*,\sigma^*}_{T^*})^\circ = D^T_{\tau,\sigma}$.
\end{enumerate}
\end{thm}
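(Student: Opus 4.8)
The statement has two parts, and by the duality between $C$ and $D$ it suffices to prove part (1), since part (2) follows by applying part (1) with the roles of domain/codomain swapped: an operator $T:X\to E$ has adjoint $T^*:E^*\to X^*$, and $C^{\tau^*,\sigma^*}_{T^*}$ is then a subset of $X^*$, so one applies the argument of part (1) to $T^*:E^*\to X^*$ (working in the dual pair $\langle X^*, X^{**}\rangle$, restricting to $X$) and uses $\tau^{**}=\tau$, $\sigma^{**}=\sigma$. I will record this reduction and then focus on (1).

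For part (1), one inclusion $D^{T^*}_{\tau^*,\sigma^*}\subseteq (C^{\tau,\sigma}_T)^\circ$ is exactly the content of Proposition~\ref{prop: duality sets C and D}(1): if $u^*\in D^{T^*}_{\tau^*,\sigma^*}$ and $u\in C^{\tau,\sigma}_T$, then $u^*(u)\le 1$; note that since $C^{\tau,\sigma}_T$ is circled and solid, $\langle u, u^*\rangle\le \langle |u|, |u^*|\rangle \le 1$ gives $|u^*(u)|\le 1$, so indeed $u^*\in (C^{\tau,\sigma}_T)^\circ$. For the reverse inclusion $(C^{\tau,\sigma}_T)^\circ \subseteq D^{T^*}_{\tau^*,\sigma^*}$, take $u^*\in (C^{\tau,\sigma}_T)^\circ$; I must show that whenever $(u^*_i)^n_{i=1}\subseteq X^*$ satisfies $|u^*|\ge \sigma^*_n(|u^*_1|,\dots,|u^*_n|)$, then $\tau^*\big(\sum^n_{i=1}\|T^*u^*_i\|e_i\big)\le 1$. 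By the duality between $\tau$ and $\tau^*$ (bidual equals $\tau$), it is enough to fix a finitely supported $(t_i)^n_{i=1}$ with $\tau\big(\sum t_i e_i\big)\le 1$ and show $\sum^n_{i=1}|t_i|\,\|T^*u^*_i\|\le 1$; and since the $u^*_i$ may be replaced by $\pm u^*_i$ we may take $t_i\ge 0$. Now pick, for each $i$, a vector $x_i\in B_E$ (or nearly norming) with $\langle x_i, T^*u^*_i\rangle = \langle Tx_i, u^*_i\rangle$ close to $\|T^*u^*_i\|$, rescale to $\hat{x}_i = t_i x_i$ so that $\tau\big(\sum \|\hat{x}_i\|e_i\big)\le \tau\big(\sum t_i e_i\big)\le 1$, and set $u = \sigma_n(|T\hat x_1|,\dots,|T\hat x_n|)$, which lies in $C^{\tau,\sigma}_T$. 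Then $\langle u, |u^*|\rangle \le 1$ because $u^*\in(C^{\tau,\sigma}_T)^\circ$ and $C^{\tau,\sigma}_T$ is solid. The key step is to bound $\sum^n_{i=1}\langle \hat x_i, T^*u^*_i\rangle$ from below by something controlled by $\langle u, |u^*|\rangle$: here I invoke Lemma~\ref{lem: sum of evaluations bounded by sigma} applied to the sequences $(T\hat x_i)$ in $X$ and $(u^*_i)$ in $X^*$, which yields
\[
\sum^n_{i=1}\langle \hat x_i, T^*u^*_i\rangle = \sum^n_{i=1}\langle T\hat x_i, u^*_i\rangle \le \big\langle \sigma_n(|T\hat x_1|,\dots,|T\hat x_n|),\, \sigma^*_n(|u^*_1|,\dots,|u^*_n|)\big\rangle \le \langle u, |u^*|\rangle \le 1,
\]
using the hypothesis $\sigma^*_n(|u^*_1|,\dots,|u^*_n|)\le |u^*|$ in the middle. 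Letting the approximation in the choice of the $x_i$ tend to zero gives $\sum^n_{i=1} t_i\,\|T^*u^*_i\| \le 1$, hence $\tau^*\big(\sum \|T^*u^*_i\|e_i\big)\le 1$, i.e. $u^*\in D^{T^*}_{\tau^*,\sigma^*}$.

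The main obstacle I anticipate is the bookkeeping around the degenerate cases and the approximation: some $\|T^*u^*_i\|$ may be zero (then that term is harmless), and one must choose the norming vectors $x_i$ uniformly enough that after rescaling by $t_i$ the $\tau$-constraint is preserved — this is routine but needs care to phrase cleanly, e.g. by first discarding indices with $t_i=0$ or $\|T^*u^*_i\|=0$, then choosing $x_i$ with $\|x_i\|\le 1$ and $\langle Tx_i,u^*_i\rangle > \|T^*u^*_i\| - \delta$ for a common small $\delta>0$, and absorbing the resulting error. One should also double-check that $\sigma_n(|T\hat x_1|,\dots,|T\hat x_n|)$ genuinely lies in $C^{\tau,\sigma}_T$ — it does, taking $u$ equal to this element (so $|u|\le \sigma_n(|T\hat x_1|,\dots,|T\hat x_n|)$ trivially) and using $\tau\big(\sum\|\hat x_i\|e_i\big)\le 1$. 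No deeper difficulty is expected; the substantive analytic input (the linearization via $C(K)$-representations) has already been packaged into Lemma~\ref{lem: bound sigma* in C(K)}, Proposition~\ref{prop: duality sigma}, and Lemma~\ref{lem: sum of evaluations bounded by sigma}.
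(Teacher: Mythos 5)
Your proposal is correct and follows essentially the same route as the paper: one inclusion is Proposition~\ref{prop: duality sets C and D}, and the reverse inclusion is obtained by choosing near-norming vectors $x_i$, rescaling so the $\tau$-constraint holds, noting that $\sigma_n(|T\hat x_1|,\dots,|T\hat x_n|)\in C^{\tau,\sigma}_T$, and applying Lemma~\ref{lem: sum of evaluations bounded by sigma} together with $\tau^{**}=\tau$ (the paper phrases this contrapositively, but the computation is identical). The paper likewise disposes of part (2) by running the symmetric argument rather than by any further work.
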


\begin{proof}
(1) It follows from Proposition \ref{prop: duality sets C and D}(1) that $D^{T^*}_{\tau^*,\sigma^*} \subseteq (C^{\tau,\sigma}_T)^\circ$.
Conversely, suppose that $u^*\notin D^{T^*}_{\tau^*,\sigma^*}$.
There is a sequence $(u^*_i)^n_{i=1}$ so that $|u^*| \geq \sigma_n^*(|u^*_1|,\dots,|u^*_n|)$ and $\tau^*(\sum^n_{i=1}\|T^*u^*_i\|e_i)> 1$.
Choose $x_i\in E$ with  $\|x_i\|\leq 1$ and a real sequence $(\al_i)^n_{i=1}$ with $\tau(\sum^n_{i=1}\al_ie_i)\leq 1$ so that $\sum^n_{i=1}\al_i\la x_i,T^*u_i^*\ra >1$.
Set $u_i = \al_iTx_i$. 
Using Lemma \ref{lem: sum of evaluations bounded by sigma}, we have
\[1 < \sum^n_{i=1}\la u_i, u^*_i\ra \leq \la \sigma_n(|u_1|,\dots,|u_n|),\sigma_n^*(|u^*_1|,\dots, |u^*_n|)\ra \leq \la \sigma_n(|u_1|,\dots,|u_n|), |u^*|\ra.\]
However, $\tau(\sum^n_{i=1}\|\al_ix_i\|e_i) \leq \tau(\sum^n_{i=1}\al_ie_i)\leq 1$.
Hence, $\sigma_n(|u_1|,\dots,|u_n|) \in C^{\tau,\sigma}_T$.
This shows that $u^* \notin (C^{\tau,\sigma}_T)^\circ$.\medskip

(2) can be proved in the same manner.
\end{proof}

\subsection{Convexity and concavity}\label{sec: concavity and convexity}
Let $\sigma$ and $\tau$ be symmetric and normalized lattice norms on $c_{00}$, $X$ a Banach lattice and $E$ a Banach space. As we advanced earlier, the following definition generalizes \Cref{def: convexity and concavity}.
\begin{defn}\label{def: tau sigma convex and concave}
\begin{enumerate}
\item[]
\item A bounded linear operator $T: E\to X$ is $(\tau,\sigma)$-convex with constant $K$ if for any sequence $(x_i)^n_{i=1}$ in $E$, 
\[ \|\sigma_n(|Tx_1|,\dots, |Tx_n|)\| \leq  K\tau\intoo[3]{\sum^n_{i=1}\|x_i\|e_i}.
\]
A Banach lattice $X$ is $(\tau,\sigma)$-convex with constant $K$ if the identity operator on $X$ is $(\tau,\sigma)$-convex with constant $K$.
\item A bounded linear operator $T: X\to E$ is $(\tau,\sigma)$-concave with constant $K$ if for any sequence $(u_i)^n_{i=1}$ in $X$, 
\[ \|\sigma_n(|u_1|,\dots, |u_n|)\| \geq  K^{-1}\tau\intoo[3]{\sum^n_{i=1}\|Tu_i\|e_i}.
\]
A Banach lattice $X$ is $(\tau,\sigma)$-concave with constant $K$ if the identity operator on $X$ is $(\tau,\sigma)$-concave with constant $K$.
\end{enumerate}
\end{defn}

It is a well-known fact (see, for instance, \cite[Chapter 16]{DJT}) that an operator $T$ is $(p,q)$-convex if and only if its adjoint $T^*$ is $(p^*,q^*)$-concave, and vice versa. Similarly, Theorem \ref{thm: polarity sets C and D} leads to a duality theory for the convex and concave operators introduced in \Cref{def: tau sigma convex and concave}.

\begin{thm}\label{thm: adjoint of convex is concave}
Let $T:E\to X$ be a $(\tau,\sigma)$-convex operator with constant $K$.
Then $T^*$ is a $(\tau^*,\sigma^*)$-concave operator with constant $K$.
\end{thm}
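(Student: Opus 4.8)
The plan is to deduce the statement from the polarity identity \Cref{thm: polarity sets C and D}(1), after reformulating convexity and concavity in terms of the sets $C$ and $D$.

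First I would record two elementary observations, both immediate from the positive homogeneity of $\sigma_n$ together with the monotonicity and solidity built into the relevant definitions. \emph{(a)} A bounded operator $S:E\to X$ is $(\tau,\sigma)$-convex with constant $K$ if and only if $C^{\tau,\sigma}_S\subseteq KB_X$: if $u\in C^{\tau,\sigma}_S$ is witnessed by $(x_i)_{i=1}^n$, then $\|u\|\le\|\sigma_n(|Sx_1|,\dots,|Sx_n|)\|\le K\tau(\sum_i\|x_i\|e_i)\le K$; conversely, given $(x_i)_{i=1}^n$ with $c:=\tau(\sum_i\|x_i\|e_i)>0$ (the case $c=0$ forces all $x_i=0$), the element $\sigma_n(|S(x_1/c)|,\dots,|S(x_n/c)|)$ lies in $C^{\tau,\sigma}_S$, hence has norm $\le K$, which is the desired inequality after rescaling. \emph{(b)} For a bounded operator $R:Z\to F$ from a Banach lattice $Z$ to a Banach space $F$, $R$ is $(\tau,\sigma)$-concave with constant $K$ if and only if $K^{-1}B_Z\subseteq D^{R}_{\tau,\sigma}$; here one uses that $\sigma_n(|z_1|,\dots,|z_n|)\ge|z_i|$ for every $i$ by functional calculus (so that $\sigma_n(|z_1|,\dots,|z_n|)=0$ forces all $z_i=0$), and otherwise normalizes $v:=\sigma_n(|z_1|,\dots,|z_n|)$: the element $\tfrac1{K\|v\|}v$ lies in $K^{-1}B_Z$ and equals $\sigma_n(|z_1/(K\|v\|)|,\dots,|z_n/(K\|v\|)|)$, so membership in $D^R_{\tau,\sigma}$ gives $\tau(\sum_i\|Rz_i\|e_i)\le K\|v\|$.

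Granting these, the argument is short. Since $T:E\to X$ is $(\tau,\sigma)$-convex with constant $K$, observation \emph{(a)} gives $C^{\tau,\sigma}_T\subseteq KB_X$. Taking polars with respect to $\la X,X^*\ra$ reverses inclusions, and $(KB_X)^\circ=K^{-1}B_{X^*}$, so $K^{-1}B_{X^*}\subseteq(C^{\tau,\sigma}_T)^\circ$. By \Cref{thm: polarity sets C and D}(1), $(C^{\tau,\sigma}_T)^\circ=D^{T^*}_{\tau^*,\sigma^*}$, whence $K^{-1}B_{X^*}\subseteq D^{T^*}_{\tau^*,\sigma^*}$. Applying observation \emph{(b)} to $R=T^*:X^*\to E^*$, with $\tau,\sigma$ replaced by $\tau^*,\sigma^*$ (which are again symmetric, normalized lattice norms on $c_{00}$), this says precisely that $T^*$ is $(\tau^*,\sigma^*)$-concave with constant $K$.

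The only genuine work lies in observations \emph{(a)} and \emph{(b)} — in particular the degenerate cases and the inequality $\sigma_n(|z_1|,\dots,|z_n|)\ge|z_i|$ — and everything afterwards is formal. If one prefers to avoid the sets $C,D$ altogether, the theorem can instead be proved by a direct computation: choose $(\alpha_i)_{i=1}^n$ with $\tau(\sum_i\alpha_ie_i)\le1$ nearly attaining $\tau^*(\sum_i\|T^*u_i^*\|e_i)$ and $x_i\in B_E$ nearly attaining $\|T^*u_i^*\|$, put $y_i=\alpha_ix_i$, and combine \Cref{lem: sum of evaluations bounded by sigma} applied to $Ty_1,\dots,Ty_n$ and $u_1^*,\dots,u_n^*$ with the estimate $\langle\sigma_n(|Ty_1|,\dots,|Ty_n|),\sigma_n^*(|u_1^*|,\dots,|u_n^*|)\rangle\le\|\sigma_n(|Ty_1|,\dots,|Ty_n|)\|\cdot\|\sigma_n^*(|u_1^*|,\dots,|u_n^*|)\|\le K\,\|\sigma_n^*(|u_1^*|,\dots,|u_n^*|)\|$; the polarity route is, however, cleaner given the machinery already in place.
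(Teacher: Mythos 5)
Your proposal is correct, but your primary route differs from the paper's. The paper proves the theorem by a direct computation: given $(u_i^*)_{i=1}^n$ in $X^*$, it picks $x_i\in B_E$ nearly attaining $\|T^*u_i^*\|$ and scalars $(a_i)$ nearly attaining $\tau^*\bigl(\sum_i\|T^*u_i^*\|e_i\bigr)$, bounds $\|\sigma_n(a_1|Tx_1|,\dots,a_n|Tx_n|)\|\leq K$ by convexity, and then invokes Lemma~\ref{lem: sum of evaluations bounded by sigma} — exactly the alternative you sketch in your final paragraph. Your main argument instead reformulates convexity as $C^{\tau,\sigma}_T\subseteq KB_X$ and concavity as $K^{-1}B_{X^*}\subseteq D^{T^*}_{\tau^*,\sigma^*}$ and passes between them by taking polars via Theorem~\ref{thm: polarity sets C and D}(1). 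This is legitimate and non-circular: that polarity theorem is established before the present statement and does not use it, and your observations (a) and (b) are carefully handled (including the degenerate cases and the inequality $\sigma_n(|z_1|,\dots,|z_n|)\geq|z_i|$, which follows from $\sigma$ being symmetric, normalized and a lattice norm). It is worth noting that your observations essentially anticipate Propositions~\ref{prop: Coo is closed convex solid} and~\ref{prop: D is closed convex solid}; in the paper the latter is \emph{deduced from} the adjoint theorems, whereas you prove the relevant inclusion directly, so your packaging inverts the paper's logical order without creating a cycle. What the polarity route buys is that the theorem becomes a one-line formal consequence of the duality $(C^{\tau,\sigma}_T)^\circ=D^{T^*}_{\tau^*,\sigma^*}$; what it costs is that it leans on the full linearization machinery (Lemma~\ref{lem: bound sigma* in C(K)}, Proposition~\ref{prop: duality sigma}) behind that duality, whereas the paper's direct proof needs only Lemma~\ref{lem: sum of evaluations bounded by sigma}.
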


\begin{proof}
Let $(u_i^*)^n_{i=1}$ be a sequence in $X^*$ and let $\ep >0$ be given.
Choose a sequence $(x^*_i)^n_{i=1}$ in $B_E$ so that 
\[ \la Tx_i,u^*_i\ra  = \la x_i,T^*u^*_i\ra \geq \|T^*u^*_i\|- \frac{K\ep}{n} \text{ for any $i$}.
\]
By definition of $\tau^*$, there exists a positive real sequence $(a_i)^n_{i=1}$ so that $\tau(\sum^n_{i=1}a_ie_i) \leq 1$
and that 
\[\tau^*\intoo[3]{\sum^n_{i=1}\|T^*u^*_i\|e_i} \leq \sum^n_{i=1}a_i\|T^*u^*_i\| +K\ep.\]
Since $T$ is $(\tau,\sigma)$-convex with constant $K$, 
\[ \|\sigma_n(a_1|Tx_1|,\dots, a_n|Tx_n|)\| \leq K\tau\intoo[3]{\sum^n_{i=1}a_i\|x_i\|e_i} \leq K\tau\intoo[3]{\sum^n_{i=1}a_ie_i}\leq K.\]
Therefore,
\begin{align*}
    \|\sigma^*_n(|u^*_1|,\dots, |u^*_n|)\| & \geq K^{-1}\la \sigma_n(a_1|Tx_1|,\dots, a_n|Tx_n|), \sigma^*_n(|u^*_1|,\dots, |u^*_n|)\ra\\
    & \geq K^{-1}\sum^n_{i=1}a_i\la Tx_i,u^*_i\ra \quad \text{by Lemma \ref{lem: sum of evaluations bounded by sigma}}\\
    & \geq K^{-1}\sum^n_{i=1}a_i\intoo[2]{\|T^*u^*_i\|-\frac{K\ep}{n}} \geq K^{-1}\sum^n_{i=1}a_i\|T^*u^*_i\| -\ep\\
    &\geq K^{-1}\tau^*\intoo[3]{\sum^n_{i=1}\|T^*u^*_i\|e_i} - 2\ep.
\end{align*}
Taking $\ep\downarrow 0$ give the required inequality for $(\tau^*,\sigma^*)$-concavity of $T^*$ with constant $K$.
\end{proof}

\begin{thm}\label{thm: adjoint of concave is convex}
Let $T:X\to E$ be a $(\tau,\sigma)$-concave operator with constant $K$. 
Then $T^*$ is a $(\tau^*,\sigma^*)$-convex operator with constant $K$.
\end{thm}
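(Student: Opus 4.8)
The plan is to argue directly, dualizing the proof of \Cref{thm: adjoint of convex is concave} but invoking part (2) of \Cref{prop: duality sigma} in place of part (1). Fix a finite sequence $(y_i^*)_{i=1}^n$ in $E^*$. Since the element $\sigma_n^*(|T^*y_1^*|,\dots,|T^*y_n^*|)$ is a positive functional in $X^*$, its norm is attained by testing against $u\in X$ with $\|u\|\le 1$, and positivity lets us reduce to estimating $\la |u|, \sigma_n^*(|T^*y_1^*|,\dots,|T^*y_n^*|)\ra$ for such $u$. That is the only point where one has to be mildly careful with absolute values.

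The first step is linearization: apply \Cref{prop: duality sigma}(2) to $u$ and the sequence $(T^*y_i^*)_{i=1}^n\subseteq X^*$ to obtain, for a given $\ep>0$, elements $(u_i)_{i=1}^n\subseteq X$ with $\sigma_n(|u_1|,\dots,|u_n|)\le |u|$ and
\[
\la |u|, \sigma_n^*(|T^*y_1^*|,\dots,|T^*y_n^*|)\ra \le \sum_{i=1}^n \la u_i, T^*y_i^*\ra + \ep = \sum_{i=1}^n \la Tu_i, y_i^*\ra + \ep .
\]
The second step is to estimate this linear expression. By the Hölder-type duality between $\tau$ and $\tau^*$ on $c_{00}$,
\[
\sum_{i=1}^n \la Tu_i, y_i^*\ra \le \sum_{i=1}^n \|Tu_i\|\,\|y_i^*\| \le \tau\Bigl(\sum_{i=1}^n \|Tu_i\| e_i\Bigr)\cdot \tau^*\Bigl(\sum_{i=1}^n \|y_i^*\| e_i\Bigr),
\]
and then the $(\tau,\sigma)$-concavity of $T$ with constant $K$, together with monotonicity of the lattice norm and $\sigma_n(|u_1|,\dots,|u_n|)\le|u|$, gives
\[
\tau\Bigl(\sum_{i=1}^n \|Tu_i\| e_i\Bigr) \le K\,\bigl\|\sigma_n(|u_1|,\dots,|u_n|)\bigr\| \le K\,\||u|\| = K\|u\| \le K .
\]
Combining the three displays yields $\la |u|, \sigma_n^*(|T^*y_1^*|,\dots,|T^*y_n^*|)\ra \le K\,\tau^*(\sum_{i=1}^n \|y_i^*\| e_i) + \ep$; letting $\ep\downarrow 0$ and taking the supremum over $\|u\|\le 1$ produces exactly $\|\sigma_n^*(|T^*y_1^*|,\dots,|T^*y_n^*|)\| \le K\,\tau^*(\sum_{i=1}^n \|y_i^*\| e_i)$, i.e.\ $T^*$ is $(\tau^*,\sigma^*)$-convex with constant $K$ in the sense of \Cref{def: tau sigma convex and concave}.

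I do not anticipate a genuine obstacle: all of the heavy machinery — the Krivine-calculus linearization and the regular-measure approximation of \Cref{lem: bound sigma* in C(K)} — is already packaged inside \Cref{prop: duality sigma}, so the argument is essentially a short bookkeeping exercise. The one thing to get right is choosing the correct half of \Cref{prop: duality sigma} (the version that linearizes $\la |u|, \sigma_n^*(\cdots)\ra$, not $\la \sigma_n(\cdots), |u^*|\ra$) and noting that testing the positive functional $\sigma_n^*(|T^*y_1^*|,\dots,|T^*y_n^*|)$ against $u$ may be replaced by testing against $|u|$. One could alternatively hope to deduce the statement from \Cref{thm: adjoint of convex is concave} via biduality, but since $T^{**}\neq T$ unless $X$ is reflexive, the direct route above is cleaner and unconditional.
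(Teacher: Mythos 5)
Your argument is correct and coincides with the paper's own proof: both fix $u\in B_X$, linearize $\la|u|,\sigma_n^*(|T^*y_1^*|,\dots,|T^*y_n^*|)\ra$ via Proposition~\ref{prop: duality sigma}(2), apply the $\tau$--$\tau^*$ duality and the $(\tau,\sigma)$-concavity of $T$ together with $\sigma_n(|u_1|,\dots,|u_n|)\le|u|$, and conclude by letting $\ep\downarrow 0$ and taking the supremum over the unit ball. Your closing remark that the biduality shortcut fails in the non-reflexive case is also consistent with how the paper handles the converse implication in Corollary~\ref{cor: duality convex concave operators}.
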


\begin{proof}
    Let $(x_i^*)_{i=1}^n\subseteq E^*$, $\ep>0$ and $u\in B_X$ be given. By Proposition \ref{prop: duality sigma}(2), there is a sequence $(u_i)_{i=1}^n\subseteq X$ such that $\sigma_n(|u_1|,\dots,|u_n|)\leq |u|$ and
    \begin{align*}
        \la|u|,\sigma^*_n(|T^*x^*_1|,\dots,|T^*x^*_n|)\ra & \leq \sum_{i=1}^n \la u_i,T^*x^*_i\ra +\ep = \sum_{i=1}^n \la Tu_i,x^*_i\ra +\ep  \\
        & \leq \tau\intoo[3]{\sum_{i=1}^n \| Tu_i\|e_i} \tau^*\intoo[3]{\sum_{i=1}^n \|x^*_i\|e_i} +\ep \leq K\tau^*\intoo[3]{\sum_{i=1}^n \|x^*_i\|e_i} +\ep,
    \end{align*}
    where the last inequality follows from the concavity of $T$, since
    \[\tau\intoo[3]{\sum_{i=1}^n \| Tu_i\|e_i}\leq K \|\sigma_n(|u_1|,\dots,|u_n|)\| \leq K\|u\| \leq K.\]
    Taking $\ep \downarrow 0$ we conclude that
    \[\|\sigma^*_n(|T^*x^*_1|,\dots,|T^*x^*_n|)\|=\sup_{u\in B_X} \la|u|,\sigma^*_n(|T^*x^*_1|,\dots,|T^*x^*_n|)\ra\leq K\tau^*\intoo[3]{\sum_{i=1}^n \|x^*_i\|e_i},\]
    so $T^*$ is a $(\tau^*,\sigma^*)$-convex operator with constant $K$.
\end{proof}

Combining both results, we get:

\begin{cor}\label{cor: duality convex concave operators}  
    \begin{enumerate}
    \item[]
        \item $T:E\to X$ is $(\tau,\sigma)$-convex with constant $K$ if and only if $T^*:X^*\rightarrow E^*$ is $(\tau^*,\sigma^*)$-concave with constant~$K$.
        \item $T:X\to E$ is $(\tau,\sigma)$-concave with constant $K$ if and only if $T^*:E^*\rightarrow X^*$ is $(\tau^*,\sigma^*)$-convex with constant~$K$.
    \end{enumerate}
\end{cor}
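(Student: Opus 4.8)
The plan is to obtain the corollary directly from Theorems~\ref{thm: adjoint of convex is concave} and~\ref{thm: adjoint of concave is convex}: each of the two ``only if'' implications is one of those theorems verbatim, so only the two converse implications require an argument, and for those I would apply the same two theorems a second time — this time to the adjoint $T^*$ — and then descend from the bidual back to the original operator along the canonical embeddings.

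In detail, for part (1): the implication ``$T$ is $(\tau,\sigma)$-convex $\Rightarrow$ $T^*$ is $(\tau^*,\sigma^*)$-concave, same constant'' is exactly Theorem~\ref{thm: adjoint of convex is concave}. For the converse, I would start from the hypothesis that $T^*:X^*\to E^*$ is $(\tau^*,\sigma^*)$-concave with constant $K$, observe that $X^*$ is a Banach lattice and $E^*$ a Banach space, and apply Theorem~\ref{thm: adjoint of concave is convex} to $T^*$; since $\tau^{**}=\tau$ and $\sigma^{**}=\sigma$, this yields that $T^{**}:E^{**}\to X^{**}$ is $(\tau,\sigma)$-convex with constant $K$. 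To finish, I would restrict along the canonical maps $J_E:E\to E^{**}$ and $J_X:X\to X^{**}$: using $T^{**}J_E=J_XT$, that $J_E$ is an isometry, and that $J_X$ is a \emph{lattice} isometry onto a sublattice (hence commutes with the functional calculus $\sigma_n$ and preserves norms of such expressions), the defining inequality of $(\tau,\sigma)$-convexity for $T^{**}$ evaluated on $J_Ex_1,\dots,J_Ex_n$ becomes precisely the defining inequality for $T$ evaluated on $x_1,\dots,x_n$, with the same constant $K$.

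Part (2) is entirely symmetric: ``$T$ is $(\tau,\sigma)$-concave $\Rightarrow$ $T^*$ is $(\tau^*,\sigma^*)$-convex'' is Theorem~\ref{thm: adjoint of concave is convex}, and for the converse I would apply Theorem~\ref{thm: adjoint of convex is concave} to $T^*:E^*\to X^*$ (with $E^*$ the Banach space and $X^*$ the Banach lattice) to deduce that $T^{**}:X^{**}\to E^{**}$ is $(\tau,\sigma)$-concave with constant $K$, and then restrict to $X\hookrightarrow X^{**}$ via $T^{**}J_X=J_ET$, using once more that $J_X$ is a lattice isometry and $J_E$ an isometry to transfer the concavity inequality back to $T$.

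I do not anticipate a genuine obstacle here; the whole content is bookkeeping with biduals. The one point that warrants care, and which I would state explicitly, is that $(\tau,\sigma)$-convexity/concavity is inherited by the restriction of $T^{**}$ to $E$ (respectively to $X$) — this is not automatic from $J_E,J_X$ being mere isometries, but it does follow from the fact that $J_X$ is a lattice-isometric embedding onto a closed sublattice, so that $\sigma_n(|J_Xu_1|,\dots,|J_Xu_n|)=J_X\,\sigma_n(|u_1|,\dots,|u_n|)$ and in particular $\|\sigma_n(|J_Xu_1|,\dots,|J_Xu_n|)\|_{X^{**}}=\|\sigma_n(|u_1|,\dots,|u_n|)\|_X$.
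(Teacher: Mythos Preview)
Your proposal is correct and follows essentially the same route as the paper: both directions use Theorems~\ref{thm: adjoint of convex is concave} and~\ref{thm: adjoint of concave is convex}, with the converse obtained by applying the appropriate theorem to $T^*$ and then descending from $T^{**}$ to $T$ via $T^{**}J_E=J_XT$ (resp.\ $T^{**}J_X=J_ET$), using that $J_X$ is a lattice isometry so that the functional calculus $\sigma_n$ is preserved. Your explicit remark that the lattice structure of $J_X$ (not merely its isometric nature) is what makes the restriction step work is exactly the point the paper invokes.
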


\begin{proof}
    (1) One implication was established in \Cref{thm: adjoint of convex is concave}. For the reverse implication, we can apply \Cref{thm: adjoint of concave is convex} to obtain that $T^{**}$ is $(\tau,\sigma)$-convex with constant $K$. If we denote by $J_X$ and $J_E$ the canonical embeddings into the corresponding bidual spaces, we get that the mapping $J_XT=T^{**}J_E:E\rightarrow J_X(X)\subseteq X^{**}$ is also $(\tau,\sigma)$-convex with the same constant. Since $J_X$ is a lattice isometry, we get the claimed result by composing $J_XT$ with $J_X^{-1}$.\medskip

    \noindent (2) Again, one implication was established in \Cref{thm: adjoint of concave is convex}, and the converse follows by applying \Cref{thm: adjoint of convex is concave} to $T^*$ to obtain that $T=J_E^{-1}J_E T=J_E^{-1}T^{**}J_X$ is $(\tau,\sigma)$-concave with constant $K$.
\end{proof}

We say that a symmetric lattice norm $\rho$ on $c_{00}$ is {\em block convex with constant $A$}, respectively {\em block concave with constant $A$},  if $\rho(\sum_i y_i) \leq A\rho(\sum_i \rho(y_i)e_i)$, respectively $\rho(\sum_i y_i) \geq A^{-1}\rho(\sum_i \rho(y_i)e_i)$, for any finite disjoint sequence $(y_i)_i$ in $c_{00}$.
It is easy to see that if a symmetric lattice norm $\rho$ is block convex with constant $A$, respectively block concave with constant $A$, then its dual norm $\rho^*$ is block concave, respectively block convex with the same constant. Observe that, for every $1\leq p< \infty$, the norm in $\ell_p$ is both block concave and block convex with constant 1, and the same can be said about the norm in $c_0$.\medskip

The following proposition shows that, given an operator $T:E\rightarrow X$, the Banach lattice generated by the set $(C^{\tau,\sigma}_T)^{\circ\circ}$ with the procedure described in \Cref{lem: minkowski functional} is $(\tau,\sigma)$-convex.

\begin{prop}\label{prop: double polar of C is tau-sigma convex}
Let $T:E\rightarrow X$ be a bounded linear operator, and assume that $\sigma$ is block concave with constant $A_\sigma$ and $\tau$ is block convex with constant $A_\tau$. Let $(u_i)^n_{i=1}$ be a sequence in $(C^{\tau,\sigma}_T)^{\circ\circ}$ and let $(\al_i)^n_{i=1}$ be a real sequence so that $\tau(\sum^n_{i=1}\al_ie_i) \leq 1$.
Then  
$\sigma_n(|\al_1u_1|,\dots,|\al_nu_n|) \in A_\sigma A_\tau\cdot (C^{\tau,\sigma}_T)^{\circ\circ}$.
\end{prop}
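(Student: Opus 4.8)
The plan is to dualize. By \Cref{thm: polarity sets C and D}(1) we have $(C^{\tau,\sigma}_T)^{\circ\circ} = (D^{T^*}_{\tau^*,\sigma^*})^{\circ}$, so it is enough to prove that $\langle \sigma_n(|\al_1u_1|,\dots,|\al_nu_n|),u^*\rangle \le A_\sigma A_\tau$ for every $u^*\in D^{T^*}_{\tau^*,\sigma^*}$. Since $C^{\tau,\sigma}_T$ is solid, so are its convex hull, its norm closure, and hence $(C^{\tau,\sigma}_T)^{\circ\circ}$, as well as the polar $D^{T^*}_{\tau^*,\sigma^*}=(C^{\tau,\sigma}_T)^{\circ}$. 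Using this, together with $\tau$ being a lattice norm and $\sigma_n$ being monotone and normalized, I would first reduce to the case $\al_i\ge 0$, $u_i\ge 0$ and $u^*\ge 0$, and write $w:=\sigma_n(\al_1u_1,\dots,\al_nu_n)\ge 0$.

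Next I would linearize. Fix $u^*\ge 0$ in $D^{T^*}_{\tau^*,\sigma^*}$ and $\ep>0$; applying \Cref{prop: duality sigma}(1) to $u^*$ and the sequence $(\al_iu_i)_i$ produces $(u^*_i)_{i=1}^n\subseteq X^*$ with $\sigma^*_n(|u^*_1|,\dots,|u^*_n|)\le u^*$ and $\langle w,u^*\rangle \le \sum_i\langle \al_iu_i,u^*_i\rangle+\ep$, and replacing $u^*_i$ by $|u^*_i|$ we may take $u^*_i\ge 0$. Let $\rho$ be the Minkowski functional of $D^{T^*}_{\tau^*,\sigma^*}$ in $X^*$ (extended real valued, a lattice seminorm). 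Since $u_i\in(D^{T^*}_{\tau^*,\sigma^*})^{\circ}$, the gauge duality gives $\langle u_i,v^*\rangle\le\rho(v^*)$ for all $v^*\in X^*$, whence $\langle w,u^*\rangle\le\sum_i\al_i\rho(u^*_i)+\ep$. As $\tau(\sum_i\al_ie_i)\le 1$, the whole problem reduces to the key estimate
\[
\tau^*\Big(\sum_{i=1}^n\rho(u^*_i)\,e_i\Big)\le A_\sigma A_\tau ,
\]
after which $\sum_i\al_i\rho(u^*_i)\le A_\sigma A_\tau$ by the duality of $\tau$ and $\tau^*$, and one lets $\ep\downarrow 0$.

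For the key estimate I would unravel $\rho$: one checks $v^*\in\lambda D^{T^*}_{\tau^*,\sigma^*}$ iff $\tau^*(\sum_j\|T^*w^*_j\|e_j)\le\lambda$ for every decomposition $|v^*|\ge\sigma^*_m(|w^*_1|,\dots,|w^*_m|)$, so $\rho(v^*)$ is the supremum of $\tau^*(\sum_j\|T^*w^*_j\|e_j)$ over such decompositions. Note $\rho(u^*_i)\le 1$ because $u^*_i\le\sigma^*_n(u^*_1,\dots,u^*_n)\le u^*\in D^{T^*}_{\tau^*,\sigma^*}$. Fix $\delta>0$ and for each $i$ choose a near-optimal decomposition $u^*_i\ge\sigma^*_{m_i}(|w^{(i)}_1|,\dots,|w^{(i)}_{m_i}|)$ with $\tau^*(\sum_j\|T^*w^{(i)}_j\|e_j)\ge\rho(u^*_i)-\delta$. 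I then regroup. Using monotonicity of $\sigma^*_n$ and block convexity of $\sigma^*$ with constant $A_\sigma$ (the dual of the hypothesis that $\sigma$ is block concave with constant $A_\sigma$), transferred to $X^*$ through Krivine's functional calculus, one gets, with $M=\sum_i m_i$,
\[
u^*\ \ge\ \sigma^*_n(u^*_1,\dots,u^*_n)\ \ge\ \sigma^*_n\big(\sigma^*_{m_1}(|w^{(1)}|),\dots,\sigma^*_{m_n}(|w^{(n)}|)\big)\ \ge\ A_\sigma^{-1}\,\sigma^*_M\big((|w^{(i)}_j|)_{i,j}\big),
\]
so $(A_\sigma^{-1}w^{(i)}_j)_{i,j}$ is a legitimate decomposition of $u^*\in D^{T^*}_{\tau^*,\sigma^*}$, yielding $\tau^*(\sum_{i,j}\|T^*w^{(i)}_j\|e_{(i,j)})\le A_\sigma$. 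On the other side, block concavity of $\tau^*$ with constant $A_\tau$ (the dual of $\tau$ being block convex with constant $A_\tau$) gives $\tau^*\big(\sum_i\tau^*(\sum_j\|T^*w^{(i)}_j\|e_j)\,e_i\big)\le A_\tau\,\tau^*(\sum_{i,j}\|T^*w^{(i)}_j\|e_{(i,j)})\le A_\sigma A_\tau$. Combining with $\rho(u^*_i)\le\tau^*(\sum_j\|T^*w^{(i)}_j\|e_j)+\delta$ and the triangle inequality for $\tau^*$, then letting $\delta\downarrow 0$, proves the key estimate.

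\textbf{Main obstacle.} The essential difficulty is the nonlinearity of $\sigma_n$: one cannot argue directly with convex combinations and norm limits of elements of $C^{\tau,\sigma}_T$ to control $\sigma_n$ of elements of the bipolar. The remedy is to move everything to the dual side via \Cref{thm: polarity sets C and D} and perform all the ``regrouping of decompositions'' there, where block convexity of $\sigma^*$ and block concavity of $\tau^*$ — exactly the dual forms of the hypotheses on $\sigma$ and $\tau$ — make the regrouping cost precisely the factor $A_\sigma A_\tau$. A minor technical point to handle with care is that $D^{T^*}_{\tau^*,\sigma^*}$ need not be norm bounded, so $\rho$ must be treated as an extended-real-valued seminorm; this causes no harm since $\rho(u^*_i)\le 1$ on the relevant functionals.
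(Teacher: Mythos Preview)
Your proposal is correct and follows essentially the same route as the paper: dualize via \Cref{thm: polarity sets C and D}(1), linearize with \Cref{prop: duality sigma}(1), introduce the Minkowski functional $\rho$ of $D^{T^*}_{\tau^*,\sigma^*}$ (the paper's $\lambda_i$), pick near-optimal $\sigma^*$-decompositions of each $|u^*_i|$, regroup them into a single decomposition of $|u^*|$ using block convexity of $\sigma^*$, and then read off the $\tau^*$-bound using block concavity of $\tau^*$. The only cosmetic differences are that the paper works with absolute values throughout rather than first reducing to the positive case, and uses a multiplicative $(1-\ep)$ in place of your additive $\delta$ when choosing the near-optimal decompositions.
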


\begin{proof}
Let $u^*\in D^{T^*}_{\tau^*,\sigma^*}$ and let $\ep >0$ be given.
By Proposition \ref{prop: duality sigma}(1), there exists $(u^*_i)^n_{i=1}\subseteq X^*$ such that 
$\sigma_n^*(|u_1^*|,\dots,|u^*_n|) \leq |u^*|$ and 
\[ \la \sigma_n(|\al_1u_1|,\dots,|\al_nu_n|),|u^*|\ra \leq\sum^n_{i=1}\la |\al_iu_i|,|u^*_i|\ra + \ep.\]
Let $\lambda_i = \inf\{\lambda>0: |u^*_i|\in  \lambda D^{T^*}_{\tau^*,\sigma^*}\}$.
For each $i$, there exists $(u^*_{ij})^{m_i}_{j=1}\subseteq X^*$ such that
$|u_i^*| \geq \sigma_{m_i}^*(|u^*_{i1}|,\dots, |u^*_{im_i}|)$ and  
\[ \tau^*(y_i) \geq (1-\ep)\lambda_i, \text{  where } y_i = \sum^{m_i}_{j=1}\|T^*u^*_{ij}\|e_{j+ \sum^{i-1}_{k=1}m_k}.\]
Using the fact that $\sigma^*$ is block convex with constant $A_\sigma$, we find that 
\begin{align*}
 |u^*|&\geq \sigma^*_n(|u^*_1|,\dots,|u^*_n|) \\
 &\geq \sigma^*_n(\sigma^*_{m_1}(|u^*_{11}|,\dots,|u^*_{1m_1}|),\dots, \sigma^*_{m_n}(|u^*_{n1}|,\dots,|u^*_{nm_n}|)\\
 &\geq (A_\sigma)^{-1}\sigma_r^*(|u^*_{11}|,\dots, |u^*_{1m_1}|,\dots, |u^*_{n1}|,\dots,|u^*_{nm_n}|), \quad r = \sum^n_{i=1}m_i.
 \end{align*}
Since $u^* \in D^{T^*}_{\tau^*,\sigma^*}$ and $\tau^*$ is block concave with constant $A_\tau$,
\begin{align*}1 &\geq \tau^*\intoo[3]{\sum^n_{i=1}\sum^{m_i}_{j=1}(A_\sigma)^{-1}\|T^*u_{ij}^*\|e_{j+ \sum^{i-1}_{k=1}m_k}}\\&
 = (A_\sigma)^{-1}\tau^*\intoo[3]{\sum^n_{i=1}y_i} \geq (A_\sigma A_\tau)^{-1}\tau^*\intoo[3]{\sum^n_{i=1}\tau^*(y_i)e_i}\\
 & \geq  (1-\ep)(A_\sigma A_\tau)^{-1}\tau^*\intoo[3]{\sum^n_{i=1}\lambda_ie_i}.
\end{align*}
As $|u_i|\in  (C^{\tau,\sigma}_T)^{\circ\circ}$ and $|u^*_i| \in \lambda_iD^{T^*}_{\tau^*,\sigma^*} = \lambda_i(C^{\tau,\sigma}_T)^{\circ}$ by Theorem \ref{thm: polarity sets C and D}(1), $\la |u_i|,|u_i^*|\ra \leq \lambda_i$.  
Therefore,
\begin{align*}
\la \sigma_n&(|\al_1u_1|,\dots,|\al_nu_n|),|u^*|\ra \leq\sum^n_{i=1}\la |\al_iu_i|,|u^*_i|\ra + \ep
\\ &\leq \sum^n_{i=1}|\al_i\lambda_i|+\ep
\leq \tau\intoo[3]{\sum^n_{i=1}\al_ie_i}\cdot \tau^*\intoo[3]{\sum^n_{i=1}\lambda_ie_i}+\ep\\& \leq \frac{A_\sigma A_\tau}{1-\ep}+\ep.
\end{align*}
Taking $\ep \downarrow0$, we see that $\la \sigma_n(|\al_1u_1|,\dots,|\al_nu_n|),|u^*|\ra\leq A_\sigma A_\tau$.
As this holds for all $u^*\in D^{T^*}_{\tau^*,\sigma^*}$, 
we conclude that $(A_\sigma A_\tau)^{-1}\sigma_n(|\al_1u_1|,\dots,|\al_nu_n|) \in (D^{T^*}_{\tau^*,\sigma^*})^\circ$.  Since the last set coincides with 
$(C^{\tau,\sigma}_T)^{\circ\circ}$ by Theorem \ref{thm: polarity sets C and D}(1), the proof is complete.
\end{proof}

We will use the previous lemma to construct optimal factorizations for $(\tau,\sigma)$-convex and $(\tau,\sigma)$-concave operators, extending the results of \cite[Section 2]{RT} for $p$-convex and $p$-concave operators. In particular, applying \Cref{thm: minimal factorization tau sigma convex operator} with $\tau$ and $\sigma$ being the norms on $\ell_p$ and $c_0$, respectively, we will obtain the first statement of \Cref{thm: optimal factorization upe lpe case}. If we apply instead \Cref{thm: maximal factorization tau sigma concave operator} with the norms on $\ell_q$ and $\ell_1$ acting as $\tau$ and $\sigma$, we will recover the second part of \Cref{thm: optimal factorization upe lpe case}.

\subsubsection{ Minimal convex factorization}
In this subsection, let $T:E\to X$ be a $(\tau,\sigma)$-convex operator with constant $K$ and recall the sets $C^{\tau,\sigma}_T$ and $D^{T^*}_{\tau^*,\sigma^*}$ from  Definition \ref{def: convex sets of T}.

\begin{prop}\label{prop: Coo is closed convex solid}
The set $(C^{\tau,\sigma}_T)^{\circ\circ}$ is a closed convex solid set in $X$ so that
$T(B_E) \subseteq (C^{\tau,\sigma}_T)^{\circ\circ} \subseteq KB_X$.  
\end{prop}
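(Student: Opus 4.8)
The plan is to establish three things: that $C_T^{\tau,\sigma}$ is contained in $KB_X$, that $(C_T^{\tau,\sigma})^{\circ\circ}$ is contained in $KB_X$, and that $T(B_E)$ is contained in $(C_T^{\tau,\sigma})^{\circ\circ}$; convexity, closedness and solidity of the bipolar being automatic. First I would note that $(C_T^{\tau,\sigma})^{\circ\circ}$ is weak$^*$-closed (hence norm-closed), convex and circled by the bipolar theorem, and it is solid because the polar operations preserve solidity: if $u\in (C_T^{\tau,\sigma})^{\circ\circ}$ and $|v|\le |u|$, then for every $u^*\in (C_T^{\tau,\sigma})^\circ = D^{T^*}_{\tau^*,\sigma^*}$ one has $|u^*(v)|\le \la |u|,|u^*|\ra \le 1$, using that $D^{T^*}_{\tau^*,\sigma^*}$ is itself solid (which is clear from its defining condition, since it only involves $|u|$) together with \Cref{thm: polarity sets C and D}(1). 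So the main content is the chain of inclusions.

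For $C_T^{\tau,\sigma}\subseteq KB_X$: take $u\in C_T^{\tau,\sigma}$, so $|u|\le \sigma_n(|Tx_1|,\dots,|Tx_n|)$ for some $(x_i)_{i=1}^n\subseteq E$ with $\tau(\sum_{i=1}^n\|x_i\|e_i)\le 1$. Then $\|u\| = \||u|\| \le \|\sigma_n(|Tx_1|,\dots,|Tx_n|)\| \le K\tau(\sum_{i=1}^n\|x_i\|e_i) \le K$ by the $(\tau,\sigma)$-convexity of $T$ with constant $K$. Hence $C_T^{\tau,\sigma}\subseteq KB_X$. Since $KB_X$ is closed, convex and circled, its bipolar (in the pair $\la X,X^*\ra$) is itself, so taking bipolars in the inclusion $C_T^{\tau,\sigma}\subseteq KB_X$ gives $(C_T^{\tau,\sigma})^{\circ\circ}\subseteq KB_X$. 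For the remaining inclusion $T(B_E)\subseteq (C_T^{\tau,\sigma})^{\circ\circ}$: since $\tau$ is normalized, for any $x\in B_E$ the single-vector choice $(x_1)=(x)$ with $n=1$ satisfies $\tau(\|x\|e_1) = \|x\|\le 1$, and $|Tx| = \sigma_1(|Tx|)$, so $|Tx|\in C_T^{\tau,\sigma}$; as $C_T^{\tau,\sigma}$ is solid this gives $Tx\in C_T^{\tau,\sigma}\subseteq (C_T^{\tau,\sigma})^{\circ\circ}$, whence $T(B_E)\subseteq (C_T^{\tau,\sigma})^{\circ\circ}$.

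I do not anticipate a serious obstacle here: the only point requiring a moment's care is verifying solidity of $(C_T^{\tau,\sigma})^{\circ\circ}$, for which I would either argue via solidity of the polar $D^{T^*}_{\tau^*,\sigma^*}$ as above, or alternatively invoke directly that $C_T^{\tau,\sigma}$ is solid (immediate from its definition, since membership depends only on $|u|$) and that the bipolar of a solid set is solid in a pair $\la X,X^*\ra$ with $X$ a Banach lattice. Everything else is a direct unwinding of \Cref{def: convex sets of T}, \Cref{def: tau sigma convex and concave}, and the bipolar theorem, together with \Cref{thm: polarity sets C and D}(1) to identify $(C_T^{\tau,\sigma})^\circ$.
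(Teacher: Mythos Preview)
Your proposal is correct and follows essentially the same approach as the paper: show $C^{\tau,\sigma}_T\subseteq KB_X$ via the $(\tau,\sigma)$-convexity of $T$, pass to bipolars, and observe $T(B_E)\subseteq C^{\tau,\sigma}_T$ via the single-vector choice $n=1$. One minor terminological slip: since the bipolar is taken in the pair $\la X,X^*\ra$, it lies in $X$ and is weakly (not weak$^*$) closed, hence norm-closed for convex sets; also, the paper dispenses with the detour through \Cref{thm: polarity sets C and D}(1) for solidity and simply uses that $C^{\tau,\sigma}_T$ is solid and the bipolar of a solid set is solid, which you note as your alternative route.
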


\begin{proof}
The set $(C^{\tau,\sigma}_T)^{\circ\circ}$ is the closed convex hull of $C^{\tau,\sigma}_T$, where the polars are taken with respect to the duality $\la X,X^*\ra$. Since $C^{\tau,\sigma}_T$ is solid, so is $(C^{\tau,\sigma}_T)^{\circ\circ}$.
Suppose that $u\in C_T^{\tau,
\sigma}$.
There exists $(x_i)^n_{i=1}$ in $E$ so that $|u| \leq \sigma_n(|Tx_1|,\dots,|Tx_n|)$ and $\tau(\sum^n_{i=1}\|x_i\|e_i) \leq 1$. By the convexity of $T$, $\|u\| \leq K$. Thus $C^{\tau,\sigma}_T \subseteq KB_X$ and hence $(C^{\tau,\sigma}_T)^{\circ \circ} \subseteq KB_X$.
Finally, if  $x\in B_E$, then $|Tx|\leq \sigma_1(|Tx|)$ and $\tau(\|x\|e_1) = \|x\| \leq 1$. Hence, $Tx\in C^{\tau,\sigma}_T\subseteq (C^{\tau,\sigma}_T)^{\circ\circ}$.
\end{proof}

By Proposition \ref{prop: class C correspondence}, there is a Class $\cC$ factorization $(Y_0,U_0,V_0)$ of $T$ so that $V_0B_{Y_0} = (C^{\tau,\sigma}_T)^{\circ\circ}$. The Banach lattice $Y_0$ induced by this factorization is $(\tau,\sigma)$-convex.

\begin{thm}\label{thm: minimal factorization tau sigma convex operator}
Assume that $\sigma$ is block concave with constant $A_\sigma$ and $\tau$ is block convex with constant $A_\tau$. Then, the Banach lattice $Y_0$ is $(\tau,\sigma)$-convex with constant $A_\sigma A_\tau$. The operators $U_0, V_0$ satisfy $\|U_0\| \leq 1$, $\|V_0\|\leq K$.
If $(Y,U,V)$ is a Class $\cC$ factorization of $T$ so that $Y$ is $(\tau,\sigma)$-convex, then there is a 
linear operator $\vp:Y_0\to Y$ of Class $\cC$ so that $U = \vp U_0$ and $V_0 = V\vp$.
\end{thm}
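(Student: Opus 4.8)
The plan is to verify each assertion in turn, relying on the two correspondences established earlier. First, the existence of the Class $\cC$ factorization $(Y_0,U_0,V_0)$ with $V_0(B_{Y_0}) = (C^{\tau,\sigma}_T)^{\circ\circ}$ follows directly from \Cref{prop: class C correspondence}(1), since by \Cref{prop: Coo is closed convex solid} the set $(C^{\tau,\sigma}_T)^{\circ\circ}$ is closed, bounded, convex, solid, and contains $T(B_E)$. Tracking through the construction in the proof of \Cref{prop: class C correspondence}, we have $Y_0 = \spn (C^{\tau,\sigma}_T)^{\circ\circ}$ with Minkowski-functional norm, $U_0 = T$ as a map into $Y_0$, and $V_0$ the formal inclusion. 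The bounds $\|U_0\|\le 1$ and $\|V_0\|\le K$ come from $T(B_E)\subseteq (C^{\tau,\sigma}_T)^{\circ\circ}$ and $(C^{\tau,\sigma}_T)^{\circ\circ}\subseteq KB_X$ respectively, both recorded in \Cref{prop: Coo is closed convex solid}.

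Next I would establish that $Y_0$ is $(\tau,\sigma)$-convex with constant $A_\sigma A_\tau$. Let $(v_i)_{i=1}^n$ be a sequence in $Y_0$ and let $(\al_i)_{i=1}^n$ be reals with $\tau(\sum_{i=1}^n\al_ie_i)\le 1$; it suffices to show $\|\sigma_n(|\al_1 v_1|,\dots,|\al_n v_n|)\|_{Y_0}\le A_\sigma A_\tau$ whenever each $\|v_i\|_{Y_0}\le 1$, i.e.\ each $v_i\in (C^{\tau,\sigma}_T)^{\circ\circ}$ (using homogeneity and the fact that the unit ball of $Y_0$ is exactly $(C^{\tau,\sigma}_T)^{\circ\circ}$). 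But this is precisely the content of \Cref{prop: double polar of C is tau-sigma convex}: it gives $\sigma_n(|\al_1 v_1|,\dots,|\al_n v_n|)\in A_\sigma A_\tau\cdot(C^{\tau,\sigma}_T)^{\circ\circ}$, which says exactly that the $Y_0$-norm of this element is at most $A_\sigma A_\tau$. Since $\sigma_n$ is computed via the same functional calculus in $Y_0$ and in $X$ (because $V_0$ is an interval preserving injective lattice homomorphism, hence preserves functional calculus), this translates into the $(\tau,\sigma)$-convexity inequality for $\mathrm{id}_{Y_0}$ with constant $A_\sigma A_\tau$.

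Finally, for the factorization-through-property: suppose $(Y,U,V)$ is a Class $\cC$ factorization of $T$ with $Y$ being $(\tau,\sigma)$-convex. By \Cref{prop: class C correspondence}(2), it is enough to produce a constant $c>0$ with $V_0(B_{Y_0})\subseteq cV(B_Y)$, for then the desired Class $\cC$ operator $\vp:Y_0\to Y$ with $U=\vp U_0$ and $V_0 = V\vp$ is automatic. Now $V(B_Y)$ is a closed, bounded, convex, solid subset of $X$ containing $cT(B_E)$ for some $c>0$ (again by \Cref{prop: class C correspondence}(1)); hence it is $\circ\circ$-closed, so to get $V_0(B_{Y_0}) = (C^{\tau,\sigma}_T)^{\circ\circ}\subseteq c'V(B_Y)$ it suffices to show $C^{\tau,\sigma}_T\subseteq c'V(B_Y)$. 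Take $u\in C^{\tau,\sigma}_T$: there are $(x_i)_{i=1}^n$ in $E$ with $|u|\le \sigma_n(|Tx_1|,\dots,|Tx_n|)$ and $\tau(\sum_{i=1}^n\|x_i\|e_i)\le 1$. Writing $w_i = Ux_i\in Y$, we have $Vw_i = Tx_i$ and, since $V$ is an injective interval preserving lattice homomorphism, $V\bigl(\sigma_n(|w_1|,\dots,|w_n|)\bigr) = \sigma_n(|Tx_1|,\dots,|Tx_n|)\ge |u|\ge 0$; because $V$ preserves and reflects order and intervals, there is $v\in Y_+$ with $0\le v\le \sigma_n(|w_1|,\dots,|w_n|)$ and $Vv = |u|$. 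The $(\tau,\sigma)$-convexity of $Y$ (with some constant $K_Y$) together with $\|w_i\|_Y\le \|U\|\|x_i\|$ gives $\|\sigma_n(|w_1|,\dots,|w_n|)\|_Y\le K_Y\|U\|\,\tau(\sum_{i=1}^n\|x_i\|e_i)\le K_Y\|U\|$, hence $\|v\|_Y\le K_Y\|U\|$, and a sign adjustment (replacing $v$ by an element of the appropriate interval above $u$ itself rather than $|u|$, using solidity of $B_Y$) shows $u\in K_Y\|U\|\cdot V(B_Y)$. Thus $c' = K_Y\|U\|$ works, completing the proof. The main obstacle is this last step: carefully checking that $V$ being of Class $\cC$ lets one pull the vector $u$ (not merely $|u|$) back into $Y$ with controlled norm, which requires invoking interval preservation and solidity of $B_Y$ rather than just boundedness of $V$.
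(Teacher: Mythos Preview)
Your argument is correct. For the first two parts (the norm bounds on $U_0,V_0$ and the $(\tau,\sigma)$-convexity of $Y_0$), you follow exactly the paper's route via \Cref{prop: Coo is closed convex solid} and \Cref{prop: double polar of C is tau-sigma convex}.

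For the minimality assertion, however, you take a genuinely different path. The paper argues \emph{dually}: it shows $\frac{1}{K_Y}(V(B_Y))^\circ \subseteq D^{T^*}_{\tau^*,\sigma^*}$ by a contrapositive computation using \Cref{lem: sum of evaluations bounded by sigma}, then invokes the polarity identity $(C^{\tau,\sigma}_T)^\circ = D^{T^*}_{\tau^*,\sigma^*}$ from \Cref{thm: polarity sets C and D}(1) and takes polars once more to obtain $(C^{\tau,\sigma}_T)^{\circ\circ}\subseteq K_Y\,V(B_Y)$. You instead work \emph{primally}: you show directly that $C^{\tau,\sigma}_T\subseteq K_Y\|U\|\,V(B_Y)$ by pulling each $u\in C^{\tau,\sigma}_T$ back through the interval-preserving injective lattice homomorphism $V$, using the $(\tau,\sigma)$-convexity of $Y$ to bound the preimage, and then pass to the bipolar. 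Your sign-adjustment step is fine: since $V$ is of Class~$\cC$, the set $V(B_Y)$ is solid in $X$ (this is part of \Cref{prop: class C correspondence}(1)), so $|u|\in K_Y\|U\|\,V(B_Y)$ immediately gives $u\in K_Y\|U\|\,V(B_Y)$. Your approach is more elementary---it avoids the duality theory of \Cref{sec: duality convexity} entirely for this step---while the paper's approach showcases the $C$--$D$ polarity that it has developed and will reuse in \Cref{sec: duality factorizations}.
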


\begin{proof}
By definition, $Y_0 = \spn (C^{\tau,\sigma}_T)^{\circ\circ}$ and $(C^{\tau,\sigma}_T)^{\circ\circ}$ is the closed ball of $Y_0$ (Lemma \ref{lem: minkowski functional}). Since $U_0$ agrees with $T$ formally and $V_0$ is the inclusion map from $Y$ to $X$, 
by Proposition \ref{prop: Coo is closed convex solid}, $\|U_0\| \leq 1$ and $\|V_0\|\leq K$.\medskip

Let $(u_i)^n_{i=1}$ be a sequence in $B_{Y_0} = (C^{\tau,\sigma}_T)^{\circ\circ}$ and let $(\al_i)^n_{i=1}$ be a scalar sequence satisfying $\tau(\sum^n_{i=1}\al_ie_i) \leq 1$.
By Proposition \ref{prop: double polar of C is tau-sigma convex}, $\sigma_n(|\al_1u_1|,\dots, |\al_nu_n|) \in A_\sigma A_\tau\cdot B_{Y_0}$.  Hence $Y_0$ is $(\tau,\sigma)$-convex with constant $A_\sigma A_\tau$. \medskip

Now, suppose that $(Y,U,V)$ is a Class $\cC$ factorization of $T$ so that $Y$ is $(\tau,\sigma)$-convex with constant $K$. Without loss of generality, we can assume that $U$ is contractive.
We claim that $\frac{1}{K}(V(B_Y))^\circ \subseteq D^{T^*}_{\tau^*,\sigma^*}$. 
Indeed, if  $u^*\notin D^{T^*}_{\tau^*,\sigma^*}$, then there exists $(u^*_i)^n_{i=1}$ in $X^*$ so that $|u^*|\geq \sigma_n^*(|u^*_1|,\dots,|u^*_n|)$ and $\tau^*(\sum^n_{i=1}\|T^*u^*_i\|e_i) > 1$.
Choose $(x_i)^n_{i=1}\subseteq  B_E$ and a scalar sequence $(\al_i)^n_{i=1}$ so that $\tau(\sum^n_{i=1}\al_ie_i)\leq1$ and  
\begin{align*}
 1 &< \sum^n_{i=1}\al_i\la x_i,T^*u^*_i\ra  = \sum^n_{i=1}\la T(\al_ix_i),u^*_i\ra\quad \text{(Lemma \ref{lem: sum of evaluations bounded by sigma})}\\
 &
 \leq \la \sigma_n(|T(\al_1x_1)|,\dots, |T(\al_nx_n)|), \sigma^*_n(|u^*_1|,\dots,|u^*_n|)\ra \\
 &\leq \la \sigma_n(|T(\al_1x_1)|,\dots, |T(\al_nx_n)|), |u^*|\ra\\
 & =  \la V\sigma_n(|U(\al_1x_1)|,\dots, |U(\al_nx_n)|), |u^*|\ra.
\end{align*}
However, since $Y$ is $(\tau,\sigma)$-convex with constant $K$ and $U$ was assumed to be contractive, 
\[\|\sigma_n(|U(\al_1x_1)|,\dots, |U(\al_nx_n)|)\| \leq K\tau\intoo[3]{\sum^n_{i=1}\al_ie_i} \leq K.\]
Thus $V\sigma_n(|U(\al_1x_1)|,\dots, |U(\al_nx_n)|)\in KV(B_Y)$.
This shows that $|u^*| \notin (KV(B_Y))^\circ = \frac{1}{K}(V(B_Y))^\circ$, completing the proof of the claim. \medskip

From the claim, Theorem \ref{thm: polarity sets C and D}(1), and the fact that $V_0$ is the formal inclusion, we obtain that
\[ V_0B_{Y_0} = V_0((C^{\tau,\sigma}_T)^{\circ \circ})= 
(C^{\tau,\sigma}_T)^{\circ \circ}= (D^{T^*}_{\tau^*,\sigma^*})^\circ \subseteq K(V(B_Y))^{\circ\circ} = KV(B_Y),\]
where the last equality holds since $V(B_Y)$ is closed and convex.
The existence of the desired map $\vp:Y_0\to Y$ now follows from Proposition \ref{prop: class C correspondence}(2).
\end{proof}

Due to Theorem \ref{thm: minimal factorization tau sigma convex operator}, we call $(Y_0,U_0,V_0)$ the {\em minimal $(\tau,\sigma)$-convex factorization} of $T:E\to X$.

\subsubsection{Maximal concave factorization}
Similar results for concave operators can be obtained via duality.
Let $T: X\to E$ be a $(\tau,\sigma)$-concave operator with constant $K$.

\begin{prop}\label{prop: D is closed convex solid}
The set $D^T_{\tau,\sigma}$ is a closed convex solid set in $X$ 
so that $\frac{1}{K}B_X \subseteq D^T_{\tau,\sigma} \subseteq T^{-1}(B_E)$.
\end{prop}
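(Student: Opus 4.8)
The plan is to verify the four assertions --- that $D^T_{\tau,\sigma}$ is solid, that $\frac1K B_X\subseteq D^T_{\tau,\sigma}$, that $D^T_{\tau,\sigma}\subseteq T^{-1}(B_E)$, and that $D^T_{\tau,\sigma}$ is closed and convex --- essentially one at a time. Solidity and the two inclusions come directly from unwinding \Cref{def: convex sets of T}, using that $\sigma$ and $\tau$ are normalized lattice norms and that $T$ is $(\tau,\sigma)$-concave with constant $K$; closedness and convexity are not visible from the definition and will instead be read off from the polarity already established in \Cref{thm: polarity sets C and D}.

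For solidity: if $u\in D^T_{\tau,\sigma}$ and $v\in X$ with $|v|\le|u|$, then every finite sequence $(u_i)_{i=1}^n\subseteq X$ satisfying $\sigma_n(|u_1|,\dots,|u_n|)\le|v|$ also satisfies $\sigma_n(|u_1|,\dots,|u_n|)\le|u|$, so the defining condition for $u$ forces $\tau\bigl(\sum_{i=1}^n\|Tu_i\|e_i\bigr)\le1$; hence $v\in D^T_{\tau,\sigma}$. For $D^T_{\tau,\sigma}\subseteq T^{-1}(B_E)$: given $u\in D^T_{\tau,\sigma}$, apply the defining condition to the single-term sequence $u_1=u$ (so $n=1$); since $\sigma$ is a normalized lattice norm, $\sigma_1(|u_1|)=|u|\le|u|$, whence $\tau(\|Tu\|e_1)\le1$, i.e.\ $\|Tu\|\le1$ since $\tau$ is a normalized lattice norm. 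For $\frac1K B_X\subseteq D^T_{\tau,\sigma}$: if $\|u\|\le1/K$ and $(u_i)_{i=1}^n\subseteq X$ satisfies $\sigma_n(|u_1|,\dots,|u_n|)\le|u|$, then $\|\sigma_n(|u_1|,\dots,|u_n|)\|\le1/K$, and the $(\tau,\sigma)$-concavity of $T$ with constant $K$ gives $\tau\bigl(\sum_{i=1}^n\|Tu_i\|e_i\bigr)\le K\,\|\sigma_n(|u_1|,\dots,|u_n|)\|\le1$, so $u\in D^T_{\tau,\sigma}$.

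It remains to show that $D^T_{\tau,\sigma}$ is closed and convex, and this is the one point that needs care: for a fixed sequence $(u_i)$ the set $\{u\in X:\sigma_n(|u_1|,\dots,|u_n|)\le|u|\}$ is norm closed but in general not convex, so the naive description of $D^T_{\tau,\sigma}$ as an intersection of complements of such sets yields nothing directly. Instead I would invoke \Cref{thm: polarity sets C and D}(2), which identifies $D^T_{\tau,\sigma}=(C^{\tau^*,\sigma^*}_{T^*})^\circ$, the polar of a subset of $X^*$ with respect to the dual pair $\la X,X^*\ra$. Any such polar is an intersection of closed half-spaces $\{u\in X:\la u,u^*\ra\le1\}$, hence $\sigma(X,X^*)$-closed and convex, and in particular norm closed and convex. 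I expect this routing of closedness and convexity through the duality theory to be the only genuine obstacle; the remaining steps are bookkeeping.
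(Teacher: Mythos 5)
Your proof is correct. It takes a genuinely different route from the paper's for most of the claims: you verify solidity and both inclusions directly from Definition~\ref{def: convex sets of T} (solidity by monotonicity of the defining condition in $|u|$, the inclusion $D^T_{\tau,\sigma}\subseteq T^{-1}(B_E)$ by testing with the singleton sequence $u_1=u$, and $\frac1K B_X\subseteq D^T_{\tau,\sigma}$ by applying the $(\tau,\sigma)$-concavity inequality together with $\|\sigma_n(|u_1|,\dots,|u_n|)\|\le\|u\|$), reserving the duality machinery only for closedness and convexity. The paper instead dualizes wholesale: it invokes Theorem~\ref{thm: adjoint of concave is convex} to get that $T^*$ is $(\tau^*,\sigma^*)$-convex, applies Proposition~\ref{prop: Coo is closed convex solid} to obtain $T^*(B_{E^*})\subseteq (C^{\tau^*,\sigma^*}_{T^*})^{\circ\circ}\subseteq KB_{X^*}$, takes polars of this chain using $(T^*(B_{E^*}))^\circ=T^{-1}(B_E)$ and $(KB_{X^*})^\circ=K^{-1}B_X$, and then identifies $(C^{\tau^*,\sigma^*}_{T^*})^\circ=D^T_{\tau,\sigma}$ via Theorem~\ref{thm: polarity sets C and D}(2), so that all four properties (closed, convex, solid, and the two inclusions) drop out of the polar description at once. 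Your approach is more elementary and self-contained for the inclusions and makes the role of the concavity constant $K$ transparent; the paper's is shorter and exhibits the proposition as the exact dual of the convex case, which fits the duality-driven architecture of the section. You are right that closedness and convexity cannot be read off the definition directly and that routing them through $D^T_{\tau,\sigma}=(C^{\tau^*,\sigma^*}_{T^*})^\circ$ is the natural fix; since the polar of a solid set is solid, that identity would in fact also give you solidity for free, though your direct argument is equally valid.
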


\begin{proof}
By Theorem \ref{thm: adjoint of concave is convex}, $T^*:E^*\to X^*$  is a $(\tau^*,\sigma^*)$-convex operator.  By Proposition \ref{prop: Coo is closed convex solid}, 
\[T^*(B_{E^*}) \subseteq (C^{\tau^*,\sigma^*}_{T^*})^{\circ\circ} \subseteq KB_{X^*}.
\]
Easy calculations show that $(T^*(B_{E^*}))^\circ = T^{-1}(B_E)$
 and $(KB_{X^*})^\circ = K^{-1}B_X$.  Thus, taking polars in the string of inclusions above gives
 \[ K^{-1}B_X \subseteq (C^{\tau^*,\sigma^*}_{T^*})^{\circ} \subseteq T^{-1}(B_E).
 \]
By Theorem \ref{thm: polarity sets C and D}(2),     $(C^{\tau^*,\sigma^*}_{T^*})^{\circ} = D^T_{\tau,\sigma}$. This concludes the proof of the proposition.
\end{proof}

By Proposition \ref{prop: class D correspondence}, there is a Class $\cD$ factorization $(Y^0,U^0,V^0)$ of $T$ such that $(U^0)^{-1}(B_{Y^0}) = D^T_{\tau,\sigma}$.
The next result follows from Theorem \ref{thm: minimal factorization tau sigma convex operator} by duality.  We omit the details.

\begin{thm}\label{thm: maximal factorization tau sigma concave operator}
Assume that $\sigma$ is block convex with constant $A_\sigma$ and $\tau$ is block concave with constant $A_\tau$. Then, the Banach lattice $Y^0$ is $(\tau,\sigma$)-concave with constant $A_\sigma A_\tau$. The operators $U^0$, $V^0$ satisfy $\|U^0\| \leq K$, $\|V^0\|\leq 1$. 
If $(Y,U,V)$ is a Class $\cD$ factorization of $T$ so that $Y$ is $(\tau,\sigma)$-concave, then there is a linear operator $\vp:Y\to Y^0$ of Class $\cD$ so that $U^0 = \vp U$ and $V^0\vp = V$.
\end{thm}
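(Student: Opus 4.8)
The plan is to transpose the proof of \Cref{thm: minimal factorization tau sigma convex operator} to the concave setting, arguing directly rather than via adjoints. Let $(Y^0,U^0,V^0)$ be the Class $\cD$ factorization of $T$ provided by \Cref{prop: class D correspondence} with $(U^0)^{-1}(B_{Y^0}) = D^T_{\tau,\sigma}$, and let $\rho$ be the Minkowski functional of $D^T_{\tau,\sigma}$ on $X$. By \Cref{prop: D is closed convex solid} the set $D^T_{\tau,\sigma}$ is closed, convex, solid and, since it contains $\tfrac1K B_X$, absorbing; thus $\rho$ is a lattice seminorm with closed unit ball $D^T_{\tau,\sigma}$, and $\|U^0 u\|_{Y^0} = \rho(u)$ for all $u\in X$. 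The inclusions $\tfrac1K B_X\subseteq D^T_{\tau,\sigma}\subseteq T^{-1}(B_E)$ then give $\|U^0\|\le K$ immediately, and $\|V^0\|\le 1$ because $U^0(D^T_{\tau,\sigma})$ is dense in $B_{Y^0}$ and $V^0U^0=T$. (One could instead note that $T^*$ is $(\tau^*,\sigma^*)$-convex with constant $K$ by \Cref{cor: duality convex concave operators} and invoke \Cref{thm: minimal factorization tau sigma convex operator} for $T^*$, but transferring the resulting convexity back to $Y^0$ is delicate, since the unit ball of $(Y^0)^*$ is the \emph{weak-$*$}-closed convex solid hull of $C^{\tau^*,\sigma^*}_{T^*}$, not its norm closure; I prefer to use directly that $D^T_{\tau,\sigma}$ is, by its very definition, stable under the relevant decompositions.)

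The main step --- and the one I expect to be the main obstacle --- is the exact dual of \Cref{prop: double polar of C is tau-sigma convex}: assuming $\sigma$ is block convex with constant $A_\sigma$ and $\tau$ is block concave with constant $A_\tau$, I claim that for every finite sequence $(u_i)^n_{i=1}$ in $X$,
\[ \tau\intoo[3]{\sum^n_{i=1}\rho(u_i)e_i}\ \le\ A_\sigma A_\tau\,\rho\bigl(\sigma_n(|u_1|,\dots,|u_n|)\bigr). \]
By positive homogeneity of $\sigma_n$, $\rho$ and $\tau$ it suffices to assume $w:=\sigma_n(|u_1|,\dots,|u_n|)\in D^T_{\tau,\sigma}$ and prove $\tau(\sum_i\rho(u_i)e_i)\le A_\sigma A_\tau$. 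Discarding the indices with $\rho(u_i)=0$, fix $\ep>0$; for each remaining $i$, since $u_i/\lambda\notin D^T_{\tau,\sigma}$ for $\lambda$ slightly below $\rho(u_i)$, the definition of $D^T_{\tau,\sigma}$ yields a ``bad decomposition'' $(u_{ij})^{m_i}_{j=1}\subseteq X$ with $|u_i|\ge\sigma_{m_i}(|u_{i1}|,\dots,|u_{im_i}|)$ and $\tau\bigl(\sum_j\|Tu_{ij}\|e_j\bigr)\ge(1-\ep)\rho(u_i)$. Monotonicity of $\sigma_n$ together with block convexity of $\sigma$ shows $w\ge A_\sigma^{-1}\sigma_r(|u_{11}|,\dots,|u_{nm_n}|)$ with $r=\sum_i m_i$; feeding the vectors $A_\sigma^{-1}u_{ij}$ into the defining implication of $w\in D^T_{\tau,\sigma}$ gives $A_\sigma^{-1}\tau\bigl(\sum_{i,j}\|Tu_{ij}\|e_{ij}\bigr)\le1$, and block concavity of $\tau$, applied to the blocks indexed by $i$, converts this into $(1-\ep)(A_\sigma A_\tau)^{-1}\tau\bigl(\sum_i\rho(u_i)e_i\bigr)\le1$. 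Letting $\ep\downarrow0$ proves the claim; the delicate part is the bookkeeping of the concatenated decompositions and the precise use of the block constants. Granting the displayed inequality, the $(\tau,\sigma)$-concavity of $Y^0$ with constant $A_\sigma A_\tau$ follows from \Cref{def: tau sigma convex and concave}: by density of $U^0(X)$ in $Y^0$ and continuity it is enough to verify the concavity inequality on vectors $U^0u_i$ with $u_i\in X$, and there $\sigma_n(|U^0u_1|,\dots,|U^0u_n|)=U^0\sigma_n(|u_1|,\dots,|u_n|)$ since $U^0$ is a lattice homomorphism, so both sides equal the corresponding quantities in the displayed inequality.

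Finally, for the maximality assertion let $(Y,U,V)$ be a Class $\cD$ factorization of $T$ with $Y$ being $(\tau,\sigma)$-concave, say with constant $K'$. I would show $U^{-1}(B_Y)\subseteq\|V\|K'\cdot D^T_{\tau,\sigma}$: if $\|Uu\|_Y\le1$ and $(u_j)_j\subseteq X$ satisfies $|u|\ge\|V\|K'\,\sigma_m(|u_1|,\dots,|u_m|)$, then applying the lattice homomorphism $U$ gives $\bigl\|\sigma_m\bigl(\|V\|K'|Uu_1|,\dots,\|V\|K'|Uu_m|\bigr)\bigr\|_Y\le\|Uu\|_Y\le1$; the $(\tau,\sigma)$-concavity of $Y$ then yields $\|V\|\,\tau\bigl(\sum_j\|Uu_j\|_Y e_j\bigr)\le1$, and from $\|Tu_j\|=\|VUu_j\|\le\|V\|\,\|Uu_j\|_Y$ and monotonicity of $\tau$ we get $\tau\bigl(\sum_j\|Tu_j\|e_j\bigr)\le1$; hence $\tfrac1{\|V\|K'}u\in D^T_{\tau,\sigma}$. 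As $(U^0)^{-1}(B_{Y^0})=D^T_{\tau,\sigma}$, this is condition (b) of \Cref{prop: class D correspondence}(2) (with $Y_1=Y$, $Y_2=Y^0$), so that result supplies a Class $\cD$ operator $\vp\colon Y\to Y^0$ with $U^0=\vp U$ and $V^0\vp=V$, as required.
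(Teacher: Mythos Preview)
Your proof is correct. The paper itself says only that the result ``follows from Theorem \ref{thm: minimal factorization tau sigma convex operator} by duality'' and omits the details, so what you have written is precisely a fleshed-out version of what the paper leaves implicit. In particular, your direct dualization of \Cref{prop: double polar of C is tau-sigma convex} --- extracting witnessing decompositions for each $u_i$ from the failure of $u_i/\lambda$ to lie in $D^T_{\tau,\sigma}$, concatenating them via block convexity of $\sigma$, and then collapsing back via block concavity of $\tau$ --- is the natural way to run the argument on the $X$-side, and it matches the structure of the convex case step for step. Your parenthetical remark is well taken: trying to deduce the concavity of $Y^0$ by applying \Cref{prop: double polar of C is tau-sigma convex} literally to $T^*$ and then pulling back through the duality of \Cref{thm: polarity sets C and D} runs into the mismatch between the norm-closed and weak-$*$-closed hulls of $C^{\tau^*,\sigma^*}_{T^*}$ in $X^*$; your direct route sidesteps that entirely and is cleaner. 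One cosmetic point: rather than discarding the indices with $\rho(u_i)=0$, you may simply take the trivial decomposition $m_i=1$, $u_{i1}=0$ for those $i$, which keeps the bookkeeping uniform.
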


We call $(Y^0,U^0,V^0)$ the {\em maximal $(\tau,\sigma)$-concave factorization} of $T:X\to E$.\medskip

As a consequence, we have the following generalized version of a factorization result given in \cite{Reisner}.

\begin{thm}\label{thm: double factorization Reisner}
Let $E,F$ be Banach spaces. Let $\tau,\sigma, \tau',\sigma'$ be normalized symmetric norms on $c_{00}$ such that $\sigma$ and $\tau'$ are block concave with constants $A_\sigma$ and $A'_\tau$, and $\tau$ and $\sigma'$ are block convex with constants $A_\tau$ and $A'_\sigma$, respectively. Assume that a bounded linear operator $T:E\to F$  has a factorization $T = VU$, where $X$ is a Banach lattice, $U:E\to X$ is $(\tau,\sigma)$-convex with constant $K$ and $V:X\to F$ is $(\tau',\sigma')$-concave with constant $K'$.
Then there are a $(\tau,\sigma)$-convex Banach lattice $Y$ with constant $A_\sigma A_\tau$ and a $(\tau',\sigma')$-concave Banach lattice $Z$ with constant $A_\sigma' A_\tau'$ and bounded linear operators $A:E\to Y$, $B:Y\to Z$, $C:Z\to F$ so that $T = CBA$, $\|A\| \leq K$, $\|C\|\leq K'$ and $B$ is a contraction of Class $\cD$.
\end{thm}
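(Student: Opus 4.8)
The plan is to chain the minimal $(\tau,\sigma)$-convex factorization of $U$ with the maximal $(\tau',\sigma')$-concave factorization of the resulting middle composition, inserting harmless renormings of the intermediate lattices so that the three operators carry the prescribed norm bounds. Recall that rescaling the norm of a Banach lattice by a positive constant changes neither its lattice structure, nor whether it is $(\tau,\sigma)$-convex or $(\tau,\sigma)$-concave, nor the corresponding optimal constant, since both sides of the defining inequalities are positively homogeneous of the same degree. We may assume $T\neq 0$ (otherwise factor trivially through $\{0\}$); then $U\neq 0$ and the composition $W$ introduced below is nonzero, so the factorization theorems apply.

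First I would apply \Cref{thm: minimal factorization tau sigma convex operator} to $U:E\to X$ --- legitimate since $\sigma$ is block concave with constant $A_\sigma$ and $\tau$ is block convex with constant $A_\tau$ --- obtaining a Class $\cC$ factorization $U = V_0 U_0$ with $U_0:E\to Y_0$, $V_0:Y_0\to X$, where $Y_0$ is $(\tau,\sigma)$-convex with constant $A_\sigma A_\tau$, $\|U_0\|\le 1$, and $\|V_0\|\le K$. Let $Y$ be $Y_0$ equipped with the equivalent lattice norm $K\|\cdot\|_{Y_0}$, put $A:=U_0$ regarded as an operator $E\to Y$, and let $\widetilde V_0:=V_0$ regarded as an operator $Y\to X$. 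Then $\|A\|\le K$, and $\widetilde V_0$ is again a Class $\cC$ operator, but now with $\|\widetilde V_0\|\le1$.

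Next I would verify that $W:=V\widetilde V_0:Y\to F$ is $(\tau',\sigma')$-concave with constant $K'$; this is the only genuine verification in the argument. It uses that $\widetilde V_0$ is a lattice homomorphism, hence commutes with the functional calculus defining $\sigma'_n$, together with the fact that $\widetilde V_0$ is a contraction: for $(v_i)_{i=1}^n$ in $Y$,
\[ \tau'\Big(\sum_i\|Wv_i\|e_i\Big)\le K'\big\|\sigma'_n(|\widetilde V_0v_1|,\dots,|\widetilde V_0v_n|)\big\|_X = K'\big\|\widetilde V_0\,\sigma'_n(|v_1|,\dots,|v_n|)\big\|_X\le K'\big\|\sigma'_n(|v_1|,\dots,|v_n|)\big\|_Y, \]
where the first inequality is $(\tau',\sigma')$-concavity of $V$. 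Using $\widetilde V_0$ (the contractively renormed inclusion) rather than $V_0$ itself is precisely what keeps this constant equal to $K'$ rather than $KK'$.

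Finally I would apply \Cref{thm: maximal factorization tau sigma concave operator} to $W:Y\to F$ --- legitimate since $\sigma'$ is block convex with constant $A'_\sigma$ and $\tau'$ is block concave with constant $A'_\tau$ --- obtaining a Class $\cD$ factorization $W = V^0 U^0$ with $U^0:Y\to Z_0$ of Class $\cD$, $V^0:Z_0\to F$, where $Z_0$ is $(\tau',\sigma')$-concave with constant $A'_\sigma A'_\tau$, $\|U^0\|\le K'$, and $\|V^0\|\le1$. Let $Z$ be $Z_0$ with the norm $\tfrac1{K'}\|\cdot\|_{Z_0}$, and set $B:=U^0:Y\to Z$ and $C:=V^0:Z\to F$; then $B$ is a Class $\cD$ contraction, $\|C\|\le K'$, $Z$ is $(\tau',\sigma')$-concave with constant $A'_\sigma A'_\tau$, and $T = VU = V\widetilde V_0\,A = WA = V^0U^0A = CBA$, as required. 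The only real obstacle is the bookkeeping: tracking the constants through the two factorization theorems and choosing the two renormings so that $A$, $B$, $C$ end up with norms $\le K$, $\le1$, $\le K'$ respectively; the lone conceptual point is that post-composing a Class $\cC$ operator with a $(\tau',\sigma')$-concave operator stays $(\tau',\sigma')$-concave without worsening the constant, which is exactly the display above.
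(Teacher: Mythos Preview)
Your argument is correct and uses the same two ingredients --- the minimal $(\tau,\sigma)$-convex factorization and the maximal $(\tau',\sigma')$-concave factorization --- but you assemble them differently from the paper.

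The paper applies the concave factorization to $V:X\to F$ itself, obtaining $V = CS$ with $S:X\to Z$ of Class~$\cD$, and then sets $B := SR$ where $R:Y\to X$ is the Class~$\cC$ inclusion. For $B$ to be Class~$\cD$ one needs $R$ to have dense range; this does not come for free, so the paper first replaces $X$ by the closed ideal generated by $U(E)$, which forces $\overline{R(Y)} = X$. Your route avoids this preliminary reduction by applying the concave factorization one step later, to $W = V\widetilde V_0:Y\to F$, whose Class~$\cD$ first factor is then $B$ directly. The price you pay is the one-line verification that precomposing $V$ with a contractive lattice homomorphism preserves $(\tau',\sigma')$-concavity with the same constant --- immediate since lattice homomorphisms commute with the functional calculus. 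Each route trades one small auxiliary step for another; yours is arguably the more streamlined, since it never touches the intermediate lattice $X$ beyond its role as the codomain of the first factorization.
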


\begin{proof}
We may replace $X$ by the closed ideal $I$ generated by $U(E)$ in $X$ and $V$ by its restriction $V|_I$. In this way, we may assume without loss of generality that the ideal generated by $U(E)$ in $X$ is dense.
Apply Theorem \ref{thm: minimal factorization tau sigma convex operator} to $U$ and Theorem \ref{thm: maximal factorization tau sigma concave operator} to $V$ to find the following:
\begin{enumerate}
\item  A $(\tau, \sigma)$-convex Banach lattice $Y$ with constant $A_\sigma A_\tau$ and bounded linear operators $A:E\to Y$, $R: Y\to X$ so that $R$ is of Class $\cC$ and $U = RA$. Furthermore, $\|A\| \leq K$ and $\|R\|\leq 1$.
\item  A $(\tau', \sigma')$-concave Banach lattice $Z$ with constant $A_\sigma' A_\tau'$ and bounded linear operators ${S}:X\to Z$, ${C}: Z\to F$ so that ${S}$ is of Class $\cD$, $V = {C}{S}$,  $\|{S}\| \leq K'$ and $\|{C}\|\leq 1$.
We may replace $S$ and $C$ by $S/K'$ and $K'C$, respectively. Thus, we may instead assume that $\|S\|\leq 1$ and $\|C\|\leq K'$.
\end{enumerate}
Since $U = RA$, $U(E)\subseteq R(Y)$. Also, since $R$ is of Class $\cC$, $R(Y)$ is an ideal in $X$. By the comment at the beginning of the proof, $\ol{R(Y)} = X$. Let $B = SR: Y\to Z$. 
Then $T = CBA$, $\|A\| \leq K$, $\|C\|\leq K'$ and $\|B\|\leq 1$.
Since both $R$ and $S$ are lattice homomorphisms with dense range, $B$ is a lattice homomorphism with dense range. Thus, $B$ is of Class $\cD$.
\end{proof}

\subsection{Duality of factorization}\label{sec: duality factorizations}

Our next goal is to generalize \cite[Theorem 5]{RT}. Let us start by studying the relations between the Banach lattices induced by a closed bounded convex solid set $B$ and its polar $B^\circ$, and their corresponding duals.

\begin{prop}\label{prop: pseudo duality of the constructions}
Let $B$ be a closed bounded convex solid subset of $X$.
Denote the Minkowski functional of $B$ on $X$ by $\rho$ and the Minkowski functional of $B^\circ$ on $X^*$ by $\rho^*$.
Let $Y= (\spn B, \rho)$.  Let $Q: X^* \mapsto X^*/\ker \rho^*$ be the quotient map, and let $Z$ be the completion of  $(X^*/\ker \rho^*, \ol{\rho^*})$, where $\ol{\rho^*}(Qu^*) = \rho^*(u^*)$.
Then $Y$ and $Z$ are Banach lattices, and there are unique bounded linear maps $\vp$ and $\psi$ determined as follows:
\begin{align*}
\vp: Y \to Z^*, \quad & \la Qu^*, \vp(u)\ra = u^*(u),\\
\psi: Z\to Y^*, \quad  & \la u, \psi(Qu^*) \ra = u^*(u).
\end{align*}
Moreover, $\vp$ and $\psi$ are lattice isometric embeddings, the map $\psi$ is of Class $\cC$, and, if $X$ has order continuous norm, then  the map $\vp$ is of Class $\cC$.  
\end{prop}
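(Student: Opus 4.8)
The plan is to dispatch the elementary parts first and then establish the four assertions one at a time, treating $\psi$ before $\vp$ and obtaining $\vp$ from $\psi$ by adjunction.

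\textit{Preliminaries.} By \ref{lem: minkowski functional}, $(\spn B,\rho)$ is complete with closed unit ball $B$; since $B$ is solid, $\spn B$ is an order ideal of $X$ (if $|v|\le|u|$ and $u\in\lambda B$, then $|u|/\lambda\in B$ by solidity, hence $v/\lambda\in B$), so it is a sublattice and $\rho$ a lattice norm, giving the Banach lattice $Y$. Likewise $\rho^*$ is the Minkowski functional of the solid set $B^\circ\subseteq X^*$, hence a lattice seminorm, so $\ker\rho^*$ is an ideal and $X^*/\ker\rho^*$ is a normed vector lattice whose completion $Z$ is a Banach lattice. The elementary inequality $u^*(u)\le\rho^*(u^*)\rho(u)$ (read off from the polar) shows that $Qu^*\mapsto(u\mapsto u^*(u))$ and $u\mapsto(Qu^*\mapsto u^*(u))$ are well-defined contractions $X^*/\ker\rho^*\to Y^*$ and $Y\to Z^*$; the first extends to $Z$ by density, and uniqueness is forced by the prescribed formulas. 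Since $\ker\rho^*=Y^\perp$, the map $\psi$ is, on the dense subspace $X^*/\ker\rho^*$, precisely the quotient of the restriction map $j^*:X^*\to Y^*$ induced by the bounded inclusion $j:Y\to X$; and as $\rho^*(u^*)=\sup_{u\in B}u^*(u)=\|j^*u^*\|_{Y^*}$, the map $\psi$ is an isometry of $Z$ onto $\overline{j^*(X^*)}\subseteq Y^*$.

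\textit{The map $\psi$.} Because $j$ is the (positive) inclusion of an ideal it is interval preserving, so $j^*$ is a lattice homomorphism (concretely, for $u\in Y_+$ one has $\{w\in X:|w|\le u\}=\{w\in Y:|w|\le u\}$, whence $|j^*u^*|=j^*|u^*|$); hence $\psi$ is a lattice homomorphism. To get that $\psi$ is interval preserving it suffices to show $\psi(Z)=\overline{j^*(X^*)}$ is an ideal in $Y^*$, and since the closure of an ideal is an ideal this reduces to showing $j^*(X^*)$ is solid. This is the one genuinely analytic point: given $0\le\eta\le j^*u^*$ with $u^*\in X^*_+$, setting $w^*(x):=\sup\{\eta(y):y\in Y,\ 0\le y\le x\}$ for $x\in X_+$ yields, using the Riesz decomposition property inside $[0,y]$ together with the fact that $Y$ is an ideal, an additive positively homogeneous functional with $0\le w^*\le u^*$ and $w^*|_Y=\eta$; decomposing a general $\eta$ with $|\eta|\le j^*|u^*|$ into positive and negative parts finishes it. Finally $\psi(B_Z)=\psi(Z)\cap B_{Y^*}$ is closed, since $\psi(Z)$, being the isometric image of the complete space $Z$, is closed in $Y^*$. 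Thus $\psi$ is of Class $\cC$, and in particular a lattice isometric embedding.

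\textit{The map $\vp$.} A direct check on the dense subspace gives $\vp=\psi^*\circ J_Y$, where $J_Y:Y\to Y^{**}$ is the canonical embedding. Since $\psi$ is positive and interval preserving, $\psi^*$ is a lattice homomorphism, and $J_Y$ is a lattice homomorphism, so $\vp$ is a lattice homomorphism. It is isometric by the bipolar theorem: $B$ is norm-closed, convex and circled, hence $\sigma(X,X^*)$-closed, so $B^{\circ\circ}=B$, and therefore $\|\vp u\|_{Z^*}=\sup_{u^*\in B^\circ}u^*(u)=\rho(u)=\|u\|_Y$. Hence $\vp$ is a lattice isometric embedding and $\vp(B_Y)=\vp(Y)\cap B_{Z^*}$ is closed. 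For the last assertion, assume $X$ has order continuous norm; it remains to show $\vp$ is interval preserving, which (as $\vp$ is an injective lattice homomorphism) is equivalent to $\vp(Y)$ being an ideal in $Z^*$. Identify $Z^*$ with the annihilator $(Y^\perp)^\perp\subseteq X^{**}$ via the adjoint of the interval-preserving quotient map $Q:X^*\to X^*/\ker\rho^*$ — a lattice isomorphism onto the sublattice $(Y^\perp)^\perp$ of $X^{**}$ — under which $\vp$ becomes $J_X|_Y$. Now $Y$ is an ideal in $X$, and order continuity of $X$ gives that $J_X(X)$ is an ideal in $X^{**}$; hence $J_X(Y)$ is an ideal in $X^{**}$, a fortiori an ideal in the sublattice $(Y^\perp)^\perp=Z^*$. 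So $\vp(Y)$ is an ideal in $Z^*$, $\vp$ is interval preserving, and $\vp$ is of Class $\cC$.

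I expect the main obstacle to be this last identification $Z^*\cong(Y^\perp)^\perp$ carrying $\vp$ to $J_X|_Y$, together with the correct use of order continuity of $X$ (equivalently, that $J_X(X)$ is an ideal in $X^{**}$): this is exactly where the hypothesis is consumed and where the argument fails for non–order continuous $X$. The solidity of $j^*(X^*)$ in $Y^*$ used for $\psi$ is the other place requiring genuine care.
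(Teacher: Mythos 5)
Your overall architecture matches the paper's: both define $\vp,\psi$ by the stated duality formulas, get the isometries from the fact that $\rho^*$ is the support functional of $B$ (equivalently, the bipolar identity $B^{\circ\circ}=B$), prove that $\psi$ is interval preserving by extending positive functionals from the ideal $Y$ to $X$ (the paper invokes the Hahn--Banach theorem for positive functionals; your Kantorovich formula $w^*(x)=\sup\{\eta(y): y\in Y,\ 0\le y\le x\}$ is the same device), and handle $\vp$ in the order continuous case by pushing $z^*\in[0,\vp u]$ into $X^{**}$ via $Q^*$ and using that $J_X(X)$ is an ideal there. Two of your packaging choices are genuinely cleaner than the paper's: reducing interval-preservation of $\psi$ to ``$j^*(X^*)$ is solid in $Y^*$, and the closure of an ideal is an ideal'' avoids the paper's explicit approximation of a general $z\in Z_+$ by elements $Qv_n^*$; and deriving that $\vp$ is a lattice homomorphism from $\vp=\psi^*J_Y$ (adjoint of an interval preserving positive map) replaces the paper's terse ``similarly''. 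You also explicitly verify the closedness of $\vp(B_Y)$ and $\psi(B_Z)$, which the paper leaves implicit.

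There is, however, one false claim in your last step: $Q^*$ is not in general a lattice isomorphism of $Z^*$ onto $(Y^\perp)^\perp$. Take $X=L_1[0,1]$ (order continuous, so this case is covered by the hypothesis) and $B=B_{L_2[0,1]}$, a closed bounded convex solid subset of $L_1$. Then $Y=L_2$, $\ker\rho^*=Y^\perp=\{0\}$, $Z$ is the completion of $(L_\infty,\|\cdot\|_2)$, i.e.\ $Z=L_2$ and $Z^*=L_2$, whereas $(Y^\perp)^\perp=X^{**}=L_1^{**}$. In general $Q^*(Z^*)=\spn\,\ol{J_X(B)}^{\,w^*}$ sits strictly between $J_X(Y)$ and $(Y^\perp)^\perp$, and it is not clear that this image is a sublattice of $X^{**}$ or that $Q^*$ is a lattice isomorphism onto it (bipositivity is easy, but that is weaker). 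Fortunately your argument does not need the identification: it suffices that $Q^*:Z^*\to X^{**}$ is positive and injective (injective because $Q$ has dense range) and that $Q^*\vp=J_X|_Y$. Then $0\le z^*\le\vp u$ gives $0\le Q^*z^*\le J_Xu$; order continuity plus the fact that $Y$ is an ideal of $X$ produce $v\in[0,u]\subseteq Y$ with $J_Xv=Q^*z^*$; and injectivity of $Q^*$ yields $\vp v=z^*$. This repaired version is exactly the paper's computation, so the gap is one of justification rather than of substance — but as written the identification should be deleted and replaced by the weaker true facts.
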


\begin{proof}
The fact that $(Y,\rho)$ is a Banach lattice follows from Lemma \ref{lem: minkowski functional}. It is clear that $Z$ is a Banach lattice.
If $u\in Y$ and $u^*\in X^*$, then 
$|u^*(u)| \leq \rho(u)\cdot \rho^*(u^*) = \rho(u)\cdot \ol{\rho^*}(Qu^*)$.
It follows that $\vp(u)$ determines a bounded linear functional on $(X^*/\ker \rho^*, \ol{\rho^*})$ and hence $\vp(u) \in Z^*$. Similarly, we have that $\psi$ is a bounded linear map on $(X^*/\ker \rho^*, \ol{\rho^*})$ and hence extends uniquely to a bounded linear map on $Z$.
Furthermore, 
\[ \|\vp(u)\|_{Z^*} = \sup_{\ol{\rho^*}(Qu^*)\leq 1}\la Qu^*,\vp(u)\ra = \sup_{\rho^*(u^*)\leq 1}u^*(u) = \rho(u).
\]
Thus, $\vp$ is an isometric embedding. Similarly, it can be seen that $\psi$ is an isometric embedding.  
\medskip
If $u\in Y_+$ and $u^*\in X^*$, then
\[
\la u, |\psi(Qu^*)|\ra  = \sup_{|v|\leq u}|\la v, \psi(Qu^*) \ra| = \sup_{|v| \leq u}|u^*(v)| = |u^*|(u) = \la u,\psi(|Qu^*|)\ra.\]
Hence, $|\psi(Qu^*)| = \psi(|Qu^*|)$. Since $Q(X^*)$ is dense in $Z$, $\psi$ is a lattice homomorphism. Similarly, $\vp$ is a lattice homomorphism.
\medskip

Next, we claim that for any $u^*\in X^*_+$, $\psi[0,Qu^*] = [0,\psi Qu^*]$.   
Indeed, the inclusion $\subseteq$ is clear.
Suppose that $x^* \in [0,\psi Qu^*]$.
For any $u\in Y$,
\[ |\la u,x^*\ra|\leq \la |u|, x^*\ra  \leq \la |u|,\psi Qu^*\ra = u^*(|u|).
\]
By the Hahn-Banach Theorem for positive functionals (cf.~\cite[Theorem 1.26]{AB}), there exists a functional $v^*$ on $X$ such that $v^*|_Y = x^*$ and $0\leq v^*(u)\leq u^*(u)$ for all $u\in X_+$. Thus, $Qv^*\in[0, Qu^*]$ and for any $u\in Y$,
\[ \la u,\psi Qv^*\ra = v^*(u)= \la u, x^*\ra.\]
Hence, $\psi Qv^* = x^*$. 
This completes the proof of the claim.
\medskip

We extend the claim to show that $\psi$ is interval preserving and thus $\psi$ is of Class $\cC$.
Suppose that $z\in Z_+$ and $0 \leq x^*\leq \psi z$.
For each $n\in \N$, choose $v^*_n\in X^*_+$ so that  $\| z- Qv^*_n\|_Z < \frac{1}{n}$.
Set 
$ y^*_n = (x^* + \psi Qv^*_n - \psi z)^+$.
Then $0\leq y^*_n \leq \psi Q v^*_n$ and
\[ \|y^*_n-x^*\| \leq \|\psi Qv^*_n- \psi z\|  = \|Qv^*_n - z\|_Z < \frac{1}{n}.\]
By the claim, $y^*_n \in [0,\psi Qv^*_n] = \psi[0,Qv^*_n]$.
Hence, there exists $w^*_n\in X^*$, $Qw^*_n \in [0,Qv^*_n]$, so that $y^*_n = \psi Qw^*_n$.
Since $(y^*_n)$ converges to $x^*$ and $\psi$ is an isometry, $(Qw^*_n)$ converges in $Z$ to some $z_0\in Z$.  Clearly, $0\leq z_0 \leq \lim Qv^*_n = z$ and 
\[\psi z_0 = \lim \psi Qw_n^* = \lim y^*_n = x^*.\]
This shows that $[0,\psi z] \subseteq \psi[0,z]$.  The reverse inclusion is clear.
\medskip

Finally, we show that $\vp$ is interval preserving if $X$ has order continuous norm.
Let $J_X: X \to X^{**}$ be the canonical inclusion.
Then $J_X[-u,u] = [-J_Xu,J_Xu]$ for all $u\in X_+$.
Suppose that $u \in Y_+$.  Then $\vp[0,u] \subseteq [0,\vp u]$.
Conversely, let $z^* \in Z^*$, $z^*\in [0,\vp u]$.
Since $Q:X^*\to (X^*/\ker\rho^*, \ol{\rho^*})\subseteq Z$ is bounded, $Q^*z^* \in X^{**}$.
For any $u^* \in X^*$,
\[ |\la u^*,Q^*z^*\ra| \leq  \la |Qu^*|,z^*\ra \leq \la Q|u^*|, \vp u\ra = \la u, |u^*|\ra = \la |u^*|,J_Xu\ra.
\] 
Hence, $|Q^*z^*| \leq J_Xu$.
Thus $Q^*z^* \in [-J_Xu,J_Xu] = J_X[-u,u]$.
Let $v \in X$, $v\in [-u,u]$ be such that $J_Xv = Q^*z^*$.  Then $v\in Y$ and 
\[ \la Qu^*,\vp v\ra = u^*(v) = \la u^*,Q^*z^*\ra = \la Qu^*,z^*\ra
\]
for all $u^*\in X^*$.
Therefore, $\vp v= z^*$. This completes the proof that $\vp[0,u] = [0,\vp u]$.
\end{proof}

Now, let $T:E\to X$ be a $(\tau,\sigma)$-convex operator, and assume that $\sigma$ is block concave and $\tau$ is block convex. By Theorem \ref{thm: minimal factorization tau sigma convex operator}, there is a minimal Class $\cC$ factorization $(Y_0,U_0,V_0)$ of $T$ so that $Y_0 = \spn (C^{\tau,\sigma}_T)^{\circ\circ}$ with norm given by the Minkowski functional of $(C^{\tau,\sigma}_T)^{\circ\circ}$.
By Theorem \ref{thm: adjoint of convex is concave}, $T^*:X^*\to E^*$ is $(\tau^*,\sigma^*)$-concave, $\sigma^*$ is block convex and $\tau^*$ is block concave.
By Theorem \ref{thm: maximal factorization tau sigma concave operator}, there is a maximal Class $\cD$ factorization $(Y^0,U^0,V^0)$ of $T^*$. Let $\rho$ be the Minkowski functional of $D^{T^*}_{\tau^*,\sigma^*}$ and let $\ol{\rho}$ be the norm on $X^*/\ker \rho$ induced by $D^{T^*}_{\tau^*,\sigma^*}$. Then
$Y^0$ is the completion of $(X^*/\ker\rho,\ol{\rho}).$ 
From  Theorem \ref{thm: polarity sets C and D}(1), we have $(C^{\tau,\sigma}_T)^\circ = D^{T^*}_{\tau^*,\sigma^*}$.
As a result, using Proposition \ref{prop: pseudo duality of the constructions}, we obtain  partial duality of the Banach lattices $Y_0$ and $Y^0$.
Note that below we use the fact that $Y_0$ is a subset of $X$ by construction.

\begin{thm}\label{thm: pseudo duality for factorization of convex operator}
Let $T:E\to X$ be a $(\tau,\sigma)$-convex operator and assume that $\sigma$ is block concave and $\tau$ is block convex.
Let $(Y_0,U_0,V_0)$ be the Class $\cC$ factorization of $T$ induced by the set $(C^{\tau,\sigma}_T)^{\circ\circ}$ and let $(Y^0,U^0,V^0)$ be the Class $\cD$ factorization of $T^*$ induced by the set $D^{T*}_{\tau^*,\sigma^*}$. Let the maps $\vp$ and $\psi$ be determined by 
\begin{align*}
\vp: Y_0 \to (Y^0)^*, \quad & \la U^0u^*, \vp(u)\ra = u^*(u),\\
\psi: Y^0\to (Y_0)^*, \quad  & \la u, \psi(U^0u^*) \ra = u^*(u).
\end{align*}
Then $\vp$ and $\psi$ are lattice isometric embeddings.  The map $\psi$ is of Class $\cC$. If $X$ has order continuous norm, then  the map $\vp$ is of Class $\cC$.  
\end{thm}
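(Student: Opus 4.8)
The plan is to obtain this theorem as a direct application of Proposition \ref{prop: pseudo duality of the constructions} to the closed, bounded, convex, solid set $B := (C^{\tau,\sigma}_T)^{\circ\circ}$ furnished by Proposition \ref{prop: Coo is closed convex solid}. The first step is to compute its polar: by the tripolar identity, $B^\circ = (C^{\tau,\sigma}_T)^{\circ\circ\circ} = (C^{\tau,\sigma}_T)^\circ$, and by Theorem \ref{thm: polarity sets C and D}(1) the latter equals $D^{T^*}_{\tau^*,\sigma^*}$. Hence the Minkowski functional of $B^\circ$ on $X^*$ is precisely the Minkowski functional of $D^{T^*}_{\tau^*,\sigma^*}$.

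The second step is to match the Banach lattices. By the construction underlying Proposition \ref{prop: class C correspondence} and Theorem \ref{thm: minimal factorization tau sigma convex operator}, the minimal $(\tau,\sigma)$-convex factorization $(Y_0,U_0,V_0)$ of $T$ has $Y_0 = \spn B$ normed by the Minkowski functional of $B$, with $V_0$ the formal inclusion; thus $Y_0$ is exactly the Banach lattice ``$Y$'' of Proposition \ref{prop: pseudo duality of the constructions}. For the other side, $T^*$ is $(\tau^*,\sigma^*)$-concave by Theorem \ref{thm: adjoint of convex is concave}, and the hypotheses that $\sigma$ is block concave and $\tau$ is block convex make $\sigma^*$ block convex and $\tau^*$ block concave, so Theorem \ref{thm: maximal factorization tau sigma concave operator} applies to $T^*$. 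By the construction underlying Proposition \ref{prop: class D correspondence}, the maximal $(\tau^*,\sigma^*)$-concave factorization $(Y^0,U^0,V^0)$ of $T^*$ induced by $D^{T^*}_{\tau^*,\sigma^*}$ is the one in which $Y^0$ is the completion of $X^*$ modulo the kernel of the Minkowski functional of $D^{T^*}_{\tau^*,\sigma^*}$ and $U^0$ is the associated quotient map into this completion. By the first step this Minkowski functional is that of $B^\circ$, so $Y^0$ is exactly the Banach lattice ``$Z$'' of Proposition \ref{prop: pseudo duality of the constructions} and $U^0$ plays the role of the quotient map ``$Q$'' there.

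With these identifications, the maps $\vp : Y_0\to (Y^0)^*$ and $\psi : Y^0\to (Y_0)^*$ characterized by $\la U^0u^*,\vp(u)\ra = u^*(u)$ and $\la u,\psi(U^0u^*)\ra = u^*(u)$ are exactly the maps $\vp$ and $\psi$ supplied by Proposition \ref{prop: pseudo duality of the constructions}, whose existence, uniqueness and boundedness are already established there (using $\|U^0u^*\|_{Y^0}$ equals the Minkowski functional of $B^\circ$ evaluated at $u^*$, together with the standard inequality between the Minkowski functional of $B$ and that of its polar). Therefore every conclusion of that proposition transfers verbatim: $\vp$ and $\psi$ are lattice isometric embeddings, $\psi$ is of Class $\cC$, and if $X$ has order continuous norm then $\vp$ is of Class $\cC$ as well.

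In this argument the substantive content has already been isolated in Proposition \ref{prop: pseudo duality of the constructions} and in the polarity relation of Theorem \ref{thm: polarity sets C and D}(1), so there is no genuine obstacle. The one point that must be handled with care — and which I regard as the main, if mild, difficulty — is the bookkeeping of the second step: checking that the lattices $Y_0$ and $Y^0$ produced by the two optimal factorization theorems literally coincide with the abstract lattices ``$Y$'' and ``$Z$'' of Proposition \ref{prop: pseudo duality of the constructions}, which is exactly where the tripolar identity and Theorem \ref{thm: polarity sets C and D}(1) enter (to guarantee that $Y^0$ is built from the kernel of the Minkowski functional of $B^\circ$, and not of some unrelated set).
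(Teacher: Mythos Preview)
Your proposal is correct and follows essentially the same approach as the paper: both reduce the theorem to Proposition~\ref{prop: pseudo duality of the constructions} applied with $B=(C^{\tau,\sigma}_T)^{\circ\circ}$, using Theorem~\ref{thm: polarity sets C and D}(1) (via the tripolar identity) to identify $B^\circ$ with $D^{T^*}_{\tau^*,\sigma^*}$ and hence match $Y_0$ and $Y^0$ with the abstract lattices of that proposition. If anything, you spell out the bookkeeping more carefully than the paper, which dismisses the argument as ``essentially immediate'' and only writes out the verification that $\vp$ is an isometry.
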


\begin{proof}
The proof is essentially immediate. Let us just check that $\vp$ is an into isometry.
Denote the norm on $(Y^0)^*$ by $\ol{\rho}^*$. Note that $U^0$ is the quotient map $X^*\to X^*/\ker \rho$.
Thus,
\[ \ol{\rho}^*(\vp(u)) = \sup_{\ol{\rho}(U^0u^*)\leq 1} \la U^0u^*,\vp(u)\ra
= \sup_{\rho(u^*)\leq 1}u^*(u) = \sup_{u^*\in D^{T^*}_{\tau^*,\sigma^*}}u^*(u) = \|u\|_{Y_0},
\]
where the last equality holds because  $\|\cdot\|_{Y_0}$ is the Minkowski functional of $(C^{\tau,\sigma}_{T})^{\circ\circ} = (D^{T^*}_{\tau^*,\sigma^*})^\circ$.
\end{proof}

A similar result holds for concave operators and their adjoints.
Let $T:X\to E$ be a $(\tau,\sigma)$-concave operator, where $\sigma$ is block convex and $\tau$ is block concave.
Let $\theta$ be the Minkowski functional of the set $D^T_{\tau,\sigma}$ and let $\ol{\theta}$ be the induced norm on $X/\ker \theta$.
Let $Z^0$ be the completion of $(X/\ker\theta, \ol{\theta})$.
By Theorem \ref{thm: maximal factorization tau sigma concave operator}, there is a maximal Class $\cD$ factorization $(Z^0,A^0,B^0)$ of $T$.
Also, there is a minimal Class $\cC$ factorization $(Z_0,A_0,B_0)$ of $T^*$, where $Z_0 = \spn (C^{\tau^*,\sigma^*}_{T^*})^{\circ\circ}$, normed by the Minkowski functional of $\spn (C^{\tau^*,\sigma^*}_{T^*})^{\circ\circ}$.
As a result of the duality $(C^{\tau^*,\sigma^*}_{T^*})^\circ  =D^T_{\tau,\sigma}$ (Theorem \ref{thm: polarity sets C and D}(2)), we obtain the following duality theorem.  The proof is similar to that of Theorem \ref{thm: pseudo duality for factorization of convex operator} and is omitted.

\begin{thm}\label{thm: pseudo duality for factorization of concave operator}
Let $T:X\to E$ be a $(\tau,\sigma)$-concave operator and assume that $\sigma$ is block convex and $\tau$ is block concave.
Let $(Z^0,A^0,B^0)$ be the Class $\cD$ factorization of $T$ induced by the set $D^T_{\tau,\sigma}$ and let $(Z_0,A_0,B_0)$ be the Class $\cC$ factorization of $T^*$ induced by the set $(C^{T^*}_{\tau^*,\sigma^*})^{\circ\circ}$.
Let the maps $\ti{\vp}$ and $\ti{\psi}$ be determined by 
\begin{align*}
\ti{\vp}: Z^0 \to (Z_0)^*, \quad & \la u^*, \ti{\vp}(A^0u)\ra = u^*(u),\\
\ti{\psi}: Z_0\to (Z^0)^*, \quad  & \la A^0u, \ti{\psi}(u^*) \ra = u^*(u).
\end{align*}
Then $\ti{\vp}$ and $\ti{\psi}$ are lattice isometric embeddings.  The map $\ti{\psi}$ is of Class $\cC$. If $X$ has order continuous norm, then  the map $\ti{\vp}$ is of Class $\cC$.  
\end{thm}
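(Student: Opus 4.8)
The plan is to run the proof of \Cref{prop: pseudo duality of the constructions} in the ``dual'' situation, where the distinguished subset of $X$ is a $0$-neighbourhood rather than a bounded set. Put $D:=D^T_{\tau,\sigma}\subseteq X$ and $D^\circ:=(C^{\tau^*,\sigma^*}_{T^*})^{\circ\circ}\subseteq X^*$. By \Cref{prop: D is closed convex solid}, $D$ is closed, convex, solid and absorbing, with $\tfrac1K B_X\subseteq D\subseteq T^{-1}(B_E)$; hence its Minkowski functional $\theta$ is a lattice seminorm with $\theta\le K\|\cdot\|$, $\ker\theta$ is a closed ideal, and $Z^0$ is the $\overline\theta$-completion of $X/\ker\theta$ with $A^0\colon X\to X/\ker\theta\subseteq Z^0$ the quotient map (of Class $\cD$). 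By \Cref{thm: adjoint of concave is convex}, $T^*$ is $(\tau^*,\sigma^*)$-convex with constant $K$, so \Cref{prop: Coo is closed convex solid} gives $T^*(B_{E^*})\subseteq D^\circ\subseteq KB_{X^*}$; thus $D^\circ$ is closed, \emph{bounded}, convex and solid, and by \Cref{lem: minkowski functional} $Z_0=\spn D^\circ$ is a Banach lattice whose closed unit ball is $D^\circ$ and which, being spanned by a solid set, is an \emph{ideal} in $X^*$. Finally, \Cref{thm: polarity sets C and D}(2) together with the bipolar theorem shows that $D$ and $D^\circ$ are mutually polar.

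Granting this, the bulk of the statement is routine and follows the pattern of \Cref{prop: pseudo duality of the constructions}. First I would note that $|u^*(u)|\le\theta(u)\,\rho^*(u^*)$ for $u\in X$ and $u^*\in Z_0$, where $\rho^*$ is the Minkowski functional of $D^\circ$; hence the bilinear form $(u,u^*)\mapsto u^*(u)$ kills $\ker\theta\times Z_0$ and extends to a bounded form on $Z^0\times Z_0$, which is exactly what determines $\ti{\vp}$ and $\ti{\psi}$. The isometry statements drop out of the bipolar theorem: $\|\ti{\vp}(A^0u)\|_{(Z_0)^*}=\sup_{u^*\in D^\circ}|u^*(u)|$ is the support function of $D^\circ$ at $u$, which equals the gauge of $D^{\circ\circ}=D$ at $u$, i.e.\ $\theta(u)=\|A^0u\|_{Z^0}$, and symmetrically $\|\ti{\psi}(u^*)\|_{(Z^0)^*}=\sup_{u\in D}|u^*(u)|=\rho^*(u^*)$. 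That $\ti{\vp}$ and $\ti{\psi}$ are lattice homomorphisms is proved by the density argument of \Cref{prop: pseudo duality of the constructions}, using that $A^0$ is almost interval preserving (so $[-A^0u,A^0u]=\overline{A^0[-u,u]}$) and that order intervals in $Z_0$ coincide with those of $X^*$.

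It remains to verify that the two isometric embeddings are interval preserving; closedness of their unit-ball images is automatic. For $\ti{\psi}$ no order-continuity hypothesis is required: given $u^*\in(Z_0)_+$ and $0\le\zeta\le\ti{\psi}(u^*)$ in $(Z^0)^*$, the functional $v^*:=\zeta\circ A^0$ already belongs to $X^*$ (because $A^0$ maps into $Z^0$), satisfies $0\le v^*\le u^*$, hence lies in $[0,u^*]\subseteq Z_0$ since $Z_0$ is an ideal in $X^*$, and $\ti{\psi}(v^*)=\zeta$ by density of $A^0(X)$ in $Z^0$. For $\ti{\vp}$ one must detour through the bidual, and this is where order continuity of $X$ is used: given $u\in X_+$ and $0\le\eta\le\ti{\vp}(A^0u)$ in $(Z_0)^*$, the Hahn--Banach theorem for positive functionals (cf.~\cite[Theorem 1.26]{AB}), applied with the monotone sublinear functional $w^*\mapsto\langle u,(w^*)^+\rangle$ on $X^*$, extends $\eta$ to $\xi\in X^{**}$ with $0\le\xi\le J_Xu$; if $X$ has order continuous norm, then $J_X(X)$ is an ideal in $X^{**}$, so $\xi=J_Xv$ with $0\le v\le u$, whence $\ti{\vp}(A^0v)=\eta$ and $A^0v\in[0,A^0u]$. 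The limiting argument used for $\psi$ in \Cref{prop: pseudo duality of the constructions} (approximate a general $z\in(Z^0)_+$ by elements $A^0u_n$ and truncate $\eta$ to $(\eta+\ti{\vp}(A^0u_n)-\ti{\vp}(z))^+$) then extends this to arbitrary $z\in(Z^0)_+$. I expect this last step to be the main obstacle: making the passage to $X^{**}$ precise and, above all, pinning down why the asymmetry with $\ti{\psi}$ is genuine — it stems from the fact that a functional pulled back along $A^0\colon X\to Z^0$ lands in $X^*$, inside which $Z_0$ sits as an ideal, whereas a positive functional on $Z_0\subseteq X^*$ can only be represented in $X^{**}$ unless $X$ is an ideal in its bidual.
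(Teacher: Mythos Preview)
Your proposal is correct and follows essentially the approach the paper intends: the paper omits the proof entirely, stating only that it is similar to \Cref{thm: pseudo duality for factorization of convex operator}, and your sketch supplies precisely the dual adaptation of \Cref{prop: pseudo duality of the constructions} that this entails. Your identification of the asymmetry between $\ti{\psi}$ and $\ti{\vp}$ is accurate and well explained: pulling back a functional $\zeta\in(Z^0)^*$ along the bounded map $A^0\colon X\to Z^0$ lands directly in $X^*$, where $Z_0$ sits as an ideal, so $\ti{\psi}$ is interval preserving without any hypothesis; whereas extending $\eta\in(Z_0)^*$ to $X^*$ via the positive Hahn--Banach theorem only produces an element of $X^{**}$, and recovering a preimage in $X$ requires $J_X(X)$ to be an ideal in $X^{**}$, i.e.\ order continuity of the norm of $X$.
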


\subsection{Factorization of a simultaneously convex and concave operator}\label{sec: factorization of simultaneous conditions}

Let $X$ and $Y$ be Banach lattices. In \cite{RT}, a factorization for an operator $T:X\to Y$ that is both $p$-convex and $q$-concave was given. In this section, we adapt an argument of \cite{Reisner} to prove an analogous factorization result that allows us to replace the $p$-convexity or $q$-concavity assumptions by $(\pinf)$-convexity and $(q,1)$-concavity, respectively.  We start with the following:

\begin{prop}\label{prop: Calderon interpolation of sets}
Let $Z$ be a Banach lattice and let $C_0,C_1$ be bounded convex sets in $Z$.
For any $0< \theta <1$, let
\begin{equation}\label{eq: Calderon interpolation of sets} 
C_\theta = \{v\in Z: |x| \leq |v_0|^\theta |v_1|^{1-\theta} \text{ for some } v_0\in C_0, v_1\in C_1\}.
\end{equation}
Then ${C_\theta}$ is a  
 bounded convex solid set in $Z$. 
\end{prop}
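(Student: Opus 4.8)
The plan is to verify the three defining properties of $C_\theta$ in turn: that it is solid, that it is convex, and that it is bounded. Solidity is immediate from the definition, since the condition ``$|v|\leq |v_0|^\theta|v_1|^{1-\theta}$ for some $v_0\in C_0, v_1\in C_1$'' depends only on $|v|$. Boundedness is almost as easy: if $C_0\subseteq R_0 B_Z$ and $C_1\subseteq R_1 B_Z$, then for $v\in C_\theta$ we have $|v|\leq |v_0|^\theta|v_1|^{1-\theta}$, and applying the lattice norm together with the pointwise/functional-calculus version of H\"older's inequality (the identity $\||f|^\theta|g|^{1-\theta}\|\leq \|f\|^\theta\|g\|^{1-\theta}$, valid in any Banach lattice via Krivine's functional calculus) yields $\|v\|\leq \|v_0\|^\theta\|v_1\|^{1-\theta}\leq R_0^\theta R_1^{1-\theta}$.

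The substantive point is convexity, and this is where I expect the main obstacle to lie. Given $v, w\in C_\theta$ with witnesses $v_0,w_0\in C_0$ and $v_1,w_1\in C_1$, and given $0\leq\lambda\leq 1$, I must produce witnesses in $C_0$ and $C_1$ for $\lambda v+(1-\lambda)w$ (or rather for something dominating $|\lambda v+(1-\lambda)w|$). First reduce to the positive case: $|\lambda v+(1-\lambda)w|\leq \lambda|v|+(1-\lambda)|w|\leq \lambda |v_0|^\theta|v_1|^{1-\theta}+(1-\lambda)|w_0|^\theta|w_1|^{1-\theta}$, so by solidity it suffices to show the right-hand side is of the form $|z_0|^\theta|z_1|^{1-\theta}$ with $z_0\in C_0$, $z_1\in C_1$. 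The natural candidates are $z_0 = \lambda v_0 + (1-\lambda) w_0 \in C_0$ and $z_1 = \lambda v_1 + (1-\lambda) w_1 \in C_1$ (using convexity of $C_0$ and $C_1$), and the claim reduces to the pointwise inequality
\[
\lambda a_0^\theta a_1^{1-\theta} + (1-\lambda) b_0^\theta b_1^{1-\theta} \leq (\lambda a_0+(1-\lambda)b_0)^\theta (\lambda a_1+(1-\lambda)b_1)^{1-\theta}
\]
for all $a_0,a_1,b_0,b_1\geq 0$. This is exactly the statement that $(x,y)\mapsto x^\theta y^{1-\theta}$ is concave on $\R_+^2$ (equivalently, a two-point Jensen inequality for this concave function), which follows from H\"older's inequality or from the AM--GM/Young inequality. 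To make this rigorous in the Banach lattice $Z$ — where $a_0,a_1,b_0,b_1$ are replaced by the positive elements $|v_0|,|v_1|,|w_0|,|w_1|$ and multiplication/powers are interpreted via Krivine's functional calculus — I would embed the sublattice generated by these four elements into a space $C(K)$ via the representation theorem, where the inequality becomes the genuine pointwise scalar inequality above and hence holds; then transport back. The only care needed is that the four elements need not lie in a common principal ideal, but one can still realize $|v_0|\vee|v_1|\vee|w_0|\vee|w_1|$ together with the whole configuration inside a single $C(K)$-representation of the ideal it generates.

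So the skeleton is: (i) solidity — trivial; (ii) boundedness — H\"older for the lattice norm; (iii) convexity — reduce to positive elements, take convex combinations of witnesses, and invoke the pointwise concavity of $(x,y)\mapsto x^\theta y^{1-\theta}$ realized inside a $C(K)$-representation of the relevant ideal. The main obstacle is purely the bookkeeping around Krivine's calculus in step (iii): ensuring that ``$|v_0|^\theta|v_1|^{1-\theta}$'' means what one expects and that the scalar concavity inequality lifts faithfully; once the right $C(K)$-representation is fixed this is routine, but it is the step that requires genuine justification rather than a one-line remark.
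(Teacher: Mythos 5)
Your proposal is correct and follows essentially the same route as the paper: solidity is immediate, boundedness follows from the lattice H\"older inequality $\|\,|v_0|^\theta|v_1|^{1-\theta}\|\leq\|v_0\|^\theta\|v_1\|^{1-\theta}$ (the paper cites \cite[Proposition 1.d.2]{LT2} for this), and convexity is reduced to the two-point concavity of $(x,y)\mapsto x^\theta y^{1-\theta}$ applied to the convex combinations of the witnesses, which the paper also proves via H\"older and then transfers to the lattice through functional calculus. Your extra care about realizing the scalar inequality inside a $C(K)$-representation is exactly the standard justification the paper leaves implicit.
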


\begin{proof}
Suppose that  $a_0,a_1,b_0,b_1$ are nonnegative real numbers.  Let $0 < \al < 1$.  Then
\begin{align*}
(1-\al)a_0^\theta\, a_1^{1-\theta} &+ \al b_0^\theta\, b_1^{1-\theta} \leq \|((1-\al)^\theta a_0^\theta, \al^\theta b_0^{\theta})\|_{\frac{1}{\theta}}\cdot \|((1-\al)^{1-\theta}a_1^{1-\theta}, \al^{1-\theta}b_1^{1-\theta})\|_{\frac{1}{1-\theta}}\\& = ((1-\al)a_0+ \al b_0)^{\theta}\cdot ((1-\al)a_1+\al b_1)^{1-\theta}.
\end{align*}
Now, for $v,w\in C_\theta$, there are $v_0,w_0\in C_0$ and $v_1,w_1\in C_1$ so that  $|v| \leq |v_0|^\theta\,|v_1|^{1-\theta}$ and $|w| \leq |w_0|^\theta\,|w_1|^{1-\theta}$.  Hence,
\begin{align*}
|(1-\al)v+\al w| &\leq (1-\al)|v| + \al |w| \leq (1-\al)|v_0|^\theta\,|v_1|^{1-\theta} + \al|w_0|^\theta\,|w_1|^{1-\theta}\\&
\leq  ({(1-\al)|v_0|+ \al |w_0|})^{\theta}\cdot ((1-\al)|v_1|+\al |w_1|)^{1-\theta}.
\end{align*}
Therefore, $(1-\al)v+\al w\in C_\theta$.  It follows that ${C_\theta}$ is convex.  Clearly, $C_\theta$ is also solid.
By \cite[Proposition 1.d.2]{LT2}, if $|v| \leq |v_0|^\theta\,|v_1|^{1-\theta}$, then
$\|v\| \leq \|v_0\|^\theta\, \|v_1\|^{1-\theta}.$ Thus, the boundedness of $C_\theta$ follows from that of $C_0$ and $C_1$.  
\end{proof}

\subsubsection{Factoring a \texorpdfstring{$(p,p_2)$}{}-convex and \texorpdfstring{$(q,1)$}{}-concave operator}
In this subsection, let $X,Y$ be Banach lattices, $1\leq p,q\leq \infty$ and $p_2 \in \{p,\infty\}$.  Suppose that $T:X\to Y$ is a bounded linear operator that is $(p,p_2)$-convex with constant $K_p$ and $(q,1)$-concave with constant $K_q$.
As in Definition \ref{def: convex sets of T}, let 
\[ C_T^{p,p_2} = \cbr[3]{v\in Y: \exists (u_i)^n_{i=1}\subseteq X \text{ such that } |v|\leq \intoo[3]{\sum^n_{i=1}|Tu_i|^{p_2}}^{\frac{1}{p_2}}, \sum^n_{i=1}\|u_i\|^p\leq 1},\]
which corresponds to the set $C_T^{\tau,\sigma}$ taking the norms $\sigma=\|\cdot\|_{p_2}$ and $\tau=\|\cdot\|_{p}$ on $c_{00}$, both of them block convex and block concave with constant 1.
By Proposition \ref{prop: Coo is closed convex solid}, $C_0 := (C^{p,p_2}_T)^{\circ\circ}$ is a closed  convex solid set in $Y$ so that $T(B_X) \subseteq C_0 \subseteq K_pB_Y$.
By Proposition \ref{prop: double polar of C is tau-sigma convex}, if $\rho_0$ is the Minkowski functional of $C_0$, then
\[ 
\rho_0\intoo[3]{\intoo[3]{\sum^n_{i=1}|u_i|^{p_2}}^{\frac{1}{p_2}}} \leq \intoo[3]{\sum^n_{i=1}\rho_0(u_i)^p}^{\frac{1}{p}},
\]
for $(u_i)^n_{i=1} \subseteq \spn C_0$.

\begin{prop}\label{prop: interpolation pp2 convex operator}
Let $C_0 = (C^{p,p_2}_T)^{\circ\circ}$ and $C_1 = B_Y$.
For any $0<\theta <1$, let $p_\theta$ be determined by the equation $\frac{1}{p_\theta} = \frac{\theta}{p} + (1-\theta)$ and set $\ol{p_2} = p_\theta$ if $p_2 = p$, and $\ol{p_2} = \infty$ if $p_2 = \infty$. Define $C_\theta$ by \eqref{eq: Calderon interpolation of sets} and let $\rho_\theta$ be the Minkowski functional of $C_\theta$ on $Z_\theta: =\spn {C_\theta}$.
Then $T(B_X) \subseteq K_p^{1-\theta}C_\theta$ and $(Z_\theta,\rho_\theta)$ is $(p_\theta,\ol{p_2})$-convex with constant $1$.
\end{prop}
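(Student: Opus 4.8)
The plan is to verify the two assertions separately, both following directly from Proposition~\ref{prop: Calderon interpolation of sets} and Proposition~\ref{prop: double polar of C is tau-sigma convex}. For the containment $T(B_X)\subseteq K_p^{1-\theta}C_\theta$, I would take $u\in B_X$ and note from Proposition~\ref{prop: Coo is closed convex solid} that $Tu\in C_0$ and $\|Tu\|_Y\leq K_p$, so $Tu/K_p\in C_1=B_Y$. Then the pointwise identity $|Tu| = |Tu|^\theta\,|Tu|^{1-\theta}\leq |Tu|^\theta\,(K_p)^{1-\theta}|Tu/K_p|^{1-\theta}$ exhibits $Tu$ as lying in $K_p^{1-\theta}C_\theta$, since $Tu\in C_0$ and $Tu/K_p\in C_1$ provide the required factors (using positive homogeneity of $C_\theta$ under scalar multiplication, which is immediate from its definition).

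For the convexity of $(Z_\theta,\rho_\theta)$, fix $(v_i)_{i=1}^n\subseteq Z_\theta$ and normalize so that $\rho_\theta(v_i)<1$ for each $i$ (the general case follows by homogeneity and a limiting argument). By definition of $C_\theta$, for each $i$ there are $w_i^{(0)}\in C_0$ and $w_i^{(1)}\in C_1=B_Y$ with $|v_i|\leq |w_i^{(0)}|^\theta|w_i^{(1)}|^{1-\theta}$. The key computation is to control $\bigl(\sum_{i=1}^n |v_i|^{\ol{p_2}}\bigr)^{1/\ol{p_2}}$ by a product of a $C_0$-element and a $C_1$-element. Writing $r$ for the exponent so that $\tfrac{\theta}{p}=\tfrac{1}{r}$ and $1-\theta=\tfrac{1}{r'}$ (i.e. $r=p/\theta$, $r'=1/(1-\theta)$, which are conjugate) and applying Hölder's inequality termwise via functional calculus, one gets
\[
\Bigl(\sum_{i=1}^n |v_i|^{\ol{p_2}}\Bigr)^{\frac{1}{\ol{p_2}}}\leq \Bigl(\sum_{i=1}^n |w_i^{(0)}|^{p}\Bigr)^{\frac{\theta}{p}}\Bigl(\sum_{i=1}^n |w_i^{(1)}|^{p'_\theta}\Bigr)^{\frac{1-\theta}{?}},
\]
where in the case $p_2=p$ one uses the mixed-norm Hölder inequality $\ell_{p_\theta}\subseteq \ell_p^{\theta}\cdot\ell_\infty^{1-\theta}$-type bound in Krivine calculus, and in the case $p_2=\infty$ the supremum splits directly as $\bigvee_i|w_i^{(0)}|^\theta|w_i^{(1)}|^{1-\theta}\leq (\bigvee_i|w_i^{(0)}|)^\theta(\bigvee_i|w_i^{(1)}|)^{1-\theta}$. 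Proposition~\ref{prop: double polar of C is tau-sigma convex} (specialized to $\tau=\|\cdot\|_p,\sigma=\|\cdot\|_{p_2}$) gives $\bigl(\sum_i|w_i^{(0)}|^{p_2}\bigr)^{1/p_2}\in C_0$ after rescaling by the $\ell_p$-norm of the $\rho_0(w_i^{(0)})$'s, and $\bigl(\bigvee_i|w_i^{(1)}|\bigr)$ or the relevant $\ell^n_\infty$ combination lies in $C_1=B_Y$ since $C_1$ has upper $\infty$-estimate with constant~$1$; one then concludes that the left-hand side lies in $C_\theta$, i.e. $\rho_\theta$ of it is $\leq 1$.

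The main obstacle is getting the exponent bookkeeping exactly right in the case $p_2=p$: one must check that the value $\ol{p_2}=p_\theta$ defined by $\tfrac{1}{p_\theta}=\tfrac{\theta}{p}+(1-\theta)$ is precisely the exponent for which the termwise Hölder split $|v_i|\leq |w_i^{(0)}|^\theta|w_i^{(1)}|^{1-\theta}$ aggregates correctly across $i=1,\dots,n$, so that the $C_0$-factor carries the $\ell_p$-convexity inherited from $\rho_0$ and the $C_1=B_Y$-factor carries only an $\ell_\infty$-type (upper $\infty$-estimate) bound; this is a direct but delicate application of the three-line / generalized Hölder inequality in the functional calculus, and one should also handle the degenerate $p=\infty$ separately. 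Once this is pinned down, both statements are routine consequences of the cited propositions and the definition of $C_\theta$.
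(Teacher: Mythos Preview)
Your overall strategy---split via H\"older into a $C_0$-factor controlled by the $(p,p_2)$-convexity of $\rho_0$ and a $C_1$-factor controlled by the norm of $Y$---is the same as the paper's. However, there is a genuine error in how you handle the $C_1$-factor. You assert that ``$C_1=B_Y$ has upper $\infty$-estimate with constant $1$'' so that $\bigvee_i|w_i^{(1)}|\in C_1$. This is false: $Y$ is an arbitrary Banach lattice here, and the only universal estimate available on $B_Y$ is the triangle inequality, i.e.\ $1$-convexity. This is precisely why $p_\theta$ is determined by $\tfrac{1}{p_\theta}=\tfrac{\theta}{p}+(1-\theta)\cdot\tfrac{1}{1}$ rather than $\tfrac{\theta}{p}+(1-\theta)\cdot\tfrac{1}{\infty}$. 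The paper carries weights: writing $v_i\in\alpha_iC_\theta$ and choosing $v_0^i\in C_0$, $v_1^i\in C_1$ with $|v_i|\le\alpha_i|v_0^i|^\theta|v_1^i|^{1-\theta}$, H\"older with the conjugate pair $\tfrac{p}{\theta p_\theta}$, $\tfrac{1}{(1-\theta)p_\theta}$ gives (for $p_2=p$)
\[
\Bigl(\sum_i |v_i|^{p_\theta}\Bigr)^{1/p_\theta}\le\Bigl(\Bigl(\sum_i(\alpha_i^{p_\theta/p}|v_0^i|)^p\Bigr)^{1/p}\Bigr)^{\theta}\Bigl(\sum_i\alpha_i^{p_\theta}|v_1^i|\Bigr)^{1-\theta}.
\]
The first factor has $\rho_0$-norm at most $(\sum_i\alpha_i^{p_\theta})^{1/p}$; the second has $Y$-norm at most $\sum_i\alpha_i^{p_\theta}$ by the \emph{triangle inequality} since each $v_1^i\in B_Y$. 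The exponents then combine to $(\sum_i\alpha_i^{p_\theta})^{\theta/p+(1-\theta)}=(\sum_i\alpha_i^{p_\theta})^{1/p_\theta}$. The case $p_2=\infty$ is identical with the first factor replaced by a supremum; your pointwise inequality for the supremum is correct, but the $C_1$-factor must still be bounded via $\|\bigvee_i\alpha_i^{p_\theta}|v_1^i|\|_Y\le\sum_i\alpha_i^{p_\theta}$, not via an $\ell_\infty$-estimate.

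Two related symptoms of the same issue: your exponents $r=p/\theta$, $r'=1/(1-\theta)$ are not conjugate (their reciprocals sum to $1/p_\theta$, not $1$); the correct pair is $p/(\theta p_\theta)$ and $1/((1-\theta)p_\theta)$. And your normalization ``$\rho_\theta(v_i)<1$ for each $i$'' does not reduce the convexity inequality to a single bound---homogeneity alone cannot recover the dependence on the individual $\rho_\theta(v_i)$'s, so one must track the weights $\alpha_i$ through the computation as above.
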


\begin{proof}
First, observe that if $K_p=0$, then $T$ is the null operator, $Z_\theta=\{0\}$, and the result becomes trivial. Therefore, we can assume that $K_p>0$. Now, suppose that $u\in B_X$. Then $Tu \in C_0 \subseteq K_pC_1$, and hence 
\[ \frac{|Tu|}{ K_p^{1-\theta}} \leq  |Tu|^\theta\, \intoo[3]{\frac{|Tu|}{K_p}}^{1-\theta}\in C_\theta. \]
Thus, $T(B_X) \subseteq K_p^{1-\theta}C_\theta$.
\medskip

Now, suppose that $v_i \in \al_iC_\theta$, $1\leq i\leq n$.
There are $v_0^i\in C_0, v_1^i \in C_1$ so that $|v_i| \leq \al_i|v_0^i|^\theta\,|v_1^i|^{1-\theta}$.
First, consider the case where $p_2 = p$. 
Note that $\frac{p}{\theta p_\theta}$ and $\frac{1}{(1-\theta)p_\theta}$ 
are conjugate indices.  Thus, using H\"older's inequality in the second step below, we have
\begin{align}
 \intoo[3]{\sum^n_{i=1}|v_i|^{p_\theta}}^{\frac{1}{p_\theta}} & \leq \intoo[3]{\sum^n_{i=1}(\al_i^{\frac{p_\theta}{p}}|v^i_0|)^{\theta p_\theta} (\al_i^{p_\theta}|v^i_1|)^{(1-\theta)p_\theta}}^{\frac{1}{p_\theta}}\\ \notag
 & \leq \intoo[3]{\intoo[3]{\sum^n_{i=1}(\al_i^{\frac{p_\theta}{p}}|v^i_0|)^{p}}^{\frac{1}{p}}}^\theta \intoo[3]{\sum^n_{i=1}\al_i^{p_\theta}|v^i_1|}^{(1-\theta)}.\label{e5.2}
 \end{align}
 Hence,
\begin{align*} 
\rho_\theta\intoo[3]{\intoo[3]{\sum^n_{i=1}|v_i|^{p_\theta}}^{\frac{1}{p_\theta}}} &\leq \rho_0\intoo[3]{\intoo[3]{\sum^n_{i=1}(\al_i^{\frac{p_\theta}{p}}|v^i_0|)^{p}}^{\frac{1}{p}}}^\theta \norm[3]{\sum^n_{i=1}\al_i^{p_\theta}|v^i_1|}_Y^{1-\theta}
\\ &\leq \intoo[3]{\sum^n_{i=1}\al_i^{p_\theta}}^{\frac{\theta}{p}} \intoo[3]{\sum^n_{i=1}\al_i^{p_\theta}}^{1-\theta}
= \intoo[3]{\sum^n_{i=1}\al_i^{p_\theta}}^{\frac{1}{p_\theta}}.
\end{align*}
For the case $p_2 = \infty$, we use the inequality 
\begin{equation}\label{e5.3} 
\bigvee^n_{i=1}|v_i| \leq \intoo[3]{\bigvee^n_{i=1}(\al_i^{\frac{p_\theta}{p}}|v^i_0|)}^\theta \intoo[3]{\bigvee^n_{i=1}(\al_i^{p_\theta}|v^i_1|)}^{1-\theta}
\end{equation}
 to obtain
\begin{align*} 
\rho_\theta\intoo[3]{\bigvee^n_{i=1}|v_i|} &\leq \rho_0\intoo[3]{\bigvee^n_{i=1}(\al_i^{\frac{p_\theta}{p}}|v^i_0|)}^\theta \norm[3]{\sum^n_{i=1}\al_i^{p_\theta}|v^i_1|}_Y^{1-\theta}
\\ &\leq \intoo[3]{\sum^n_{i=1}\al_i^{p_\theta}}^{\frac{\theta}{p}} \intoo[3]{\sum^n_{i=1}\al_i^{p_\theta}}^{1-\theta}= \intoo[3]{\sum^n_{i=1}\al_i^{p_\theta}}^{\frac{1}{p_\theta}}.
\end{align*}
In both cases, we may take $\al_i$ to be arbitrarily close to $\rho_\theta(v_i)$ to complete the proof of the proposition.
\end{proof}

In \Cref{prop: interpolation pp2 convex operator} we have exploited the convexity of the operator $T$ to build a $(p_\theta,\ol{p_2})$-convex normed vector lattice $Z_\theta$. Next, we exploit the $(q,1)$-concavity of $T$:

\begin{prop}\label{prop: qtheta1 convavity of the factor}
For $0<\theta < 1$, set $q_\theta = \frac{q}{1-\theta}$. 
Let  $S_\theta: X\to Z_\theta$ be the operator defined by $S_\theta u = Tu$.
Then $S_\theta$ is $(q_\theta,1)$-concave with constant $K_q^{1-\theta}$.
\end{prop}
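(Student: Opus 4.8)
The plan is to obtain $(q_\theta,1)$-concavity of $S_\theta$ by combining the $(q,1)$-concavity of $T$ with only the crudest possible estimate on the norm $\rho_\theta$ of the Calderón product $Z_\theta = \spn C_\theta$ (in contrast with \Cref{prop: interpolation pp2 convex operator}, where the full strength of the construction was needed to transfer convexity). The first step is the elementary upper bound: since $C_0$ is convex and circled and $C_1 = B_Y$, whenever $v_0\in\spn C_0$, $v_1\in Y$ and $|v|\leq |v_0|^\theta|v_1|^{1-\theta}$, one has $\rho_\theta(v)\leq \rho_0(v_0)^\theta\|v_1\|_Y^{1-\theta}$; this follows by rescaling, for each $\mu>\rho_0(v_0)$, $v_0$ into $C_0$ and $v_1$ into $B_Y$. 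Applying this with the trivial splitting $v = v_0 = v_1 = |Tu|$, which is admissible because $\rho_0$ is a lattice seminorm (so $\rho_0(|Tu|)=\rho_0(Tu)$) and $Y$ is a Banach lattice (so $\|\,|Tu|\,\|_Y=\|Tu\|_Y$), we get for every $u\in X$
\[ \rho_\theta(Tu) \leq \rho_0(Tu)^\theta\,\|Tu\|_Y^{1-\theta}. \]

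Next I would invoke $T(B_X)\subseteq C_0$ from \Cref{prop: Coo is closed convex solid}, which gives $\rho_0(Tu)\leq \|u\|_X$ for all $u$; raising the previous inequality to the power $q_\theta$ and using $(1-\theta)q_\theta = q$ yields $\rho_\theta(Tu)^{q_\theta}\leq \|u\|_X^{\theta q_\theta}\,\|Tu\|_Y^{q}$. Now fix a finite sequence $(u_i)_{i=1}^n$ in $X$; by homogeneity of $\rho_\theta$ and of both sides of the desired inequality, I may assume $\bigl\|\sum_{i}|u_i|\bigr\|_X = 1$ (the case where this norm vanishes is trivial, as then every $u_i=0$). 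Then $\|u_i\|_X = \|\,|u_i|\,\|_X \leq \bigl\|\sum_j|u_j|\bigr\|_X = 1$ by monotonicity of the lattice norm, so $\|u_i\|_X^{\theta q_\theta}\leq 1$ for each $i$, and hence by the $(q,1)$-concavity of $T$,
\[ \sum_{i=1}^n \rho_\theta(Tu_i)^{q_\theta} \leq \sum_{i=1}^n \|Tu_i\|_Y^{q} \leq K_q^{q}\,\Bigl\|\sum_{i=1}^n |u_i|\Bigr\|_X^{q} = K_q^{q}. \]
Taking $q_\theta$-th roots and recalling $S_\theta u = Tu$ and $q/q_\theta = 1-\theta$ gives $\bigl(\sum_i \|S_\theta u_i\|_{Z_\theta}^{q_\theta}\bigr)^{1/q_\theta}\leq K_q^{1-\theta}\bigl\|\sum_i |u_i|\bigr\|_X$, which is precisely $(q_\theta,1)$-concavity of $S_\theta$ with constant $K_q^{1-\theta}$.

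The computation is essentially routine once the splitting estimate for $\rho_\theta$ is set up, so there is no real obstacle; the only things meriting a line of justification are that the trivial decomposition $v_0=v_1=|Tu|$ is legitimate in the definition \eqref{eq: Calderon interpolation of sets} of $C_\theta$ and that $S_\theta$ really takes values in $Z_\theta=\spn C_\theta$ (immediate from $T(B_X)\subseteq C_0\subseteq K_pB_Y$, which shows $Tu\in K_p^{1-\theta}\|u\|_X C_\theta$). I would also dispose of the degenerate cases at the outset: if $K_p=0$ then $T=0$ and $Z_\theta=\{0\}$, while if $q=\infty$ then $q_\theta=\infty$ and the assertion reduces to the trivial $(\infty,1)$-concavity; thus the substantive range is $1\leq q<\infty$, where all exponents above are finite and nonnegative and the monotonicity $t\mapsto t^{\theta q_\theta}$ used on $[0,1]$ is valid.
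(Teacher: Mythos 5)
Your argument is correct and follows essentially the same route as the paper's: the key step in both is the bound $\rho_\theta(Tu)\leq\rho_0(Tu)^\theta\,\|Tu\|_Y^{1-\theta}$ coming from the trivial Calder\'on splitting $v_0=v_1=|Tu|$, combined with $T(B_X)\subseteq C_0$ and the $(q,1)$-concavity of $T$ via $(1-\theta)q_\theta=q$. The only cosmetic difference is that you normalize $\bigl\|\sum_i|u_i|\bigr\|_X=1$ where the paper instead bounds $\sup_i\rho_0(Tu_i)^\theta$ by $\|\sum_i|u_i|\|^\theta$; these are equivalent.
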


\begin{proof}
First, note that $S_\theta$ maps into $Z_\theta$ by Proposition \ref{prop: interpolation pp2 convex operator}.
Let $(u_i)^n_{i=1}\subseteq X$. Write $\rho_1$ for the original norm on $Y$.
Note that 
\[
|S_\theta u_i| = |Tu_i| = \rho_0(Tu_i)^\theta \rho_1(Tu_i)^{1-\theta} \intoo[3]{\frac{|Tu_i|}{\rho_0(Tu_i)}}^\theta\intoo[3]{\frac{|Tu_i|}{\rho_1(Tu_i)}}^{1-\theta},
\]
which implies that $\rho_\theta(S_\theta u_i)\leq \rho_0(Tu_i)^\theta \rho_1(Tu_i)^{1-\theta}$.
Thus
\begin{align*}
\intoo[3]{\sum^n_{i=1}\rho_\theta(S_\theta u_i)^{q_\theta}}^{\frac{1}{q_\theta}} & \leq \intoo[3]{\sum^n_{i=1}\rho_0(Tu_i)^{\theta q_\theta} \rho_1(Tu_i)^{(1-\theta)q_\theta}}^{\frac{1}{q_\theta}} \\
&\leq \sup_i \,\rho_0(Tu_i)^{\theta}\intoo[3]{\sum^n_{i=1}\rho_1(Tu_i)^{q}}^{\frac{1}{q_\theta}}.
\end{align*}
Let $u = \sum^n_{i=1}|u_i|$.  Since $T$ is $(q,1)$-concave with constant $K_q$,
\[ \|u\|= \norm[3]{\sum^n_{i=1}|u_i|} \geq K_q^{-1}\intoo[3]{\sum^n_{i=1}\rho_1(Tu_i)^{q}}^{\frac{1}{q}}.\]
Since $T(B_X) \subseteq C_0$, $\rho_0(Tu_i) \leq \|u_i\|\leq \|u\|$.  Hence, 
\[ \intoo[3]{\sum^n_{i=1}\rho_\theta(S_\theta u_i)^{q_\theta}}^{\frac{1}{q_\theta}}  \leq\|u\|^\theta (K_q\|u\|)^{\frac{q}{q_\theta}}=K_q^{1-\theta} \|u\|.
\]
Therefore, $S_\theta$ is $(q_\theta,1)$-concave with constant $K_q^{1-\theta}$.
\end{proof}

The previous results yield an analogue of \cite[Theorem 15]{RT} for operators that are simultaneously $(p,p_2)$-convex and $(q,1)$-concave:

\begin{thm}\label{thm: factorization of pp2 convex and q1 concave operator}
Let $X,Y$ be Banach lattices and let $T:X\to Y$ be a bounded linear operator that is both $(p,p_2)$-convex ($p_2 \in \{p,\infty\}$) and $(q,1)$-concave. Let $0<\theta < 1$.  Determine $p_\theta, q_\theta$ by the equations $\frac{1}{p_\theta} = \frac{\theta}{p} + (1-\theta)$ and $q_\theta = \frac{q}{1-\theta}$. Set $\ol{p_2} = p_\theta$ if $p_2 = p$, and $\ol{p_2} = \infty$ if $p_2 = \infty$. 
There exists 
\begin{enumerate}
\item a $(p_\theta,\ol{p_2})$-convex Banach lattice $\mathcal{Z}_\theta$ and  a $(q_\theta,1)$-concave Banach lattice $W_\theta$; 
\item an interval preserving injective lattice homomorphism $\Phi: \mathcal{Z}_\theta \to Y$ and a Class $\cD$ operator $\Psi: X\to W_\theta$;
\item a bounded linear operator $T_\theta: W_\theta\to \mathcal{Z}_\theta$, 
\end{enumerate}
so that $T = \Phi T_\theta\, \Psi$.
\end{thm}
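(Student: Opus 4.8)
The plan is to assemble the theorem from the preceding propositions, which have already done all the analytic work. First I would dispatch the trivial case $K_p = 0$, in which $T$ is null and everything is zero. Assuming $K_p > 0$, set $C_0 = (C^{p,p_2}_T)^{\circ\circ}$ and $C_1 = B_Y$, and for the fixed $0 < \theta < 1$ form the interpolated set $C_\theta$ via \eqref{eq: Calderon interpolation of sets}. By \Cref{prop: Calderon interpolation of sets}, $C_\theta$ is a bounded convex solid set in $Y$, and by \Cref{prop: interpolation pp2 convex operator} it satisfies $T(B_X) \subseteq K_p^{1-\theta}C_\theta$ while the normed vector lattice $(Z_\theta, \rho_\theta)$, where $Z_\theta = \spn C_\theta$ and $\rho_\theta$ is the Minkowski functional of $C_\theta$, is $(p_\theta, \ol{p_2})$-convex with constant $1$. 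The one subtlety here is that $C_\theta$ need not be norm-closed in $Y$, so $(Z_\theta, \rho_\theta)$ is a priori only a normed vector lattice; I would either pass to the completion $\mathcal{Z}_\theta$ of $(Z_\theta, \rho_\theta)$ directly, noting that $(p_\theta, \ol{p_2})$-convexity with constant $1$ is inherited by the completion since the defining inequality is closed, or equivalently replace $C_\theta$ by its closure $\ol{C_\theta}$ in $Y$ (still bounded, convex, solid, with $T(B_X) \subseteq K_p^{1-\theta}\ol{C_\theta}$ and the $(p_\theta,\ol{p_2})$-convexity estimate preserved by continuity of $\rho_\theta$) and invoke \Cref{lem: minkowski functional} to see that the Minkowski functional of $\ol{C_\theta}$ is a complete lattice norm on $\mathcal{Z}_\theta := \spn \ol{C_\theta}$, with $\ol{C_\theta}$ as its closed unit ball.

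Next I would define $\Phi: \mathcal{Z}_\theta \to Y$ to be the formal inclusion; since $C_\theta$ (hence $\ol{C_\theta}$) is solid in $Y$, $\Phi$ is an injective lattice homomorphism, and since it maps the unit ball $\ol{C_\theta}$ of $\mathcal{Z}_\theta$ onto a closed solid set in $Y$, $\Phi$ is interval preserving and of Class $\cC$ (the argument is identical to the verification in the proof of \Cref{prop: class C correspondence}(1) that the inclusion map associated to a closed bounded convex solid set is of Class $\cC$). The operator $S_\theta: X \to \mathcal{Z}_\theta$ given by $S_\theta u = Tu$ is well defined and bounded because $T(B_X) \subseteq K_p^{1-\theta}\ol{C_\theta}$, and by \Cref{prop: qtheta1 convavity of the factor} it is $(q_\theta, 1)$-concave with constant $K_q^{1-\theta}$. (Here $q_\theta = q/(1-\theta)$; note $\ol{C_\theta} \supseteq C_\theta$ only enlarges the unit ball, so the $(q_\theta,1)$-concavity estimate for $S_\theta$ into $(Z_\theta,\rho_\theta)$ transfers verbatim, since $\rho_{\ol{C_\theta}} \leq \rho_{C_\theta}$ pointwise — one should double-check this direction of the inequality, but in fact $\ol{C_\theta}$ is the closure so its Minkowski functional is $\leq$ that of $C_\theta$, which is the favorable direction for concavity.)

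Now apply \Cref{thm: maximal factorization tau sigma concave operator} to the $(q_\theta,1)$-concave operator $S_\theta: X \to \mathcal{Z}_\theta$, with $\tau = \|\cdot\|_{q_\theta}$ and $\sigma = \|\cdot\|_1$ on $c_{00}$, both of which are block convex and block concave with constant $1$. This produces a $(q_\theta, 1)$-concave Banach lattice $W_\theta$ (with constant $1$) together with a Class $\cD$ operator $\Psi := U^0: X \to W_\theta$ and a bounded operator $T_\theta := V^0: W_\theta \to \mathcal{Z}_\theta$ such that $S_\theta = T_\theta \Psi$. Composing with $\Phi$ yields $T = \Phi S_\theta = \Phi T_\theta \Psi$, which is exactly the asserted factorization, with $\Phi$ an interval preserving injective lattice homomorphism, $\Psi$ of Class $\cD$, and $T_\theta$ bounded.

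The bulk of the difficulty is already absorbed into \Cref{prop: interpolation pp2 convex operator} and \Cref{prop: qtheta1 convavity of the factor}, so the main point requiring care in writing this up is the bookkeeping around closures and completions: making sure the intermediate object one feeds to \Cref{thm: maximal factorization tau sigma concave operator} is a genuine Banach lattice (hence the passage to $\ol{C_\theta}$ or to the completion) and checking that both the $(p_\theta,\ol{p_2})$-convexity of $\mathcal{Z}_\theta$ and the $(q_\theta,1)$-concavity of $S_\theta$ survive that passage. Everything else is a direct citation of the already-established machinery, so I expect the proof to be short once these points are pinned down.
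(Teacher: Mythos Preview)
Your proposal is correct and follows essentially the same route as the paper: build $\mathcal{Z}_\theta$ from the interpolated set $C_\theta$ via \Cref{prop: interpolation pp2 convex operator}, take $\Phi$ to be the formal inclusion, use \Cref{prop: qtheta1 convavity of the factor} to see $S_\theta$ is $(q_\theta,1)$-concave, and then factor $S_\theta$ through a $(q_\theta,1)$-concave Banach lattice via the maximal concave factorization. The paper handles the completeness issue by taking the abstract completion of $(Z_\theta,\rho_\theta)$ and identifying it with a subset of $Y$ (using that $\rho_\theta$ dominates $\|\cdot\|_Y$, so $\rho_\theta$-Cauchy sequences converge in $Y$); your alternative of passing to $\ol{C_\theta}$ also works, though note these two constructions need not yield literally the same space. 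One small over-claim: you assert $\Phi$ is of Class~$\cC$, but the theorem only requires $\Phi$ to be an interval preserving injective lattice homomorphism, and under the completion approach the image of the unit ball is not obviously closed in $Y$ --- your $\ol{C_\theta}$ route does give Class~$\cC$, so this is harmless, but be aware it is slightly more than what is stated.
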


\begin{proof}
Let $(Z_\theta,\rho_\theta)$ be as in Proposition \ref{prop: interpolation pp2 convex operator} and let $S_\theta:X\to Z_\theta$ be defined by $S_\theta u = Tu$.
By Proposition \ref{prop: interpolation pp2 convex operator}, $(Z_\theta, \rho_\theta)$ is a $(p_\theta,\ol{p_2})$-convex normed vector lattice.
Hence, its norm completion $\mathcal{Z}_\theta$ is a $(p_\theta,\ol{p_2})$-convex Banach lattice.
Since $C_\theta$ is a bounded convex solid subset of $Y$, its Minkowski functional $\rho_\theta$ dominates the norm of $Y$.  Thus, every $\rho_\theta$-Cauchy sequence in $Z_\theta$ converges in $Y$.
Hence $\mathcal{Z}_\theta$ may be identified (as a vector lattice) with 
\[ \{v\in Y: \exists \text{ a $\rho_\theta$-Cauchy sequence } (v_n)^\infty_{n=1} \text{ in } Z_\theta \text{ so that } (v_n)^\infty_{n=1} \text{ converges to } v \text { in } Y\}.\]
In this way, $\mathcal{Z}_\theta$ is (identified with) a vector lattice ideal in $Y$.
Hence, the inclusion map $\Phi : \mathcal{Z}_\theta\to Y$ is an interval preserving injective lattice homomorphism.
Note that $\Phi S_\theta = T$ by the definition of $\Phi$ and $S_\theta$.
\medskip

By Proposition \ref{prop: qtheta1 convavity of the factor}, $S_\theta: X\to Z_\theta \subseteq \mathcal{Z_\theta}$ is $(q_\theta,1)$-concave. 
Hence, by \Cref{thm: optimal factorization upe lpe case}(2), there is a $(q_\theta,1)$-concave Banach lattice $W_\theta$, a Class $\cD$ operator $\Psi :X\to W_\theta$, and a bounded linear operator $T_\theta: W_\theta\to \mathcal{Z}_\theta$ so that $S_\theta = T_\theta\, \Psi$.
Therefore, $T = \Phi S_\theta = \Phi T_\theta\, \Psi$, as claimed.
\end{proof}

\subsubsection{Factoring a \texorpdfstring{$(p,\infty)$}{}-convex and \texorpdfstring{$(q,q_2)$}{}-concave operator}

In this subsection, let $X,Y$ be Banach lattices, $1\leq p,q\leq \infty$ and $q_2 \in \{q,1\}$. Suppose that  $T:X\to Y$ is a bounded linear operator that is $(p,\infty)$-convex with constant $K_p$ and $(q,q_2)$-concave with constant $K_q$. 
As in Definition \ref{def: convex sets of T}, let 
\[ C_{T^*}^{q^*,q_2^*} = \cbr[3]{u^*\in X^*: \exists (v_i^*)^n_{i=1} \subseteq Y^* \text{ such that } |u^*|\leq \intoo[3]{\sum^n_{i=1}|T^*v_i^*|^{q^*_2}}^{\frac{1}{q^*_2}}, \sum^n_{i=1}\|v_i\|^{q^*}\leq 1},\]
which corresponds to the set $C_{T^*}^{\tau,\sigma}$ taking the norms $\sigma=\|\cdot\|_{q_2^*}$ and $\tau=\|\cdot\|_{q^*}$ on $c_{00}$, both of them block convex and block concave with constant 1.
Since $T$ is $(q,q_2)$-concave with constant $K_q$, $T^*$ is $(q^*,q_2^*)$-convex with constant $K_q$ by Theorem \ref{thm: adjoint of concave is convex}.
By Proposition \ref{prop: Coo is closed convex solid}, $C_0:= (C_{T^*}^{q^*,q_2^*})^{\circ \circ}$ is a closed convex solid set in $X^*$ so that $T^*B_{Y^*}\subseteq C_0 \subseteq K_qB_{X^*}$. 
Fix $0<\theta<1$ and let 
\[ C_\theta = \{u^*\in X^*: |u^*| \leq |u^*_0|^\theta|u^*_1|^{1-\theta}, u^*_0 \in C_0, u^*_1 \in B_{X^*}\}.\]
Let $Z_\theta = \spn C_\theta$ and let $\rho_\theta$ be the Minkowski functional of $C_\theta$.
Then $(Z_\theta,\rho_\theta)$ is a normed vector lattice.
By Proposition \ref{prop: interpolation pp2 convex operator}, $T^*B_{Y^*} \subseteq K_q^{1-\theta}C_\theta$ and $(Z_\theta,\rho_\theta)$ is $(q_\theta^*, (\ol{q_2})^*)$-convex with constant $1$, where 
\[ \frac{1}{q^*_\theta} = \frac{\theta}{q^*} + (1-\theta) \quad\text{and}\quad 
\ol{q_2} = \begin{cases}
q_\theta &\text{if $q_2 = q$},\\
1& \text{if $q_2 =1$.}
\end{cases}\]
Note that the first condition is equivalent to taking $q_\theta = \frac{q}{\theta}$.
Let $D\subseteq X$ be the set $(C_\theta)^\circ$, where the polar is taken with respect to the duality $\la X,X^*\ra$.

\begin{prop}\label{prop: pseudo dual of the interpolation of q1 concave operator}
The set $D$ is a closed convex solid subset of $X$ that contains $K_p^{-\theta}B_X$.
Let $\pi$ be the Minkowski functional of $D$ on $X$ and let $Q:X \to X/\ker\pi$ be the quotient map. Define $\wh{\pi}$ on $X/\ker\pi$ by $\wh{\pi}(Qu) = \pi(u)$.
Then $(X/\ker\pi,\wh{\pi})$ is a normed vector lattice.
The map $j: (X/\ker\pi, \wh{\pi})\to (Z_\theta,\rho_\theta)^*$ given by $\la u^*, j(Qu)\ra = \la u,u^*\ra$ is a well-defined lattice isometric embedding.  In particular, $(X/\ker \pi,\wh{\pi})$ is $(q_\theta, \ol{q_2})$-concave.
\end{prop}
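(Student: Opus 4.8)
The plan is to verify the claims about $D$ and $\pi$ directly, then obtain the map $j$ by duality and transfer the concavity of $(Z_\theta,\rho_\theta)$ across it. First I would record the structural properties of $D = (C_\theta)^\circ$. Since $C_\theta$ is bounded, convex and solid in $X^*$ (by \Cref{prop: Calderon interpolation of sets}), its polar $D$ in $X$ is automatically closed, convex and solid. For the inclusion $K_p^{-\theta}B_X \subseteq D$, I would argue on the predual side: by \Cref{prop: interpolation pp2 convex operator} applied to $T^*$ we have $T^*B_{Y^*}\subseteq K_q^{1-\theta}C_\theta$; more to the point, since $T$ is $(p,\infty)$-convex with constant $K_p$, $T^*$ is $(p^*,1)$-concave with constant $K_p$ by \Cref{thm: adjoint of convex is concave}, and a single-vector use of this gives $\|T^*v^*\|\leq K_p\|v^*\|$, i.e. $T^*$ has norm at most $K_p$. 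Then, for $u\in B_X$ and $u^*\in C_\theta$, write $|u^*|\leq |u^*_0|^\theta|u^*_1|^{1-\theta}$ with $u^*_0\in C_0=(C^{q^*,q_2^*}_{T^*})^{\circ\circ}$ and $u_1^*\in B_{X^*}$; since $\|u^*_0\|\leq K_q$ is irrelevant here — what I actually need is a bound on $\langle |u|,|u^*|\rangle$. The cleanest route is: $C_\theta\subseteq K_p^{\theta}B_{X^*}$ is false in general, so instead I would bound $|u^*(u)|$ using $|u^*|\leq |u_0^*|^\theta|u_1^*|^{1-\theta}$ together with the fact that $C_0\subseteq K_pB_{X^*}$ (this is the correct constant, since $C_0\subseteq K_qB_{X^*}$ is what \Cref{prop: Coo is closed convex solid} gives for $T^*$, but we want the $p$-side) — here I must be careful and re-examine: the relevant bound is $C_0\subseteq K_q B_{X^*}$ from $(q,q_2)$-concavity, hence $C_\theta\subseteq K_q^{\theta}B_{X^*}$, giving $K_q^{-\theta}B_X\subseteq D$; to get $K_p^{-\theta}$ one uses instead that every $u_0^*\in C_0$ satisfies $\langle |u|,|u_0^*|\rangle\le$ (bound coming from the convexity of $T$ via the pairing). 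This bookkeeping about which constant appears is exactly the point to get right, and I would resolve it by tracking the pairing $\langle|u|,|u^*|\rangle\le \langle|u|,|u^*_0|\rangle^\theta\langle|u|,|u_1^*|\rangle^{1-\theta}$ (a consequence of \cite[Proposition 1.d.2]{LT2}-type arguments, i.e. the pointwise/Hölder inequality in the functional calculus) and bounding the first factor by $K_p$ times $\|u\|$ using the representation of $u_0^*$ and the $(p,\infty)$-convexity of $T$ against the test element $u$.

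Next I would handle $\pi$ and the quotient. Since $D$ is closed, convex, solid and absorbing (it contains a ball), its Minkowski functional $\pi$ is a lattice seminorm on $X$; the kernel $\ker\pi=\bigcap_{\lambda>0}\lambda D$ is a lattice ideal, so $X/\ker\pi$ carries a natural lattice order and $\wh\pi(Qu)=\pi(u)$ is a well-defined lattice norm on it — this is standard and parallels the construction in \Cref{prop: class D correspondence} and \Cref{prop: pseudo duality of the constructions}. Then I would define $j\colon (X/\ker\pi,\wh\pi)\to (Z_\theta,\rho_\theta)^*$ by $\langle u^*, j(Qu)\rangle = \langle u, u^*\rangle$ for $u^*\in Z_\theta=\spn C_\theta$. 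Well-definedness: if $Qu=0$ then $u\in\lambda D$ for all $\lambda>0$, so $|u^*(u)|\le \lambda\,\rho_{C_\theta^\circ}(u)\cdot\rho_\theta(u^*)\to 0$, whence $u^*(u)=0$; boundedness and the isometry $\|j(Qu)\|_{Z_\theta^*}=\wh\pi(Qu)$ follow from the bipolar identity $\sup_{u^*\in C_\theta}u^*(u)=\sup_{\rho_\theta(u^*)\le1}u^*(u)$ and the definition of $\pi$ as the Minkowski functional of $D=(C_\theta)^\circ$ — exactly the computation carried out for $\vp$ in \Cref{prop: pseudo duality of the constructions}. That $j$ is a lattice homomorphism is verified by the same "sup over $|v|\le u$" argument used there for $\psi$ and $\vp$.

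Finally, the concavity transfer. By the discussion preceding the proposition, $(Z_\theta,\rho_\theta)$ is $(q_\theta^*,(\ol{q_2})^*)$-convex with constant $1$, hence so is its completion $\mathcal Z_\theta$; by \Cref{cor: duality convex concave operators} (or \Cref{thm: adjoint of convex is concave}) its dual $\mathcal Z_\theta^*$ is $(q_\theta,\ol{q_2})$-concave with constant $1$. The isometric lattice embedding $j$ then realizes $(X/\ker\pi,\wh\pi)$ as a sublattice of $(Z_\theta,\rho_\theta)^*$; since $(q_\theta,\ol{q_2})$-concavity passes to closed (indeed to any) sublattices, I conclude $(X/\ker\pi,\wh\pi)$ is $(q_\theta,\ol{q_2})$-concave with constant $1$. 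The main obstacle I anticipate is none of the above being deep, but rather the constant bookkeeping in the inclusion $K_p^{-\theta}B_X\subseteq D$: one must correctly interpolate the $(p,\infty)$-convexity estimate (which controls the $K_p$-side through the set $C_0$'s relation to $B_{X^*}$, i.e. through $C_0\subseteq K_pB_{X^*}$ as $T^*$ is $(p^*,1)$-concave... — wait, $C_0$ is built from the $(q,q_2)$-side) with the trivial estimate $B_{X^*}\subseteq B_{X^*}$, and be honest about whether the constant is $K_p^{-\theta}$, $K_q^{-\theta}$, or a product; I would resolve this by noting $T^*B_{Y^*}\subseteq K_q^{1-\theta}C_\theta$ (from \Cref{prop: interpolation pp2 convex operator}) is the statement about the range, while the statement $K_p^{-\theta}B_X\subseteq D$ is dual to $C_\theta\subseteq K_p^{\theta}(\text{something containing }T^*B_{Y^*})$ and traces back, via polarity, to $T$ being $(p,\infty)$-convex, i.e. to $C^{p,\infty}_T\subseteq K_pB_Y$ reinterpreted on the adjoint side. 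Everything else is a routine transcription of the arguments already developed in \Cref{sec: geometry factorization} and \Cref{sec: duality factorizations}.
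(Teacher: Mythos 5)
Your proof follows the same route as the paper's: the polar of a bounded convex solid set is closed, convex and solid; the standard Minkowski-functional/quotient construction gives the normed vector lattice $(X/\ker\pi,\wh\pi)$; the duality computation for $j$ (well-definedness, isometry via the bipolar identity, lattice homomorphism via the ``sup over $|w^*|\leq|u^*|$'' argument) is exactly the one from \Cref{prop: pseudo duality of the constructions}; and the concavity is transferred from $(Z_\theta,\rho_\theta)^*$ to the sublattice by \Cref{thm: adjoint of convex is concave}. The one point you flagged as the main obstacle---the constant in $K_p^{-\theta}B_X\subseteq D$---you in fact resolved correctly in passing and then talked yourself out of: since $C_0=(C^{q^*,q_2^*}_{T^*})^{\circ\circ}\subseteq K_qB_{X^*}$ (this containment comes from the $(q^*,q_2^*)$-convexity constant of $T^*$, i.e.\ from the concavity constant $K_q$ of $T$, and has nothing to do with $K_p$), one gets $C_\theta\subseteq K_q^{\theta}B_{X^*}$ and hence $K_q^{-\theta}B_X\subseteq D$. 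The paper's own proof asserts $C_\theta\subseteq K_p^{\theta}B_{X^*}$, which appears to be a slip for $K_q^{\theta}$; your proposed detour through the $(p,\infty)$-convexity of $T$ ``against the test element $u$'' to force the constant $K_p$ does not work and is not needed, since only the existence of \emph{some} ball inside $D$ is used afterwards (to make $\pi$ a finite seminorm and $Q$ bounded). With the constant read as $K_q^{-\theta}$, your argument is complete and coincides with the paper's.
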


\begin{proof}
Since $C_\theta \subseteq K_p^\theta B_{X^*}$ is a bounded solid subset of $X^*$, $D$ is a closed convex solid subset of $X$ that contains $K_p^{-\theta}B_X$. 
In particular, $\ker\pi$ is a vector lattice ideal in $X$ and $\wh{\pi}$ is a well-defined lattice norm on the vector lattice $X/\ker \pi$.
If $Qu =0$, then $u \in \lambda D$ for all $\lambda >0$.
Thus, $|\la u, u^*\ra| \leq \lambda$ for any $u^*\in C_\theta$ and $\lambda >0$. Hence, $\la u,u^*\ra = 0$ for any $u^*\in C_\theta$. It follows that $\la u,u^*\ra =0$ for any $u^*\in Z_\theta$.
This shows that the map $j$ is well defined.
For any $u\in X$,
\[
\|j(Qu)\| = \sup_{\rho_\theta(u^*)\leq 1}|\la u^*,j(Qu)\ra| =
 \sup_{u^*\in C_\theta}|\la u,u^*\ra|.\]
If $\wh{\pi}(Qu) <1$, then $u\in D$ and hence $|\la u,u^*\ra| \leq 1$ for any $u^*\in C_\theta$, since $D = (C_\theta)^\circ$. 
On the other hand, if $\wh{\pi}(Qu) >1$, then $u\notin  D =  (C_\theta)^\circ$.
Hence, there exists $u^*\in C_\theta$ such that $\la u, u^*\ra > 1$.
Thus, $\|j(Qu)\| > 1$ by the above equation.  This completes the proof that $j$ is an isometric embedding.
\medskip

For any $u \in X$ and $u^*\in (Z_\theta)_+$,
\[ \la u^*, |j(Qu)|\ra = \sup_{|w^*|\leq |u^*|}|\la w^*, jQu\ra| = \sup_{|w^*|\leq |u^*|}|\la u, w^*\ra| = \la |u|, u^*\ra = \la u^*, jQ|u|\ra.
\] 
Thus $|jQu| = jQ|u|$.
Since $Q$ is a lattice homomorphism, $Q|u| = |Qu|$.  Therefore, $|jQu| = j|Qu|$.  This shows that $j$ is a lattice homomorphism.
\medskip

Finally, since $(Z_\theta, \rho_\theta)$ is $(q^*_\theta, (\ol{q_2})^*)$-convex, its dual is $(q_\theta, \ol{q_2})$-concave by Theorem \ref{thm: adjoint of convex is concave}. Thus, 
 $(X/\ker \pi, \wh{\pi})$, which is lattice isometric to a sublattice of $(Z_\theta, \rho_\theta)^*$, is $(q_\theta, \ol{q_2})$-concave.
\end{proof}

The next step is to show that $T$ factors through $(X/\ker \pi, \wh{\pi})$ as $T=RQ$, with $R$ a $(p_\theta,\infty)$-convex operator.

\begin{prop}\label{prop: definition of R}
Define $R: (X/\ker \pi,\wh{\pi}) \to Y$ by $R(Qu) = Tu$.
Then $R$ is a well-defined bounded linear operator.
\end{prop}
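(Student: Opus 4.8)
The plan is to reduce the entire statement to a single norm estimate,
\[
\|Tu\|_Y \le K_q^{1-\theta}\,\pi(u)\qquad\text{for all }u\in X,
\]
and then read off the three assertions. Indeed, linearity of $R$ is automatic from the linearity of $T$ together with the surjectivity of $Q$; well-definedness is immediate once we know $\ker\pi\subseteq\ker T$, which is the special case $\pi(u)=0$ of the estimate; and boundedness with $\|R\|\le K_q^{1-\theta}$ is just a restatement of the estimate, since $\wh\pi(Qu)=\pi(u)$ by definition.

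To prove the estimate I would first record the only structural fact about $D$ and $\pi$ that is needed: because $D=(C_\theta)^\circ$ is the polar of the solid (hence circled) set $C_\theta$ in the duality $\la X,X^*\ra$, one has $u\in\lambda D$ if and only if $\sup_{u^*\in C_\theta}|\la u,u^*\ra|\le\lambda$, so the Minkowski functional of $D$ is the support functional of $C_\theta$, and in particular $|\la u,u^*\ra|\le\pi(u)$ whenever $u^*\in C_\theta$. Next I would invoke the inclusion $T^*B_{Y^*}\subseteq K_q^{1-\theta}C_\theta$ obtained just above by applying Proposition \ref{prop: interpolation pp2 convex operator} to $T^*$, which is $(q^*,q_2^*)$-convex with constant $K_q$ by Theorem \ref{thm: adjoint of concave is convex} (here the role of the set $C_1$ in that proposition is played by $B_{X^*}$). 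Consequently $K_q^{-(1-\theta)}T^*v^*\in C_\theta$ for every $v^*\in B_{Y^*}$, and therefore, for every $u\in X$,
\[
|\la Tu,v^*\ra| = |\la u,T^*v^*\ra| = K_q^{1-\theta}\,\bigl|\la u,\,K_q^{-(1-\theta)}T^*v^*\ra\bigr| \le K_q^{1-\theta}\,\pi(u).
\]
Taking the supremum over $v^*\in B_{Y^*}$ yields $\|Tu\|_Y\le K_q^{1-\theta}\pi(u)$, which is the claimed estimate.

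I do not expect a genuine obstacle here: the substance of the proposition is contained entirely in the preceding constructions, and what remains is the routine duality computation above. The only two points that deserve a moment of care are (i) the identification of the Minkowski functional of a polar with the corresponding support functional, which is standard once one uses that $C_\theta$ is solid, and (ii) making sure that $T^*B_{Y^*}\subseteq K_q^{1-\theta}C_\theta$ is exactly the conclusion already available from Proposition \ref{prop: interpolation pp2 convex operator}, and not a new claim. With the displayed inequality in hand, well-definedness of $R$ follows because $u\in\ker\pi$ forces $\la Tu,v^*\ra=0$ for all $v^*\in Y^*$, hence $Tu=0$; thus $Qu=Qu'$ implies $Tu=Tu'$, and the formula $R(Qu)=Tu$ makes sense.
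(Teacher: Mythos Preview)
Your proof is correct and follows essentially the same approach as the paper: both reduce to the estimate $\|Tu\|\le K_q^{1-\theta}\pi(u)$ via the inclusion $T^*B_{Y^*}\subseteq K_q^{1-\theta}C_\theta$ and the polar relation $D=(C_\theta)^\circ$, then read off well-definedness and boundedness. The only cosmetic difference is that the paper phrases the key inequality by first restricting to $u\in D$ and then homogenizing, whereas you invoke the Minkowski-functional/support-functional identity directly.
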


\begin{proof}
Suppose that $u\in D$.  Then $\la u, u^*\ra \leq 1$ for all $u^*\in C_\theta$.
Since $T^*B_{Y^*} \subseteq K_q^{1-\theta}C_\theta$, $|\la Tu, v^*\ra| \leq  K_q^{1-\theta}$ for all $v^*\in B_{Y^*}$. This proves that  $\|Tu\| \leq  K_q^{1-\theta}\pi(u)$.
In particular,  if $Qu= 0$, then $\pi(u) = 0$ and thus $Tu=0$.
Therefore, $R$ is well defined.
Also, the same inequality shows that 
\[ \|R(Qu)\| = \|Tu\| \leq  K_q^{1-\theta}\pi(u) =   K_q^{1-\theta}\wh{\pi}(Qu).\]
Thus, $R$ is bounded.
\end{proof}

Define $S_\theta:Y^*\to Z_\theta$ by $S_\theta v^* = T^*v^*$.  By Proposition \ref{prop: qtheta1 convavity of the factor}, $S_\theta$ is $(p^*_\theta,1)$-concave, where $p^*_\theta = \frac{p^*}{1-\theta}$ (equivalently, $\frac{1}{p_\theta} = \frac{1-\theta}{p}+\theta$).

\begin{prop}\label{prop: R is convex}
Define  $\psi: (Z_\theta, \rho_\theta)\to (X/\ker\pi, \wh{\pi})^*$ by $\la Qu,\psi u^*\ra = \la u,u^*\ra$.  Then $\psi$ is a well-defined bounded linear operator so that  $R^* = \psi S_\theta$. Consequently, $R$ is $(p_\theta,\infty)$-convex.
\end{prop}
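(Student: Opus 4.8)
The plan is to verify the three assertions in turn, the first requiring essentially all of the work. To see that $\psi$ is well defined, I would note that if $Qu = Qu'$ then $u-u' \in \ker\pi = \bigcap_{\lambda > 0}\lambda D$; since $D = (C_\theta)^\circ$, for each $w^* \in C_\theta$ and each $\lambda > 0$ we get $|\la u-u', w^*\ra| \le \lambda$, so $\la u-u', w^*\ra = 0$ for every $w^* \in C_\theta$ and hence for every $w^* \in Z_\theta = \spn C_\theta$. Thus $\la Qu, \psi u^*\ra := \la u, u^*\ra$ does not depend on the representative $u$, exactly as in the argument for the map $j$ in Proposition \ref{prop: pseudo dual of the interpolation of q1 concave operator}; linearity (in $Qu$ and in $u^*$) is immediate. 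For boundedness: if $\wh{\pi}(Qu) < 1$ then $u \in D = (C_\theta)^\circ$, and if $\rho_\theta(u^*) < 1$ then $u^* \in C_\theta$ (as $C_\theta$ is convex, solid and contains $0$), so $|\la Qu, \psi u^*\ra| = |\la u, u^*\ra| \le 1$; by positive homogeneity $|\la Qu, \psi u^*\ra| \le \wh{\pi}(Qu)\,\rho_\theta(u^*)$, whence $\psi u^* \in (X/\ker\pi, \wh{\pi})^*$ and $\|\psi\| \le 1$. Equivalently, one may observe that $\psi = j^*\circ\kappa$, where $j$ is the isometric embedding of Proposition \ref{prop: pseudo dual of the interpolation of q1 concave operator} and $\kappa \colon (Z_\theta,\rho_\theta) \to (Z_\theta,\rho_\theta)^{**}$ is the canonical map.

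Next I would check $R^* = \psi S_\theta$ by a direct computation: for $v^* \in Y^*$ and $u \in X$ one has $\la Qu, R^*v^*\ra = \la R(Qu), v^*\ra = \la Tu, v^*\ra = \la u, T^*v^*\ra$, while, since $S_\theta v^* = T^*v^* \in Z_\theta$, the definition of $\psi$ gives $\la Qu, (\psi S_\theta)v^*\ra = \la Qu, \psi(T^*v^*)\ra = \la u, T^*v^*\ra$. As every element of $X/\ker\pi$ is of the form $Qu$, the functionals $R^*v^*$ and $(\psi S_\theta)v^*$ agree, and since $v^*$ is arbitrary, $R^* = \psi S_\theta$.

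Finally, the convexity of $R$ follows by duality. As recalled just before the statement, $S_\theta \colon Y^* \to Z_\theta$ is $(p_\theta^*,1)$-concave (with constant $K_p^{1-\theta}$, via Proposition \ref{prop: qtheta1 convavity of the factor} applied to $T^*$, which is $(q^*,q_2^*)$-convex with constant $K_q$ and $(p^*,1)$-concave with constant $K_p$). Post-composing with the contraction $\psi$ preserves $(p_\theta^*,1)$-concavity, since $\|\psi S_\theta v^*\| \le \|S_\theta v^*\|$ for every $v^*$; hence $R^* = \psi S_\theta$ is $(p_\theta^*,1)$-concave. In the notation of Section \ref{sec: concavity and convexity}, $(p_\theta,\infty)$-convexity is $(\tau,\sigma)$-convexity for $\tau = \|\cdot\|_{\ell_{p_\theta}}$ and $\sigma = \|\cdot\|_{c_0}$, whose dual norms are $\tau^* = \|\cdot\|_{\ell_{p_\theta^*}}$ and $\sigma^* = \|\cdot\|_{\ell_1}$, so $(\tau^*,\sigma^*)$-concavity is precisely $(p_\theta^*,1)$-concavity; thus Corollary \ref{cor: duality convex concave operators}(1) yields that $R$ is $(p_\theta,\infty)$-convex. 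If one is uneasy that $(X/\ker\pi,\wh{\pi})$ is only a normed vector lattice, one notes that $(p_\theta,\infty)$-convexity of $R$ is equivalent, by density and continuity of the lattice operations in $Y$, to that of its extension to the completion, to which the Corollary applies verbatim.

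The main obstacle is the first step: showing $\psi$ is well defined and contractive, which hinges on the identities $D = (C_\theta)^\circ$ and $\ker\pi = \bigcap_{\lambda>0}\lambda D$. Everything after that is bookkeeping with adjoints together with the convex/concave duality of Section \ref{sec: concavity and convexity}.
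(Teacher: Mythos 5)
Your proposal is correct and follows essentially the same route as the paper: well-definedness and contractivity of $\psi$ via the polarity $D=(C_\theta)^\circ$, the identity $R^*=\psi S_\theta$ by direct computation on elements $Qu$, and $(p_\theta,\infty)$-convexity of $R$ by dualizing the $(p_\theta^*,1)$-concavity of $R^*=\psi S_\theta$ inherited from $S_\theta$. The extra touches (the factorization $\psi=j^*\circ\kappa$ and the explicit appeal to Corollary~\ref{cor: duality convex concave operators}) are harmless elaborations of what the paper leaves implicit.
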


\begin{proof}
Note that $|\la u,u^*\ra|\leq 1$ if $u\in D$ and $u^*\in C_\theta$.  Hence $\la u,u^*\ra = 0$ if $Qu = 0$ and $u^*\in Z_\theta$. This shows that $\psi u^*$ is a well-defined functional on $X/\ker\pi$. 
Also, if $\wh{\pi}(Qu)<1$, then $u\in D$ and hence $|\la Qu, \psi u^*\ra| = |\la u,u^*\ra| \leq \rho_\theta(u^*)$ for any $u^*\in Z_\theta$.  Hence, $\psi u^*\in (X/\ker\pi, \wh{\pi})^*$ and $\|\psi u*\| \leq \rho_\theta(u^*)$.
Therefore, $\psi$ is a bounded linear operator. 
\medskip

For any $v^*\in Y^*$ and any $u\in X$,
\[ \la Qu, \psi S_\theta v^*\ra = \la u, S_\theta v^*\ra = \la u, T^*v^*\ra = \la Tu,v^*\ra = \la RQu,v^*\ra = \la Qu, R^*v^*\ra.
\]
Thus, $R^* = \psi S_\theta$.  
Finally, since $S_\theta$ is $(p^*_\theta,1)$-concave, so is $R^*$.  Hence $R$ is $(p_\theta,\infty)$-convex.
\end{proof}

Finally, we can state the analogue of \cite[Theorem 15]{RT} for operators that are simultaneously $(p,\infty)$-convex and $(q,q_2)$-concave.

\begin{thm}\label{thm: factorization of pinfty convex and qq2 concave operator}
Let $X,Y$ be Banach lattices and let $T:X\to Y$ be a bounded linear operator that is both $(p,\infty)$-convex and $(q,q_2)$-concave ($q_2 \in \{q,1\}$). Let $0<\theta < 1$. Determine $p_\theta, q_\theta$ by the equations $q_\theta = \frac{q}{\theta}$ and $\frac{1}{p_\theta} = \frac{\theta}{p} +{1-\theta}$. Set $\ol{q_2} = q_\theta$ if $q_2 = q$, and $\ol{q_2} = 1$ if $q_2 = 1$. 
There exists 
\begin{enumerate}
\item a $(q_\theta,\ol{q_2})$-concave Banach lattice $\mathcal{Z}_\theta$ and  a $(p_\theta,\infty)$-convex Banach lattice $W_\theta$, 
\item a Class $\cC$ operator $\Phi: {W}_\theta \to Y$ and a Class $\cD$ operator $\Psi: X\to \mathcal{Z}_\theta$,
\item a bounded linear operator $T_\theta: \mathcal{Z}_\theta\to {Y}_\theta$, 
\end{enumerate}
so that $T = \Phi T_\theta\, \Psi$.
\end{thm}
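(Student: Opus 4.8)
The plan is to assemble the factorization directly from the propositions already proved in this subsection, which have done essentially all of the substantive work: they produce a normed vector lattice $(X/\ker\pi,\wh{\pi})$ that is $(q_\theta,\ol{q_2})$-concave (\Cref{prop: pseudo dual of the interpolation of q1 concave operator}), together with an operator $R\colon (X/\ker\pi,\wh{\pi})\to Y$, $R(Qu)=Tu$, that is $(p_\theta,\infty)$-convex (\Cref{prop: definition of R} and \Cref{prop: R is convex}) and satisfies $RQ=T$, where $Q\colon X\to X/\ker\pi$ is the quotient map. So all that remains is bookkeeping: pass to completions and invoke the canonical $(p,\infty)$-convex factorization of \Cref{thm: optimal factorization upe lpe case}.

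First I would let $\mathcal{Z}_\theta$ denote the norm completion of $(X/\ker\pi,\wh{\pi})$. This is a Banach lattice, and it is $(q_\theta,\ol{q_2})$-concave with the same constant, since the defining inequality involves only finitely many vectors and both of its sides (a finite $\ell_{q_\theta}$-average of norms, and the norm of a Krivine functional-calculus expression) are norm-continuous in those vectors, hence the inequality persists under passage to limits from the dense subspace $X/\ker\pi$. The map $\Psi\colon X\to\mathcal{Z}_\theta$ obtained by composing $Q$ with the canonical inclusion $X/\ker\pi\hookrightarrow\mathcal{Z}_\theta$ is a lattice homomorphism with dense range, hence of Class $\cD$. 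Next, since $Y$ is complete, the bounded operator $R$ extends uniquely to a bounded operator $\bar R\colon\mathcal{Z}_\theta\to Y$, and $\bar R$ is again $(p_\theta,\infty)$-convex, for the same reason (the inequality $\|\bigvee_{i}|\bar R v_i|\|\le K\,(\sum_i\|v_i\|^{p_\theta})^{1/p_\theta}$ for finitely many vectors is stable under norm limits in the domain). By construction $\bar R\Psi=RQ=T$.

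Then I would apply \Cref{thm: optimal factorization upe lpe case}(1) to the $(p_\theta,\infty)$-convex operator $\bar R\colon\mathcal{Z}_\theta\to Y$ (viewing $\mathcal{Z}_\theta$ merely as a Banach space, as that theorem does with its domain): this produces a $(p_\theta,\infty)$-convex Banach lattice $W_\theta$ (with constant $1$), a bounded operator $T_\theta\colon\mathcal{Z}_\theta\to W_\theta$, and a Class $\cC$ operator $\Phi\colon W_\theta\to Y$ with $\bar R=\Phi T_\theta$. Combining the pieces gives $T=\bar R\Psi=\Phi T_\theta\Psi$, which is precisely the asserted factorization, with $\mathcal{Z}_\theta$ the $(q_\theta,\ol{q_2})$-concave lattice and $W_\theta$ the $(p_\theta,\infty)$-convex one, $\Psi$ of Class $\cD$ and $\Phi$ of Class $\cC$.

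Since the analytic content has already been extracted by the earlier propositions, there is no single hard step; the only points demanding a moment's care are (i) verifying that the concavity and convexity constants survive the two completion/extension steps, and (ii) keeping the roles of the two interpolated lattices straight — in contrast to the proof of \Cref{thm: factorization of pp2 convex and q1 concave operator}, here it is the domain-side lattice $\mathcal{Z}_\theta$ that carries the concavity (arising as a sublattice of the dual of the interpolated space $(Z_\theta,\rho_\theta)$ built from $T^*$), while the convex lattice $W_\theta$ is the one manufactured by the canonical factorization of $\bar R$, and correspondingly $\Psi$ lands densely in $\mathcal{Z}_\theta$ whereas $\Phi$ embeds $W_\theta$ interval-preservingly into $Y$.
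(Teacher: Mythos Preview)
Your proposal is correct and follows essentially the same approach as the paper: take the completion $\mathcal{Z}_\theta$ of $(X/\ker\pi,\wh{\pi})$, extend $R$ to $\mathcal{R}\colon\mathcal{Z}_\theta\to Y$, observe that $\Psi=Q$ is of Class $\cD$ and $\mathcal{R}$ is $(p_\theta,\infty)$-convex, and then apply \Cref{thm: optimal factorization upe lpe case}(1) to $\mathcal{R}$ to produce $W_\theta$, $\Phi$, and $T_\theta$. Your write-up even spells out a couple of points (stability of the concavity/convexity constants under completion, the contrast with \Cref{thm: factorization of pp2 convex and q1 concave operator}) that the paper leaves implicit.
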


\begin{proof}
Let $\mathcal{Z}_\theta$ be the completion of $(X/\ker \pi, \wh{\pi})$ and let $\mathcal{R}:\mathcal{Z}_\theta\to Y$ be the continuous extension to $\mathcal{Z}_\theta$ of the bounded linear operator $R:(X/\ker \pi, \wh{\pi})\to Y$. Also, let $\Psi:X\to \mathcal{Z}_\theta$ be given by $\Psi u = Qu$. Then $\Psi$ is a Class $\cD$ operator.
By Proposition \ref{prop: definition of R}, $T= \mathcal{R}\Psi$.
It follows from Proposition \ref{prop: R is convex} that $\mathcal{R}$ is $(p_\theta,\infty)$-convex.
By \Cref{thm: optimal factorization upe lpe case}(1), there is a $(p_\theta,\infty)$-convex Banach lattice $W_\theta$, a Class $\cC$ operator $\Phi:W_\theta\to Y$ and a bounded linear operator $T_\theta:\mathcal{Z}_\theta\to W_\theta$ so that $\mathcal{R} = \Phi T_\theta$. Therefore, $T= \Phi T_\theta\,\Psi$, as claimed.
\end{proof}

\section{Push-outs in classes of convex Banach lattices}\label{sec: PO}

Recall that in a given category, for objects $X_0,X_1,X_2$ and morphisms $T_i:X_0\rightarrow X_i$, $i=1,2$, a \textit{push-out diagram} is an object $PO=PO(T_1,T_2)$ together with morphisms $S_i:X_i\rightarrow PO$, $i=1,2$, making the following diagram commutative
\begin{equation*}
	\xymatrix{
		X_1 \ar@{->}[r]^{S_1}& PO  \\
		X_0 \ar@{->}[r]_{T_2} \ar@{->}[u]^{T_1} & X_2 \ar@{->}[u]_{S_2} 
	}
\end{equation*}
and with the universal property that if $S'_i:X_i\rightarrow Y$ are such that $S'_1T_1=S'_2T_2$, then there is a unique $\gamma:PO\rightarrow Y$ such that $\gamma S_i=S'_i$ for $i=1,2,$ as follows:
\begin{equation*}
	\xymatrix{
		&  &  Y \\
        X_1 \ar@{->}[r]^{S_1} \ar@/^1.0pc/[rru]^{S'_1}& PO \ar@{->}[ru]^{\gamma} & \\
		X_0 \ar@{->}[r]_{T_2} \ar@{->}[u]^{T_1} & X_2 \ar@{->}[u]_{S_2}  \ar@/_1.0pc/[uur]_{S'_2} &
	}
\end{equation*}
In the category $\mathcal{BL}$ of Banach lattices and lattice homomorphisms, \textit{isometric push-outs}, i.e., push-outs satisfying the additional conditions that $\max\{\|S_1\|,\|S_2\|\}\leq 1$ and $\|\gamma\|\leq \max\{ \|S'_1\|,\|S'_2\| \}$, were shown to exist in \cite{AT}. The construction of such isometric push-outs was extended in \cite{GLTT} to certain classes of Banach lattices $\cC$:

\begin{thm}\label{push-out}
Let $\mathcal C$ be a class of Banach lattices which is closed under the operations of taking quotients and sublattices of $\ell_\infty$-sums of spaces in $\cC$. Given Banach lattices $X_0,X_1,X_2$ in $\cC$ and lattice homomorphisms $T_i:X_0\rightarrow X_i$ for $i=1,2$, there is a Banach lattice $PO^{\cC}$ in $\cC$ and lattice homomorphisms $S_1,S_2$ so that the following is an isometric push-out diagram in $\cC$:
\begin{equation*}
	\xymatrix{
		X_1 \ar@{->}[r]^{S_1}& PO^{\ol{\cC}}  \\
		X_0 \ar@{->}[r]_{T_2} \ar@{->}[u]^{T_1} & X_2 \ar@{->}[u]_{S_2} 
	}
\end{equation*}
\end{thm}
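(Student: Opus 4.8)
The plan is to reduce the construction of the push-out in the class $\cC$ to the construction of the isometric push-out in the category $\mathcal{BL}$ of all Banach lattices established in \cite{AT}, together with the existence of the free object $\fbl^{\cC}$ (equivalently, a contractive ``projection'' from $\mathcal{BL}$ onto $\cC$ at the level of the universal properties). First I would recall that, since $\cC$ is closed under quotients and sublattices of $\ell_\infty$-sums of members of $\cC$, every Banach lattice $X$ admits a universal arrow $q_X\colon X\to X^{\cC}$ into an object of $\cC$: namely, one can take $X^{\cC}=\fbl^{\cC}[X]/\ker$ where we quotient by the congruence generated by the defining relations of $X$, or more concretely one realizes $X^{\cC}$ as the closure of the image of $X$ in a suitable $\ell_\infty$-sum of its $\cC$-quotients; this gives a lattice homomorphism $q_X\colon X\to X^{\cC}$ with $\|q_X\|\le 1$ which is \emph{initial} among lattice homomorphisms from $X$ into spaces of $\cC$, in the sense that every lattice homomorphism $X\to Z$ with $Z\in\cC$ factors uniquely and contractively-in-norm through $q_X$. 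I expect the verification that this ``$\cC$-reflection'' functor exists and is well behaved to be essentially a repackaging of \cite[Proposition 2.5 and Proposition 2.7]{GLTT}.

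Next, given the data $T_i\colon X_0\to X_i$ in $\cC$, I would form the ordinary isometric push-out $(PO,S_1,S_2)$ in $\mathcal{BL}$ from \cite{AT}, and then set $PO^{\cC}:=PO^{\cC}$ to be the $\cC$-reflection of $PO$, with structure maps $\widetilde{S_i}:=q_{PO}\circ S_i\colon X_i\to PO^{\cC}$. Since $q_{PO}$ and the $S_i$ are lattice homomorphisms with norm at most one, each $\widetilde{S_i}$ is a norm-one lattice homomorphism, and $\widetilde{S_1}T_1=q_{PO}S_1T_1=q_{PO}S_2T_2=\widetilde{S_2}T_2$, so the square commutes. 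For the universal property, suppose $S_i'\colon X_i\to Y$ with $Y\in\cC$ satisfy $S_1'T_1=S_2'T_2$; by the universal property of the push-out in $\mathcal{BL}$ there is a unique lattice homomorphism $\gamma\colon PO\to Y$ with $\gamma S_i=S_i'$ and $\|\gamma\|\le\max\{\|S_1'\|,\|S_2'\|\}$. Because $Y\in\cC$, $\gamma$ factors (uniquely, contractively in the relative sense) through the $\cC$-reflection as $\gamma=\widetilde{\gamma}\circ q_{PO}$ with $\|\widetilde{\gamma}\|\le\|\gamma\|\le\max\{\|S_1'\|,\|S_2'\|\}$; then $\widetilde{\gamma}\,\widetilde{S_i}=\widetilde{\gamma}q_{PO}S_i=\gamma S_i=S_i'$, as required. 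Uniqueness of $\widetilde{\gamma}$ follows from uniqueness of $\gamma$ together with injectivity (on the relevant congruence classes) of precomposition with the epimorphism $q_{PO}$, which has dense range.

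The main obstacle, and the step that needs the most care, is showing that the $\cC$-reflection $q_{PO}$ is \emph{surjective onto a dense subspace} and, crucially, that the induced structure maps $\widetilde{S_i}$ still generate $PO^{\cC}$ as a Banach lattice, so that the factoring homomorphism $\widetilde{\gamma}$ is genuinely unique; equivalently, one must check that $PO^{\cC}$ really is the push-out and not merely \emph{a} cocone. This is where the hypothesis that $\cC$ is closed under sublattices of $\ell_\infty$-sums is used: it guarantees that the $\cC$-reflection of a space generated by $S_1(X_1)\cup S_2(X_2)$ is generated by $q_{PO}S_1(X_1)\cup q_{PO}S_2(X_2)$, since the reflection map has dense range. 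A secondary subtlety is bookkeeping the norm estimates: one must make sure that at no point does composing with the reflection functor inflate norms, which is automatic because all the maps involved (the $S_i$, the reflection $q_{PO}$, and the induced $\widetilde\gamma$) are contractive relative to the data, so the inequalities $\max\{\|\widetilde{S_1}\|,\|\widetilde{S_2}\|\}\le 1$ and $\|\widetilde\gamma\|\le\max\{\|S_1'\|,\|S_2'\|\}$ drop out. Once the density/generation point is settled, the rest is a routine diagram chase.
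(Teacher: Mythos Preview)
The paper does not supply a proof of this theorem; it is quoted from \cite{GLTT}, so there is no in-paper argument to compare against. Your approach---form the push-out $PO$ in $\mathcal{BL}$ via \cite{AT}, then apply a $\cC$-reflection $q_{PO}\colon PO\to PO^{\cC}$---is sound and is in the same spirit as the construction in \cite{GLTT}.

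A few remarks on the write-up. Your first description of the reflection as ``$\fbl^{\cC}[X]/\ker$'' is imprecise: $\fbl^{\cC}[X]$ treats $X$ only as a Banach \emph{space}, so to force the canonical map to be a lattice homomorphism you must quotient $\fbl^{\cC}[X]$ by the closed ideal generated by $\{\phi(x\vee y)-\phi(x)\vee\phi(y):x,y\in X\}$; closure of $\cC$ under quotients then puts the result in $\cC$. Your second description---the diagonal embedding of $X$ into an $\ell_\infty$-sum of its $\cC$-valued contractive lattice-homomorphic images, followed by taking the closed sublattice generated---is cleaner and uses the closure hypothesis on sublattices of $\ell_\infty$-sums directly; the only missing step is the routine reduction of the indexing class to a \emph{set} (e.g., by bounding the density character of the targets by that of $X$). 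Finally, the ``main obstacle'' you flag, that the $\widetilde{S_i}$ generate $PO^{\cC}$, is not actually an obstacle: $S_1(X_1)\cup S_2(X_2)$ generates $PO$ as a Banach lattice by the construction in \cite{AT}, and $q_{PO}$ is a lattice homomorphism with dense range, so $\widetilde{S_1}(X_1)\cup\widetilde{S_2}(X_2)$ generates $PO^{\cC}$. With those points tidied, the diagram chase you outline goes through verbatim, including the norm bounds $\max\{\|\widetilde{S_1}\|,\|\widetilde{S_2}\|\}\le 1$ and $\|\widetilde\gamma\|\le\max\{\|S_1'\|,\|S_2'\|\}$.
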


\Cref{push-out} applies, in particular, when $\mathcal{C}$ is the class $\cC_p$ of $p$-convex Banach lattices or the class $\cC_\pinf$ of Banach lattices with upper $p$-estimates (both with constant 1). For these classes, the problem of extending \cite[Theorem 4.4]{AT} was also studied in \cite{GLTT}. This result establishes that isometric push-outs in the category $\mathcal{BL}$ preserve isometries. Namely, if in the definition of the push-out diagram above, $T_1:X_0\rightarrow X_1$ is contractive and $T_2:X_0\rightarrow X_2$ is an isometric embedding, it was shown in \cite{AT} that $S_1:X_1\rightarrow PO$ is an isometric embedding. For push-outs in the classes $\cC_p$ and $\cC_\pinf$, it was shown in \cite[Theorem 4.7]{GLTT} that $S_1$ is a $K_\cC$-isomorphic embedding, with $K_\cC=2^\frac{1}{p^*}$ when $\cC=\cC_p$ and $K_\cC=2^\frac{1}{p^*}\gamma_p$ when $\cC=\cC_\pinf$. The proof provided in \cite{GLTT} uses Maurey's and Pisier's Factorization Theorems. It turns out that both constants can be improved by using different techniques. \medskip

In the $p$-convex setting, we can obtain $K_{\cC_p}=1$, that is, a complete extension of \cite[Theorem 4.4]{AT}, using a $p$-concavification and $p$-convexification argument (see, for instance, \cite[Section 1.d]{LT2}):

\begin{thm}\label{thm: PO preserving isometries p-convex}
Let $1<p<\infty$ and let $\cC_p$ be the class of $p$-convex Banach lattices (with constant 1). Let
\begin{equation*}
	\xymatrix{
		X_1 \ar@{->}[r]^{S_1}& PO^{\cC_p}  \\
		X_0 \ar@{->}[r]_{T_2} \ar@{->}[u]^{T_1} & X_2 \ar@{->}[u]_{S_2} 
	}
\end{equation*}
be an isometric push-out diagram in $\cC_p$. If $T_1$ is contractive and $T_2$ is an isometric embedding, then $S_1$ is an isometric embedding.
\end{thm}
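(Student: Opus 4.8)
## Proof plan

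\textbf{The approach.} The plan is to reduce the $p$-convex case to the known case $p=1$ (i.e., to \cite[Theorem 4.4]{AT}) by means of the $p$-convexification/$p$-concavification functors. Recall that if $X$ is a $p$-convex Banach lattice with constant $1$, then its $p$-concavification $X_{(p)}$ is a Banach lattice (with constant $1$ in the $1$-convexity sense, which is automatic), and conversely the $p$-convexification $(Y)^{(p)}$ of a Banach lattice $Y$ is $p$-convex with constant $1$; moreover these operations are mutually inverse on the relevant classes and are functorial for lattice homomorphisms, preserving the operator norm in the sense that $\|T_{(p)}\| = \|T\|^p$ and isometric embeddings go to isometric embeddings (see \cite[Section 1.d]{LT2}). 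The first step is therefore to apply $p$-concavification to the entire push-out square: from $T_1\colon X_0\to X_1$ contractive and $T_2\colon X_0\to X_2$ an isometric embedding, between $p$-convex Banach lattices with constant $1$, we obtain lattice homomorphisms $(T_1)_{(p)}\colon (X_0)_{(p)}\to (X_1)_{(p)}$ contractive and $(T_2)_{(p)}\colon (X_0)_{(p)}\to (X_2)_{(p)}$ an isometric embedding, between arbitrary Banach lattices.

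\textbf{Key steps.} Second, I would form the isometric push-out $PO^{\mathcal{BL}}$ of the square $\big((X_i)_{(p)}, (T_i)_{(p)}\big)$ in the full category $\mathcal{BL}$, with canonical lattice homomorphisms $\widetilde S_i\colon (X_i)_{(p)}\to PO^{\mathcal{BL}}$. By \cite[Theorem 4.4]{AT}, since $(T_1)_{(p)}$ is contractive and $(T_2)_{(p)}$ is an isometric embedding, the map $\widetilde S_1$ is an isometric embedding. Third, I would apply $p$-convexification to this square. This yields a commuting square of $p$-convex Banach lattices with constant $1$, with the maps $(\widetilde S_i)^{(p)}\colon X_i \to (PO^{\mathcal{BL}})^{(p)}$, and crucially $(\widetilde S_1)^{(p)}$ is again an isometric embedding. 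The remaining point is to identify $(PO^{\mathcal{BL}})^{(p)}$ together with the maps $(\widetilde S_i)^{(p)}$ as (a model of) the isometric push-out $PO^{\cC_p}$ of the original square in $\cC_p$. For this I would verify the universal property directly: given $S_i'\colon X_i\to Y$ with $Y\in\cC_p$ and $S_1'T_1=S_2'T_2$, apply $p$-concavification to get $(S_i')_{(p)}\colon (X_i)_{(p)}\to Y_{(p)}$ with $(S_1')_{(p)}(T_1)_{(p)}=(S_2')_{(p)}(T_2)_{(p)}$; the universal property of $PO^{\mathcal{BL}}$ produces a lattice homomorphism $\widetilde\gamma\colon PO^{\mathcal{BL}}\to Y_{(p)}$ with the correct norm bound; applying $p$-convexification gives $\gamma:=(\widetilde\gamma)^{(p)}\colon (PO^{\mathcal{BL}})^{(p)}\to Y$ with $\gamma (\widetilde S_i)^{(p)}=S_i'$ and $\|\gamma\|\le \max\{\|S_1'\|,\|S_2'\|\}$. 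Uniqueness follows since the $X_i$ generate $(PO^{\mathcal{BL}})^{(p)}$ as a closed sublattice (this generation property is inherited from the $\mathcal{BL}$-push-out through $p$-convexification). Since isometric push-outs are unique up to lattice isometry, $(\widetilde S_1)^{(p)}$ being an isometric embedding transfers to the given $S_1$, completing the proof.

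\textbf{Main obstacle.} The routine algebra of $p$-convexification is standard, so the real care is needed in two places. The first, and the step I expect to be the main obstacle, is the bookkeeping around \emph{completeness}: $p$-convexification of a Banach lattice is a priori only a normed vector lattice and must be completed, so one must check that forming the completion commutes appropriately with the push-out construction, i.e., that $(PO^{\mathcal{BL}})^{(p)}$ — after completion — still satisfies the universal property with the sharp norm constant, and in particular that passing to completions does not destroy the isometric-embedding conclusion for $\widetilde S_1$. The second delicate point is verifying that the $\mathcal{BL}$-push-out $PO^{\mathcal{BL}}$ is generated (as a closed sublattice) by the images of the $(X_i)_{(p)}$ — a fact needed for the uniqueness half of the universal property — and that this generation survives $p$-convexification; this should follow from the explicit construction of the isometric push-out in \cite{AT} (as a quotient of a free-lattice-type object, hence generated by the images), but it needs to be invoked carefully.
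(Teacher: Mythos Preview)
Your approach is essentially the paper's: $p$-concavify the square, invoke \cite[Theorem~4.4]{AT} in $\mathcal{BL}$, then $p$-convexify back. The only difference is the final step, where the paper takes a shorter route than you do: instead of verifying that $(PO^{\mathcal{BL}})^{(p)}$ \emph{is} the push-out in $\cC_p$, the paper simply treats it as a test object for the universal property of the \emph{given} $PO^{\cC_p}$, obtaining a contractive $\gamma\colon PO^{\cC_p}\to (PO^{\mathcal{BL}})^{(p)}$ with $\gamma S_i=(\widetilde S_i)^{(p)}=:R_i$, and concludes via $\|y\|=\|R_1y\|=\|\gamma S_1y\|\le\|S_1y\|\le\|y\|$. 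This bypasses both of your anticipated obstacles at once. Incidentally, your first obstacle is a non-issue anyway: the $p$-convexification of a Banach lattice is already a Banach lattice (completeness is inherited, since the norms are related by a $p$-th power), so no completion step is needed; and your second obstacle (generation/uniqueness) simply does not arise in the paper's argument.
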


\begin{proof}
Since $X_i$, $i=0,1,2$, are $p$-convex Banach lattices, we can $p$-concavify the spaces. Let $X_{i(p)}$ be their $p$-concavifications. The maps $T_i:X_{0(p)}\rightarrow X_{i(p)}$, $i=1,2$, will still be contractive lattice homomorphisms, since lattice homomorphisms preserve functional calculus, and moreover $T_2$ will still be an isometry:
\[\|T_2z\|_{X_{2(p)}}:=\|T_2 z\|_{X_2}^p=\|z\|_{X_0}^p=:\|z\|_{X_{0(p)}}, \quad z\in X_{0(p)}.\]
Let $PO$ be a push-out in the category of all Banach lattices for this new diagram:
\begin{equation*}
	\xymatrix{
		X_{1(p)} \ar@{->}[r]^{R_1}& PO  \\
		X_{0(p)} \ar@{->}[r]_{T_2} \ar@{->}[u]^{T_1} & X_{2(p)} \ar@{->}[u]_{R_2} 
	}
\end{equation*}
By \cite[Theorem 4.4]{AT} we know that $R_1:X_{1(p)} \rightarrow PO$ is an isometric embedding. If we $p$-convexify this diagram, we find that for every $i=0,1,2$, $(X_{i(p)})^{(p)}=X_i$, as $X_i$ has $p$-convexity constant 1. Arguing as before, the lattice homomorphisms $R_i:X_i\rightarrow PO^{(p)}$, $i=1,2$, are contractive, and $R_1$ is an isometry. Since $PO^{(p)}$ is $p$-convex with constant 1, we can apply the minimality property of $PO^{\cC_p}$ to find a lattice homomorphism $\gamma: PO^{\cC_p}\rightarrow PO^{(p)}$ such that $R_i=\gamma S_i$, $i=1,2$, and $\|\gamma\|\leq \max\{\|R_1\|, \|R_2\|\}\leq 1$:
\begin{equation*}
	\xymatrix{
		&  &  PO^{(p)} \\
        X_1 \ar@{->}[r]^{S_1} \ar@/^1.0pc/[rru]^{R_1}& PO^{\cC_p} \ar@{->}[ru]^{\gamma} & \\
		X_0 \ar@{->}[r]_{T_2} \ar@{->}[u]^{T_1} & X_2 \ar@{->}[u]_{S_2}  \ar@/_1.0pc/[uur]_{R_2} &
	}
\end{equation*}
It follows that
\[\|y\|_{X_1}=\|R_1 y\|_{PO^{(p)}}=\|\gamma S_1 y\|_{PO^{(p)}}\leq \|S_1 y\|_{PO^{\cC_p}} \leq \|y\|_{X_1}, \]
so $S_1$ is an isometry.
\end{proof}

Unfortunately, this technique cannot be used in the upper $p$-estimates setting. However, we can adapt the abstract factorization techniques developed in \Cref{sec: abstract factorizations} to improve the constant $K_{\cC_\pinf}$ obtained in \cite{GLTT}.

\begin{thm}\label{parallelisometries}
Let $1<p<\infty$ and let $\cC_{p,\infty}$ be the class of Banach lattices with upper $p$-estimates (with constant 1). Let
\begin{equation*}
	\xymatrix{
		X_1 \ar@{->}[r]^{S_1}& PO^{\cC_\pinf}  \\
		X_0 \ar@{->}[r]_{T_2} \ar@{->}[u]^{T_1} & X_2 \ar@{->}[u]_{S_2} 
	}
\end{equation*}
be an isometric push-out diagram in $\cC_\pinf$. If $T_1$ is contractive and $T_2$ is an isometric embedding, then $S_1$ is a $2^\frac{1}{p^*}$-embedding.
\end{thm}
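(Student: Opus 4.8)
The plan is to bypass the representation of Banach lattices with upper $p$-estimates inside $\ell_\infty$-sums of $\wlp$-spaces used in \cite{GLTT} (which is precisely what produces the spurious factor $\gamma_p$), and instead feed the data directly into the abstract factorization machinery of \Cref{sec: abstract factorizations}. First I would let $PO$ denote the isometric push-out of $T_1,T_2$ in the category $\mathcal{BL}$ of \emph{all} Banach lattices, with canonical lattice homomorphisms $R_1:X_1\to PO$ and $R_2:X_2\to PO$. Since $T_1$ is contractive and $T_2$ is an isometric embedding, \cite[Theorem 4.4]{AT} gives that $R_1$ is an \emph{isometric} embedding (and $\|R_2\|\le 1$); this is the one step where one really needs $PO$ rather than $PO^{\cC_\pinf}$ itself, since in the latter $S_1$ is in general not isometric.

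Next, form the Banach space $E=X_1\oplus_p X_2$ (the $\ell_p$-direct sum) and the operator $T:E\to PO$, $T(y,z)=R_1y+R_2z$. The crucial point is that $T$ is $(p,\infty)$-convex with constant at most $2^{1/p^*}$: for $w_i=(y_i,z_i)\in E$, $1\le i\le n$, one has $|Tw_i|\le R_1|y_i|+R_2|z_i|$, hence $\bigvee_i|Tw_i|\le R_1\big(\bigvee_i|y_i|\big)+R_2\big(\bigvee_i|z_i|\big)$ because lattice homomorphisms preserve finite suprema and $\bigvee_i(a_i+b_i)\le \bigvee_i a_i+\bigvee_i b_i$ for positive elements; since $X_1$ and $X_2$ are $(p,\infty)$-convex with constant $1$ (equivalently, have upper $p$-estimate with constant $1$) and $\|R_j\|\le 1$, the norm of the right-hand side is at most $\big(\sum_i\|y_i\|^p\big)^{1/p}+\big(\sum_i\|z_i\|^p\big)^{1/p}$, which by H\"older's inequality applied against $(1,1)$ is at most $2^{1/p^*}\big(\sum_i\|w_i\|_E^p\big)^{1/p}$. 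Now I would apply the minimal $(\tau,\sigma)$-convex factorization of \Cref{thm: minimal factorization tau sigma convex operator} (that is, \Cref{thm: optimal factorization upe lpe case}(1)) with $\tau=\|\cdot\|_{\ell_p}$ and $\sigma=\|\cdot\|_{c_0}$, both block convex and block concave with constant $1$, to obtain a factorization $T=V_0U_0$ in which $Y_0$ is $(p,\infty)$-convex with constant $1$ (so $Y_0\in\cC_\pinf$, being complete by \Cref{lem: minkowski functional} and \Cref{prop: Coo is closed convex solid}), $\|U_0\|\le 1$, and $V_0:Y_0\to PO$ is an injective interval-preserving lattice homomorphism with $\|V_0\|\le 2^{1/p^*}$.

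The remainder is bookkeeping with universal properties. From $T(\cdot,0)=R_1=V_0U_0(\cdot,0)$ and the injectivity of the lattice homomorphism $V_0$ one gets $R_1(X_1)\subseteq V_0(Y_0)$, so $S_1':=V_0^{-1}\circ R_1:X_1\to Y_0$ is a lattice homomorphism (a composition of such), it coincides with $U_0|_{X_1\times\{0\}}$ and is therefore contractive; similarly $S_2':=V_0^{-1}\circ R_2:X_2\to Y_0$ is a contractive lattice homomorphism, and $S_1'T_1=V_0^{-1}(R_1T_1)=V_0^{-1}(R_2T_2)=S_2'T_2$ by commutativity of the push-out square. By the universal property of the push-out $PO^{\cC_\pinf}$ in $\cC_\pinf$ (\Cref{push-out}) there is then a lattice homomorphism $\gamma:PO^{\cC_\pinf}\to Y_0$ with $\gamma S_i=S_i'$ for $i=1,2$ and $\|\gamma\|\le\max\{\|S_1'\|,\|S_2'\|\}\le 1$. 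Consequently, for every $y\in X_1$,
\[
\|S_1y\|_{PO^{\cC_\pinf}}\ \ge\ \|\gamma S_1y\|_{Y_0}=\|S_1'y\|_{Y_0}\ \ge\ 2^{-1/p^*}\|V_0S_1'y\|_{PO}=2^{-1/p^*}\|R_1y\|_{PO}=2^{-1/p^*}\|y\|_{X_1},
\]
where the second inequality uses $\|V_0\|\le 2^{1/p^*}$ and the last equality uses that $R_1$ is isometric; combining this with $\|S_1y\|\le\|y\|$ (which holds because the push-out is isometric, so $\|S_1\|\le 1$) yields exactly that $S_1$ is a $2^{1/p^*}$-embedding.

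I expect the principal difficulty to be organizational: one must keep the two push-outs ($\mathcal{BL}$ and $\cC_\pinf$) carefully apart — the genuine isometry lives in $PO$, the comparison map $\gamma$ in $PO^{\cC_\pinf}$ — and one must cope with the fact that $U_0$ is only bounded, not a lattice homomorphism, which is why the auxiliary compatible maps $S_1',S_2'$ into $Y_0$ have to be manufactured via $V_0^{-1}$ (hence the need for $V_0$ of Class $\cC$). A secondary point requiring care is the H\"older estimate, which is where the sharp constant $2^{1/p^*}$ gets pinned down.
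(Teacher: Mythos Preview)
Your proof is correct and follows the same overall strategy as the paper: start from the $\mathcal{BL}$ push-out $PO$ (where $R_1$ is isometric by \cite[Theorem 4.4]{AT}), build an auxiliary $(p,\infty)$-convex Banach lattice with constant $1$ through which both $R_1$ and $R_2$ factor as contractive lattice homomorphisms while the inclusion into $PO$ has norm at most $2^{1/p^*}$, and then invoke the universal property of $PO^{\cC_\pinf}$. The difference lies in how the auxiliary lattice is produced. The paper defines a bespoke set
\[
C=\Bigl\{u\in PO:\ |u|\le\Bigl(\bigvee_j|R_1v_j|\Bigr)\vee\Bigl(\bigvee_j|R_2w_j|\Bigr),\ \Bigl(\textstyle\sum\|v_j\|^p+\sum\|w_j\|^p\Bigr)^{1/p}\le 1\Bigr\},
\]
takes $Z=\spn C^{\circ\circ}$, and then says one must \emph{reproduce} the arguments of \Cref{sec: duality convexity} and \Cref{sec: concavity and convexity} (the analogue of \Cref{prop: double polar of C is tau-sigma convex} and the polarity $C^\circ=D$) for this two-operator variant, omitting the details. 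You instead package the two operators into the single operator $T:X_1\oplus_p X_2\to PO$, $T(y,z)=R_1y+R_2z$, verify via the H\"older step that $K^{(p,\infty)}(T)\le 2^{1/p^*}$, and then apply \Cref{thm: optimal factorization upe lpe case}(1) (equivalently \Cref{thm: minimal factorization tau sigma convex operator}) as a black box. This is a genuine streamlining: it avoids redoing the Section~\ref{sec: abstract factorizations} machinery in a two-operator setting, and it makes transparent that the constant $2^{1/p^*}$ arises precisely from the inequality $a+b\le 2^{1/p^*}(a^p+b^p)^{1/p}$. What the paper's formulation buys is a slightly more explicit description of the unit ball of the auxiliary lattice (tailored to the sup of the two ranges rather than to the sum $R_1y+R_2z$), but for the purpose of the theorem your route is shorter and self-contained once \Cref{thm: optimal factorization upe lpe case}(1) is in hand.
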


\begin{proof}
We begin the proof by considering the isometric push-out diagram in the category $\mathfrak{BL}$ from \cite[Theorem 4.4]{AT}, so that $R_1$ is an isometric embedding:
\[\xymatrix{
		X_1  \ar@{->}[r]^{R_1} & PO   \\
        X_0  \ar@{->}[u]^{T_1}  \ar@{->}[r]_{T_2} & X_2 \ar@{->}[u]_{R_2} 
}\]
Our aim is to construct a $(\pinf)$-convex Banach lattice $Z$ with constant 1 and a lattice homomorphism $\psi:Z\rightarrow PO$ with $\|\psi\|\leq 2^\frac{1}{p^*}=:K_p$ such that for $i=1,2$, $R_i$ factors through $Z$ as $\psi \wh R_i$, with $\wh R_i:X_i\rightarrow Z$ contractive lattice homomorphisms. Once this is established, it follows that $\wh R_1$ must be an isomorphism, since 
\[\|y\|_{X_1}=\|R_1y\|_{PO}\leq \|\psi\| \|\wh R_1 y\|_Z\leq K_p \|\wh R_1 y\|_Z\]
for every $y\in X_1$. Moreover, by the universal property of the push-out in $\cC_\pinf$ established in \Cref{push-out}, there exists a contractive lattice homomorphism $\gamma:PO^{\cC_\pinf}\rightarrow Z$ that makes the following diagram commute:
\begin{equation*}
	\xymatrix{
		&  &  Z \\
        X_1 \ar@{->}[r]^{S_1} \ar@/^1.0pc/[rru]^{\wh R_1}& PO^{\cC_\pinf} \ar@{->}[ru]^{\gamma} & \\
		X_0 \ar@{->}[r]_{T_2} \ar@{->}[u]^{T_1} & X_2 \ar@{->}[u]_{S_2}  \ar@/_1.0pc/[uur]_{\wh R_2} &
	}
\end{equation*}
It follows that
\[\|y\|_{X_1}\leq K_p\|\wh R_1 y\|_Z= K_p\|\gamma S_1 y\|_Z\leq  K_p\|S_1 y\|_{PO^{\cC_\pinf}},\]
as we wanted to show.\medskip

Let us construct the Banach lattice $Z$. We will take $Z=\spn B$, for $B\subseteq PO$ a certain closed, convex and solid set (so that $Z$ endowed with the Minkowski functional is a Banach lattice by \Cref{lem: minkowski functional}) such that $Z$ is $(\pinf)$-convex with constant 1 and $R_i (B_{X_i})\subseteq B\subseteq K_p B_{PO}$. The latter will imply that $R_i$ can be factored through $Z$ as $R_i=\psi \wh R_i$, with $\wh R_i:X_i\rightarrow Z$ given by $\wh R_i x=R_i x$ and $\psi:Z\rightarrow PO$ the formal identity, so that $\wh R_i$ is contractive and $\|\psi\|\leq K_p$. Using the ideas from \Cref{sec: abstract factorizations}, let us choose $B=C^{\circ\circ }$, where
\begin{align*}
    C= \Bigg \{u\in PO \,:\, & \exists (v_j)_{j=1}^m\subseteq X_1, (w_j)_{j=m+1}^n\subseteq X_2 \text{ such that } |u|\leq \intoo[3]{\bigvee_{j=1}^m |R_1 v_j|}\vee \intoo[3]{ \bigvee_{j=m+1}^n |R_2 w_j|} \\
    & \text{ and } \intoo[3]{\sum_{j=1}^m \|v_j\|^{p} + \sum_{j=m+1}^n \| w_j\|^{p}}^\frac{1}{p}\leq 1 \Bigg\},
\end{align*}
and the polars are taken with respect to $\la PO, PO^* \ra$. $B$ is the closed convex hull of the solid set $C$, so it is closed, convex and solid. Clearly $R_i (B_{X_i})\subseteq C\subseteq B$. Moreover, $C\subseteq K_p B_{PO}$. Indeed, if $u\in C$, there exist $(v_j)_{j=1}^m\subseteq X_1$ and $(w_j)_{j=m+1}^n\subseteq X_2$ such that
\[|u|\leq \intoo[3]{\bigvee_{j=1}^m |R_1 v_j|}\vee \intoo[3]{ \bigvee_{j=m+1}^n |R_2 w_j|} \text{ and } \intoo[3]{\sum_{j=1}^m \|v_j\|^{p} + \sum_{j=m+1}^n \| w_j\|^{p}}^\frac{1}{p}\leq 1.\]
Therefore, using that the $X_i$ satisfy upper $p$-estimates with constant 1, and hence $R_i$ are $(\pinf)$-convex with constant 1, we obtain that
\begin{align*}
    \|u\|& \leq \norm[3]{\intoo[3]{\bigvee_{j=1}^m |R_1 v_j|}\vee \intoo[3]{ \bigvee_{j=m+1}^n |R_2 w_j|}}\leq \norm[3]{\bigvee_{j=1}^m |R_1 v_j|} + \norm[3]{\bigvee_{j=m+1}^n |R_2 w_j|}\\
    & \leq \intoo[3]{\sum_{j=1}^m \|v_j\|^{p}}^\frac{1}{p} + \intoo[3]{\sum_{j=m+1}^n \|w_j\|^{p}}^\frac{1}{p}\leq 2^\frac{1}{p^*}.
\end{align*}
Since $B_{PO}$ is already closed and convex, it follows that $B\subseteq K_p B_{PO}$. It remains to show that $Z$ satisfies an upper $p$-estimate (equivalently, it is $(\pinf)$-convex) with constant 1. This can be done by reproducing the results from Sections \ref{sec: duality convexity} and \ref{sec: concavity and convexity} for the $(\pinf)$-convex case, that is, choosing $\sigma$ to be the $c_0$-norm and $\tau$ the $\ell_p$-norm, and replacing the roles of the sets $C_T^{\tau, \sigma}$ and $D^{T^*}_{\tau^*,\sigma^*}$ by $C$ and
\begin{align*}
    D= \Bigg \{u^*\in PO^* \,:\, & |u^*|\geq \intoo[3]{\sum_{j=1}^n |u^*_j|^{p_2^*}}^\frac{1}{p_2^*} \text{ implies that }  \\
    & \forall J\subseteq \{1,\ldots,n\} \intoo[3]{\sum_{j\in J} \|R_1^*u^*_j\|^{p^*} + \sum_{j\notin J}\|R_2^*u^*_j\|^{p^*}}^\frac{1}{p^*}\leq 1  \Bigg\},
\end{align*}
respectively. More specifically, one needs to show that $C^\circ =D$ following the proofs of \Cref{prop: duality sets C and D}(1) and \Cref{thm: polarity sets C and D}(1), and then adapt the argument of \Cref{prop: double polar of C is tau-sigma convex} to show that for every sequence $(u_i)^n_{i=1}\subseteq B=C^{\circ \circ}$ and $(\al_i)^n_{i=1}\subseteq \R$ such that $\sum^n_{i=1}|\al_i|^p \leq 1$, it follows that $\bigvee^n_{i=1} |\alpha_i u_i|\in  B$, i.e., that the Minkowski functional of $B$ induces a $(\pinf)$-convex norm with constant 1 over $Z$. We omit the details.
\end{proof}

The contrast between Theorems \ref{thm: PO preserving isometries p-convex} and \ref{parallelisometries} makes us wonder whether the constant $K_p=2^\frac{1}{p^*}$ in the latter result is just an artifact of the proof or, on the contrary, it is a fundamental difference between $p$-convexity and upper $p$-estimates. It should be noted that if $K_p$ could be taken to be 1 in \Cref{parallelisometries}, this would allow us to adapt other categorical constructions from \cite{AT}, such as Banach lattices of universal disposition or separably injective Banach lattices, to the category of Banach lattices with upper $p$-estimates.

\section{Searching for a universal separable Banach lattice with upper \texorpdfstring{$p$}{}-estimates}\label{sec: universal}

As we have seen in the previous section, certain universal constructions in the category of Banach lattices can be easily generalized to the $p$-convex setting via a $p$-concavification and $p$-convexification argument, but this process does not carry over to the class of Banach lattices with upper $p$-estimates. Another example concerns injectively universal Banach lattices. Recall that in \cite{LLOT} it has been shown that $C(\Delta, L_1[0,1])$ ($\Delta$ denotes the Cantor set) is injectively universal for the class of all separable Banach lattices, that is, every separable Banach lattice embeds lattice isometrically into $C(\Delta, L_1[0,1])$. Using $p$-concavification and $p$-convexification, we can show the following:

\begin{thm}\label{thm: universal p convex}
   $C(\Delta, L_p[0,1])$ is injectively universal for the class of separable $p$-convex Banach lattices (with constant 1). That is, if $X$ is a $p$-convex Banach lattice with constant 1, then there exists a lattice isometric embedding from $X$ into $C(\Delta, L_p[0,1])$.
\end{thm}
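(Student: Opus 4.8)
The plan is to reduce the statement to the known embedding result of \cite{LLOT} via a $p$-concavification/$p$-convexification argument, exactly in the spirit of \Cref{thm: PO preserving isometries p-convex}. Let $X$ be a $p$-convex Banach lattice with constant $1$. Since $p$-convexity with constant $1$ means that the Krivine functional calculus behaves well, we may form the \emph{$p$-concavification} $X_{(p)}$ of $X$; this is again a Banach lattice (see \cite[Section 1.d]{LT2}), it is separable whenever $X$ is, and the underlying vector lattice is the same with the norm $\|z\|_{X_{(p)}} = \|\,|z|^{1/p}\,\|_X^p$ (up to the usual conventions). The point of passing to $X_{(p)}$ is that now it is merely \emph{some} separable Banach lattice, with no convexity constraint, so by \cite{LLOT} there is a lattice isometric embedding $\iota: X_{(p)} \to C(\Delta, L_1[0,1])$.

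\textbf{Key steps.} First I would recall precisely the functorial properties of $p$-(con)vexification that I need: (i) if $Y$ is a Banach lattice and $X$ is $p$-convex with constant $1$, then $X = (X_{(p)})^{(p)}$ isometrically; (ii) a lattice homomorphism $S: Z \to W$ between Banach lattices induces a lattice homomorphism $S: Z_{(p)} \to W_{(p)}$ (resp. on $p$-convexifications) with $\|S\|_{\text{concavified}} = \|S\|^p$ — in particular isometric (lattice) embeddings are preserved, because lattice homomorphisms commute with the functional calculus expression $|\cdot|^{1/p}$. Next, apply $p$-convexification to the embedding $\iota: X_{(p)} \hookrightarrow C(\Delta, L_1[0,1])$. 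By (ii) this yields a lattice isometric embedding $\iota^{(p)}: (X_{(p)})^{(p)} \to \big(C(\Delta, L_1[0,1])\big)^{(p)}$, and by (i) the left-hand side is just $X$. The final step is to identify the right-hand side: I claim $\big(C(\Delta, L_1[0,1])\big)^{(p)}$ is lattice isometric to $C(\Delta, L_p[0,1])$. This follows because $p$-convexification commutes with the relevant constructions — $C(\Delta, \cdot)$ is, fiberwise, an $\ell_\infty$-type construction whose $p$-convexification leaves the sup-norm structure intact, and $(L_1[0,1])^{(p)} = L_p[0,1]$ isometrically (the defining formula $\|f\|_{(L_1)^{(p)}} = \|\,|f|^p\,\|_1^{1/p} = \|f\|_p$). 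One must check that the $C(\Delta,-)$ functor and $p$-convexification genuinely interchange; this is a routine but slightly delicate verification using that both operations are defined pointwise/fiberwise and that the $\Delta$-variable only enters through a sup.

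\textbf{Main obstacle.} The conceptual steps are clear, so the real work — and the step most likely to hide a subtlety — is the identification $\big(C(\Delta, L_1[0,1])\big)^{(p)} \cong C(\Delta, L_p[0,1])$ as Banach lattices, including checking that the convexification of a $C(\Delta, E)$-space is $C(\Delta, E^{(p)})$ with the \emph{correct} norm and not merely an equivalent one. A clean way to handle this is to represent $C(\Delta, L_1[0,1])$ concretely — e.g.\ as a Banach lattice of functions on $\Delta \times [0,1]$ with a mixed sup-$L_1$ norm — apply the functional-calculus formula for $\|\cdot\|_{(p)}$ directly to that norm, and simplify: $\sup_{t\in\Delta}\big(\int_0^1 |f(t,s)|^{p}\,ds\big)^{1/p}$, which is exactly the norm of $C(\Delta, L_p[0,1])$. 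One should also double-check that no separability or measure-theoretic pathology arises (it does not, since $\Delta$ is compact metric and $[0,1]$ carries Lebesgue measure), and that the embedding $\iota^{(p)}$ is genuinely into (not merely a lattice homomorphism), which is automatic since $p$-convexification preserves injectivity of lattice homomorphisms (the kernel is a closed ideal and convexification does not change the underlying vector lattice). Assembling these observations gives the isometric embedding $X \hookrightarrow C(\Delta, L_p[0,1])$, as claimed.
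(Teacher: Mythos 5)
Your proposal is correct and follows essentially the same route as the paper's proof: $p$-concavify $X$, embed $X_{(p)}$ lattice isometrically into $C(\Delta,L_1[0,1])$ via \cite{LLOT}, then $p$-convexify and identify $C(\Delta,L_1[0,1])^{(p)}$ with $C(\Delta,L_p[0,1])$. The paper carries out that last identification with the explicit map $f\mapsto f^p$, which is the same computation you describe via the mixed sup-$L_1$ norm.
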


\begin{proof}
    The proof is composed of two claims: First, we show that the $p$-convexification of the Banach lattice $C(\Delta, L_1[0,1])$ is injectively universal for the class of separable $p$-convex Banach lattices, and then we show that this space can be identified with $C(\Delta, L_p[0,1])$.
\medskip

    To show the first claim, let $X$ be a $p$-convex Banach lattice with constant 1, and consider its $p$-concavification $X_{(p)}$, which is also a Banach lattice. By \cite[Theorem 1.1]{LLOT}, there exists a lattice isometric embedding $T:X_{(p)}\rightarrow C(\Delta, L_1[0,1])$. If we $p$-convexify both spaces, we find that $T$ is again a lattice isometry from $X=(X_{(p)})^{(p)}$ into $C(\Delta, L_1[0,1])^{(p)}$. 
    Therefore, $C(\Delta, L_1[0,1])^{(p)}$ is injectively universal for the class of separable $p$-convex Banach lattices.\medskip

    Next, we identify $C(\Delta, L_1[0,1])^{(p)}$ with $C(\Delta, L_p[0,1])$. Let $\Phi:C(\Delta, L_p[0,1])\rightarrow C(\Delta, L_1[0,1])^{(p)}$ be the map that sends any function $f\in C(\Delta, L_p[0,1])$ to $f^p$. This map is well defined: Given any $t\in \Delta$, $f(t)^p\in L_1[0,1]$, and clearly $f^p$ is continuous as a function from $\Delta$ into $L_1[0,1]$, so $f^p$ belongs to $C(\Delta, L_1[0,1])$, which coincides as a set with $C(\Delta, L_1[0,1])^{(p)}$. If we denote by $\oplus$ and $\odot$ the sum and product by scalars defined in $C(\Delta, L_1[0,1])^{(p)}$ (see \cite[Section 1.d]{LT2}), it can be easily checked that $\Phi(f+g)=\Phi f \oplus \Phi g$ and $\Phi (\lambda f)=\lambda \odot \Phi f$ for any $f,g \in C(\Delta, L_p[0,1])$ and $\lambda \in \R$, so $\Phi$ is a linear map from $C(\Delta, L_p[0,1])$ to $C(\Delta, L_1[0,1])^{(p)}$. It is straightforward to check that $\Phi$ is a lattice homomorphism. We also observe that
    \[\|\Phi f\|_{C(\Delta, L_1[0,1])^{(p)}}=\| f^p\|_{C(\Delta, L_1[0,1])}^\frac{1}{p}=\| f\|_{C(\Delta, L_p[0,1])},\]
    so $\Phi$ is a lattice isometry. Finally, we can argue similarly to check that the map that sends every $h\in C(\Delta, L_1[0,1])^{(p)}$ to $h^\frac{1}{p}$ is the inverse map of $\Phi$, so in particular $\Phi$ is an onto isometry. 
\end{proof}

As we discussed earlier, there is no analogue of $p$-convexification and $p$-concavification for upper $p$-estimates, so the proof given above does not directly adapt to show the existence of an injectively universal Banach lattice for separable Banach lattices with upper $p$-estimates (with constant 1). In analogy with \Cref{thm: universal p convex}, one could expect that if such a Banach lattice existed, it should be of the form $C(\Delta, Z)$ for some separable Banach lattice with upper $p$-estimates $Z$ that is somehow canonical in the setting of upper $p$-estimates. A reasonable candidate could be the order continuous part of $\wlp$. Given a measure space $(\Omega, \Sigma, \mu)$, the \textit{order continuous part of $\wlp(\mu)$}, denoted $\ocwlp(\mu)$, is defined as the closure in $\wlp(\mu)$ of the simple functions. For instance, the prototypical example of a function that belongs to $\wlp[0,1]$ but not to $\ocwlp[0,1]$ is $t^{-\frac{1}{p}}$. The following characterizes when a function belongs to $\ocwlp(\mu)$. We include its proof for the sake of completeness. 

\begin{prop}\label{characterization oc weak Lp}
   Let $f\in \wlp(\mu)$. Then $f\in \ocwlp(\mu)$ if and only if 
   \[\lim_{\mu(A)\rightarrow 0} \mu(A)^{-\frac{1}{p^*}}\int_A |f|\, d\mu =0 \quad \text{and} \quad \lim_{\mu(A)\rightarrow \infty} \mu(A)^{-\frac{1}{p^*}}\int_A |f|\, d\mu =0.\]
\end{prop}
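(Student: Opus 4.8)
The plan is to establish \Cref{characterization oc weak Lp} by proving the two implications separately, relying on the fact that the norm $\|\cdot\|_{\wlp} = \|\cdot\|_{\wlp^{[1]}}$ is defined as $\sup_{0<\mu(A)<\infty}\mu(A)^{-\frac{1}{p^*}}\int_A|f|\,d\mu$ (using $\frac1p-1 = -\frac1{p^*}$), and on the description of $\ocwlp(\mu)$ as the closed span of the simple functions in $\wlp(\mu)$.

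\textit{Necessity.} First I would show that the two limit conditions hold for every simple function $s = \sum_{i=1}^n a_i\chi_{B_i}$ with $\mu(B_i)<\infty$: if $\mu(A)\to 0$ then $\int_A|s|\,d\mu \le (\max_i|a_i|)\mu(A) \to 0$ faster than $\mu(A)^{1/p^*}$ since $1/p^* < 1$; if $\mu(A)\to\infty$, then $\int_A|s|\,d\mu$ is bounded by $\|s\|_1 := \sum|a_i|\mu(B_i) < \infty$, while $\mu(A)^{1/p^*}\to\infty$, so the quotient tends to $0$. Then I would promote this to all of $\ocwlp(\mu)$ by a standard $\ep/3$-type approximation: given $f\in\ocwlp(\mu)$ and $\ep>0$, pick a simple $s$ with $\|f-s\|_{\wlp} < \ep$; then for any $A$ with $0<\mu(A)<\infty$,
\[
\mu(A)^{-\frac1{p^*}}\int_A|f|\,d\mu \le \mu(A)^{-\frac1{p^*}}\int_A|f-s|\,d\mu + \mu(A)^{-\frac1{p^*}}\int_A|s|\,d\mu \le \|f-s\|_{\wlp} + \mu(A)^{-\frac1{p^*}}\int_A|s|\,d\mu,
\]
and the last term is $<\ep$ for $\mu(A)$ small enough (resp. large enough), giving $\limsup \le 2\ep$ in both regimes; let $\ep\downarrow 0$.

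\textit{Sufficiency.} Conversely, assume $f\in\wlp(\mu)$ satisfies both limit conditions. Without loss of generality $f\ge 0$. The strategy is to truncate: for $0<\delta<N<\infty$ set $f_{\delta,N} = f\chi_{\{\delta \le f \le N\}}$, which is bounded and supported on a set of finite measure (since $\mu\{f\ge\delta\}\le \delta^{-p}\|f\|_{\wlp}^p < \infty$), hence lies in $\ocwlp(\mu)$ because bounded functions on finite-measure sets are norm-limits of simple functions in $\wlp$. It then suffices to show $\|f - f_{\delta,N}\|_{\wlp}\to 0$ as $\delta\downarrow 0$, $N\uparrow\infty$. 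Write $f - f_{\delta,N} = f\chi_{\{f<\delta\}} + f\chi_{\{f>N\}}$ and estimate the $\wlp$-norm of each piece. For a test set $A$, on $\{f<\delta\}$ we control $\mu(A)^{-1/p^*}\int_{A\cap\{f<\delta\}}f\,d\mu$: when $\mu(A\cap\{f<\delta\})$ is small the first limit hypothesis handles it (after noting $\int_{A\cap\{f<\delta\}} f \le \int_{A'} f$ for $A' = A\cap\{f<\delta\}$), and when it is large we bound $\int_{A'} f \le \delta\mu(A')$, so $\mu(A')^{-1/p^*}\int_{A'}f \le \delta\mu(A')^{1/p} \le \delta\cdot(\text{something})$ — here I need to be a little careful and instead split at a threshold $M$ of the measure: for $\mu(A')\le M$ use the "small" hypothesis to get a bound $\eta(M)$ with $\eta(M)\to 0$ as... no: rather, I would argue that the function $g(A) := \mu(A)^{-1/p^*}\int_A|h|\,d\mu$ for $h = f\chi_{\{f<\delta\}}$ satisfies $g(A)\le \delta\mu(A)^{1/p}$, which is unbounded, so this naive split fails and I must be more careful. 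The clean route is: the hypotheses say exactly that the "truncated norm" contributions vanish; concretely one shows $\sup_{0<\mu(A)<\infty}\mu(A)^{-1/p^*}\int_A f\chi_{\{f<\delta\}\cup\{f>N\}}\,d\mu \to 0$ by proving that for any $\ep>0$ there are $\delta,N$ so that $\int_A f\chi_{E} \le \ep\,\mu(A)^{1/p^*}$ for all finite-measure $A$, where $E = \{f<\delta\}\cup\{f>N\}$; this follows by combining the two limit hypotheses (which give thresholds $r_0<r_1$ on $\mu(A\cap E)$ outside of which $\mu(A\cap E)^{-1/p^*}\int_{A\cap E}f<\ep$) with the observation that on the middle range $r_0\le\mu(A\cap E)\le r_1$ one has $\int_{A\cap E}f = \int_{A\cap\{f<\delta\}}f + \int_{A\cap\{f>N\}}f \le \delta r_1 + \int_{\{f>N\}}f$, and $\int_{\{f>N\}}f\to 0$ as $N\to\infty$ since $f\in\ocwlp$...

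\textit{Main obstacle.} The delicate point — and the step I expect to be the real obstacle — is precisely this last estimate controlling $\int_{A\cap E} f$ uniformly over \emph{all} finite-measure sets $A$ simultaneously, because the two hypotheses are stated as limits in $\mu(A)$ and do not immediately give a uniform bound over the intermediate range of measures. The fix is to exploit absolute continuity of the measure $\nu_f(A) := \int_A f\,d\mu$ on sets of bounded measure together with $\sigma$-finiteness of $\supp f$: one shows that $\int_{\{f<\delta\}} f\,d\mu$ restricted to any fixed finite-measure reference set tends to $0$ as $\delta\downarrow 0$ by dominated convergence, and that $\int_{\{f>N\}}f\,d\mu \to 0$ as $N\uparrow\infty$ — but here one must be cautious since $\int f\,d\mu$ may be infinite; instead one uses that $\mu(A)^{-1/p^*}\int_{A\cap\{f>N\}}f\,d\mu \le \|f\chi_{\{f>N\}}\|_{\wlp}$ and that $\|f\chi_{\{f>N\}}\|_{\wlp}\to 0$ because the level-set function $\mu\{f>\lambda\}$ is finite for each $\lambda>0$ and the weak-$L_p$ quasinorm of the tail is dominated by $\sup_{\lambda>N}\lambda\,\mu\{f>\lambda\}^{1/p}$, which vanishes as $N\to\infty$ — wait, that last claim needs $f$ to actually be in a space where this tail vanishes, which is again the content we are trying to prove. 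So the genuinely careful argument should avoid circularity by working directly: fix $\ep>0$; by the second hypothesis there is $R$ with $\mu(A)^{-1/p^*}\int_A f\,d\mu<\ep$ whenever $\mu(A)\ge R$; by the first hypothesis there is $r$ with the same bound whenever $\mu(A)\le r$; for $r\le\mu(A)\le R$ we have $\mu(A)^{-1/p^*}\int_{A\cap\{f<\delta\}}f\,d\mu \le r^{-1/p^*}\delta R \to 0$ as $\delta\to 0$ and $\mu(A)^{-1/p^*}\int_{A\cap\{f>N\}}f\,d\mu \le r^{-1/p^*}\int_{\{f>N\}}f\chi_{S}\,d\mu$ for any finite-measure $S$, but since $A$ ranges over sets of measure $\le R$ and $\{f>N\}$ has measure $\le N^{-p}\|f\|_{\wlp}^p$, we may take $S = \{f>N\}$ itself, which has finite measure, and then $\int_{\{f>N\}}f\,d\mu<\infty$ and tends to $0$ as $N\to\infty$ by dominated convergence (dominating function $f\chi_{\{f>1\}}$... which is integrable since $\int_{\{f>1\}}f\le \|f\|_{\wlp}\cdot\mu\{f>1\}^{1/p^*}<\infty$ by Hölder applied to the definition of the $[1]$-norm). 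Assembling these three regimes and letting $\delta\to 0$, $N\to\infty$ yields $\|f-f_{\delta,N}\|_{\wlp}<3\ep$ eventually, so $f\in\ocwlp(\mu)$, completing the proof.
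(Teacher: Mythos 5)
Your final assembled argument is correct and follows essentially the same route as the paper: both directions rest on the density of simple functions, and sufficiency proceeds by the same three-way truncation of $f$ into a bounded middle piece (approximated uniformly by simple functions on its finite-measure support) plus an upper tail and a small-values part controlled via the two limit hypotheses. The only cosmetic difference is in the upper tail: you kill $\int_{\{f>N\}}f\,d\mu$ by dominated convergence using the integrability of $f\chi_{\{f>1\}}$, whereas the paper instead notes that $\mu(\{|f|>n\})\to 0$ so that every test set for $f\chi_{\{|f|>n\}}$ falls into the small-measure regime of the first hypothesis; both work.
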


\begin{proof}
    Let us denote by 
    \[Y:= \cbr[3]{f\in \wlp(\mu): \lim_{\mu(A)\rightarrow 0} \mu(A)^{-\frac{1}{p^*}}\int_A |f|\, d\mu =0 \quad \text{and} \quad \lim_{\mu(A)\rightarrow \infty} \mu(A)^{-\frac{1}{p^*}}\int_A |f|\, d\mu =0}.\]
    It is clear that $Y$ is an ideal in $\wlp(\mu)$. Moreover, it is closed. Indeed, if $(f_n)_n\subseteq Y$ is a sequence that converges to $f\in \wlp(\mu)$, then for every $\eps>0$ there is some $N\in \N$ such that $\|f-f_N\|_{\wlp}<\frac{\eps}{2}$. For this function $f_N\in Y$, there are $\delta>0$ and $M>0$ such that for every measurable set $A$ with $\mu(A)<\delta$ or $\mu(A)>M$, it follows that $\mu(A)^{-\frac{1}{p^*}}\int_A |f_N|\, d\mu<\frac{\eps}{2}$. Therefore, 
    \[\mu(A)^{-\frac{1}{p^*}}\int_A |f|\, d\mu\leq \mu(A)^{-\frac{1}{p^*}}\int_A |f_N|\, d\mu+\mu(A)^{-\frac{1}{p^*}}\int_A |f-f_N|\, d\mu <\frac{\eps}{2}+\|f-f_N\|_{\wlp}<\eps,\]
    so $f\in Y$. Now, in order to show that $\ocwlp(\mu)\subseteq Y$, it suffices to check that every characteristic function belongs to $Y$. Let $B$ be a measurable subset with $0<\mu(B)<\infty$ and $\eps>0$. Then, for every measurable set $A$ with $\mu(A)<\eps^p$ we have that
    \[\mu(A)^{-\frac{1}{p^*}}\int_A \chi_B\, d\mu= \mu(A)^{-\frac{1}{p^*}}\mu(A\cap B)\leq \mu(A)^{\frac{1}{p}}<\eps.\]
    On the other hand, if $\mu(A)>(\mu(B)/\eps)^{p^*}$, it follows that
    \[\mu(A)^{-\frac{1}{p^*}}\int_A \chi_B\, d\mu = \mu(A)^{-\frac{1}{p^*}}\mu(A\cap B)\leq \mu(A)^{-\frac{1}{p^*}}\mu(B)<\eps.\]
    We conclude that $\chi_B\in Y$.\medskip

    To show the reverse inclusion, let us fix $f\in Y$ and $\eps>0$, and find $\delta>0$ and $M>0$ satisfying $\mu(A)^{-\frac{1}{p^*}}\int_A |f|\, d\mu<\frac{\eps}{3}$ for every measurable set $A$ with $\mu(A)<\delta$ or $\mu(A)>M$. Since $f\in \wlp(\mu)$, $\mu(\{|f|>n\})$ decreases to 0 as $n$ grows to infinity (see the definition of the quasinorm $\vvvert\cdot \vvvert_{\wlp}$), so there is $n\in \N$ such that $\mu(\{|f|>n\})<\delta$ and hence $\|f\chi_{\{|f|>n\}}\|_{\wlp}<\frac{\eps}{3}$. If we also fix $m\in \N$ such that $\frac{M^\frac{1}{p}}{m}<\frac{\eps}{3}$, we have that for every measurable subset $A$, either $\mu(A)>M$, so that
    \[\mu(A)^{-\frac{1}{p^*}}\int_A |f\chi_{\{|f|\leq \frac{1}{m}\}}|\, d\mu \leq \mu(A)^{-\frac{1}{p^*}}\int_A |f|\, d\mu<\frac{\eps}{3},\]
    or $\mu(A)\leq M$, and hence
    \[\mu(A)^{-\frac{1}{p^*}}\int_A |f\chi_{\{|f|\leq \frac{1}{m}\}}|\, d\mu \leq \frac{\mu(A)^\frac{1}{p}}{m}<\frac{\eps}{3}.\]
    In follows that $\|f\chi_{\{|f|\leq \frac{1}{m}\}}\|_{\wlp}<\frac{\eps}{3}$, so that $\|f-f\chi_{\{\frac{1}{m}<|f|\leq n\}}\|_{\wlp}<\frac{2\eps}{3}$. Note that $g=f\chi_{\{m<|f|\leq n\}}$ is bounded and has finite support, so there exists a simple function $s$ supported in $\supp g$ such that $\|g-s\|_{L_\infty}<\eps/\mu(\supp g)^\frac{1}{p}$ and hence $\|g-s\|_{\wlp}<\frac{\eps}{3}$. Therefore, for every $\eps>0$ we have found a simple function $s$ such that $\|f-s\|_{\wlp}<\eps$, so $f\in \ocwlp(\mu)$.
\end{proof}

It turns out that the space $\ocwlp$ plays a distinguished role in the class of separable Banach lattices with upper $p$-estimates. Indeed, \Cref{thm: factorization Pisier operator} can be improved so that the $(p,\infty)$-convex operator $T$ factors through $\ocwlp$ when $E$ is separable.

\begin{prop}\label{prop: Pisier factorization separable case}
    Let $E$ be a separable Banach space,  $1<p<\infty$ and $T:E\rightarrow L_1(\mu)$ a $(p,\infty)$-convex operator with constant $C$. There exists a normalized $h\in L_1(\mu)_+$ such that $\{h=0\}\subseteq \{Tx=0\}$ for every $x\in E$, and $T=M R$, where $R:E\rightarrow L_{p,\infty}^\circ(h\cdot\mu)$ given by $Rx=h^{-1}Tx$ is bounded with $\|R\|\leq K^{\frac{1}{p^*}}\gamma_p C$ for some $1\leq K\leq 2$, and $M:L_{p,\infty}^\circ(h\mu)\rightarrow L_1(\mu)$ is the multiplication operator by $h$.
\end{prop}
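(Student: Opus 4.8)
The plan is to invoke Pisier's Factorization Theorem (\Cref{thm: factorization Pisier operator}) with $r=1$ and then replace the density it produces by a slightly larger, carefully chosen one, so that the range of the new multiplication factor is forced into the order continuous part $L_{p,\infty}^\circ$; the cost of this replacement will be absorbed into the constant $K^{1/p^*}$.

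First, I would apply \Cref{thm: factorization Pisier operator} with $r=1$ to get a normalized $g\in L_1(\mu)_+$ with $\{g=0\}\subseteq\{Tx=0\}$ for all $x\in E$, a bounded operator $R_0\colon E\to L_{p,\infty}^{[1]}(g\cdot\mu)$, $R_0x=g^{-1}Tx$, with $\|R_0\|\le\gamma_pC$ (recall $C''\le(1-\tfrac1p)^{\frac1p-1}C=\gamma_pC$), and $M_0\colon L_{p,\infty}^{[1]}(g\cdot\mu)\to L_1(\mu)$ multiplication by $g$, so that $T=M_0R_0$. The difficulty is that $R_0$ need not map into the order continuous part. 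To remedy this, fix a dense sequence $(x_n)_{n\ge1}$ in $E$ and put $\psi=\sum_{n:\,Tx_n\ne0}2^{-n}\|Tx_n\|_{L_1(\mu)}^{-1}\,|Tx_n|\in L_1(\mu)_+$, so $\|\psi\|_{L_1(\mu)}\le1$. Set $c=\tfrac12(1+\|\psi\|_{L_1(\mu)})\in[\tfrac12,1]$ and define the normalized function $h=\tfrac1{2c}(g+\psi)\in L_1(\mu)_+$; writing $\lambda=\tfrac1{2c}\in[\tfrac12,1]$, one has $h\ge\lambda g$ and $h\ge\lambda\psi$. Let $R\colon E\to L_{p,\infty}(h\cdot\mu)$ be $Rx=h^{-1}Tx$ and $M\colon L_{p,\infty}(h\cdot\mu)\to L_1(\mu)$ be $Mf=hf$. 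Since $h=0$ forces $g=0$ and hence $Tx=0$, we get $\{h=0\}\subseteq\{Tx=0\}$ and $T=MR$; moreover $M$ is a contraction because $h\cdot\mu$ is a probability measure and the $L_{p,\infty}^{[1]}$–norm dominates the $L_1$–norm on a probability space.

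It then remains to bound $\|R\|$ and to verify that $R$ actually maps into $L_{p,\infty}^\circ(h\cdot\mu)$. For any measurable $A$, $(h\mu)(A)\ge\lambda\,(g\mu)(A)$, and since $\int_A|Rx|\,d(h\mu)=\int_A|Tx|\,d\mu$, taking the supremum over $A$ gives $\|Rx\|_{L_{p,\infty}(h\mu)}\le\lambda^{-1/p^*}\|R_0x\|_{L_{p,\infty}(g\mu)}\le\lambda^{-1/p^*}\gamma_pC\|x\|$, i.e.\ $\|R\|\le K^{1/p^*}\gamma_pC$ with $K=\lambda^{-1}=1+\|\psi\|_{L_1(\mu)}\in[1,2]$. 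For the order continuity, observe $|Tx_n|\le2^n\|Tx_n\|_{L_1(\mu)}\,\psi\le\lambda^{-1}2^n\|Tx_n\|_{L_1(\mu)}\,h$, so $(h\mu)(A)^{-1/p^*}\int_A|Rx_n|\,d(h\mu)\le\lambda^{-1}2^n\|Tx_n\|_{L_1(\mu)}(h\mu)(A)^{1/p}\to0$ as $(h\mu)(A)\to0$, while the condition at $(h\mu)(A)\to\infty$ is vacuous since $h\cdot\mu$ is finite; by \Cref{characterization oc weak Lp}, $Rx_n\in L_{p,\infty}^\circ(h\cdot\mu)$ for every $n$. As $L_{p,\infty}^\circ(h\cdot\mu)$ is closed, $R$ is continuous and linear, and $\spn\{x_n:n\ge1\}$ is dense in $E$, we conclude $R(E)\subseteq L_{p,\infty}^\circ(h\cdot\mu)$, which completes the argument.

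I expect the main obstacle to be the construction and analysis of $h$ in the second paragraph: one must choose $h$ so that it simultaneously dominates a multiple of each $|Tx_n|$ (which yields the quantitative order-continuity criterion of \Cref{characterization oc weak Lp}) and differs from $g$ by at most a bounded multiplicative factor (which keeps $\|R\|$ within $K^{1/p^*}\gamma_pC$ with $K\le2$). The remaining points — the identity $T=MR$, boundedness of $M$, the null-set inclusion, and the passage from the dense sequence to all of $E$ — are routine.
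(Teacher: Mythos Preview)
Your proof is correct and follows essentially the same approach as the paper: apply Pisier's factorization to obtain $g$, then enlarge the density to one dominating both $g$ and each $|Tx_n|$ up to a factor of $2$, so that the criterion of \Cref{characterization oc weak Lp} forces $Rx_n\in L_{p,\infty}^\circ$ and density passes the conclusion to all of $E$. The only cosmetic difference is that the paper takes $h=\tfrac1K\bigl[(\sum_n 2^{-n}f_n)\vee g\bigr]$ (a maximum) whereas you take $h=\tfrac1{2c}(g+\psi)$ (a sum); either choice yields $h\ge g/K$ and $h\ge |Tx_n|/(\text{const}\cdot K)$ with $1\le K\le 2$, and the rest of the argument is identical.
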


\begin{proof}
    By Pisier's factorization \Cref{thm: factorization Pisier operator}, there exists a normalized function $g\in L_1(\mu)_+$ such that $\{g=0\}\subseteq \{Tx=0\}$ for every $x\in E$ and
    \[\left\|\frac{Tx}{g} \right\|_{L_{p,\infty}(g\mu)}\leq \gamma_p C\|x\|.\]
    Let us take a dense subset $(x_n)_{n=1}^\infty\subset S_E$ and assume without loss of generality that $Tx_n\neq 0$. We define 
    \[f_n=\frac{|Tx_n|}{\|Tx_n\|_{L_1(\mu)}} \quad \text{and} \quad h=\frac{1}{K}\left[\left(\sum_n \frac{f_n}{2^n}\right)\vee g\right],\]
    where $K$ is chosen so that $\|h\|_{L_1(\mu)}=1$, so in particular $1\leq K\leq 2$. Clearly $h$ is positive and $\{h=0\}\subseteq \{g=0\}$. Moreover, for every $x\in E$ and measurable subset $A$ we have
    \[\left(\int_A h\, d\mu \right)^{-\frac{1}{p^*}} \int_A|Tx|\, d\mu \leq K^{\frac{1}{p^*}}\left(\int_A g\, d\mu \right)^{-\frac{1}{p^*}} \int_A|Tx|\, d\mu \leq K^{\frac{1}{p^*}} \left\|\frac{Tx}{g} \right\|_{L_{p,\infty}(g\mu)}\leq K^{\frac{1}{p^*}} \gamma_p C\|x\|, \]
    so the operator $Rx=\frac{Tx}{h}$ is bounded from $E$ to $L_{p,\infty}(h\cdot\mu)$ with $\|R\|\leq K^{\frac{1}{p^*}} \gamma_p C$. Let us show that the range of $R$ is contained in $L_{p,\infty}^\circ(h\cdot \mu)$. To do so, fix $n\in \N$ and consider a measurable subset $A$, so that
    \begin{align*}
        \left(\int_A h\, d\mu \right)^{-\frac{1}{p^*}} \int_A|Tx_n|\, d\mu & \leq (2^n K)^{\frac{1}{p^*}}\left(\int_A f_n\, d\mu \right)^{-\frac{1}{p^*}} \|Tx_n\|_{L_1(\mu)} \int_A f_n\, d\mu \\
        & \leq C(2^n K)^{\frac{1}{p^*}} \left(\int_A f_n\, d\mu \right)^{\frac{1}{p}} \leq C 2^n K \left(\int_A h\, d\mu \right)^{\frac{1}{p}}.
    \end{align*}
    It is clear that $\lim_{\mu(A)\rightarrow 0} \int_A h\, d\mu =0$, so using Proposition \ref{characterization oc weak Lp} we conclude that $Rx_n \in L_{p,\infty}^\circ(h\cdot\mu)$ for every $n\in \N$. Since $R$ is continuous and $(x_n)_n$ is dense in $E$, it follows that $R(E)\subseteq L_{p,\infty}^\circ(h\cdot\mu)$. Finally, it is straightforward to check that $T=MR$.
\end{proof}

In particular, it follows from the above that every positive operator from a separable Banach lattice with upper $p$-estimates to $L_1$ factors through some $\ocwlp$. One could try to apply this result in order to obtain an (isomorphic) analogue of \Cref{thm: universal p convex} for upper $p$-estimates by using the following scheme: If $X$ is a Banach lattice satisfying an upper $p$-estimate with constant 1, let $T:X\rightarrow C(\Delta, L_1[0,1])$ be the isometric lattice embedding given by \cite[Theorem 1.1]{LLOT}, and consider for each $t\in \Delta$ the point evaluation $\delta_t: C(\Delta, L_1[0,1])\rightarrow L_1[0,1]$, which is a lattice homomorphism. Then, $\delta_tT:X\rightarrow L_1[0,1]$ is a $(p,\infty)$-convex operator, so by \Cref{prop: Pisier factorization separable case} it factors through $L_{p,\infty}^\circ(h_t)$ as $\delta_t T=M_tR_t$ for some normalized $h_t\in L_1[0,1]_+$, with $\|R_t\|\leq 2^\frac{1}{p^*} \gamma_p$. Since $([0,1], \mathfrak{m},h\cdot \lambda)$, where $\lambda$ denotes the Lebesgue measure and $\mathfrak{m}$ the $\sigma$-algebra of measurable sets, is a separable, non-atomic probability measure space, by Carath\'eodory's Isomorphism Theorem (\cite{Caratheodory}, see also \cite[Section 41]{Halmos}) it must be measure isomorphic to the unit interval with the Lebesgue measure. In particular, $L_{p,\infty}^\circ(h_t)$ can be identified isometrically with $\ocwlp[0,1]$. Therefore, each of the maps $\delta_tT:X\rightarrow L_1[0,1]$ can be factored through $\ocwlp[0,1]$ with a uniform bound on the norm of the factors. If one could show that the above process of choosing the density $h_t$ could be carried out continuously with respect to $t\in \Delta$, then it would produce a lattice isomorphic embedding $R: X\rightarrow C(\Delta,\ocwlp[0,1])$ that sends every $x\in X$ to the function $Rx(t)=R_tx$, $t\in \Delta$. However, the way the density is obtained in the original proof of \Cref{thm: factorization Pisier operator} (see \cite{Pisier86}) makes it difficult to deduce whether such a choice of density preserving continuity can be made.\medskip

Another unclear aspect of this approach is whether the candidate for the universal space should be $C(\Delta,\ocwlp[0,1])$ or $C(\Delta,\ocwlp[0,\infty))$. Clearly, if the former satisfies the universal property, the latter will as well, as $\ocwlp[0,1]$ embeds lattice isometrically into $\ocwlp[0,\infty)$. However, the converse might not be true. Indeed, $\ocwlp[0,1]$ and $\ocwlp[0,\infty)$ behave very differently, as the following results show.

\begin{prop}\label{prop: embedings into ocwlp 0 infty}
    If $\mu$ is a separable measure over $\Omega$, $\ocwlp(\mu)$ embeds lattice isomorphically into $\ocwlp[0,\infty)$.
\end{prop}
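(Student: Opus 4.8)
The strategy is to reduce the general separable measure case to a countable family of nice probability spaces, and then glue. First, since $\mu$ is separable, the measure algebra of $(\Omega,\Sigma,\mu)$ is separable; decompose $\Omega$ (up to null sets) into its atomic part and its non-atomic part. The non-atomic part, being separable, splits into at most countably many pieces of finite positive measure, each of which is — by Carath\'eodory's Isomorphism Theorem (invoked exactly as in the discussion preceding the statement) — measure-isomorphic to an interval $[0,c_n]$ with Lebesgue measure, and after rescaling one may take $c_n=1$ (noting that $\ocwlp$ of a finite non-atomic space of any total mass is lattice isometric to $\ocwlp[0,1]$ after a measure-preserving identification, since the weak-$L_p$ norm only sees the measure algebra up to the total mass, which can be normalized). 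The atomic part contributes a (finite or countable) family of atoms; grouping atoms of comparable mass, one checks that $\ocwlp$ of a purely atomic separable space embeds lattice isometrically into $\ocwlp[0,\infty)$ directly (atoms can be modeled by disjoint intervals of the appropriate length). Thus it suffices to produce a single lattice isomorphic embedding $\bigl(\bigoplus_{n} \ocwlp(\mu_n)\bigr)$ into $\ocwlp[0,\infty)$, where the internal norm is the appropriate one making the direct sum equal to $\ocwlp$ of the disjoint union $\Omega=\bigsqcup_n \Omega_n$, and each $\mu_n$ is either Lebesgue measure on $[0,1]$ or a single atom.

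Second — the heart of the matter — I would realize the disjoint union inside $[0,\infty)$. Place the $n$-th piece $\Omega_n$ on the interval $I_n=[n-1,n)$ via the measure isomorphism above (for the non-atomic pieces) or as a suitable subinterval (for atoms), and let $\Phi$ be the obvious lattice homomorphism sending $f=(f_n)_n$ to the function on $[0,\infty)$ equal to $f_n$ on $I_n$. The point to verify is that $\Phi$ is bounded above and below, i.e. that $\vvvert \Phi f\vvvert_{L_{p,\infty}[0,\infty)}$ is comparable to $\sup_n \vvvert f_n\vvvert_{L_{p,\infty}(\mu_n)}$ (which is the norm of $f$ in $\ocwlp(\mu)=\ocwlp(\bigsqcup_n\Omega_n)$ — recall weak-$L_p$ of a disjoint union carries the $\ell_\infty$-type norm on the pieces only up to constants, so one already works with an equivalent norm). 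The lower bound $\vvvert\Phi f\vvvert\ge \sup_n\vvvert f_n\vvvert$ is immediate by testing on sets contained in a single $I_n$. For the upper bound: given a measurable $A\subseteq[0,\infty)$, write $A=\bigsqcup_n A_n$ with $A_n=A\cap I_n$, and estimate
\[
\mu(A)^{\frac1p-1}\int_A|\Phi f|\,d\lambda=\mu(A)^{\frac1p-1}\sum_n\int_{A_n}|f_n|\,d\lambda\le \mu(A)^{\frac1p-1}\sum_n \mu(A_n)^{1-\frac1p}\,\|f_n\|_{L_{p,\infty}^{[1]}(\mu_n)},
\]
and then bound $\sum_n \mu(A_n)^{1-1/p}$ by $\mu(A)^{1-1/p}$ times a constant — this is where concavity of $t\mapsto t^{1-1/p}$ and the fact that the $I_n$ have unit length comes in; since $1-1/p<1$ one gets $\sum_n\mu(A_n)^{1-1/p}\le (\sum_n \mu(A_n))^{1-1/p}\cdot(\#\{n:A_n\neq\emptyset\})^{1/p}$, which is \emph{not} uniformly bounded, so a naive estimate fails. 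The correct fix is to group the indices $n$ according to the dyadic size of $\mu(A_n)$ and sum geometrically, exactly as in the proof that $L_{p,\infty}$ of a countable disjoint union of unit-mass pieces embeds into $L_{p,\infty}[0,\infty)$ — this yields a bound $C_p\,\mu(A)^{1-1/p}\sup_n\|f_n\|$, and the universal constant $C_p$ (depending only on $p$) is exactly the distortion of the embedding.

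Third, I would check that $\Phi$ maps $\ocwlp(\mu)$ \emph{into} $\ocwlp[0,\infty)$ and not merely into $L_{p,\infty}[0,\infty)$: using the characterization of \Cref{characterization oc weak Lp}, one must verify $\lim_{\lambda(A)\to0}\lambda(A)^{-1/p^*}\int_A|\Phi f|\,d\lambda=0$ and the analogous condition as $\lambda(A)\to\infty$. The first follows from the corresponding property of each $f_n$ together with the tail decay $\sup_{n\ge N}\|f_n\|\to 0$ is \emph{not} available in general, so instead one argues: $\Phi f$ is a norm limit of the finitely-supported truncations $\Phi(f\chi_{\{1,\dots,N\}})$ (because $f\in\ocwlp(\mu)$ means $f$ is a norm limit of simple functions, which are supported on finitely many pieces up to small error), each truncation lies in $\ocwlp[0,\infty)$ since it is supported on $[0,N)$ and is an $\ocwlp$ function there, and $\ocwlp[0,\infty)$ is closed; hence $\Phi f\in\ocwlp[0,\infty)$. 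Finally, $\Phi$ is a lattice homomorphism by construction (it acts "pointwise"), completing the proof. \textbf{Main obstacle.} The one genuinely delicate point is the upper bound in the second step — specifically, proving that a countable $\ell_\infty$-sum of copies of $\ocwlp$ (of unit-mass or atomic pieces) embeds with uniformly bounded distortion into $\ocwlp[0,\infty)$; the dyadic-grouping argument must be executed carefully to extract a constant depending only on $p$, and one must also confirm that the ambient $\ocwlp(\mu)$ norm really is equivalent (with $p$-dependent constants) to the $\sup_n$ of the piecewise norms, which is where the distinction between the quasinorm $\vvvert\cdot\vvvert_{L_{p,\infty}}$ and the norm $\|\cdot\|_{L_{p,\infty}^{[1]}}$ on a disjoint union needs attention.
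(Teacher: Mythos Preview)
Your proposal contains a genuine error that derails the argument. The claim that ``weak-$L_p$ of a disjoint union carries the $\ell_\infty$-type norm on the pieces only up to constants'' is false: if $\Omega=\bigsqcup_{n\in\N}[0,1]$ and $f_n=\chi_{[0,1]}$ for every $n$, then $\sup_n\|f_n\|_{L_{p,\infty}}=1$ while $f=(f_n)_n$ is the characteristic function of a set of infinite measure and hence not even in $L_{p,\infty}(\Omega)$. So the quantity you are trying to show $\|\Phi f\|$ is comparable to is \emph{not} the norm of $f$ in $\ocwlp(\mu)$, and the whole second step is aimed at the wrong target. This is also why the ``naive estimate fails'' and why the dyadic-grouping repair remains vague: you are attempting to prove a false equivalence.

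The source of the trouble is the rescaling ``one may take $c_n=1$''. The lattice isometry $\ocwlp[0,c_n]\cong\ocwlp[0,1]$ is \emph{not} induced by a measure-preserving map, so it does not respect the ambient norm of $\ocwlp(\bigsqcup_n\Omega_n)$. If you drop the rescaling and instead place the $n$-th piece on an interval of length exactly $c_n$ (and each atom on an interval of length equal to its mass), then $\Phi$ implements a measure isomorphism from $(\Omega,\mu)$ onto a measurable subset of $[0,\infty)$, and hence $\Phi:\ocwlp(\mu)\to\ocwlp[0,\infty)$ is automatically a lattice \emph{isometry} --- no norm estimate is needed at all, and your third step (closure under norm limits) becomes unnecessary as well.

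The paper's proof exploits this even more directly: rather than chopping the non-atomic part into countably many finite pieces, it applies the $\sigma$-finite form of Carath\'eodory's theorem once to identify the entire non-atomic part with a single interval $[0,r)$, $r=\mu_{na}(\Omega)\in[0,\infty]$. The atomic part is realized via characteristic functions of disjoint intervals of the correct lengths. Both embeddings are isometric (being induced by measure isomorphisms), and combining two such pieces inside $[0,\infty)$ is again a matter of measure isomorphism, yielding a lattice isomorphic (in fact isometric on each summand) embedding with no analytic work.
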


\begin{proof}
    Let us decompose $\mu$ into an atomic part $\mu_a$ and a non-atomic part $\mu_{na}$, so that $\ocwlp(\mu)$ is lattice isomorphic to $\ocwlp(\mu_a)\oplus \ocwlp(\mu_{na})$. Depending on whether it has a finite or infinite number of atoms, $\ocwlp(\mu_a)$ is lattice isometric to $\ell_\pinf^n(\omega)$ or $\ocwlp(\N,\omega)$, where $\omega$ is a weighted counting measure over the set $\{1,\ldots,n\}$ or $\N$, respectively. These spaces are in turn lattice isometric to the span in $\ocwlp[0,\infty)$ of characteristic functions of disjoint sets $A_k$ of measure $\omega_k$ ($k=1,\ldots, n$ or $k\in \N$, respectively). On the other hand, $\mu_{na}$ is measure isomorphic to the Lebesgue measure on $[0,r)$, where $r=\mu_{na}(\Omega)$. Hence, $\ocwlp(\mu)$ is isomorphic to a sublattice of $\ocwlp[0,\infty)$.
\end{proof}

Therefore, $\ocwlp[0,\infty)$ contains any other separable $\ocwlp(\mu)$ as a sublattice. On the other hand, $\ocwlp[0,1]$ does not contain $\ell_\pinf^\circ$ nor $\ocwlp[0,\infty)$ lattice isomorphically:

\begin{prop}\label{prop: ocwellp do not embed into ocwlp01}
    The Banach lattice $\ell_\pinf^\circ$ does not lattice embed into $\ocwlp[0,1]$.
\end{prop}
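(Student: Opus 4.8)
The plan is to argue by contradiction: a lattice embedding would produce a pairwise disjoint sequence in $\ocwlp[0,1]$ that simultaneously "saturates" the upper $p$-estimate on \emph{every} finite block, and this turns out to be incompatible with order continuity (through \Cref{characterization oc weak Lp}) once the supports are thinned to a lacunary subsequence.

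\textbf{Step 1 (reduction to a disjoint sequence).} Suppose $T\colon \ell_\pinf^\circ\to\ocwlp[0,1]$ is a lattice embedding, normalised so that $C^{-1}\|a\|\le\|Ta\|\le\|a\|$, and put $u_n:=Te_n$. Since $T$ is a lattice homomorphism the $u_n$ are disjoint and positive, $C^{-1}\le\|u_n\|\le1$, and for every finite $F\subseteq\N$,
\[
\Big\|\sum_{n\in F}u_n\Big\|=\Big\|T\Big(\sum_{n\in F}e_n\Big)\Big\|\ge C^{-1}\Big\|\sum_{n\in F}e_n\Big\|_{\ell_\pinf}=C^{-1}|F|^{1/p}.
\]
Writing $A_n:=\supp u_n$ and denoting Lebesgue measure by $\mu$, disjointness forces $\sum_n\mu(A_n)\le1$, so $\mu(A_n)\to0$; permuting the basis (a lattice automorphism of $\ell_\pinf^\circ$) and passing to a subsequence, we may assume $\mu(A_n)\le 4^{-n}$.

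\textbf{Step 2 (what order continuity buys).} By \Cref{characterization oc weak Lp}, since $[0,1]$ has finite measure, for each $n$ the modulus $\omega_n(\delta):=\sup\{\mu(A)^{-1/p^*}\int_A u_n:0<\mu(A)\le\delta\}$ is nondecreasing with $\omega_n(\delta)\downarrow0$ as $\delta\downarrow0$, and $\omega_n(\delta)\le\|u_n\|\le1$; in particular $\int_A u_n\le\omega_n(\mu(A))\,\mu(A)^{1/p^*}$ for all measurable $A$. Hence for any measurable $B$, with $t_n:=\mu(B\cap A_n)\le\mu(A_n)\le 4^{-n}$ and $s:=\mu(B)=\sum_n t_n$,
\[
\int_B\sum_{n\le k}u_n=\sum_{n\le k}\int_{B\cap A_n}u_n\le\sum_{n\le k}\omega_n(t_n)\,t_n^{1/p^*}.
\]
Optimising this concave sum over $t_n\ge0$ subject to $\sum t_n\le s$ and the caps $t_n\le 4^{-n}$ (a Lagrange computation using $\tfrac1p+\tfrac1{p^*}=1$) bounds it by $s^{1/p^*}\big(\sum_{n\le k}\omega_n(s)^p\big)^{1/p}$ plus a geometrically small term from the caps, so
\[
\Big\|\sum_{n\le k}u_n\Big\|_{\ocwlp}\ \lesssim\ \sup_{0<s\le1}\Big(\sum_{n\le k}\omega_n(s)^{p}\Big)^{1/p}+O(1).
\]
Since $\omega_n(s)\to0$ as $s\to0$ for each fixed $n$, a suitable diagonal thinning of the subsequence — choosing each $n_j$ so that $\mu(A_{n_j})$ lies below the scale at which the moduli $\omega_{n_1},\dots,\omega_{n_{j-1}}$ of all earlier terms have already become tiny — makes the right-hand side $o(k^{1/p})$, contradicting the lower bound $\|\sum_{j\le k}u_{n_j}\|\ge C^{-1}k^{1/p}$ from Step 1.

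\textbf{Main obstacle.} The hard part is exactly this last diagonalisation: the moduli $\omega_n$ are only qualitative and may decay arbitrarily slowly, so one must arrange the subsequence so that no single test set $B$ can "pile up" more than $o(k^{1/p})$ of mass from $u_{n_1},\dots,u_{n_k}$. The point to make precise is that such a pile-up would force $\mu(B)$ to be super-exponentially small in $k$ while simultaneously $\omega_{n_j}(\mu(B))$ stays bounded below for a positive proportion of $j\le k$ — impossible once the $n_j$ are thinned below the scales where the earlier $\omega_{n_i}$ vanish. Everything else (the reduction to the disjoint sequence, summability of the supports, the pointwise estimate $\int_A u_n\le\omega_n(\mu(A))\mu(A)^{1/p^*}$) is routine. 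An equivalent way to package the contradiction is to apply $T$ to the "flat" vector $\sum_{n\in F}e_n$ for an infinite $F$ along which $\mu(A_n)$ decays geometrically: in $\ell_\pinf^\circ$ its norm is $|F|^{1/p}$, whereas a lacunary disjoint sum of order-continuous normalised functions in $\ocwlp[0,1]$ can only grow like $o(|F|^{1/p})$.
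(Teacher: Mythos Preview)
Your Step~1 is fine, but the key displayed estimate in Step~2 is vacuous as written. You claim
\[
\Big\|\sum_{n\le k}u_n\Big\|\ \lesssim\ \sup_{0<s\le1}\Big(\sum_{n\le k}\omega_n(s)^{p}\Big)^{1/p}+O(1),
\]
and then assert that a diagonal thinning makes the right-hand side $o(k^{1/p})$. But for any $s\ge\max_{n\le k}\mu(A_n)$ one has $\omega_n(s)=\|u_n\|\ge C^{-1}$, so that supremum is always at least $C^{-1}k^{1/p}$; no thinning of the subsequence can change this. The caps $t_n\le 4^{-n}$ are not an additive $O(1)$ correction --- they must be used to \emph{restrict which indices contribute at each scale~$s$}. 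Splitting according to whether $4^{-n_j}>s$ or not, using H\"older on the first group and the geometric tail $\sum_{j:\,4^{-n_j}\le s}(4^{-n_j})^{1/p^*}\le C_p\,s^{1/p^*}$ on the second, the honest estimate is
\[
\Big\|\sum_{j\le k}u_{n_j}\Big\|\ \le\ \sup_{0<s\le1}\Big(\sum_{j:\,4^{-n_j}>s}\omega_{n_j}(s)^{p}\Big)^{1/p}+C_p.
\]
\emph{This} bound does yield to your diagonalisation: choosing $n_j$ inductively so that $\omega_{n_i}(4^{-n_j})\le 2^{-j}$ for all $i<j$, for $s\in[4^{-n_{l+1}},4^{-n_l})$ the restricted sum has at most $l$ terms, all but the last bounded by $2^{-lp}$, giving a uniform $O(1)$ bound --- so the subsequence is in fact equivalent to the $c_0$ basis, not merely $o(k^{1/p})$.

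Once repaired in this way, your argument reproves exactly what the paper isolates as \Cref{lem: disjoint subsequence in ocwlp01 is c0}: every disjoint normalised sequence in $\ocwlp[0,1]$ has a $c_0$-subsequence; the proposition then follows since no subsequence of $(e_n)$ in $\ell_\pinf^\circ$ is. The paper establishes that lemma by a shorter route that bypasses the moduli $\omega_n$ entirely: perturb each $u_n$ to a bounded function, choose $m_n\uparrow\infty$ with $\|u_n\|_{L_\infty}\le m_n$ and $\sum_{k>n}\mu(A_k)\le m_n^{-p}$, and estimate the distribution function of $\sum_n u_n$ directly via the quasinorm --- for $t\in[m_{n-1},m_n)$ one has $\{\sum_k u_k>t\}\subseteq\{u_n>t\}\cup\bigcup_{k>n}A_k$, whence $\mu\{\sum_k u_k>t\}\le 2t^{-p}$.
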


One should compare this result with \cite{Leung91}, where it is established that $\ell_\pinf$ embeds as a complemented sublattice of $L_\pinf[0,1]$, with a positive projection. The proof of \Cref{prop: ocwellp do not embed into ocwlp01} uses the following lemma (cf.~\cite{CD}):

\begin{lem}\label{lem: disjoint subsequence in ocwlp01 is c0}
    Let $(f_n)_n\subseteq \ocwlp[0,1]$ be a positive, disjoint and normalized (with respect to the quasinorm $\vvvert \cdot\vvvert_{\wlp}$) sequence. Then $(f_n)_n$ has a subsequence equivalent to the $c_0$ basis.
\end{lem}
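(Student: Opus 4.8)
The plan is to extract from a positive, disjoint, normalized sequence $(f_n)_n$ in $\ocwlp[0,1]$ a subsequence on which the weak-$L_p$ (quasi)norm behaves like the $c_0$-norm, i.e., $\vvvert \sum_{n\in F} a_n f_n\vvvert_{\wlp} \approx \max_{n\in F}|a_n|$ uniformly over finite $F$ and scalars. The lower bound $\vvvert \sum a_n f_n\vvvert_{\wlp}\geq c\max|a_n|$ is automatic (for any $n$, taking a set where $|f_n|$ is large already forces the quasinorm to dominate $|a_n|$ up to a constant, since the $f_n$ are disjoint and normalized). So the whole content is the upper estimate: after passing to a subsequence, $\vvvert \sum_{n\in F} a_n f_n\vvvert_{\wlp}\leq C\max_{n\in F}|a_n|$.

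First I would reduce to controlling, for a set $A\subseteq[0,1]$ and $g=\sum_{n\in F}a_nf_n$ with $\max|a_n|\le 1$, the quantity $\lambda(A)^{-1/p^*}\int_A |g|\,d\lambda = \lambda(A)^{-1/p^*}\sum_{n\in F}|a_n|\int_{A}|f_n|\,d\lambda$. Since each $f_n\in\ocwlp[0,1]$, \Cref{characterization oc weak Lp} gives that $\lambda(A)^{-1/p^*}\int_A|f_n|\,d\lambda\to 0$ both as $\lambda(A)\to 0$ and as $\lambda(A)\to\infty$; here the domain has finite measure, so only the $\lambda(A)\to 0$ regime is relevant, but crucially for a \emph{fixed} $n$ we also have the trivial global bound $\lambda(A)^{-1/p^*}\int_A|f_n|\,d\lambda\le \|f_n\|_{\wlp^{[1]}}\le C_p\vvvert f_n\vvvert_{\wlp}=C_p$. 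The key point is a "small-tail, uniformly over a chosen subsequence" diagonalization: the disjointness of the $f_n$ means that for any threshold $\delta$, the set $E_n=\{|f_n|>M_n\}$ can be made to have arbitrarily small measure (since $f_n\in\wlp$), and the $E_n$ are disjoint. I would choose inductively a subsequence (still called $f_n$) and thresholds so that $\sum_n \lambda(E_n)$ is as small as we like, and so that $\int |f_n\chi_{E_n^c}|^{\,r}$-type quantities telescope; essentially one arranges that outside a single small "bad" set $E=\bigcup E_n$ each $f_n$ is bounded and the contributions $\int_A |f_n|$ over a common set $A$ add up controllably because of disjointness of supports.

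The cleanest route is probably to fix $1<r<p$ and work with the equivalent norm $\|\cdot\|_{\wlp^{[r]}}$ from \eqref{eq: renorming weak Lp}, under which $\ocwlp[0,1]$ is $r$-convex with constant $1$ (\Cref{rem: interpretation renorming}). Then for disjoint $(f_n)$ and scalars with $\max|a_n|\le1$,
\[
\Big\|\sum_{n\in F}a_nf_n\Big\|_{\wlp^{[r]}}
=\sup_{\lambda(A)>0}\lambda(A)^{\frac1p-\frac1r}\Big(\sum_{n\in F}|a_n|^r\int_A|f_n|^r\,d\lambda\Big)^{1/r},
\]
and the task is to show this sup is $\le C$ after passing to a subsequence. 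Splitting $A$ according to which $E_n$'s it meets and using that $\lambda(A\cap E_n^c)^{1/p-1/r}(\int_{A\cap E_n^c}|f_n|^r)^{1/r}$ is dominated by the bounded part while $\lambda(A\cap E_n)$-contributions are killed by choosing $\sum_n\lambda(E_n)$ tiny together with the $\ocwlp$ condition (\Cref{characterization oc weak Lp}) applied to each $f_n$ on the small sets, one gets a geometric-series bound. The main obstacle is making this diagonal choice genuinely uniform: the $\ocwlp$-smallness of $\lambda(A)^{-1/p^*}\int_A|f_n|$ as $\lambda(A)\to0$ is per-$n$, so one must interleave the choice of subsequence with the choice of a decreasing sequence of thresholds and exploit disjointness so that a set $A$ of a given size can only "see" finitely many $f_n$ at full strength. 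This is exactly the point where the cited argument of \cite{CD} does the work, and I would follow that template: a careful inductive construction producing a subsequence and a single exceptional null-ish set off of which the $f_n$'s behave like a disjointly supported bounded sequence, whose weak-$L_p$ span is then readily seen to be $c_0$.
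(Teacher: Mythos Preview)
Your plan is sound, and once one observes that for disjoint nonnegative $f_n$ the upper estimate is exactly the statement that $g:=\sup_n f_n\in\wlp$ (since $|\sum_{n\in F}a_nf_n|\le g$ whenever $\max|a_n|\le1$, and the quasinorm is a lattice quasinorm), your $\wlp^{[r]}$-formulation and the paper's argument are aiming at the same target. The paper, however, reaches it by a more direct route: rather than controlling integrals $\lambda(A)^{1/p-1/r}(\int_A|f_n|^r)^{1/r}$ over sets, it works with the distribution function. After a small perturbation (density of simple functions in $\ocwlp$) one may assume each $f_n\in L_\infty$; then, since $r_n:=\lambda(\supp f_n)\to0$, one passes to a subsequence with $\|f_n\|_{L_\infty}\le m_n\nearrow\infty$ and $\sum_{k>n}r_k\le m_n^{-p}$. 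For $t\in[m_{n-1},m_n)$ only the $f_k$ with $k\ge n$ can exceed $t$, and one reads off $\lambda\{g>t\}\le \lambda\{f_n>t\}+\sum_{k>n}r_k\le t^{-p}+m_n^{-p}\le 2t^{-p}$, so $g\in\wlp$.

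Your detour through $\|\cdot\|_{\wlp^{[r]}}$, the bad sets $E_n=\{|f_n|>M_n\}$, and a ``geometric-series bound'' would, once made precise, collapse to essentially the same subsequence choice (your $M_n$ play the role of the paper's $m_n$, and the summability of $\lambda(E_n)$ is replaced by summability of $r_k$), but the integral bookkeeping is heavier and the invocation of \Cref{characterization oc weak Lp} is not actually needed: the only ``order-continuous'' input used is that simple functions are dense, which lets one assume $f_n\in L_\infty$. So your approach is correct but less economical; the paper's distribution-function argument is the cleaner realization of the same idea.
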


\begin{proof}
    Since simple functions are dense in $\ocwlp[0,1]$, after a small perturbation, we can assume that $f_n\in L_\infty[0,1]$ for every $n\in \N$. Let $r_n=\lambda(\supp f_n)$, where $\lambda$ denotes the Lebesgue measure. Then $r_n\rightarrow 0$, and passing to a subsequence if necessary, we can assume that there is a positive sequence $(m_n)_n\subseteq \R$ increasing to $\infty$ such that $\|f_n\|_{L_\infty}\leq m_n$ and $\sum_{k=n+1}^\infty r_k\leq m_n^{-p}$ for all $n\in \N$. Let $f$ be $\sum_n f_n$ almost everywhere (note that, since the $f_n$ are pairwise disjoint, this coincides almost everywhere with the supremum). To prove the statement, it suffices to show that $f\in \wlp[0,1]$. Take any $t\in \R_+$, and choose $n$ so that $m_{n-1}\leq t <m_n$, where $m_0=0$. Since $\|f_k\|_{L_\infty}\leq m_k$, $\{f_k>t\}=\emptyset$ for $1\leq k < n$. Thus,
    \[\lambda\{f>t\}=\sum_{k=n}^\infty \lambda\{f_k>t\}\leq\lambda\{f_n>t\}+\sum_{k=n+1}^\infty r_k\leq t^{-p}+ m_n^{-p}\leq 2 t^{-p}.\]
    As $t\in \R_+$ was arbitrary and $f\geq 0$, we conclude that $f\in \wlp[0,1]$, as desired.
\end{proof}

\begin{proof}[Proof of \Cref{prop: ocwellp do not embed into ocwlp01}]
    If such a lattice embedding $T:\ell_\pinf^\circ\rightarrow \ocwlp[0,1]$ existed, then $(Te_n)_n\subseteq \ocwlp[0,1]$ would contain a subsequence equivalent to the basis of $c_0$, where $(e_n)_n$ denotes the canonical basis of $\ell_\pinf^\circ$. In particular, $(e_n)_n$ would also contain a subsequence equivalent to the $c_0$ basis, which is impossible, since every subsequence of $(e_n)_n$ is equivalent to itself. 
\end{proof}

As a consequence of the above, we get:

\begin{cor}\label{cor: ocwlps non isomorphic}
    $\ell_\pinf^\circ$, $\ocwlp[0,1]$ and $\ocwlp[0,\infty)$ are pairwise non-isomorphic as Banach lattices.
\end{cor}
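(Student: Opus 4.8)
The plan is to deduce \Cref{cor: ocwlps non isomorphic} directly from \Cref{prop: ocwellp do not embed into ocwlp01}, \Cref{prop: embedings into ocwlp 0 infty}, and the elementary structure of the three spaces, without any new technical work. The three pairwise non-isomorphism statements are:
\begin{enumerate}
\item $\ell_\pinf^\circ \not\cong \ocwlp[0,1]$;
\item $\ell_\pinf^\circ \not\cong \ocwlp[0,\infty)$;
\item $\ocwlp[0,1]\not\cong \ocwlp[0,\infty)$.
\end{enumerate}
The key observation used throughout is that a lattice isomorphism $U:Z\to W$ is in particular a lattice embedding of $Z$ into $W$; so to separate two of these spaces it suffices to exhibit a sublattice (even a simple one, like a copy of $\ell_\pinf^\circ$) that lattice embeds into one but not the other.

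First I would dispose of (1): it is \emph{exactly} \Cref{prop: ocwellp do not embed into ocwlp01}, since $\ell_\pinf^\circ$ does not lattice embed into $\ocwlp[0,1]$, hence cannot be lattice isomorphic to it. Next, for (2), note that $\ell_\pinf^\circ$ has an order continuous norm and the canonical basis $(e_n)_n$ is a disjoint normalized sequence \emph{no subsequence of which is equivalent to the $c_0$ basis} (each subsequence is isometrically equivalent to $(e_n)_n$ itself, which spans $\ell_\pinf^\circ$, not $c_0$); whereas in $\ocwlp[0,\infty)$ I claim every positive disjoint normalized sequence has a subsequence equivalent to the $c_0$ basis. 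The latter claim is the analogue of \Cref{lem: disjoint subsequence in ocwlp01 is c0} for $[0,\infty)$, and the same proof works verbatim: after perturbing so that each $f_n$ is a bounded simple function with support of finite measure $r_n\to 0$ (here one uses the order continuity characterization of \Cref{characterization oc weak Lp}, which forces $r_n\to 0$), one chooses a subsequence with $\|f_n\|_\infty\le m_n\uparrow\infty$ and $\sum_{k>n}r_k\le m_n^{-p}$, and checks $\sum_n f_n\in\wlp[0,\infty)$ by the identical estimate $\lambda\{f>t\}\le 2t^{-p}$. Therefore a lattice isomorphism $\ell_\pinf^\circ\to\ocwlp[0,\infty)$ would carry $(e_n)_n$ to a disjoint sequence whose (appropriately normalized) image has a $c_0$-subsequence, forcing $(e_n)_n$ to have a $c_0$-subsequence — a contradiction. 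I would either restate this disjoint-sequence lemma for $[0,\infty)$ explicitly or simply remark that its proof is identical to that of \Cref{lem: disjoint subsequence in ocwlp01 is c0}.

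Finally, for (3), by \Cref{prop: embedings into ocwlp 0 infty} the space $\ell_\pinf^\circ=\ocwlp(\omega)$ (for $\omega$ the counting measure on $\N$) lattice embeds into $\ocwlp[0,\infty)$; but by \Cref{prop: ocwellp do not embed into ocwlp01} it does not lattice embed into $\ocwlp[0,1]$. A lattice isomorphism $\ocwlp[0,1]\to\ocwlp[0,\infty)$ would then allow us to transport a copy of $\ell_\pinf^\circ$ from $\ocwlp[0,\infty)$ back into $\ocwlp[0,1]$, contradicting \Cref{prop: ocwellp do not embed into ocwlp01}. (One could alternatively argue (3) the same way as (2), noting that $\ocwlp[0,\infty)$ contains $\ell_\pinf^\circ$ while $\ocwlp[0,1]$ does not; the two arguments are interchangeable.) I do not expect a genuine obstacle here — the only point requiring minor care is making sure the $c_0$-subsequence lemma is available on $[0,\infty)$, i.e.\ that the perturbation-plus-diagonal argument of \Cref{lem: disjoint subsequence in ocwlp01 is c0} still applies when the underlying measure space is $\sigma$-finite rather than finite, which it does because every element of $L_{p,\infty}(\mu)$ has $\sigma$-finite support and the estimate is local in the supports of the $f_n$.
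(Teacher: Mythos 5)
Your items (1) and (3) are correct, and (3) is essentially the paper's own argument: $\ocwlp[0,\infty)$ contains a lattice isometric copy of $\ell_\pinf^\circ$ (the span of the characteristic functions $\chi_{[n-1,n)}$) while $\ocwlp[0,1]$ does not by \Cref{prop: ocwellp do not embed into ocwlp01}. Item (1) also works as you state it.

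Item (2), however, rests on a false lemma. The analogue of \Cref{lem: disjoint subsequence in ocwlp01 is c0} fails on $[0,\infty)$: the sequence $f_n=\chi_{[n-1,n)}$ is positive, disjoint, normalized and lies in $\ocwlp[0,\infty)$, yet it spans a lattice isometric copy of $\ell_\pinf^\circ$ and no subsequence of it is equivalent to the $c_0$ basis (since $\vvvert\sum_{i\in I}\chi_{[i-1,i)}\vvvert_{\wlp}=|I|^{1/p}$). This is exactly the copy of $\ell_\pinf^\circ$ that you yourself invoke in item (3), so the two halves of your argument contradict each other. The specific error is your parenthetical claim that order continuity "forces $r_n\to 0$": in the proof of \Cref{lem: disjoint subsequence in ocwlp01 is c0} the convergence $r_n=\lambda(\supp f_n)\to 0$ comes from disjointness together with the \emph{finiteness} of the total measure of $[0,1]$ (so that $\sum_n r_n\leq 1$), not from membership in the order continuous part; \Cref{characterization oc weak Lp} places no upper bound on the measures of the supports. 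The conclusion of (2) is still true, but needs a different argument; the paper's is the simplest: $\ell_\pinf^\circ$ is purely atomic whereas $\ocwlp[0,\infty)$ (and $\ocwlp[0,1]$) is atomless, and a surjective lattice isomorphism carries atoms to atoms, so the spaces cannot be lattice isomorphic. (Alternatively, (2) follows from (1) plus (3)-style reasoning: an isomorphism onto $\ocwlp[0,\infty)$ would pull the nonatomic band back into $\ell_\pinf^\circ$.)
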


\begin{proof}
    Clearly, $\ell_\pinf^\circ$ cannot be lattice isomorphic to any of the other two spaces, as it is purely atomic. $\ocwlp[0,1]$ and $\ocwlp[0,\infty)$ are not lattice isomorphic, as the latter contains a lattice isometric copy of $\ell_\pinf^\circ$ (the span of the characteristic functions on the intervals $[n-1,n)$), whereas the first one does not by \Cref{prop: ocwellp do not embed into ocwlp01}.
\end{proof}

These facts point towards $C(\Delta, \ocwlp[0,\infty))$ as a more suitable candidate for a universal Banach lattice for upper $p$-estimates. Finally, it should be noted that none of these candidates will be \emph{isometrically} universal for Banach lattices with upper $p$-estimates with constant 1, as both embed lattice isometrically into an $\ell_\infty$-sum of $\wlp$-spaces, and we have shown in \Cref{ex: finite dim BL} that there are Banach lattices with upper $p$-estimates that cannot be embedded isometrically into any space of this form.

\begin{question}\label{quest: universal for separable upe}
    Is there a separable Banach lattice with upper $p$-estimates (with constant 1) that is (isomorphically or isometrically) universal for this class? That is, a separable Banach lattice $Y$ satisfying an upper $p$-estimate with constant 1 such that for every separable Banach lattice $X$ satisfying an upper $p$-estimate with constant 1 there exists a lattice (isometric or isomorphic) embedding of $X$ into $Y$? Do $C(\Delta, \ocwlp[0,1])$ or $C(\Delta, \ocwlp[0,\infty))$ have this isomorphic property?
\end{question}

\section{Operators factoring through a \texorpdfstring{$(\pp)$}{}-convex and a \texorpdfstring{$(\qq)$}{}-concave operator}\label{sec: Reisner}
In \cite{Reisner}, the author studies the ideal of operators that factor through a $p$-convex and a $q$-concave operator, which is denoted $M_{p,q}$. This ideal is shown to be perfect, that is, it coincides with its double adjoint, and can be identified with the ideal $I_{p,q}$ of operators factoring through a diagonal operator from $L_p(\mu)$ to $L_q(\mu)$ defined in \cite{GLR}. In this section, we provide analogous results for the ideal of operators factoring through a $(\pp)$-convex and a $(\qq)$-concave operator, where $1<p,q<\infty$, $p\leq p_2\leq \infty$ and $1\leq q_2\leq q$. Note that in light of \cite[Corollary 16.7]{DJT}, up to constants, the only relevant cases are $p_2\in\{p,\infty\}$ and $q_2\in\{1,q\}$.\medskip  

Throughout this section, we will use the definitions and notation for tensor products and operator ideals given in \cite{GLR}. For instance, recall that an \textit{operator ideal} is a class $A$ of operators between Banach spaces such that for each pair of Banach spaces $E$ and $F$ the set $A(E,F)=A\cap \lL(E,F)$ satisfies that:
\begin{enumerate}
    \item If $x^*\in E^*$ and $y\in F$, then $x^*\otimes y\in A(E,F)$, where $x^*\otimes y (x)=x^*(x)y$ is a rank one operator.
    \item $A(E,F)$ is a linear subspace of $\lL(E,F)$.
    \item If $X$ and $Y$ are Banach spaces, $U\in \lL(X,E)$, $T\in A(E,F)$ and $V\in \lL(F,Y)$, then $VTU\in A(X,Y)$.
\end{enumerate}
Moreover, a function $\alpha:A\rightarrow \R_+$ is an \textit{ideal norm} if one has:
\begin{enumerate}[(4)]
    \item $\alpha(x^*\otimes y)=\|x^*\|\|y\|$ for every $x^*\in E^*$ and $y\in F$.
    \item $\alpha(S+T)\leq \alpha(S)+\alpha (T)$ for every $S,T \in A(E,F)$.
    \item If $X$ and $Y$ are Banach spaces, $U\in \lL(X,E)$, $T\in A(E,F)$ and $V\in \lL(F,Y)$, then $\alpha(VTU)\leq \|V\|\alpha(T)\|U\|$.
\end{enumerate}
A \textit{normed (operator) ideal} $[A,\alpha]$ is an ideal $A$ with an ideal norm $\alpha$, and we say that it is a \textit{Banach ideal} if each $A(E,F)$ is a Banach space when endowed with $\alpha$.
\medskip

Given a normed ideal $[A,\alpha]$, the \textit{adjoint ideal} $[A,\alpha]^*=[A^*,\alpha^*]$ is defined as follows (observe that here $*$ denotes the adjoint of an operator ideal instead of the dual of a Banach space): For each pair of Banach spaces $E$ and $F$, $A^*(E,F)$ is the set of all $T\in \lL(E,F)$ such that there exists $\rho>0$ satisfying that for all finite-dimensional Banach spaces $X,Y$ and all $U\in A(Y,X)$, $V\in \lL(X,E)$ and $W\in \lL(F,Y)$,
\[|\tr WTVU|\leq \rho \|W\|\|V\|\alpha(U),\]
where $\tr$ denotes the trace of an operator defined from a finite-dimensional Banach space to itself. The norm $\alpha^*(T)$ is given by the infimum of all $\rho$ satisfying the above. $\alpha^*$ is always a complete ideal norm. An operator ideal $[A,\alpha]$ is \textit{perfect} if $[A,\alpha]=[A,\alpha]^{**}$. This notion is equivalent to the ideal being \textit{finitely generated}, \textit{maximal} or being the adjoint of some other ideal (cf.~\cite[Theorem 8.11]{Ryan}).\medskip

Regarding tensor products, recall that if $E$ and $F$ are Banach spaces, a norm $\eta$ on $E\otimes F$ is a \textit{reasonable crossnorm} if it satisfies the following properties:
\begin{enumerate}
    \item $\eta(x\otimes y)\leq \|x\|\|y\|$ for every $x\in E$ and $y\in F$.
    \item For every $x^*\in E^*$ and $y^*\in F^*$, the linear functional $x^*\otimes y^*$ on $E\otimes F$ that takes values $x^*\otimes y^*(x\otimes y)=x^*(x)y^*(y)$ on the elementary products $x\otimes y\in E\otimes F$ is bounded, and $\|x^*\otimes y^*\|\leq \|x^*\|\|y^*\|$.
\end{enumerate}
A \textit{uniform crossnorm} is an assignment to each pair of Banach spaces of a reasonable crossnorm on their tensor product in such a way that whenever $E,F,G$ and $H$ are Banach spaces and $S:E\rightarrow G$ and $T:F\rightarrow H$ are operators, then the operator $S\otimes T:E\otimes F\rightarrow G\otimes H$ is bounded and $\|S\otimes T\|\leq \|S\|\|T\|$. It is easy to check that every uniform crossnorm induces an ideal norm over the operator ideal $\cF$ of all finite rank operators via the identification $\cF(E,F)=E^*\otimes F$ described in property (1) in the definition of an operator ideal. We will denote this normed ideal by $[\cF,\eta]$. A uniform crossnorm $\eta$ is \textit{finitely generated} (see \cite[p.~129]{Ryan}, this is also referred to as a $\otimes$-norm in \cite{Reisner}), if for all $u\in E\otimes F$,
\[\eta(u)=\inf \eta(u;M,N),\]
where the infimum is taken over all finite-dimensional subspaces $M\subseteq E$ and $N\subseteq F$ such that $u\in M\otimes N$, and $\eta(u;M,N)$ denotes the $\eta$-norm of $u$ as an element of $M\otimes N$. \medskip
 
Let us fix $1<p,q<\infty$, $p\leq p_2\leq \infty$ and $1\leq q_2\leq q$. We introduce the following definition, which generalizes the ideal $[M_{p,q},\mu_{p,q}]$ of operators that factor through a $p$-convex and a $q$-concave operator introduced in \cite[Section 2]{Reisner}:

\begin{defn}\label{def: ideal M of factorizations through convex and concave}
    Let $E$ and $F$ be Banach spaces. An operator $T:E\rightarrow F$ belongs to the class $M_\ppqq(E,F)$ if there exists a Banach lattice $X$, a $(\pp)$-convex operator $U:E\rightarrow X$ and a $(\qq)$-concave operator $V:X\rightarrow F^{**}$ such that $J_F T=V U$, where $J_F:F\rightarrow F^{**}$ denotes the canonical embedding:
    \[\xymatrix{
		E  \ar@{->}[rd]_{U} \ar@{->}[r]^{T} & F  \ar@{->}[r]^{J_F} & F^{**}  \\
		&  X \ar@{->}[ru]_{V} &
    }\]
    We define $\mu_\ppqq(T)=\inf \cbr[0]{K^{(\pp)}(U)K_{(\qq)}(V)}$, where the infimum is taken over all possible factorizations above.
\end{defn}

As is customary, when $p_2=p$ we will simply write $p$ instead of $(p,p_2)$, and similarly when $q_2=q$. Our first goal is to show that $M_\ppqq$ is a perfect ideal. This will be done by showing that it is the adjoint of $[\cF,\eta_\ppqq]^*$, where $\eta_\ppqq$ is a uniform crossnorm defined as follows:

\begin{defn}\label{def: eta norm}
    Given $u\in E\otimes F$, we define
    \[\eta_\ppqq (u)=\inf\cbr[3]{\theta_\ppqq (\{x_i\otimes y_i\}_{i=1}^n): (x_i)_{i=1}^n \subseteq E, (y_i)_{i=1}^n\subseteq F, u=\sum_{i=1}^n x_i\otimes y_i },\]
    where
    \begin{align*}
        \theta_\ppqq (\{x_i\otimes y_i\}_{i=1}^n)=\sup \Bigg\{&\sum_{i=1}^n \intoo[3]{\sum_{k=1}^\infty |x_k^*(x_i)|^{p_2}}^\frac{1}{p_2} \intoo[3]{\sum_{j=1}^\infty |y_j^*(y_i)|^{q^*_2}}^\frac{1}{q^*_2}: \\
        &  \|(x_k^*)_{k=1}^\infty\|_{\ell_p(E^*)} \leq 1, \|(y_j^*)_{j=1}^\infty\|_{\ell_{q^*}(F^*)} \leq 1\Bigg\}
    \end{align*}
 and $q^*$ and $q^*_2$ denote the conjugate exponents of $q$ and $q_2$, respectively (when $p_2=\infty$ or $q_2=1$, the corresponding $p_2$ or $q_2^*$-sum must be understood as a supremum). 
\end{defn}

Following the steps of \cite{Reisner}, the first thing to show is that:

\begin{prop}\label{prop: eta is computable on finite dimensions}
    $\eta_\ppqq$ is finitely generated. 
\end{prop}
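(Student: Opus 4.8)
The statement to prove is that $\eta_\ppqq$ is finitely generated, i.e., that for any $u\in E\otimes F$ one has $\eta_\ppqq(u)=\inf\{\eta_\ppqq(u;M,N)\}$, the infimum being taken over finite-dimensional $M\subseteq E$, $N\subseteq F$ with $u\in M\otimes N$. The inequality $\eta_\ppqq(u)\le \eta_\ppqq(u;M,N)$ is automatic since any representation of $u$ in $M\otimes N$ is a representation of $u$ in $E\otimes F$. So the whole content is the reverse inequality, and the natural route is to mimic the argument of \cite{Reisner} for $\eta_{p,q}$, checking that the only structural facts used there — that $\theta_\ppqq$ is computed as a supremum over \emph{sequences} of functionals with an $\ell_p$ (resp.\ $\ell_{q^*}$) summability constraint, and that the inner quantity is monotone and homogeneous in each argument — survive the replacement of the $\ell_p$-sum inside $\theta$ by an $\ell_{p_2}$-sum (and $\ell_{q^*}$ by $\ell_{q_2^*}$).

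\textbf{Key steps.} First I would fix $u=\sum_{i=1}^n x_i\otimes y_i$ an arbitrary representation and let $M_0=\spn\{x_i\}$, $N_0=\spn\{y_i\}$, so that $u\in M_0\otimes N_0$; the goal is to show $\eta_\ppqq(u;M_0,N_0)\le\eta_\ppqq(u)+\eps$ for every $\eps>0$, from which finite generation follows. Given a near-optimal representation $u=\sum_{i=1}^m a_i\otimes b_i$ in $E\otimes F$ with $\theta_\ppqq(\{a_i\otimes b_i\})\le \eta_\ppqq(u)+\eps$, the issue is that the $a_i$ need not lie in $M_0$ nor the $b_i$ in $N_0$. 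I would remedy this by a projection/averaging argument: since $u\in M_0\otimes N_0$, one can pass $u$ through finite-rank norm-one (up to $1+\eps$) operators onto $M_0$ and $N_0$ — here one uses that $M_0,N_0$ are finite-dimensional, so by a standard Auerbach/local-reflexivity argument there are operators $P:E\to E$, $Q:F\to F$ with finite-dimensional range containing $M_0$, $N_0$ respectively, $\|P\|,\|Q\|\le 1+\eps$, and $P\otimes Q$ fixing $u$. Applying $P\otimes Q$ to the representation $\sum a_i\otimes b_i$ gives a representation of $u$ with all terms in $(\text{finite-dim})\otimes(\text{finite-dim})$, and the crucial point is that $\theta_\ppqq$ does not increase by more than a factor $(1+\eps)^2$ under this operation. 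This last fact is where the $\ell_{p}(E^*)$/$\ell_{q^*}(F^*)$ structure enters: pulling back the supremizing sequences $(x_k^*)$, $(y_j^*)$ through $P^*$, $Q^*$ multiplies their respective $\ell_p(E^*)$, $\ell_{q^*}(F^*)$ norms by at most $\|P\|$, $\|Q\|$, and the inner double series is left formally unchanged; this is exactly the computation that shows $\eta_\ppqq$ is a uniform crossnorm (property $\|S\otimes T\|\le\|S\|\|T\|$), which one should verify en route. Finally, one may need a further reduction from "range in a fixed finite-dimensional space" to "range exactly in $M_0$ (resp.\ $N_0$)" — but this costs nothing, since any representation of $u$ with terms in $F_1\otimes F_2$ for finite-dimensional $F_1\supseteq M_0$, $F_2\supseteq N_0$ already witnesses $\eta_\ppqq(u;F_1,F_2)$, and the infimum defining finite generation ranges over \emph{all} such finite-dimensional pairs, not just $(M_0,N_0)$; so taking the infimum over these pairs already yields the claim without a second projection. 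Letting $\eps\downarrow 0$ completes the proof.

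\textbf{Main obstacle.} The delicate point is not the algebra but justifying the existence of the approximating finite-rank operators $P,Q$ that fix $u$ while barely increasing the norm — i.e., the appeal to the principle of local reflexivity (or, more elementarily, to the fact that a finite-dimensional subspace of a Banach space is $(1+\eps)$-complemented in a suitable finite-dimensional superspace via an Auerbach basis). One has to be careful that $P\otimes Q$ genuinely fixes $u$, which requires $P|_{M_0}=\mathrm{id}_{M_0}$ and $Q|_{N_0}=\mathrm{id}_{N_0}$, so the finite-rank operators must be chosen as \emph{projections extending the identity on} $M_0$, $N_0$; such projections onto $M_0$, $N_0$ themselves may have large norm, which is why one enlarges the target to a bigger finite-dimensional space where a good-norm projection exists, and then works with $\eta_\ppqq(u;\cdot,\cdot)$ over that larger pair. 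Once this technical device is in place, the estimate $\theta_\ppqq(\{(P\otimes Q)(a_i\otimes b_i)\})\le (1+\eps)^2\,\theta_\ppqq(\{a_i\otimes b_i\})$ is routine from the definitions, and the rest is bookkeeping.
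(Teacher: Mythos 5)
There is a gap, and it sits precisely at the one step your outline treats as free. In your final reduction you assert that a near-optimal representation $u=\sum_i c_i\otimes d_i$ with $c_i\in F_1$, $d_i\in F_2$ ``already witnesses $\eta_\ppqq(u;F_1,F_2)$.'' But $\eta_\ppqq(u;F_1,F_2)$ is computed via $\theta_\ppqq(\{c_i\otimes d_i\};F_1,F_2)$, whose supremum runs over sequences $(v_k^*)\in B_{\ell_p(F_1^*)}$ and $(w_j^*)\in B_{\ell_{q^*}(F_2^*)}$ — functionals on the subspaces, not restrictions of functionals on $E$ and $F$. To conclude you need $\theta_\ppqq(\{c_i\otimes d_i\};F_1,F_2)\leq\theta_\ppqq(\{c_i\otimes d_i\};E,F)$, and this is exactly where the Hahn--Banach theorem must be invoked: each $v_k^*$ extends to $E^*$ and each $w_j^*$ to $F^*$ with the same norm, hence without changing the $\ell_p$- and $\ell_{q^*}$-constraints or the inner sums evaluated on the $c_i, d_i$. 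This extension step is the entire content of the paper's proof, and your proposal never supplies it; the same issue already affects your ``automatic'' direction $\eta_\ppqq(u)\leq\eta_\ppqq(u;M,N)$, which needs the (easy, but not vacuous) converse comparison obtained by restricting the supremizing sequences to $M$ and $N$.

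Relatedly, the local-reflexivity/projection machinery with $P\otimes Q$ is an unnecessary detour, as you yourself half-observe at the end. Since the infimum defining finite generation ranges over all finite-dimensional pairs containing $u$, one simply takes a representation $u=\sum_i x_i\otimes y_i$ with $\theta_\ppqq(\{x_i\otimes y_i\};E,F)\leq\eta_\ppqq(u)+\eps$, sets $M=\spn\{x_i\}$, $N=\spn\{y_i\}$, and proves the two-sided identity $\theta_\ppqq(\{x_i\otimes y_i\};E,F)=\theta_\ppqq(\{x_i\otimes y_i\};M,N)$ — restriction of functionals for one inequality, Hahn--Banach extension for the other. That is the paper's argument in full; no projections fixing $u$ and no appeal to local reflexivity are needed. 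I recommend deleting the $P\otimes Q$ construction and inserting the restriction/extension argument explicitly.
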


\begin{proof}
    It suffices to check that whenever we fix a representation $u=\sum_{i=1}^n x_i\otimes y_i$ with $(x_i)_{i=1}^n \subseteq M$ and $(y_i)_{i=1}^n\subseteq N$ for some finite-dimensional subspaces $M\subseteq E$ and $N\subseteq F$, then 
    \[\theta_\ppqq (\{x_i\otimes y_i\}_{i=1}^n;E,F)=\theta_\ppqq (\{x_i\otimes y_i\}_{i=1}^n;M,N).\]
    
    Clearly, $\theta_\ppqq (\{x_i\otimes y_i\}_{i=1}^n;E,F)\leq\theta_\ppqq (\{x_i\otimes y_i\}_{i=1}^n;M,N)$, since every pair of sequences $(x_k^*)_{k=1}^\infty\in B_{\ell_p(E^*)}$ and $(y_j^*)_{j=1}^\infty\in B_{\ell_{q^*}(F^*)}$ can be restricted to $M$ and $N$, respectively, to induce sequences in $B_{\ell_p(M^*)}$ and $B_{\ell_{q^*}(N^*)}$, and the values of the sums considered in the computation of $\theta_\ppqq$ remain unchanged.
\medskip

    On the other hand, the inequality $\theta_\ppqq (\{x_i\otimes y_i\}_{i=1}^n;E,F)\geq\theta_\ppqq (\{x_i\otimes y_i\}_{i=1}^n;M,N)$ is a consequence of the Hahn--Banach Theorem: Every pair of sequences $(v_k^*)_{k=1}^\infty\in B_{\ell_p(M^*)}$ and $(w_j^*)_{j=1}^\infty\in B_{\ell_{q^*}(N^*)}$ can be extended to sequences in $B_{\ell_p(E^*)}$ and $B_{\ell_{q^*}(F^*)}$ without altering their values on $(x_i)_{i=1}^n$ and $(y_i)_{i=1}^n$.
\end{proof}

The next proposition shows the connection between the $\eta_\ppqq$-norm and $(\pp)$-convex and $(\qq)$-concave operators.

\begin{prop}\label{prop: eta factors fd}
    For every $u\in E^*\otimes F= \cF(E,F)\subseteq L(E,F)$, 
    \[\eta_\ppqq(u)=\inf\cbr[0]{K^{(\pp)}(R)K_{(\qq)}(S)},\]
    where the infimum is taken over all possible factorizations $u=SR$ with $Z$ a finite-dimensional Banach space with a 1-unconditional basis (endowed with the order induced by the basis), $R:E\rightarrow Z$ and $S:Z\rightarrow F$.
\end{prop}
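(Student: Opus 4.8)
The statement is a finite-dimensional refinement of the identification of $\eta_\ppqq$ with factorization norms; it should be proved by a Hahn--Banach/separation argument exactly as in \cite{Reisner} for the classical case $p_2=p$, $q_2=q$. The plan is to prove both inequalities.

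\emph{The easy inequality} $\eta_\ppqq(u)\le \inf\{K^{(\pp)}(R)K_{(\qq)}(S)\}$. Suppose $u=SR$ with $R:E\to Z$ $(\pp)$-convex, $S:Z\to F$ $(\qq)$-concave, $Z$ finite-dimensional with normalized $1$-unconditional basis $(z_m)_{m=1}^d$ and biorthogonal functionals $(z_m^*)$. Write $u=\sum_i x_i^*\otimes y_i$ in $\cF(E,F)=E^*\otimes F$ (here I am using $E^*$ since we are in $\cF(E,F)$; for the domain side just relabel). Decompose $Rx=\sum_m \langle x, r_m^*\rangle z_m$ with $r_m^*\in E^*$ and $Sz_m=s_m\in F$, so that $u=\sum_m r_m^*\otimes s_m$ is a representation of $u$; I would estimate $\theta_\ppqq$ on this representation. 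Given test sequences $(x_k^*)\in B_{\ell_p(E^*)}$ and $(y_j^*)\in B_{\ell_{q^*}(F^*)}$, I write
\[
\sum_m \Big(\sum_k |x_k^*(r_m^*)|^{p_2}\Big)^{\frac1{p_2}}\Big(\sum_j |y_j^*(s_m)|^{q_2^*}\Big)^{\frac1{q_2^*}}
= \sum_m \big\|(\langle R_k, z_m^*\rangle)_k\big\|_{p_2}\,\big\|(\langle S^* y_j^*, z_m^*\rangle)_j\big\|_{q_2^*},
\]
where the $R_k$ are appropriate preimages; then apply the unconditionality of $(z_m)$ together with the definitions of $(\pp)$-convexity of $R$ and $(\qq)$-concavity of $S$ (in the dual form of Definition~\ref{def: convexity and concavity}, using that $S$ is $(\qq)$-concave iff $S^*$ is $(q_2^*,q^*)$-... recast appropriately) to bound this by $K^{(\pp)}(R)K_{(\qq)}(S)$. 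Taking the infimum over representations and then over factorizations gives the bound. This is a direct computation; the only care needed is matching the $\ell_{p_2}$/$\ell_{q_2^*}$ inner norms to the convexity/concavity inequalities, which is exactly what the definition of $\theta_\ppqq$ was engineered to do.

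\emph{The hard inequality} $\eta_\ppqq(u)\ge \inf\{K^{(\pp)}(R)K_{(\qq)}(S)\}$. Here I would imitate Reisner's argument. Fix $u\in \cF(E,F)$ and, since $\eta_\ppqq$ is finitely generated (\Cref{prop: eta is computable on finite dimensions}), replace $E,F$ by finite-dimensional subspaces $M,N$ containing a representation of $u$; so we may assume $E,F$ finite-dimensional. Choose a representation $u=\sum_{i=1}^n x_i\otimes y_i$ (domain side) nearly achieving $\eta_\ppqq(u)$ and consider the linear map $\Phi:E\to \ell_\infty^n$... more precisely, I would build the finite-dimensional lattice $Z$ as a quotient/subspace of a concrete sequence lattice: on $\R^n$ put the norm
\[
\|(t_i)\|_Z^{\mathrm{conv}} = \sup\Big\{\Big\|\Big(\sum_i t_i \langle x_i,\cdot\rangle\, ?\Big)\Big\|\Big\},
\]
defining the ``convex hull'' norm from the $C^{\tau,\sigma}$-type set associated to the data (with $\tau=\|\cdot\|_p$, $\sigma=\|\cdot\|_{p_2}$), and dually a ``concave'' norm from the $D$-type set with $\tau=\|\cdot\|_q$, $\sigma=\|\cdot\|_{q_2}$; then take $Z$ to be $\R^n$ normed by the larger of the two (or an appropriate interpolation), so that the coordinate maps $R:E\to Z$, $S:Z\to F$ defined by $Rx = (\langle x, x_i^*\rangle)_i$-type formulas and $Sz = \sum z_i y_i$ satisfy $u=SR$, $R$ is $(\pp)$-convex and $S$ is $(\qq)$-concave with constants controlled by $\eta_\ppqq(u)$. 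The verification that these norms are comparable on $Z$ — i.e. that one can simultaneously make $R$ convex and $S$ concave with the product of constants $\le \eta_\ppqq(u)+\varepsilon$ — is where a Hahn--Banach separation / minimax argument enters: one sets up a bilinear pairing between admissible ``convex test data'' $(x_k^*)\in B_{\ell_p(E^*)}$ and ``concave test data'' $(y_j^*)\in B_{\ell_{q^*}(F^*)}$ whose supremum is exactly $\theta_\ppqq$, and uses that the norm $\eta_\ppqq(u)$ dominates this to produce, via a compactness argument on the finite-dimensional unit balls (the supremum defining $\theta$ is attained), the desired norm on $Z$. Reisner's Lemma in \cite{Reisner} does precisely this in the $(p,q)$ case; the point is that replacing $\ell_p$ inner norms by $\ell_{p_2}$ (with $p_2\in\{p,\infty\}$) and $\ell_q$ by $\ell_{q_2}$ changes nothing structural because these norms are all block convex and block concave with constant $1$, so the duality machinery of \Cref{sec: duality convexity} applies verbatim.

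\emph{Main obstacle.} The crux is the hard inequality: concretely constructing the finite-dimensional lattice $Z$ and checking that the two seminorms (the minimal one making $R$ convex, the maximal one making $S$ concave) are comparable with the right constant. I expect to reuse \Cref{prop: Coo is closed convex solid}, \Cref{prop: double polar of C is tau-sigma convex}, \Cref{prop: D is closed convex solid} and \Cref{thm: polarity sets C and D} applied with $\tau=\|\cdot\|_p,\sigma=\|\cdot\|_{p_2}$ on one side and $\tau=\|\cdot\|_q,\sigma=\|\cdot\|_{q_2}$ on the other — all four norms being block convex and block concave with constant $1$, so all the hypotheses of those results hold and the relevant lattices have convexity/concavity constant exactly $1$. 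The bookkeeping of passing between $E\otimes F$ representations (for the defining infimum of $\eta_\ppqq$) and $E^*\otimes F$ (for $\cF(E,F)$) must be done carefully but is routine. I would present the easy inequality in full and, for the hard one, give the construction of $Z$ and invoke the separation argument of \cite{Reisner} adapted through the abstract duality of \Cref{sec: abstract factorizations}, rather than redoing the minimax estimate from scratch.
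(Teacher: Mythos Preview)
Your plan for the easy inequality is correct and matches the paper: given $u=SR$ through $Z$ with basis $(e_i,e_i^*)$, one sets $x_i^*=R^*e_i^*$, $y_i=Se_i$, obtains the representation $u=\sum x_i^*\otimes y_i$, and bounds $\theta_\ppqq(\{x_i^*\otimes y_i\})$ by $K^{(\pp)}(R)K_{(\qq)}(S)$ via the duality pairing $\langle\cdot,\cdot\rangle_{Z\times Z^*}$.

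For the hard inequality, however, you are anticipating a difficulty that does not exist, and your proposed route through two competing norms plus a Hahn--Banach/minimax comparison is a significant overcomplication. The paper's argument (following Reisner's original one, which also contains no separation step here) is a direct explicit construction: given a near-optimal representation $u=\sum_{i=1}^n x_i^*\otimes y_i$, one puts on $\R^n$ the \emph{single} norm
\[
\vvvert z\vvvert=\sup\Bigg\{\sum_{i=1}^n\Big|z_i\Big(\sum_j|y_j^*(y_i)|^{q_2^*}\Big)^{1/q_2^*}\Big|:\ \|(y_j^*)\|_{\ell_{q^*}(F^*)}\le 1\Bigg\},
\]
sets $R=\sum_i x_i^*\otimes e_i$ and $Sz=\sum_i z_iy_i$, and computes directly that $K_{(\qq)}(S)=K^{(q^*,q_2^*)}(S^*)=1$ (the norm is tailored precisely so that the dual norm makes this identity hold) and $K^{(\pp)}(R)=\theta_\ppqq(\{x_i^*\otimes y_i\})$. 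There is no need to balance a convex-side norm against a concave-side norm: all the ``work'' is loaded onto the convexity constant of $R$, while the concavity constant of $S$ is forced to be $1$ by the definition of $\vvvert\cdot\vvvert$. The abstract machinery of \Cref{sec: abstract factorizations} is not invoked at all.

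Your proposed route might be made to work, but the comparison of the two seminorms you describe is exactly the step that the explicit choice of $\vvvert\cdot\vvvert$ renders unnecessary; you would in effect be re-deriving the definition of $\theta_\ppqq$ through a detour.
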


\begin{proof}
    The proof is just an adaptation of \cite[Proposition 1.3]{Reisner} with the corresponding changes in the exponents. We include it for the sake of completeness.\medskip

    Let $u\in E^*\otimes F$ and assume that it factors as $u=RS$ with $Z$ a finite-dimensional Banach space with a 1-unconditional basis, $R:E\rightarrow Z$ and $S:Z\rightarrow F$. Let $(e_i,e_i^*)_{i=1}^n$ be the corresponding basis of $Z$ and its biorthogonal functionals. We define $x_i^*=R^* e_i^*$ and $y_i=Se_i$, so that $u=SR=\sum_{i=1}^n x_i^*\otimes y_i$. By definition,
    \begin{align*}
        K^{(\pp)}(R)& =\sup \cbr[3]{\norm[3]{\sum_{i=1}^n \intoo[3]{\sum_k |x_i^*(x_k)|^{p_2}}^\frac{1}{p_2}e_i}:  \|(x_k)_{k=1}^\infty\|_{\ell_p(E)}\leq 1},\\
        K_{(\qq)}(S)=K^{(q^*,q_2^*)}(S^*)& =\sup \cbr[3]{\norm[3]{\sum_{i=1}^n \intoo[3]{\sum_k |y_j^*(y_i)|^{q_2^*}}^\frac{1}{q_2^*}e_i^*}:  \|(y_j^*)_{k=1}^\infty\|_{\ell_{q^*}(F^*)}\leq 1}.
    \end{align*}
    Therefore, given $\eps>0$, we can choose $(x_k)_{k=1}^\infty\in \ell_p(E)$ and $(y_j^*)_{k=1}^\infty\in \ell_{q^*}(F^*)$ that $\eps$-almost attain the supremum in the definition of $\theta_\ppqq (\{x_i^*\otimes y_i\}_{i=1}^n)$, so that
    \begin{align*}
        \theta_\ppqq (\{x_i^*\otimes y_i\}_{i=1}^n)& \leq \sum_{i=1}^n \intoo[3]{\sum_{k=1}^\infty |x_k^*(x_i)|^{p_2}}^\frac{1}{p_2} \intoo[3]{\sum_{j=1}^\infty |y_j^*(y_i)|^{q^*_2}}^\frac{1}{q^*_2} +\eps\\
        & \leq  \left \la \sum_{i=1}^n \intoo[3]{\sum_k |x_i^*(x_k)|^{p_2}}^\frac{1}{p_2}e_i, \sum_{i=1}^n \intoo[3]{\sum_k |y_j^*(y_i)|^{q_2^*}}^\frac{1}{q_2^*}e_i^* \right \ra +\eps \\
        & \leq K^{(\pp)}(R)K_{(\qq)}(S)+\eps.
    \end{align*}
    Hence, $\eta_\ppqq(u)\leq \inf\{ K^{(\pp)}(R)K_{(\qq)}(S)\}$.\medskip

    For the reverse inequality, let $\eps>0$ and suppose that $u=\sum_{i=1}^n x_i^*\otimes y_i$ is a representation such that
    \[\theta_\ppqq (\{x_i^*\otimes y_i\}_{i=1}^n)\leq \eta_\ppqq(u) +\eps.\]
    Let $Z$ be $\R^n$ endowed with the norm
    \[\vvvert z \vvvert = \sup \cbr[3]{\sum_{i=1}^n \abs[3]{z_i \intoo[3]{\sum_k |y_j^*(y_i)|^{q_2^*}}^\frac{1}{q_2^*} } :  \|(y_j^*)_{k=1}^\infty\|_{\ell_{q^*}(F^*)}\leq 1} \]
    for $z=(z_i)_{i=1}^n\in \R^n$. Note that the unit vector basis $(e_i)_{i=1}^n$ of $\R^n$ is 1-unconditional for this norm, as $\vvvert z \vvvert$ is only determined by $(|z_i|)$. Let us define $R:E\rightarrow Z$ by $R=\sum_{i=1}^n x_i^*\otimes e_i$ and $S:Z\rightarrow F$ by $Sz=\sum_{i=1}^n z_i y_i$. It follows that $u=SR$, 
    \begin{align*}
         K^{(\pp)}(R)& =\sup \cbr[4]{\left \vvvert \sum_{i=1}^n \intoo[3]{\sum_k |x_i^*(x_k)|^{p_2}}^\frac{1}{p_2}e_i\right \vvvert :  \|(x_k)_{k=1}^\infty\|_{\ell_p(E)}\leq 1}\\
         &= \theta_\ppqq (\{x_i^*\otimes y_i\}_{i=1}^n)\leq \eta_\ppqq(u) +\eps,
    \end{align*}
    and 
    \begin{align*}
        K_{(\qq)}(S)&=K^{(q^*,q_2^*)}(S^*) =\sup \cbr[4]{\left \vvvert \sum_{i=1}^n \intoo[3]{\sum_k |y_j^*(y_i)|^{q_2^*}}^\frac{1}{q_2^*}e_i^* \right \vvvert :  \|(y_j^*)_{k=1}^\infty\|_{\ell_{q^*}(F^*)}\leq 1}\\
        & = \sup \cbr[3]{\sum_{i=1}^n \abs[3]{z_i \intoo[3]{\sum_k |y_j^*(y_i)|^{q_2^*}}^\frac{1}{q_2^*} }: \vvvert z \vvvert \leq 1,  \|(y_j^*)_{k=1}^\infty\|_{\ell_{q^*}(F^*)}\leq 1} \\
        & = \sup \{\vvvert z \vvvert: \vvvert z \vvvert\leq 1\}=1.
    \end{align*}
    Therefore, $\inf \{K^{(\pp)}(R)K_{(\qq)}(S)\}\leq \eta_\ppqq(u)$.
\end{proof}

Next, we show that the ideal $M_\ppqq$ defined in \Cref{def: ideal M of factorizations through convex and concave} is perfect.

\begin{prop}\label{prop: M is perfect ideal}  The following properties hold:
    \begin{enumerate}
        \item $[M_\ppqq,\mu_\ppqq]$ is a perfect normed ideal of operators.
        \item $[M_\ppqq,\mu_\ppqq]=[\cF,\eta_\ppqq]^{**}$.
    \end{enumerate}
\end{prop}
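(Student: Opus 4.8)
\textbf{Proof proposal for Proposition~\ref{prop: M is perfect ideal}.}

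The plan is to follow the scheme of \cite[Section~2]{Reisner}, replacing $p$-convexity and $q$-concavity by $(\pp)$-convexity and $(\qq)$-concavity, and using the two structural results already established above — namely that $\eta_\ppqq$ is finitely generated (\Cref{prop: eta is computable on finite dimensions}) and that on finite-rank operators it is exactly the infimum of products of convexity/concavity constants over factorizations through finite-dimensional $1$-unconditional lattices (\Cref{prop: eta factors fd}). The key abstract fact I would invoke is the standard duality: for a finitely generated uniform crossnorm $\eta$, the associated Banach ideal $[\cF,\eta]^{**}$ coincides with the maximal hull of $[\cF,\eta]$, and it is the adjoint of the adjoint ideal; in particular $[\cF,\eta]^{**}$ is perfect. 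So (1) will be immediate from (2) once (2) is proved, because perfectness is equivalent to being a double adjoint (cf.~\cite[Theorem~8.11]{Ryan}). Thus the whole content is the identification $[M_\ppqq,\mu_\ppqq] = [\cF,\eta_\ppqq]^{**}$.

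To prove (2) I would argue in two inclusions, working at the level of finite-rank operators and then passing to the maximal hull. First, $M_\ppqq \subseteq [\cF,\eta_\ppqq]^{**}$ with $\eta_\ppqq^{**}(T) \le \mu_\ppqq(T)$: given a factorization $J_FT = VU$ through a Banach lattice $X$ with $U$ $(\pp)$-convex and $V$ $(\qq)$-concave, one uses \Cref{prop: representation convexity infty sums} (and its concave analogue mentioned right after it) together with the factorization theorems of \Cref{sec: review} to reduce, on each finite-dimensional restriction, to a factorization through a finite-dimensional sublattice of an $\ell_\infty$-sum of $\wlp$-spaces, hence through a finite-dimensional $1$-unconditional lattice; \Cref{prop: eta factors fd} then bounds $\eta_\ppqq$ of the restriction by (a constant multiple of) $K^{(\pp)}(U)K_{(\qq)}(V)$, and since $\eta_\ppqq$ is finitely generated, $\eta_\ppqq^{**}(T)$ is the supremum of these finite-dimensional values — giving the desired bound. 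Conversely, $[\cF,\eta_\ppqq]^{**} \subseteq M_\ppqq$ with $\mu_\ppqq \le \eta_\ppqq^{**}$: for $T$ in the maximal hull one takes a directed net of finite-dimensional factorizations $T|_{M} = S_M R_M$ through finite-dimensional $1$-unconditional lattices $Z_M$ with $K^{(\pp)}(R_M)K_{(\qq)}(S_M)$ close to $\eta_\ppqq(T|_M) \le \eta_\ppqq^{**}(T)$ (using \Cref{prop: eta factors fd} again), and then assembles these via an ultraproduct / $\ell_\infty$-sum construction: the ultraproduct of the $Z_M$ along an appropriate ultrafilter is a Banach lattice $X$, the $R_M$ glue to a $(\pp)$-convex operator $U : E \to X$ (convexity is an inequality tested on finitely many vectors, hence preserved by ultraproducts with the same constant), and the $S_M$ glue to a $(\qq)$-concave operator $X \to F^{**}$ whose composition with $U$ is $J_FT$. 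This yields the required factorization and the norm estimate.

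The main obstacle I anticipate is the ``gluing'' step in the second inclusion — ensuring that the net of finite-dimensional factorizations can be coherently assembled into a single factorization through one Banach lattice while keeping the convexity and concavity constants controlled. One must be careful that the target of the concave factor is $F^{**}$ (not $F$), which is precisely why \Cref{def: ideal M of factorizations through convex and concave} is phrased with $J_F$; the bidual appears naturally because the maximal hull only sees $T$ through its finite-dimensional behaviour and the assembled operator $S_M$-limit lands in a bidual. A secondary technical point is bookkeeping the constants: in passing from a lattice factorization to a finite-dimensional $1$-unconditional one via \Cref{prop: representation convexity infty sums}, a factor of $\gamma_p$ (and its concave counterpart) enters, so one should either absorb it into the statement or check — as in the $p$-convex case of \cite{Reisner} — that the finitely generated crossnorm $\eta_\ppqq$ already accounts for it, so that the final equality $[M_\ppqq,\mu_\ppqq]=[\cF,\eta_\ppqq]^{**}$ holds isometrically. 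Modulo these points, perfectness of $M_\ppqq$ follows formally, since any double adjoint ideal is perfect.
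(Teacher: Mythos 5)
Your overall architecture is right: (1) does follow formally from (2) via \cite[Theorem 8.11]{Ryan}, the inclusion $[\cF,\eta_\ppqq]^{**}\subseteq M_\ppqq$ is handled exactly as you describe (embed $E$ and $F^*$ into ultraproducts of their finite-dimensional subspaces, factor each finite-dimensional piece through a finite-dimensional $1$-unconditional lattice via \Cref{prop: eta factors fd}, and glue by an ultraproduct, which is why the target must be $F^{**}$), and this matches the paper's argument. The genuine gap is in your first inclusion. You propose to reduce a factorization $J_FT=VU$ to finite-dimensional $1$-unconditional factorizations by passing through \Cref{prop: representation convexity infty sums}, and you yourself note that this costs a factor $\gamma_p$ (plus its concave counterpart). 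That loss is not an artifact you can ``check away'': \Cref{ex: finite dim BL} shows the constant in \Cref{prop: representation convexity infty sums} is genuinely $>1$, so this route can only yield $\eta_\ppqq^{**}(T)\leq c_{p,q}\,\mu_\ppqq(T)$ with $c_{p,q}>1$, i.e.\ an isomorphic rather than the asserted isometric identification; neither of your two suggested fixes resolves this, and with only a norm-equivalence you also cannot conclude perfectness of $[M_\ppqq,\mu_\ppqq]$ \emph{with the norm} $\mu_\ppqq$.

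The correct reduction avoids the representation theorem entirely. One first uses the abstract maximal factorization (\Cref{thm: maximal factorization tau sigma concave operator}) to factor the $(\qq)$-concave operator $V$ as $V_2V_1$ through a $(\qq)$-concave Banach lattice $Y$ \emph{with constant $1$}, at no cost in the product of constants; then, for a finite-dimensional $G\subseteq E$, one approximates the finite-dimensional subspace $V_1U(G)\subseteq Y$ by a finite-dimensional \emph{sublattice} $Z$ up to $\delta$ using \Cref{lem: finite approximation of p sums} (which controls the perturbation of the $p_2$-sums, hence of the $(\pp)$-convexity constant), and finally uses the Principle of Local Reflexivity to bring the range back from $F^{**}$ to $F$. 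This is the content of \Cref{lem: standard argument} and gives $K^{(\pp)}(R)K_{(\qq)}(S)\leq(1+\eps)K^{(\pp)}(U)K_{(\qq)}(V)$, whence $\eta_\ppqq^{**}(T)\leq\mu_\ppqq(T)$ exactly. So you need to replace your use of \Cref{prop: representation convexity infty sums} by this constant-free finite-dimensional reduction; the rest of your plan then goes through.
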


Proving \Cref{prop: M is perfect ideal} requires some previous lemmata. Namely, we need \cite[Lemma 2]{Reisner}, which is also valid for the case $p=\infty$ even though it was originally stated for $1\leq p<\infty$, and an analogue of \cite[Lemma 3]{Reisner}:

\begin{lem}\label{lem: finite approximation of p sums}
    Let $G$ be a finite-dimensional subspace of a Dedekind complete Banach lattice $Y$. Given $\delta>0$ there are pairwise disjoint $(y_i)_{i=1}^n\subseteq Y_+$ and an operator $W:G\rightarrow \spn (y_i)_{i=1}^n$ such that $\|x-Wx\|\leq \delta \|x\|$ for every $x\in G$, and moreover we have that
    \[\norm[3]{\intoo[3]{\sum_{j=1}^m|x_j|^p}^\frac{1}{p} - \intoo[3]{\sum_{j=1}^m|Wx_j|^p}^\frac{1}{p}} \leq \delta \intoo[3]{\sum_{j=1}^m\|x_j\|^p}^\frac{1}{p}\]
    for all $(x_j)_{j=1}^m\subseteq G$ and $1\leq p\leq \infty$.
\end{lem}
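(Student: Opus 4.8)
\textbf{Proof plan for \Cref{lem: finite approximation of p sums}.}

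The plan is to reduce to a single application of a classical finite-dimensional approximation result for sublattices of Dedekind complete Banach lattices, and then upgrade the scalar estimate $\|x - Wx\| \leq \delta\|x\|$ to the stated estimate on $p$-sums by exploiting that functional calculus is $1$-Lipschitz in an appropriate sense. First I would recall that any finite-dimensional subspace $G$ of a Dedekind complete Banach lattice $Y$ can, for every $\delta_0 > 0$, be $\delta_0$-approximated by a sublattice spanned by finitely many pairwise disjoint positive vectors: this is essentially \cite[Lemma 2]{Reisner} (valid also in the $p=\infty$ endpoint), which produces pairwise disjoint $(y_i)_{i=1}^n \subseteq Y_+$ and an operator $W: G \to \spn(y_i)_{i=1}^n$ with $\|x - Wx\| \leq \delta_0 \|x\|$ on $G$. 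Since $\spn(y_i)_{i=1}^n$ is lattice isometric to a finite-dimensional $\ell_\infty^n$, functional calculus there is computed coordinatewise.

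The key point is then the following: for fixed $(x_j)_{j=1}^m \subseteq G$ and any $1 \leq p \leq \infty$, I would write
\[
\norm[3]{\intoo[3]{\sum_{j=1}^m |x_j|^p}^\frac1p - \intoo[3]{\sum_{j=1}^m |Wx_j|^p}^\frac1p}
\leq \norm[3]{\intoo[3]{\sum_{j=1}^m |x_j - Wx_j|^p}^\frac1p},
\]
using the pointwise (functional-calculus) inequality $\big|(\sum |a_j|^p)^{1/p} - (\sum |b_j|^p)^{1/p}\big| \leq (\sum |a_j - b_j|^p)^{1/p}$, which is the triangle inequality in $\ell_p$ applied inside the Banach lattice via Krivine's calculus and is valid for $1 \le p \le \infty$; here one should note that all the vectors $x_j$, $Wx_j$ lie in the finite-dimensional sublattice generated by $G \cup \spn(y_i)_{i=1}^n$, so the calculus is unambiguous. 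The right-hand side is the norm of a $p$-sum of the vectors $x_j - Wx_j$. At this stage I would invoke that the sublattice $\spn(y_i)$ is $\ell_\infty^n$-like, hence $p$-convex with constant $1$ for every $p$ --- but the subtlety is that $x_j - Wx_j$ need not live there. To handle this cleanly, the cleanest route is to choose $W$ at the outset so that not only $\|x - Wx\|\leq \delta_0\|x\|$ but the whole identity-to-$W$ difference is small as an operator, and then apply the Maurey--Rosenthal / Krivine type estimate: in a Banach lattice, $\norm{(\sum_j |z_j|^p)^{1/p}} \leq $ some universal multiple of $(\sum_j \|z_j\|^p)^{1/p}$ does \emph{not} hold in general, so instead I would factor the approximation through the sublattice: replace $W$ by $W' = W$ composed with the lattice projection onto $\spn(y_i)$, observe $x_j - W'x_j$ decomposes, and control the $p$-sum norm by first moving to $\spn(y_i)$ (where it is exactly $(\sum \|W'x_j\|^p)^{1/p}$ up to the $1$-unconditionality) and then absorbing the error $x_j - W'x_j$ into $\delta$.

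The main obstacle I anticipate is precisely this last point: passing from the scalar approximation $\|x - Wx\|\le \delta_0\|x\|$ to a bound on the \emph{norm of a $p$-sum} of the errors $x_j - Wx_j$ without picking up a dimension- or $m$-dependent constant. The resolution is to strengthen \cite[Lemma 2]{Reisner} so that $W$ is not merely a $\delta_0$-isometry but a ``$p$-sum $\delta_0$-approximation'' simultaneously for all $p$; inspecting its proof, the approximating vectors $y_i$ are obtained from a fine enough partition of a representing $C(K)$ (or via order-continuity of $Y$ restricted to the ideal generated by $G$), and the same partition controls $p$-sums uniformly in $p$ because the estimate is pointwise on $K$ before integrating. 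Concretely, one realizes $G$ inside $C(K)$ for a suitable compact $K$ via Kakutani-type representation of the ideal generated by $G$, approximates uniformly on $K$ by step functions subordinate to a fine partition $(K_i)$, sets $y_i = \chi_{K_i}$ (or their images in $Y$), and lets $W$ be the corresponding conditional-expectation-like averaging operator; then $\sup_K |(\sum_j |x_j|^p)^{1/p} - (\sum_j |Wx_j|^p)^{1/p}| \leq \sup_K (\sum_j |x_j - Wx_j|^p)^{1/p} \leq \delta_0 (\sum_j \|x_j\|_\infty^p)^{1/p}$ follows from the pointwise $\ell_p$-triangle inequality plus the uniform approximation, and taking norms (which only contracts on $C(K)$) together with choosing $\delta_0 = \delta / (1+ C_Y)$ where $C_Y$ bounds the embedding constant of $G$ into $C(K)$ gives the claim for all $1 \le p \le \infty$ at once. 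Finally one transports everything back to $Y$ and records that $(y_i)$ are pairwise disjoint by construction.
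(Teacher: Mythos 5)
Your final argument is essentially correct, but you should know that the paper does not prove this lemma at all: it is quoted verbatim as \cite[Lemma 2]{Reisner}, with the only added remark being that Reisner's proof goes through unchanged for $p=\infty$. What you have written in your last paragraph is, in effect, a reconstruction of Reisner's original argument, and it is the right one: take a strong unit $e$ for $G$ (e.g.\ $e=\sum_k|g_k|$ for a basis of $G$), represent the principal ideal $Y_e$ with its order-unit norm as $C(K)$ via Kakutani, use Dedekind completeness of $Y$ (hence of $Y_e$) to conclude that $K$ is extremally disconnected so that a fine clopen partition $(K_i)$ exists on which all basis vectors oscillate by less than a prescribed amount, set $y_i=\chi_{K_i}$ and let $W$ be point evaluation/averaging over the $K_i$; the pointwise inequality $\bigl|(\sum_j|a_j|^p)^{1/p}-(\sum_j|b_j|^p)^{1/p}\bigr|\le(\sum_j|a_j-b_j|^p)^{1/p}$ then gives the $p$-sum estimate uniformly in $1\le p\le\infty$ in the sup norm of $C(K)$, and one transports back to $Y$ using $\|u\|_Y\le\|e\|_Y\|u\|_\infty$ on $Y_e$ together with the equivalence of $\|\cdot\|_\infty$ and $\|\cdot\|_Y$ on the finite-dimensional $G$, absorbing the resulting constant into the choice of $\delta_0$. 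Two small corrections: the relevant hypothesis is Dedekind completeness, not order continuity --- it is what makes the $\chi_{K_i}$ continuous --- and your middle section (invoking Reisner's lemma as a black box and then trying to control $\|(\sum_j|x_j-Wx_j|^p)^{1/p}\|$ a posteriori, or composing $W$ with a lattice projection) is a dead end for exactly the reason you identify, so it should simply be deleted in favour of the pointwise argument; the estimate must be established inside $C(K)$ \emph{before} taking norms, not after.
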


\begin{lem}\label{lem: standard argument}
    Assume that $E$ or $F$ are finite-dimensional and $T\in  \lL(E,F)$ is such that $J_FT=VU$ with $X$ a Banach lattice, $U:E\rightarrow X$ a $(\pp)$-convex operator and $V:X\rightarrow F^{**}$ a $(\qq)$-concave operator. Then, for every $\eps>0$ there exists a finite-dimensional $Z$ with a 1-unconditional basis and a factorization
    \[\xymatrix{
		E  \ar@{->}[rd]_{R} \ar@{->}[rr]^{T} & & F    \\
		&  Z \ar@{->}[ru]_{S} 
    }\]
    with $K^{(\pp)}(R)K_{(\qq)}(S)\leq (1+\eps) K^{(\pp)}(U)K_{(\qq)}(V)$.
\end{lem}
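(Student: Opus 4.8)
The statement is the standard finite-dimensional reduction used throughout factorization theory (cf. \cite[Lemma 3]{Reisner}): given a factorization $J_F T = VU$ through an arbitrary Banach lattice $X$, one produces a factorization through a finite-dimensional lattice with a $1$-unconditional basis at the cost of an $\eps$ in the constants. The plan is to first reduce $X$ to a finite-dimensional sublattice, then to a lattice ordered by a $1$-unconditional basis, controlling the convexity and concavity constants at each step.

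First I would reduce to a finite-dimensional domain or range. By hypothesis, $E$ or $F$ is finite-dimensional; assume $E$ is (the case of $F$ finite-dimensional is symmetric, working with $T^*$ or with $V$ on the other side). Then $U(E)$ is a finite-dimensional subspace of $X$, and $V$ restricted to the sublattice generated by $U(E)$ still carries the relevant factorization. Next I would embed $X$ (or the relevant separable part) into its Dedekind completion, which is a Dedekind complete Banach lattice and preserves all convexity/concavity constants, so that \Cref{lem: finite approximation of p sums} becomes applicable to the finite-dimensional subspace $G = U(E)$ (and also to $\spn$ of a suitable finite set of vectors needed to witness the concavity of $V$, since concavity of $V:X\to F^{**}$ with $F^{**}$ only involves finitely many test vectors at a time — here one should note $F^{**}$ is itself Dedekind complete when $F$ is finite-dimensional, or work with a large enough finite-dimensional piece).

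The core step is then: apply \Cref{lem: finite approximation of p sums} to $G = U(E)$ inside the Dedekind completion of $X$ with a small $\delta = \delta(\eps)$ to obtain pairwise disjoint $(y_i)_{i=1}^n \subseteq X_+$ and an operator $W: G \to Z_0 := \spn(y_i)_{i=1}^n$ with $\|x - Wx\| \leq \delta\|x\|$ on $G$ and the quantitative approximation of the $(p,p_2)$-expressions. Since the $(y_i)$ are pairwise disjoint, $Z_0$ is lattice isometric to an $n$-dimensional lattice with a $1$-unconditional basis; set $Z$ to be this space. Define $R = WU: E \to Z$ and $S: Z \to F$ to be essentially $V$ composed with the inclusion $Z_0 \hookrightarrow X$ and (after checking it lands back in $F$, which holds since $VU = J_F T$ maps into $J_F(F)$ and $Z_0$ is close to $U(E)$) followed by $J_F^{-1}$; to make $S$ land exactly in $F$ one absorbs the $\delta$-perturbation by enlarging the constant. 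Then $SR$ is $\delta$-close to $T$ in operator norm, so after composing with a small correction (a rank-one or bounded perturbation absorbed into the constant, using that $\cF$ is an ideal and the estimate is allowed to lose $1+\eps$) one arranges $SR = T$ exactly. The convexity estimate $K^{(p,p_2)}(R) \leq (1+\eps)^{1/2} K^{(p,p_2)}(U)$ follows from the second inequality in \Cref{lem: finite approximation of p sums} together with $(p,p_2)$-convexity of $U$; the concavity estimate $K_{(q,q_2)}(S) \leq (1+\eps)^{1/2} K_{(q,q_2)}(V)$ follows because $S$ is obtained from $V$ by restricting to a sublattice and a small perturbation, which changes the concavity constant by at most a factor $1+\eps$ for $\delta$ small. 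Multiplying gives the claimed bound.

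\textbf{Main obstacle.} The delicate point is bookkeeping the perturbations so that in the end $SR = T$ \emph{exactly} rather than merely approximately, while keeping both the $1$-unconditional structure of $Z$ and the constant $\leq (1+\eps)K^{(p,p_2)}(U)K_{(q,q_2)}(V)$. The clean way is: since $E$ is finite-dimensional, the map $T - SR$ has finite rank and small norm, so one factors $T - SR$ through a finite-dimensional $1$-unconditional lattice (e.g. $\ell_\infty^k$ or $\ell_p^k$) with factorization constants bounded by its norm, takes the direct sum $Z \oplus (\text{that lattice})$ — still $1$-unconditional — and adds the two factorizations; the added term contributes only $O(\delta)$ to both the convexity constant of $R$ and the concavity constant of $S$, hence is absorbed. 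One must also double-check that when $p_2 = \infty$ or $q_2 = 1$ the relevant expressions (suprema rather than $\ell_{p_2}$-sums) are still covered by \Cref{lem: finite approximation of p sums}, which is stated uniformly for $1 \leq p \leq \infty$, so this causes no extra difficulty. I expect the rest to be routine.
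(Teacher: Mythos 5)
Your overall strategy (reduce to a finite-dimensional sublattice spanned by disjoint vectors via \Cref{lem: finite approximation of p sums}, then correct) is in the right spirit, but as written it has two genuine gaps, and they are exactly the points the paper's proof is organized around. First, your map $S$ does not land in $F$: you only know $V(U(E))\subseteq J_F(F)$, whereas $S$ must be defined on all of $Z_0=\spn(y_i)$, and $V(Z_0)$ is a finite-dimensional subspace of $F^{**}$ that need not meet $J_F(F)$ outside of $VU(E)$; "absorbing the $\delta$-perturbation by enlarging the constant" cannot repair a failure of containment. What is needed here is the Principle of Local Reflexivity, which supplies an almost-isometry from the finite-dimensional subspace of $F^{**}$ back into $F$ acting as $J_F^{-1}$ where it should; the paper invokes it explicitly and your proposal has no substitute for it. Second, and more fundamentally, your claim that a small-norm perturbation of $S$ "changes the concavity constant by at most a factor $1+\eps$" is false: if $P$ has norm $\delta$, the defect $(\sum_j\|Pz_j\|^q)^{1/q}\le\delta(\sum_j\|z_j\|^q)^{1/q}$ must be compared against $\|(\sum_j|z_j|^{q_2})^{1/q_2}\|$, and the ratio of these two quantities is unbounded as the number of test vectors grows unless the domain lattice is itself $(\qq)$-concave. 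Since your $Z_0\subseteq X$ carries no concavity as a lattice (only $V$ is concave as an operator), any additive correction to $S$ — which your exactness fix and your "landing in $F$" fix both seem to require — destroys the concavity estimate.

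This is precisely why the paper takes a different route: before doing the finite-dimensional approximation, it factors $V$ through its maximal $(\qq)$-concave factorization (\Cref{thm: maximal factorization tau sigma concave operator}) as $V=V_2V_1$ with $V_1$ a lattice homomorphism of norm $\le K_{(\qq)}(V)$ into a $(\qq)$-concave lattice $Y$ with constant $1$, and then applies \Cref{lem: finite approximation of p sums} to $G=V_1U(E)$ inside $Y$. The resulting finite-dimensional sublattice $Z\subseteq Y$ is then itself $(\qq)$-concave with constant $1$, so $K_{(\qq)}(S)=\|S\|$ for \emph{any} bounded $S$ on $Z$; this makes the subsequent non-lattice corrections (the operator $M$ with $MW=\operatorname{id}_G$, which yields $SR=T$ exactly and removes the need for your direct-sum patch, and the local reflexivity operator $Q$) completely harmless for the concavity constant. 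You should also note that extending $V$ to the Dedekind completion (or bidual) where the disjoint $y_i$ live requires replacing $V$ by $J_{F^*}^*V^{**}$, which preserves the concavity constant by \Cref{cor: duality convex concave operators}; your proposal mentions the Dedekind completion but does not extend the operator. A corrected version of your plan could be made to work by composing $V|_{Z_0}$ only on the left with local reflexivity and pushing all exactness corrections into a separately controlled direct summand, but the justifications you give for the two steps above are not valid as stated.
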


\begin{proof}
    Let us first assume that $E$ is finite-dimensional. By \Cref{thm: maximal factorization tau sigma concave operator}, $V:X\rightarrow F^{**}$ factors through a $(\qq)$-concave Banach lattice $Y$ with constant 1 as $V=V_2 V_1$, where $V_1:X\rightarrow Y$ is a lattice homomorphism with $\|V_1\|\leq K_{(\qq)}(V)$ and $V_2:Y\rightarrow F^{**}$ is contractive. Without loss of generality, we can assume that $Y$ is Dedekind complete. Otherwise, we replace $Y$ by $Y^{**}$, $V_1$ by $J_Y V_1$ and $V_2$ by $J_{F^*}^*V_2^{**}$. Let us fix $\eps>0$, and consider $\delta>0$ to be chosen later. We can apply \Cref{lem: finite approximation of p sums} to the finite-dimensional subspace $G=V_1U(E)\subseteq Y$ to find a finite-dimensional sublattice $Z\subseteq Y$ and an isomorphism $W:G\rightarrow Z$ such that for all $(x_j)_{j=1}^m\subseteq G$,
    \[\norm[3]{\intoo[3]{\sum_{j=1}^m|x_j|^{p_2}}^\frac{1}{p_2} - \intoo[3]{\sum_{j=1}^m|Wx_j|^{p_2}}^\frac{1}{p_2}} \leq \delta \intoo[3]{\sum_{j=1}^m\|x_j\|^{p_2}}^\frac{1}{p_2}.\]
    Observe that $V_1 U$ is $(\pp)$-convex with constant $K^{(\pp)}(U)K_{(\qq)}(V)$, as $V_1$ is a lattice homomorphism, and $Z$ is also $(\qq)$-concave with constant 1. It follows that for all $(u_j)_{j=1}^m\subseteq E$
    \begin{align*}
        \norm[3]{\intoo[3]{\sum_{j=1}^m|WV_1U u_j|^{p_2}}^\frac{1}{p_2}}&\leq \norm[3]{\intoo[3]{\sum_{j=1}^m|V_1U u_j|^{p_2}}^\frac{1}{p_2}} + 
        \norm[3]{\intoo[3]{\sum_{j=1}^m|V_1U u_j|^{p_2}}^\frac{1}{p_2} - \intoo[3]{\sum_{j=1}^m|WV_1U u_j|^{p_2}}^\frac{1}{p_2}} \\
        & \leq K^{(\pp)}(U)K_{(\qq)}(V) \intoo[3]{\sum_{j=1}^m\|u_j\|^{p}}^\frac{1}{p} + \delta \intoo[3]{\sum_{j=1}^m\|V_1U u_j\|^{p_2}}^\frac{1}{p_2}\\
        & \leq K^{(\pp)}(U)K_{(\qq)}(V) \intoo[3]{\sum_{j=1}^m\|u_j\|^{p}}^\frac{1}{p} + \delta \|V_1U\| \intoo[3]{\sum_{j=1}^m\| u_j\|^{p_2}}^\frac{1}{p_2}\\
        & \leq K^{(\pp)}(U)K_{(\qq)}(V) (1+\delta) \intoo[3]{\sum_{j=1}^m\|u_j\|^{p}}^\frac{1}{p},
    \end{align*}
    where, in the last step, we have used the fact that $p_2\geq p$. Therefore, $R=WV_1U: E\rightarrow Z$ is $(\pp)$-convex with $K^{(\pp)}(R)\leq  K^{(\pp)}(U)K_{(\qq)}(V) (1+\delta) $.\medskip

    To construct $S$, observe that $W$ satisfies that $(1-\delta) \|x\|\leq \|Wx\|\leq (1+\delta)\|x\|$ for every $x\in G$. Denote by  $k=\dim G$, which is independent of $\eps$ and $\delta$, and $n=\dim Z\geq k$, and let $(e_j)_{j=1}^k$ be a normalized basis of $G$. Then $(We_j)_{j=1}^k$ is a seminormalized basis of $W(G)\subseteq Z$. Let $(z^*_j)_{j=1}^k\subseteq Z^*$ be the Hahn--Banach extensions to $Z$ of a set of seminormalized functionals which are biorthogonal to $(We_j)_{j=1}^k$, so that $\|z^*_j\|\leq \frac{1}{1-\delta}$ for every $j=1,\ldots, k$. If we write $K=\bigcap_{j=1}^k \ker z_j^*$, then $Z=W(G)\oplus K$. Then, every $z\in Z$ can be represented as $z=\sum_{j=1}^k c_j We_j + y$ with $y\in K$ and $c_j=z_j^*(z)$ for every $j$. Let us define $M: Z=W(G)\oplus K \rightarrow Y$ that sends every $z=\sum_{j=1}^k c_j We_j + y$ to $Mz=\sum_{j=1}^k c_j e_j + y$, so that
    \begin{align*}
        \|Mz-z\|& =\norm[3]{\sum_{j=1}^k c_j e_j-\sum_{j=1}^k c_j We_j}\leq \delta \norm[3]{\sum_{j=1}^k c_j e_j} \\
        & =\delta \norm[3]{\sum_{j=1}^k z_j^*(z) e_j} \leq \delta \sum_{j=1}^k\|z_j^*\|\|z\|\leq \frac{\delta k}{1-\delta}\|z\|.
    \end{align*}
    It follows that $M$ is an isomorphism onto its image such that $\|M\|\leq 1+\frac{\delta k}{1-\delta}$ and $MWx=x$ for every $x\in G$. Let us now compose $M$ with $V_2$ to obtain a finite-dimensional subspace of $F^{**}$, to which we can apply the Local Reflexivity Principle (cf.~\cite[Theorem 6.3 and Exercise 1.76]{FHHMZ}) to get an isomorphism onto its image $Q:V_2M(Z)\rightarrow F$ such that $Q=J_F^{-1}$ on $V_2M(Z)\cap J_F(F)$ and $\|Q\|\leq \frac{1+\delta}{1-\delta}$. It follows that the operator $S=QV_2M:Z\rightarrow F$ satisfies
    \[SRu=QV_2MWV_1U u= QV_2V_1U u=QVU u=QJ_FT u=Tu\]
    for every $u\in E$, and moreover it is $(\qq)$-concave, since $Z$ was already $(\qq)$-concave, with constant
    \[K_{(\qq)}(S)=\|S\|\leq \|Q\|\|V_2\|\|M\|\leq \frac{1+\delta}{1-\delta}\intoo[2]{ 1+\frac{\delta k}{1-\delta}}.\]
    Taking $\delta$ small enough, we can achieve $K^{(\pp)}(R)K_{(\qq)}(S)\leq (1+\eps) K^{(\pp)}(U)K_{(\qq)}(V)$, as we wanted to show.\medskip

    Now, if we assume that $F$ is finite-dimensional, then $F=F^{**}$, so we can write $T=VU$. Then, it suffices to apply the first part of the proof to the factorization
    \[\xymatrix{
		F^*  \ar@{->}[rd]_{V^*} \ar@{->}[r]^{T^*} & E^* \ar@{->}[r]^{J_{E^*}} & E^{***}    \\
		&  Z \ar@{->}[ru]_{J_{E^*}U^*} 
    }\]
    and take adjoints in the new factorization to obtain the desired result.
\end{proof}

Now, we are ready to prove \Cref{prop: M is perfect ideal}:

\begin{proof}[Proof of \Cref{prop: M is perfect ideal}]
    The first statement follows from the second one by \cite[Theorem 8.11]{Ryan}. \medskip
    
    Let us first show that $[M_\ppqq,\mu_\ppqq]\subseteq [\cF,\eta_\ppqq]^{**}$. Let $E$ and $F$ be Banach spaces and $T\in M_\ppqq(E,F)$. Assume that $J_FT$ factors as in \Cref{def: ideal M of factorizations through convex and concave}. By \Cref{lem: standard argument}, for every finite-dimensional $G\subseteq E$ and $H\subseteq F^*$ (denote by $i:G\rightarrow E$ and $j:H\rightarrow F^*$ the embeddings) the operator $j^*J_FTi$ admits a factorization
    \[\xymatrix{
		G  \ar@{->}[rd]_{R} \ar@{->}[rr]^{j^*J_FTi} & & H^*    \\
		&  Z \ar@{->}[ru]_{S} 
    }\]
    with $Z$ finite-dimensional with a 1-unconditional basis, such that $K^{(\pp)}(R)K_{(\qq)}(S)\leq (1+\eps) K^{(\pp)}(U)K_{(\qq)}(V)$. Since $G$ and $H^*$ are finite-dimensional, using \cite[Proposition 9.2.2]{Pietsch} and \Cref{prop: eta factors fd} we get
    \[\eta_\ppqq^{**}(j^*J_FTi)=\eta_\ppqq(j^*J_FTi)\leq K^{(\pp)}(R)K_{(\qq)}(S)\leq (1+\eps) K^{(\pp)}(U)K_{(\qq)}(V).\]
    Observe that $[\cF,\eta_\ppqq]^{**}$ is already a perfect ideal, as it is the adjoint of $[\cF,\eta_\ppqq]^{*}$. Therefore, by \cite[Theorem 8.11]{Ryan}, it is a finitely generated ideal, so
    \begin{align*}
        \eta_\ppqq^{**}(T)&=\sup \{\eta_\ppqq^{**}(j^*J_FTi): G\subseteq E, H\subseteq F^*\text{ finite-dimensional}\}\\
        & \leq (1+\eps) K^{(\pp)}(U)K_{(\qq)}(V).
    \end{align*}
    It now follows that $\eta_\ppqq^{**}(T)\leq \mu_\ppqq(T)$.\medskip

    In order to show that $[M_\ppqq,\mu_\ppqq]\supseteq [\cF,\eta_\ppqq]^{**}$ we will make use of a standard ultraproduct argument, so we will omit some of the details. We refer to \cite{Heinrich} for basic terminology and notation on ultraproducts. Again, let us fix Banach spaces $E$ and $F$ and an operator in $[\cF(E;F),\eta_\ppqq]^{**}$. First of all, we make use of the fact that every Banach space embeds isometrically into the ultraproduct of a collection of its finite-dimensional subspaces (see \cite[Proposition 6.2]{Heinrich}). In particular, there exist index sets $K$ and $L$, ultrafilters $\fU_E$ and $\fU_{F^*}$ over $K$ and $L$ (respectively), collections $\{G_k\}_{k\in K}\subseteq\{G: G\subseteq E \text{ finite-dimensional}\}$ and $\{H_l\}_{l\in L}\subseteq\{H: H\subseteq F^* \text{ finite-dimensional}\}$ and isometric embeddings $i_E:E\rightarrow (G_k)_{\fU_E}$ and $j_{F^*}:F^*\rightarrow (H_l)_{\fU_{F^*}}$. It follows that 
    \[\fU_\times=\{C\subseteq K\times L: \exists A\in \fU_E, B\in \fU_{F^*} \text{ such that }A\times B\subseteq C\}\]
    defines a filter on $K\times L$, so by \cite[Proposition 1.1]{Heinrich} there exists an ultrafilter $\fU$ that dominates $\fU_\times$. If we put $G_{k,l}=G_k$ for every $l\in L$ and $H_{k,l}=H_l$ for every $k\in K$, there exist canonical isometric embeddings $\widehat{i}:(G_k)_{\fU_E}\rightarrow (G_{k,l})_\fU$ and $\widehat{j}:(H_l)_{\fU_{F^*}}\rightarrow (H_{k,l})_\fU$. Finally, note that by \cite[Section 7]{Heinrich} there exists a canonical isometric embedding $\varphi: (H_{k,l}^*)_\fU\rightarrow (H_{k,l})_\fU^*$.
\medskip

    Now, let us fix $\eps>0$ and consider for each pair $(k,l)\in K\times L$ the finite-dimensional subspaces $G_{k,l}\subseteq E$ and $H_{k,l}\subseteq F^*$ (this time we denote by $i_{k,l}:G_{k,l}\rightarrow E$ and $j_{k,l}:H_{k,l}\rightarrow F^*$ the embeddings). By \cite[Proposition 9.2.2]{Pietsch}, $T_{k,l}=j_{k,l}^*J_F Ti_{k,l}\in [\cF(G_{k,l},H^*_{k,l}),\eta_\ppqq]^{**}=[\cF(G_{k,l},H^*_{k,l}),\eta_\ppqq]$, so we can use \Cref{prop: eta factors fd} to find a finite-dimensional $Z_{k,l}$ with a 1-unconditional basis and operators $R_{k,l}:G_{k,l}\rightarrow Z_{k,l}$ and $S_{k,l}:Z_{k,l}\rightarrow H_{k,l}$ such that $T_{k,l}$ factors as $S_{k,l}R_{k,l}$ and
    \[K^{(\pp)}(R_{k,l})K_{(\qq)}(S_{k,l})\leq (1+\eps)\eta_\ppqq(j_{k,l}^*J_FTi_{k,l})\leq (1+\eps)\eta_\ppqq^{**}(T)\]
    (the last inequality holds because $[\cF,\eta_\ppqq]^{**}$ is a finitely generated ideal, see \cite[Theorem 8.11]{Ryan}). By rescaling the norm of $Z_{k,l}$ we can assume without loss of generality that $K_{(\qq)}(S_{k,l})=1$. It follows from \cite[Proposition 3.2]{Heinrich} that $X=(Z_{k,l})_\fU$ is a Banach lattice, and it is easy to check that $R=(R_{k,l})_\fU:(G_{k,l})_\fU\rightarrow X$ is $(\pp)$-convex with $K^{(\pp)}(R)\leq \sup_{k,l} K^{(\pp)}(R_{k,l})\leq (1+\eps)\eta_\ppqq^{**}(T)$ and $S=(S_{k,l})_\fU:X\rightarrow (H_{k,l}^*)_\fU$ is $(\qq)$-concave with $K_{(\qq)}(S)\leq \sup_{k,l} K_{(\qq)}(S_{k,l})=1$. Therefore, we have obtained the following commutative diagram:
    \[\xymatrix{
		E  \ar@{->}[r]^{i_E} & (G_k)_{\fU_E} \ar@{->}[r]^{\wh{i}} & (G_{k,l})_\fU \ar@{->}[dr]_{R} \ar@{->}[rr]^{(T_{k,l})_\fU} & & (H_{k,l}^*)_\fU \ar@{->}[r]^{\varphi} & (H_{k,l})_\fU^* \ar@{->}[r]^{\wh{j}^*} & (H_l)_{\fU_{F^*}}^* \ar@{->}[r]^{j_{F^*}^*} & F^{**}  \\
		& & & X \ar@{->}[ur]_{S} & & & &
    }\]
    A careful computation shows that $j_{F^*}^*\wh{j}^*\varphi (T_{k,l})_\fU \wh{i} i_E = J_FT$, so for every $\eps>0$ we have found a factorization of $J_FT$ through a Banach lattice $X$ with operators $U=R \wh{i} i_E$ and $V=j_{F^*}^*\wh{j}^*\varphi S$ such that
    \[\mu_\ppqq(T)\leq K^{(\pp)}(R)K_{(\qq)}(S)\leq  (1+\eps)\eta_\ppqq^{**}(T).\]
\end{proof}

We devote the last part of the section to identifying the ideal $M_\ppqq$. First, observe that \Cref{thm: double factorization Reisner} allows us to redefine the ideal $M_\ppqq$ in an alternative way:

\begin{prop}\label{prop: M equiv to factorization through BLs}
    Let $T:E\rightarrow F$. Then $T\in M_\ppqq(E,F)$ if and only if $J_FT$ factors as
    \[\xymatrix{
		E  \ar@{->}[d]_{U} \ar@{->}[r]^{T} & F  \ar@{->}[r]^{J_F} & F^{**}  \\
		X \ar@{->}[rr]_{Q} &   & Y \ar@{->}[u]_{V}
    }\]
    where $X$ is a $(\pp)$-convex Banach lattice, $Y$ is a $(\qq)$-concave Banach lattice, both with constant 1, $U$ and $V$ are bounded operators and $Q$ is an (almost interval preserving) lattice homomorphism. Moreover, $\mu_\ppqq(T)=\inf \|U\|\|Q\|\|V\|$, where the infimum is computed over all possible factorizations.
\end{prop}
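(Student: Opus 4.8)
\textbf{Proof plan for Proposition \ref{prop: M equiv to factorization through BLs}.}

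The plan is to prove the two inclusions separately, in each case pushing one formulation into the other while keeping track of the norm/constant bookkeeping. For the direction ``$T\in M_\ppqq$ implies the three-space factorization'', I would start from a factorization $J_FT=VU$ as in \Cref{def: ideal M of factorizations through convex and concave} with $U\colon E\to Z$ a $(\pp)$-convex operator and $V\colon Z\to F^{**}$ a $(\qq)$-concave operator, with $K^{(\pp)}(U)K_{(\qq)}(Z)$ close to $\mu_\ppqq(T)$. The key input is \Cref{thm: double factorization Reisner}, applied with the norms $\tau=\|\cdot\|_p$, $\sigma=\|\cdot\|_{p_2}$, $\tau'=\|\cdot\|_{q^*}$, $\sigma'=\|\cdot\|_{q_2^*}$ on $c_{00}$ (or rather, in the dictionary of \Cref{def: convexity and concavity}, the norms governing $(\pp)$-convexity and $(\qq)$-concavity); all four are block convex and block concave with constant $1$, so the theorem produces a $(\pp)$-convex Banach lattice $X$ with constant $1$ and a $(\qq)$-concave Banach lattice $Y$ with constant $1$, together with $A\colon E\to X$, $B\colon X\to Y$ of Class $\cD$, $C\colon Y\to F^{**}$ so that $J_FT=CBA$, $\|A\|\le K^{(\pp)}(U)$, $\|C\|\le K_{(\qq)}(V)$ and $\|B\|\le 1$. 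Setting $U:=A$, $Q:=B$, $V:=C$ gives exactly the desired diagram, and since a Class $\cD$ operator is in particular an almost interval preserving lattice homomorphism (cf.\ the remark after \Cref{def: convexity and concavity} on Class $\cC$/$\cD$ and the corresponding comment in \Cref{sec: geometry factorization}), $Q$ has the stated property. This yields $\inf\|U\|\|Q\|\|V\|\le \mu_\ppqq(T)$.

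For the reverse direction, suppose $J_FT=VQU$ with $X$ a $(\pp)$-convex Banach lattice (constant $1$), $Y$ a $(\qq)$-concave Banach lattice (constant $1$), $U,V$ bounded and $Q$ a lattice homomorphism. The composite $QU\colon E\to Y$ is $(\pp)$-convex with $K^{(\pp)}(QU)\le\|Q\|\|U\|$: indeed, since $Q$ is a lattice homomorphism it commutes with the functional calculus expressions $\big(\sum_i|\cdot|^{p_2}\big)^{1/p_2}$, so for any $(x_i)_{i=1}^n\subseteq E$,
\[
\big\|\big(\textstyle\sum_i|QUx_i|^{p_2}\big)^{1/p_2}\big\|_Y=\big\|Q\big(\textstyle\sum_i|Ux_i|^{p_2}\big)^{1/p_2}\big\|_Y\le\|Q\|\big\|\big(\textstyle\sum_i|Ux_i|^{p_2}\big)^{1/p_2}\big\|_X\le\|Q\|\|U\|\,\big(\textstyle\sum_i\|x_i\|^{p}\big)^{1/p},
\]
using the $(\pp)$-convexity of $X$ with constant $1$ in the last step. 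Then $J_FT=V\circ(QU)$ is a factorization of the form required by \Cref{def: ideal M of factorizations through convex and concave}: $QU$ is $(\pp)$-convex and $V\colon Y\to F^{**}$ is $(\qq)$-concave with $K_{(\qq)}(V)\le\|V\|$ (every bounded operator into a Banach space from a $(\qq)$-concave Banach lattice with constant $1$ is $(\qq)$-concave with constant $\le$ its norm, by the same functional-calculus argument). Hence $\mu_\ppqq(T)\le K^{(\pp)}(QU)K_{(\qq)}(V)\le\|Q\|\|U\|\|V\|$, and taking the infimum over all such factorizations gives $\mu_\ppqq(T)\le\inf\|U\|\|Q\|\|V\|$.

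Combining the two inequalities gives both the equivalence of the two notions of membership and the equality $\mu_\ppqq(T)=\inf\|U\|\|Q\|\|V\|$. I expect the only genuinely delicate point to be the first direction: one must verify that \Cref{thm: double factorization Reisner} applies verbatim with the block-convexity/block-concavity constants all equal to $1$ for the four norms relevant to the $(\pp)$/$(\qq)$ pair (including the extreme cases $p_2=\infty$ and $q_2=1$, where the relevant $\sigma$ or $\sigma'$ is the $c_0$-norm, which is block convex and block concave with constant $1$ as noted in \Cref{sec: concavity and convexity}), and that the interval-preserving/almost-interval-preserving statement about $Q$ is exactly what Class $\cD$ delivers. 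The remaining steps are routine functional-calculus manipulations as sketched above.
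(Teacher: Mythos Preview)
Your proposal is correct and follows essentially the same route as the paper, which simply remarks that \Cref{thm: double factorization Reisner} ``allows us to redefine the ideal $M_\ppqq$'' and states the proposition without further detail; your write-up supplies exactly the bookkeeping the paper omits. One small slip: in the abstract framework of \Cref{def: tau sigma convex and concave}, $(\qq)$-concavity corresponds to $\tau'=\|\cdot\|_q$ and $\sigma'=\|\cdot\|_{q_2}$ (not their conjugates), though this is immaterial since all the $\ell_r$ and $c_0$ norms involved are block convex and block concave with constant $1$, as you note.
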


From now on, we consider only $p_2\in\{p,\infty\}$ and $q_2\in\{1,q\}$, as they are the most relevant cases. The following definition is an analogue in our setting of the ideal $I_{p,q}$ of \textit{operators factoring through a diagonal $D:L_p(\mu)\rightarrow L_q(\mu)$} introduced in \cite{GLR}.

\begin{defn}\label{def: ideal I of factorizations through multiplication operator}
    An operator $T:E\rightarrow F$ belongs to $I_\ppqq(E,F)$ (i.e.~factors through a multiplication operator from $L_\pp(g_1\cdot \mu)$ to $L_\qq(g_2\cdot\mu)$) if there exists a positive measure $\mu$, functions $g_1,g_2\in L_1(\mu)$ and operators $A:E\rightarrow L_\pp(g_1\cdot \mu)$, $D:L_\pp(g_1\cdot \mu)\rightarrow L_\qq(g_2\cdot\mu)$ and $B:L_\qq(g_2\cdot \mu)\rightarrow F^{**}$ such that $J_F T=BDA$ and $D$ is a multiplication operator, that is, $Df=gf$ for some measurable function $g$:
    \[\xymatrix{
		E  \ar@{->}[d]_{A} \ar@{->}[r]^{T} & F  \ar@{->}[r]^{J_F} & F^{**}  \\
		L_\pp(g_1\cdot \mu) \ar@{->}[rr]_{D} &   & L_\qq(g_2\cdot\mu) \ar@{->}[u]_{B}
    }\]
    We define $i_\ppqq(T)=\inf \cbr[0]{\|A\|\|D\|\|B\|}$, where the infimum is taken over all possible factorizations above.
\end{defn}

\begin{thm}\label{thm: identification M and I}
    Assume $p>q$. Then $[M_\ppqq,\mu_\ppqq]$ and $[I_\ppqq,i_\ppqq]$ are isomorphic ideals.
\end{thm}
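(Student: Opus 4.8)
The strategy is to establish the two inclusions $[M_\ppqq,\mu_\ppqq] \subseteq [I_\ppqq,i_\ppqq]$ and $[I_\ppqq,i_\ppqq] \subseteq [M_\ppqq,\mu_\ppqq]$, each with control on the ideal norms. The easy inclusion is $I_\ppqq \subseteq M_\ppqq$: if $T$ factors as $J_FT = BDA$ with $D$ a multiplication operator from $L_\pp(g_1\cdot\mu)$ to $L_\qq(g_2\cdot\mu)$, then the key observation is that $L_\pp$ is a $(\pp)$-convex Banach lattice with constant $1$, $L_\qq$ is a $(\qq)$-concave Banach lattice with constant $1$, and the multiplication operator $D$ is a lattice homomorphism. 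Thus $(D A, D, B)$—more precisely, absorbing $D$ into the factor $Q$ in the sense of \Cref{prop: M equiv to factorization through BLs}—exhibits $T$ as a member of $M_\ppqq$, and chasing norms through \Cref{prop: M equiv to factorization through BLs} gives $\mu_\ppqq(T) \leq \|A\|\|D\|\|B\| $, hence $\mu_\ppqq(T) \leq i_\ppqq(T)$. (One should take the infimum over all such factorizations.)

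\textbf{The reverse inclusion.} For $M_\ppqq \subseteq I_\ppqq$, I would start from a factorization of $J_FT$ as in \Cref{prop: M equiv to factorization through BLs}: $J_FT = V Q U$, where $U:E\to X$ is bounded, $Q:X\to Y$ is a lattice homomorphism, $V:Y\to F^{**}$ is bounded, $X$ is $(\pp)$-convex with constant $1$ and $Y$ is $(\qq)$-concave with constant $1$. The plan is then to replace $X$ and $Y$ by model spaces. By \Cref{prop: representation convexity infty sums} (and its $(p,p_2)=(p,\infty)$ variant, plus the classical Maurey-type representation for $p_2=p$), $X$ lattice embeds into an $\ell_\infty$-sum of $L_\pp(\mu_\gamma)$ spaces; dually, the $(\qq)$-concave lattice $Y$ is a lattice quotient of an $\ell_1$-sum of $L_\qq(\nu_\delta)$ spaces. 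The composition $Q U: E \to Y$ together with these representations should allow us, after the usual localization/finite-dimensional reduction and an ultraproduct argument (mirroring the proof of \Cref{prop: M is perfect ideal}), to produce a \emph{single} pair of measure spaces and a multiplication operator between $L_\pp$ and $L_\qq$ through which $T$ factors. The role of the hypothesis $p>q$ is crucial here: it guarantees that a bounded lattice homomorphism from an $L_\pp$-type space into an $L_\qq$-type space can be represented—after appropriate change of density, in the style of Maurey's and Pisier's factorization theorems (\Cref{thm: factorization Maurey operator}, \Cref{thm: factorization Pisier operator}, and their concave duals)—as a multiplication (diagonal) operator composed with isometric inclusions/quotients. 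When $p \leq q$ this representation fails, which is why the statement is restricted to $p>q$.

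\textbf{Key steps, in order.} (1) Prove $i_\ppqq(T) \leq \mu_\ppqq(T)$ is false—rather, prove the two-sided estimate: first $\mu_\ppqq(T)\leq C\, i_\ppqq(T)$ from the easy direction above (with $C=1$ in fact). (2) Reduce the reverse inclusion to finite-dimensional domain and range using \cite[Proposition 9.2.2]{Pietsch} and the fact, established in \Cref{prop: M is perfect ideal}, that $[M_\ppqq,\mu_\ppqq]$ is a perfect (hence finitely generated) ideal; since one should check $I_\ppqq$ is also suitably maximal, or at least that its norm is determined by finite-dimensional sections. (3) In the finite-dimensional case, use \Cref{prop: eta factors fd} (or \Cref{lem: standard argument}) to replace $X$ and $Y$ by finite-dimensional lattices $Z$ with $1$-unconditional bases; a finite-dimensional lattice with a $1$-unconditional basis that is $(\pp)$-convex with constant $1$ embeds isometrically into a finite $\ell_\pp$-sum, and dually on the concave side, so $Z$ can be sandwiched between an $\ell_\pp^n$ and an $\ell_\qq^m$ via a diagonal operator. (4) Re-assemble globally via an ultraproduct argument as in the second half of the proof of \Cref{prop: M is perfect ideal}, using that ultraproducts of $L_\pp(\mu)$-spaces are again $L_\pp$-spaces (over a suitable measure) and that the ultraproduct of multiplication operators is a multiplication operator. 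This yields the factorization of $J_FT$ through a genuine multiplication operator with $i_\ppqq(T)\leq (1+\eps)\mu_\ppqq(T)$, completing the proof.

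\textbf{Main obstacle.} The hard part will be Step (3)–(4): showing that a $(\pp)$-convex-to-$(\qq)$-concave lattice factorization can be routed through a single multiplication operator rather than through a general lattice homomorphism, and doing so with uniform norm control so that the ultraproduct survives. The delicate point is the change of densities: on the convex side one applies the Maurey/Pisier factorization to push the convex operator into an $L_\pp(g_1\cdot\mu)$-space, on the concave side one applies the dual factorization (Theorems \ref{thm: dual factorization Maurey operator}–\ref{thm: dual factorization Pisier operator}), and one must arrange that the \emph{same} underlying measure $\mu$ works for both, so that the intermediate lattice homomorphism literally becomes $f\mapsto g\cdot f$. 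Reconciling the two densities $g_1,g_2$ into a common measure space is exactly where $p>q$ enters (it ensures the relevant exponents in the interpolation/Hölder estimates have the right sign, cf.\ the constant $(1-\tfrac{r}{p})^{1/p-1/r}$ appearing in \Cref{thm: factorization Pisier operator}), and I expect this to require a careful adaptation of Reisner's argument in \cite{Reisner} together with the abstract factorization machinery of \Cref{sec: abstract factorizations}.
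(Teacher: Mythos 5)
Your easy direction ($I_\ppqq\subseteq M_\ppqq$) is essentially the paper's argument, up to one small inaccuracy: the multiplication operator $D$ is multiplication by a \emph{measurable} function $g$ that may change sign, so $D$ itself need not be a lattice homomorphism and cannot be "absorbed into $Q$" in the sense of \Cref{prop: M equiv to factorization through BLs}. The fix is routine and is what the paper does: pass to the modulus $|D|$ (multiplication by $|g|$), note $\||D|\|=\|D\|$, and use $|D|$ to verify directly that $D$ is $(\pp)$-convex with constant $\|D\|$; then $DA$ is the convex factor and $B$ the concave factor, giving $\mu_\ppqq(T)\le i_\ppqq(T)$.

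The reverse inclusion is where your proposal has a genuine gap, in two places. First, your step (3) asserts that a finite-dimensional lattice with a $1$-unconditional basis that is $(\pp)$-convex with constant $1$ embeds isometrically into a finite $\ell_\pp$-sum; for $p_2=\infty$ this is exactly what Example~\ref{ex: finite dim BL} disproves (only an isomorphic embedding with constant $\gamma_p$ is available), so the finite-dimensional "sandwich" cannot be set up isometrically, and even isomorphically you have not explained how an embedding into an $\ell_\infty$-sum of $L_\pp$'s on one side and a quotient of an $\ell_1$-sum of $L_\qq$'s on the other produces a \emph{single} multiplication operator between one $L_\pp(g_1\cdot\mu)$ and one $L_\qq(g_2\cdot\mu)$ over a common $\mu$. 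Second, the actual mechanism that accomplishes this — and the place where $p>q$ is used — is missing: the paper fixes an intermediate exponent $q<r<p$, renorms $X$ (and/or $Y$) to make it $r$-convex (resp.\ $r$-concave) with constant $1$, applies Krivine's factorization theorem to push the lattice homomorphism $Q:X\to Y$ through a single $L_r(\mu)$, and then applies Maurey's and Pisier's factorization theorems (\Cref{thm: factorization Maurey operator}, \Cref{thm: factorization Pisier operator} and their duals) on either side of $L_r(\mu)$ to change densities to $g_1$ and $g_2$; the composite of the two multiplications and the $[r]\leftrightarrow[1]$ renormings of $\wlp$ and $L_{q,1}$ is then literally multiplication by $(g_1/g_2)^{1/r}$. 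Your "main obstacle" paragraph correctly identifies that the two densities must live on a common measure space, but the Krivine step through $L_r$ is the idea that resolves it, and without it the finite-dimensional/ultraproduct reassembly you propose has nothing to reassemble (and would additionally require showing $I_\ppqq$ is maximal and that ultraproducts of $L_\pinf^{[1]}$-spaces stay in the class, neither of which you address).
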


\begin{proof}
    $[M_\ppqq,\mu_\ppqq]\supseteq [I_\ppqq,i_\ppqq]$: Assume that $J_FT$ factors as in \Cref{def: ideal I of factorizations through multiplication operator}. Since $L_\qq(g_2\cdot\mu)$ is $(\qq)$-concave with constant 1, it follows that $B$ is $(\qq)$-concave with constant $\|B\|$. On the other hand, observe that $L_\qq(g_2\cdot\mu)$ is Dedekind complete, so the operator $D$ has a modulus $|D|$, which is given by $|D|f=|g|f$ for every $f\in L_\pp(g_1\cdot \mu)$. Moreover, $\||D|\|=\|D\|$ and $|D|$ is a lattice homomorphism. Therefore,
    \begin{align*}
        \norm[3]{\intoo[3]{\sum_{i=1}^n |Df_i|^{p_2}}^\frac{1}{p_2}}_{\qq}&  = \norm[3]{\intoo[3]{\sum_{i=1}^n ||D|f_i|^{p_2}}^\frac{1}{p_2}}_{\qq}\\
        & \leq \||D|\| \norm[3]{\intoo[3]{\sum_{i=1}^n |f_i|^{p_2}}^\frac{1}{p_2}}_{\pp}\leq \|D\| \intoo[3]{\sum_{i=1}^n \|f_i\|_\pp^{p_2}}^\frac{1}{p_2}
    \end{align*}
    and $D$ is $(\pp)$-convex with constant $\|D\|$. It follows that $DA$ is $(\pp)$-convex with constant $\|A\|\|D\|$, so $T\in M_\ppqq(E,F)$ and $\mu_\ppqq(T)\leq \|A\|\|D\|\|B\|$.\medskip

    \noindent $[M_\ppqq,\mu_\ppqq]\subseteq [I_\ppqq,i_\ppqq]$: We start by considering a factorization $J_FT=VQU$ as in \Cref{prop: M equiv to factorization through BLs} and fixing $q<r<p$. The case $p_2=p$, $q_2=q$ was established in \cite{Reisner}. Here, we show the other three cases, which are summarized in the commutative diagrams below:
    \begin{enumerate}
        \item \textbf{Case 1:} $p_2=\infty$, $q_2=q$. $X$ is $(p,\infty)$-convex, so it is $r$-convex with constant $C_{p,r}$ \cite[Theorem 1.f.7]{LT2}. Let $\widetilde{X}$ be a lattice renorming of $X$ so that the $r$-convexity constant is 1 \cite[Proposition 1.d.8]{LT2}. By rescaling, we can assume that $\widetilde{Q}:\widetilde{X}\rightarrow Y$, the composition of $Q$ with the renorming isomorphism, is a contractive lattice homomorphism. Moreover, $Y$ is already $r$-concave with constant 1, so by Krivine's Factorization Theorem (cf.~\cite[Th\'eor\`eme 2]{KrivineExp2223} or \cite[Lemma 17]{RT}), $\widetilde{Q}$ can be factored through some $L_r(\mu)$ with contractive lattice homomorphisms $R:\widetilde{X}\rightarrow L_r(\mu)$ and $S:L_r(\mu)\rightarrow Y$. Note that $S$ is $q$-concave, so, by \Cref{thm: dual factorization Maurey operator}, it factors through $L_q(g_2\cdot \mu)$ for some change of density $0\leq g_2\in S_{L_1(\mu)}$, with contractive factors $S_1:L_r(\mu)\rightarrow L_q(g_2\cdot \mu)$ and $S_2:L_q(g_2\cdot \mu)\rightarrow Y$, where $S_1$ is the operator of multiplication by $g_2^{-\frac{1}{r}}$. Similarly, by \Cref{thm: factorization Pisier operator}, there exists a change of density $0\leq g_1\in S_{L_1(\mu)}$ such that the composition of the renorming of $X$ with $R$ (which is $(p,\infty)$-convex) factors through $L_\pinf^{[r]}(g_1 \cdot \mu)$ with $R_1:X\rightarrow L_\pinf^{[r]}(g_1 \cdot \mu)$ of norm bounded by $(1-\frac{r}{p})^{\frac{1}{p}-\frac{1}{r}}C_{p,r}\|Q\|$ and $R_2:L_\pinf^{[r]}(g_1 \cdot \mu)\rightarrow L_r(\mu)$ the contractive operator of multiplication by $g_1^\frac{1}{r}$. Composing the renorming isomorphism from $L^{[1]}_\pinf(g_1\cdot  \mu)$ into $L^{[r]}_\pinf(g_1\cdot  \mu)$ given by \eqref{eq: renorming weak Lp} with the multiplication operators $R_2$ and $S_1$, we obtain the operator $D:L^{[1]}_\pinf(g_1\cdot  \mu)\rightarrow L_q(g_2\cdot \mu)$ of multiplication by $\left(\frac{g_1}{g_2}\right)^\frac{1}{r}\in L_r(g_2\cdot \mu)_+$, which has norm bounded by $\left(\frac{p}{p-r}\right)^\frac{1}{r}$. Putting everything together, we conclude that $T\in I_{(p,\infty;q)}(E,F)$ and, since the factorization through $Q$ and the exponent $r\in (q,p)$ were arbitrary, we can take the infimum over all possibilities to get that 
        \[i_{(p,\infty;q)}(T)\leq \inf_{q<r<p}\cbr[3]{\left(\frac{p}{p-r}\right)^{\frac{2}{r}-1}C_{p,r}}\mu_{(p,\infty;q)}(T).\]

        \[\xymatrix{
		E  \ar@{->}[d]_{U} \ar@{->}[rr]^{T} & & F  \ar@{->}[rr]^{J_F} & & F^{**}  \\
        X \ar@/^2.0pc/[rrrr]^{Q} \ar@{<->}[r] \ar@{->}[rd]^{R_1} \ar@{->}[d]_{\widetilde{R_1}}& \widetilde{X} \ar@{->}[rrr]^{\widetilde{Q}} \ar@{->}[rd]^{R} & & & Y \ar@{->}[u]_{V}\\
		L^{[1]}_\pinf(g_1 \mu) \ar@/_2.0pc/[rrrr]_{D} \ar@{<->}[r] & L_\pinf^{[r]}(g_1 \mu) \ar@{->}[r]^{R_2}&  L_r(\mu) \ar@{->}[rru]^{S} \ar@{->}[rr]^{S_1} & & L_q(g_2\mu) \ar@{->}[u]_{S_2}
        }\]
        \item \textbf{Case 2:} $p_2=p$, $q_2=1$. The procedure is similar, but this time, we need to renorm $Y$ with distortion $C_{q^*,r^*}$ so that it becomes $r$-concave with constant 1, and after factoring through $L_r(\mu)$ with Krivine's Theorem, we need to apply \Cref{thm: factorization Maurey operator} and \Cref{thm: dual factorization Pisier operator}, and renorm $L^{[r^*]}_{q,1}(g_2\cdot \mu)$ to recover the standard norm of $L^{[1]}_{q,1}(g_2\cdot \mu)$. Finally, we get that 
        \[i_{(p;q,1)}(T)\leq \inf_{q<r<p}\cbr[3]{\left(\frac{q^*}{q^*-r^*}\right)^{\frac{2}{r^*}-1}C_{q^*,r^*}}\mu_{(p;q,1)}(T).\]

        \[\xymatrix{
		E  \ar@{->}[d]_{U} \ar@{->}[rr]^{T} & & F  \ar@{->}[rr]^{J_F} & & F^{**}  \\
        X \ar@/^2.0pc/[rrrr]^{Q} \ar@{->}[rrd]^{R} \ar@{->}[rrr]^{\widetilde{Q}}  \ar@{->}[d]^{R_1} &   & & \widetilde{Y}  \ar@{<->}[r] & Y \ar@{->}[u]_{V}\\
		L_p(g_1 \mu) \ar@/_2.0pc/[rrrr]_{D} \ar@{->}[rr]^{R_2} &  &  L_r(\mu) \ar@{->}[ru]^{S} \ar@{->}[r]^{S_1} & L^{[r^*]}_{q,1}(g_2\mu) \ar@{<->}[r] \ar@{->}[ru]_{S_2} & L^{[1]}_{q,1}(g_2\mu) \ar@{->}[u]_{\widetilde{S_2}} 
        }\]
        \item \textbf{Case 3:} $p_2=\infty$, $q_2=1$. This time, both $X$ and $Y$ need to be renormed, introducing distortions $C_{p,r}$ and $C_{p^*,r^*}$, respectively, and, after applying Theorems \ref{thm: factorization Pisier operator} and \ref{thm: dual factorization Pisier operator}, we need to renorm the corresponding $L^{[r]}_\pinf(g_1\cdot  \mu)$ and $L^{[r^*]}_{q,1}(g_2\cdot \mu)$ to obtain a multiplication operator $D$ from $L^{[1]}_\pinf(g_1\cdot  \mu)$ into $L^{[1]}_{q,1}(g_2\cdot \mu)$. This time, the relation between the ideal norms is given by
        \[i_{(p,\infty;q,1)}(T)\leq \inf_{q<r<p}\cbr[3]{\left(\frac{p}{p-r}\right)^{\frac{2}{r}-1}\left(\frac{q^*}{q^*-r^*}\right)^{\frac{2}{r^*}-1}C_{p,r}C_{q^*,r^*}}\mu_{(p,\infty;q,1)}(T).\]
        
        \[\xymatrix{
		E  \ar@{->}[d]_{U} \ar@{->}[rr]^{T} & & F  \ar@{->}[rr]^{J_F} & & F^{**}  \\
        X \ar@/^2.0pc/[rrrr]^{Q} \ar@{<->}[r] \ar@{->}[rd]^{R_1} \ar@{->}[d]_{\widetilde{R_1}}& \widetilde{X} \ar@{->}[rr]^{\widetilde{Q}} \ar@{->}[rd]^{R} & &\widetilde{Y}  \ar@{<->}[r] & Y \ar@{->}[u]_{V}\\
		L^{[1]}_\pinf(g_1 \mu) \ar@/_2.0pc/[rrrr]_{D} \ar@{<->}[r] & L_\pinf^{[r]}(g_1 \mu) \ar@{->}[r]^{R_2}&  L_r(\mu) \ar@{->}[ru]^{S} \ar@{->}[r]^{S_1} & L^{[r^*]}_{q,1}(g_2\mu) \ar@{<->}[r] \ar@{->}[ru]_{S_2} & L^{[1]}_{q,1}(g_2\mu) \ar@{->}[u]_{\widetilde{S_2}} 
        }\]
    \end{enumerate}
\end{proof}

\section*{Acknowledgments}
We are grateful to Mar\'ia J. Carro and Timur Oikhberg for helpful discussions related to the content of this paper. The research of E.~Garc\'ia-S\'anchez and P.~Tradacete was partially supported by the grants PID2020-116398GB-I00, PID2024-162214NB-I00 and CEX2023-001347-S funded by the MCIN/AEI/10.13039/501100011033. The research of E.~Garc\'ia-S\'anchez was partially supported by the grant CEX2019-000904-S-21-3 funded by the MICIU/AEI/10.13039/501100011033 and by ``ESF+''.

\end{document}